\def\bbe{\mathbf{e}}
\def\bbk{\mathbf{k}}
\def\bbr{\mathbf{r}}
\def\bbt{\mathbf{t}}
\def\bbv{\mathbf{v}}
\def\bbF{\mathbf{F}}
\def\bbM{\mathbf{M}}
\def\bbR{\mathbf{R}}
\def\bbS{\mathbf{S}}
\def\AAA{\mathbb{A}}
\def\NN{\mathbb{N}}
\def\QQ{\mathbb{Q}}
\def\RR{\mathbb{R}}
\def\ZZ{\mathbb{Z}}
\def\gothm{\mathfrak{m}}
\def\gotho{\mathfrak{o}}
\def\gothp{\mathfrak{p}}
\def\gothR{\mathfrak{R}}
\def\calD{\mathcal{D}}
\def\calF{\mathcal{F}}
\def\calI{\mathcal{I}}
\def\calO{\mathcal{O}}
\def\calR{\mathcal{R}}
\def\sfF{\mathsf{F}}
\def\rmH{\mathrm{H}}
\newcommand{\alg}{{\mathrm{alg}}}
\newcommand{\id}{{{\rm id}}}
\renewcommand{\log}{\mathrm{log}}
\newcommand{\Car}{\mathrm{Car}}
\newcommand{\ZCar}{\mathrm{ZCar}}
\newcommand{\length}{\mathrm{length}}
\newcommand{\Supp}{{\mathrm{Supp}}}
\newcommand{\End}{{\mathrm{End}}}
\newcommand{\Gal}{{\mathrm{Gal}}}  
 \newcommand{\rank}{{\mathrm{rank}}}
\newcommand{\Frac}{{\mathrm{Frac}}}
\newcommand{\Irr}{{\mathrm{Irr}}}
\newcommand{\Spec}{{\mathrm{Spec}}}  \newcommand{\Sym}{{\mathrm{Sym}}}
\newcommand{\bs}{\backslash}
\newcommand{\Ref}{\mathrm{Ref}}
\newcommand{\fil}{\mathrm{fil}}
\newcommand{\gr}{\mathrm{gr}}
\newcommand{\dR}{\mathrm{dR}}
\newcommand{\bdd}{\mathrm{bd}}
\newcommand{\Log}{\mathrm{Log}}
\newcommand{\GL}{\mathrm{GL}}
\newcommand{\Reg}{\mathrm{Reg}}
\theoremstyle{plain}
\newtheorem{theorem}[subsubsection]{Theorem}
\newtheorem{lemma}[subsubsection]{Lemma}
\newtheorem{corollary}[subsubsection]{Corollary}
\newtheorem{prop}[subsubsection]{Proposition}
\theoremstyle{definition}
\newtheorem{definition}[subsubsection]{Definition}
\newtheorem{caution}[subsubsection]{Caution}
\newtheorem{construction}[subsubsection]{Construction}
\newtheorem{remark}[subsubsection]{Remark}
\newtheorem{notation}[subsubsection]{Notation}
\begin{document}

\title[Cleanliness and log-characteristic cycles]{Cleanliness and log-characteristic cycles for vector bundles with flat connections}

\author{Liang Xiao}

\begin{abstract}
Let $X$ be a proper smooth algebraic variety over a field $k$ of characteristic zero and let $D$ be a divisor with  simple normal crossings.  Let $M$ be a vector bundle over $X-D$ equipped with a flat connection with possible irregular singularities  along $D$.  We define a cleanliness condition which roughly says that the singularities of the connection are controlled by the singularities at the generic points of $D$.  When this condition is satisfied, we compute explicitly the associated log-characteristic cycle, and relate it to the so-called refined irregularities.  As a corollary of a log-variant of Kashiwara-Dubson formula, we obtain the Euler characteristic of the de Rham cohomology of the vector bundle, under a mild technical hypothesis on $M$.
\end{abstract}

\maketitle
\tableofcontents

\section{Introduction}

Let $X$ be a proper  smooth algebraic variety of dimension $n$ over an algebraically closed field $k$ of charactersitic zero, and let $D$ be a divisor with  simple normal crossings.  We put $U = X -D$.
Let $M$ be a vector bundle over $U$ with a flat connection having possibly irregular singularities along $D$.  We may define the \emph{Euler characteristic} of the \emph{de Rham cohomology} of $M$ to be 
\[
\chi_\dR(M) = \sum_{i=0}^{2n} (-1)^i
\dim_k \mathrm H^i(U, M \otimes \Omega_U^\bullet).
\]

When $X$ is a smooth projective curve of genus $g(X)$ and $D$ is a finite subset of closed points, Deligne and Gabber \cite[Theorem~2.9.9]{katz} proved a formula to compute the Euler characteristic:
\[
\chi_\dR(M)=\rank(M)\chi(U)-\sum_{x\in D}\Irr_x(M),
\]
where $\chi(U)=2-2g(X)-\#D$ is the usual Euler characteristic of $U$ and $\Irr_x(M)$ is the irregularity of $M$
at $x$ (see Subsection~\ref{SS:irregularities} for a definition).

This formula says that the Euler characteristic of $M$ is given by certain geometric information (the first term) corrected by appropriate ramification information (the second term).  It is a natural question to ask for higher dimensional analogues of this.

The first step was taken by Kato \cite{kato-d-mod}, who proved a higher dimensional analogue of the above formula for line bundles with flat connections, under some cleanliness condition.  For example, when $X$ is two-dimensional and the irregularity of $M$ along each irreducible component  $D_j$ of $D$ is $r_j$, then the Euler characteristic of $M$ is
\begin{equation}
\label{E:euler-dim2}
\chi_\dR(M) = \rank(M) \chi(U) - \sum_j r_j \chi(D_j^\circ) + \sum_{j,j'} r_{j}r_{j'} (D_j \cdot D_{j'})
\end{equation}
where $D_j^\circ = D_j \bs (\cup_{j' \neq j} D_{j'})$, $(D_j \cdot D_{j'})$ is the intersection number, and $\chi(\cdot)$ is the usual Euler characteristic.

\vspace{5pt}
The cleanliness condition Kato assumed in his formula is \emph{necessary}, and is given in a very explicit form.   But when working with vector bundle of higher rank, it is very subtle  to rigorously define the cleanliness condition.  We start by explaining how to interpret the meaning of the cleanliness condition.  
A caveat is that these viewpoints lead to \emph{inequivalent} definitions of cleanliness; we list them from the strongest to the weakest. (Proofs maybe found in Section~2: Theorem~\ref{T:kedlaya-criterion} for (1)$\Rightarrow$(2), Theorem~\ref{T:numerical=>clean} for (2)$\Rightarrow$(3), and Proposition~\ref{P:local-calculation} for (3)$\Rightarrow$(4).)

(1)  In the formal neighborhood of each closed point of $X$ and up to a tamely ramified extension, $M$ has a ``good decomposition" in the sense of \cite{kedlaya-sabbah1, kedlaya-sabbah2}; vaguely speaking, it can be written as a direct sum of some differential modules which are tensor products of regular differential modules with some ``simple and explicit" rank one (irregular) differential modules.  See Definition~\ref{D:good-decomposition} for the precise definition.

(2) In the formal neighborhood of each closed point of $X$, the function(s) interpolating the irregularities along the exceptional divisors of toroidal blowups are linear, if appropriately normalized.  This is a weak version of Kedlaya's ``numerical cleanliness" \cite[Theorem~4.4.2]{kedlaya-sabbah1}.

(3) The irregularity of the connection  at each closed point of $D$ is ``controlled" by the irregularity at the generic points of $D$.  In particular, there is no expected contribution to the Euler characteristic from codimension $\geq 2$ strata.  This is the cleanliness condition we will work with throughout this paper.  One expects that this definition (as oppose to the previous two stronger statements) generalizes to analogous positive characteristic situation.  (See \cite{abbes-saito} for the description at the generic points in the analogous $\ell$-adic setting.)

(4) In terms of the log-characteristic variety $\Car(M)$ of $M$, the cleanliness condition should imply that $\Car(M)$ consists of only zero sections of the log-cotangent bundle and some line bundles over $D$; in particular, the fiber of $\Car(M)$ over each closed point of $X$ is at most one-dimensional.

\vspace{5pt}
The aim of this paper is to generalize Kato's result to vector bundles of arbitrary rank, under a very mind hypothesis essentially saying that all $r_j$ above are positive.  (See Theorem~\ref{T:main-theorem} and Corollary~\ref{C:EP-formula} for the precise statement.)  Roughly speaking, in computing Euler characteristic, we may ``pretend" that the differential module is a direct sum of rank 1 modules with specified irregularity properties.  
(But note that the corresponding $r_j$'s in \eqref{E:euler-dim2} may be rational numbers as opposed to integers.  So, interestingly, it is a priori not clear why the total Euler characteristic is an integer by just looking at the formula.)

The basic strategy is to compute the log-characteristic cycle of $M$ explicitly and then try to obtain the Euler characteristic from a log-variant of Kashiwara-Dubson formula.  As pointed out in (4), the log-characteristic cycle will be the sum of the zero section of the log-cotangent bundle (with multiplicity $\rank(M)$) and some line bundles over $D$. (They are usually \emph{not} the conormal bundles of the divisors because of the log-structure. See the discussion later in (a).)  The multiplicities of these line bundles are determined by the irregularities, and the positions of the line bundles in the log-cotangent bundle is determined by so-called refined irregularities.  When computing the Euler characteristic, the positions of these line bundles no longer matter and hence, the refined irregularities do not appear in the expression of the Euler characteristic formula.

There are several essential difficulties we need to overcome.

(a) The theory of logarithmic $\calD$-modules is \emph{very different} from the classical theory of nonlogarithmic $\calD$-modules.  For one thing, if we view the vector bundle $M$ (with a flat connection) over $U$ as a quasi-coherent sheaf $j_*M$ over $X$ ($j: U\to X$ being the natural immersion), it is coherent as a $\calD_X$-module but  it may \emph{not} be $\calD_X^\log$-coherent.  In some sense, the trouble comes from the piece with regular singularities; see Caution~\ref{C:not-coherent}. Therefore, we cannot apply the classical definition of the characteristic cycle directly. For another, we do not have the log-holonomicity (since we do not even have the coherence at the first place) and Bernstein inequality (see Caution~\ref{C:no-Berntein-inequality}).
To get around the first issue, we are forced to brutally define the log-characteristic cycle to be the ``limit" over all finitely generated $\calD_X^\log$-submodules.  
It is then not clear apriori that such a definition would give anything reasonable to work with.
But we prove, using a trick of Bernstein, that holonomicity implies log-holonomicity (in the sense that the dimension of the log-characteristic cycle is less than or equal to $\dim X$; note that, without Bernstein inequality, we do not expect an equality of dimension here). 
What made the situation even worse is that, unfortunately, we cannot establish the log-variant of Kashiwara-Dubson formula using the new log-characteristic cycle for a rather technical reason.  However, when  $M$ is clean and all irregularities are positive, such a formula holds; this explains why we had the assumption earlier.

(b)  Kato \cite{kato-d-mod} was working with line bundles, where one can choose a generator of the line bundle {\it Zariski locally} and everything may be explicitly written down.  When we work with higher rank vector bundles in this paper, we already need the full power of the theory of differential modules, developed by Kedlaya and the author \cite{kedlaya-xiao, kedlaya-sabbah1, xiao-refined}, to be able to give a description of the differential module over the \emph{formal completion at each closed point}; thus we may not glue the description at various closed points.   To be able to patch up the result globally, we will start with a conjectural definition of the log-characteristic variety using some global data, and check that this definition agrees with the actual log-characteristic cycle at each closed point.

(c)  The computation of the log-characteristic cycle is much more delicate than Kato's original computation.  
In our case, we need to first compute the underlying variety of the (log-)characteristic cycle; for this, we pass to the formal neighborhood of a closed point and check that, over this point (not the formal neighborhood), the log-characteristic variety agrees with the conjectural one obtained from refined irregularities.  This proves the equality on the characteristic varieties.  To get the multiplicities, we pass to the generic points of $D$ and do a more careful study in this case.

\vspace{5pt}
The cleanliness condition is a very strong condition to impose on $X$.
Kedlaya \cite{kedlaya-sabbah1, kedlaya-sabbah2} and independently Mochizuki \cite{mochizuki} proved that there exists a proper birational morphism $f: X' \to X$ which is an isomorphism when restricted over $U$, such that $f^{-1}(D)$ is a divisor with simple normal crossings and that $f^*M$ has a good formal model on $X'$ (i.e., satisfying condition (1)).  But one does not have control over how ``ramify" $f$ can be over $D$.  For example, it is still open that this morphism $f$ may be taken functorially with respect to smooth morphisms.  From another point of view, this is a type of (weak) resolution of singularities for $M$, that is to find a ``good compactification" of $U$ (where $M$ lives on) so that all the ramification information of $M$ is ``exposed" in codimension 1 strata.

\vspace{5pt}
We  mention that there are similar stories over a field of characteristic $p>0$, considering lisse $\ell$-adic sheaves or overconvergent $F$-isocrystals, in particular in some works of Abbes and Saito \cite{abbes-saito, saito}.  We point out that our approach is fundamentally different from their approach in that 
\begin{itemize}
\item our definition of the log-characteristic cycle is intrinsic to the vector bundle, and we prove that, under the cleanliness condition, the intrinsically defined log-characteristic cycle agrees with the ramification information read off at the generic points of the divisor;
\item In contrast, \cite{abbes-saito, saito} reads off the ramification information at the generic points of the divisor and use it to \emph{define} the log-characteristic cycle.
\end{itemize}
It is also worth pointing out that the conjectural formula \cite[Conjecture~1.13]{abbes-saito} depends mostly on the multiplicities of each irreducible component of the log-characteristic cycle defined there, but \emph{not} on the shape of the log-characteristic cycle (which is defined by ramification information at generic points.)  Our approach then gives a new interpretation of their definition in terms of more traditional theory of characteristic cycles, in the analogous situation for vector bundles with connections.

\subsubsection*{Structure of the paper}
In Section 1, we study the theory of logarithmic $\mathcal{D}$-modules.  We give the definition of log-holonomicity for not necessarily finitely generated $\calD_X^\log$-modules and define the  log-characteristic cycles for vector bundles with flat connections.
%A logarithmic variant of Kashiwara-Dubson formula is established.
In Section 2, we first review the theory of nonarchimedean differential modules and make some generalizations for our need.  Then we define various cleanliness conditions and discuss their relations. In Section 3, we state and prove the main theorem, which computes the log-characteristic cycles of vector bundles with flat connections under the cleanliness condition.

\subsubsection*{Acknowledgement}

I thank Kiran Kedlaya for helpful discussions and sharing ideas.
I am grateful for Kazuya Kato's groundbreaking paper \cite{kato-d-mod}, on which the calculation of this paper is based on.
I thank Matthew Morrow for suggesting the lecture notes on $\calD$-modules by Braverman and Chmutova, which inspired the proof of Theorem~\ref{T:log-holonomicity}.
Thanks also go to Adriano Marmora and J\'er\^ome Poineau for organizing the wonderful conference on \textit{Berkovich space and $p$-adic differential equations}; it stimulates the current project. 
I also thank the anonymous referee for careful reading of the manuscript and many helpful suggestions which largely improve the presentation.
I thank Daniel Caro, Dennis Gaitsgory, Bin Li, Chenyang Xu, Zhiwei Yun, and Weizhe Zheng for inspirations and interesting discussions. I thank Tsinghua Mathematics Science Center and Morningside Center of Mathematics for their hospitality when I visited, during which time I started to work on this paper.

\subsection*{Notation and convention}\

Throughout this paper, we use $k$ to denote a field of characteristic zero.%  We use $\NN$ to denote the set of nonnegative integers.

For a noetherian integral (formal) scheme $X$ over $k$, let $\calO_X$ denote the structure sheaf on $X$ and let $k(X)$ denote the field of rational functions on $X$.  %If $\eta$ is a point of $X$ of codimension $1$, we use $k(X)^{\wedge, \eta}$ to denote the completion of $k(X)$ with respect to the discrete valuation at $\eta$.
We use $|X|$ to denote the set of closed points.
If $X$ is affine, we also use $\calO_X$ to denote the ring of global sections of the structure sheaf;
for a closed point  $x\in |X|$, we use $\gothm_x$ to denote the maximal ideal of $\calO_X$ corresponding to $x$.
If $X$ is affine and $u \in \calO_X$, we use $V(u)$ to denote the closed (formal) subscheme associated to $\calO_X / (u)$.  

A \emph{smooth pair} $(X,D)$ consists of an irreducible smooth variety $X$ over $k$ and a divisor $D$ with  simple normal crossings. (Our convention of simple normal crossings require all irreducible components of $D$ to be smooth.)  A morphism $f: (X', D') \to (X, D)$ between two smooth pairs is a morphism $f: X' \to X$ of varieties such that $f(X'-D') \subseteq X-D$.  Given a smooth pair $(X, D)$, we equip it with the natural log-structure.  
%If $D_j$ is an irreducible component of $D$ and $D_j^c$ is the union of other irreducible components of $D$, then $(D_j, D_j \cap D_j^c)$ is again a smooth pair (with the natural log structure).

Unless otherwise stated, all differentials and derivations are continuous, and are relative to $k$.  

We will frequently say vector bundles to mean locally free sheaves of finite rank.
For a locally free coherent sheaf $\calF$ over a scheme $X$, we let $\Sym^\bullet_{\calO_X} \calF^\vee$ denote the sheaf of symmetric algebra over $\calF^\vee$; the associated scheme is the physical vector bundle associated to $\calF$.
A connection $\nabla$ on a vector bundle $M$ over a smooth scheme $U$ is called \emph{integrable} if the composition of $\nabla$ with the induced morphism $\nabla^{(1)}: M \otimes \Omega^1_U \to M \otimes \Omega^2_U$ is zero.  Here we choose to use the notation ``integrable" over ``flat" because we want to avoid possible confusion with the algebro-geometric meaning of flatness.

In this paper, we only implicitly use nonlog-characteristic cycles/varieties in Subsection~\ref{S:log-holonomicity}.  Aside from this, we will exclusively discuss log-characteristic cycles/varieties.  We will try to emphasize this as often as possible.  But we sometimes give in for simpler notation, e.g. $\Car(M)$ and $\ZCar(M)$.

\section{Log-characteristic cycles}

\subsection{General framework of characteristic cycles}
\label{S:framework-CZar}
We discuss some slightly general framework of filtered rings and their characteristic cycles.  The results here are elementary and are probably in the literature (e.g. \cite{laumon}), but it might be hard to extract the exact statements we need.  For completeness, we reproduce them here for the convenience of readers.  (Results from this subsection apply equally well to the case when $k$ is of finite characteristic.)

\subsubsection{Filtered rings}
\label{SS:filtered-ring}

Let $(D, \fil_\bullet D)$ be an increasingly filtered possibly non-commutative $k$-algebra. We assume the following:

(i) $\fil_\alpha D = 0$ if $\alpha<0$, $D = \cup_{\alpha \geq0} \fil_\alpha D$;

(ii) $\gr_\bullet D$ is a \emph{commutative noetherian} $k$-algebra (which implies that $D$ itself is noetherian).

A homomorphism $f:(D', \fil_\bullet D') \to (D, \fil_\bullet D)$ between two such filtered $k$-algebras is called \emph{strict} if the filtration on $D'$ is exactly the filtration induced by $f$.  A homomorphism $f:(D', \fil_\bullet D') \to (D, \fil_\bullet D)$ induces a homomorphism $\gr_\bullet: \gr_\bullet D' \to \gr_\bullet D$.

We often write $D_0$ for $\fil_0D$; it is a commutative noetherian $k$-algebra.

A standard example to keep in mind is the ring of differential operators $\calD_X$ defined later.

\begin{definition}
\label{D:good-filtration}
For a $D$-module $M$, a filtration $\fil_\bullet M$ on $M$ is called 
 \emph{admissible} if

(i) $\fil_\alpha M =0$ when $\alpha \ll 0$ and $M = \cup_\alpha M_\alpha$,

(ii) each $M_\alpha$ is $D_0$-coherent, and

(iii) $\fil_\alpha D \cdot \fil_\beta M \subseteq \fil_{\alpha+\beta} M$ for each $\alpha, \beta$.

\noindent
We call it \emph{good} if it is admissible and it satisfies:

(iv) $\gr_\bullet M$ is a finitely generated $\gr_\bullet D$-module.

\end{definition}

\begin{definition}
Let $M$ be a finitely generated $D$-module.  It is well-known that $M$ has a good filtration $\fil_\bullet M$.  Define the \emph{characteristic variety} $\Car(M) = \Car_D(M)$ to be the support of $\gr_\bullet M$ as a $\gr_\bullet D$-module; it is a closed subscheme of $\Spec (\gr_\bullet D)$. We define the \emph{characteristic cycle} to be
\[
\ZCar(M)=\ZCar_D(M) = \sum \length(\gr_\bullet(M)_\eta) \overline{\{\eta\}},
\]
where the sum is taken over all generic points $\eta$ of irreducible components of $\Car(M)$.
These do not depend on the choice of good filtrations. (See for example, \cite[Lemma~D.3.1]{HTT})
\end{definition}

\begin{remark}
\label{R:log-char-for-sub}
If $M' \subseteq M$ is a sub-$D$-module, we have $\Car(M') \subseteq \Car(M)$.  Note that this does not imply that the generic points of $\Car(M')$ is a subset of those of $\Car(M)$.  If moreover $\Car(M') = \Car(M)$, the multiplicity of $\ZCar(M)$ at every generic point of $\Car(M)$ is greater than or equal to the corresponding multiplicity of $\ZCar(M')$.
\end{remark}

\begin{lemma}
\label{L:charcycles-base-change}
Let $f: (D, \fil_\bullet D) \to (D', \fil_\bullet D')$ be a strict morphism as in Definition~\ref{D:good-filtration} such that $D'_0$ is flat over $D_0$ and $\gr_\bullet f$ induces an isomorphism $\gr_\bullet D' \simeq D'_0 \otimes_{D_0} \gr_\bullet D$. We write 
\[
g: \Spec (\gr_\bullet D') =\Spec(D'_0) \times_{\Spec (D_0)}\Spec(\gr_\bullet D)   \to \Spec (\gr_\bullet D)
\]
for the natural projection.   For a finitely generated $D$-module $M$, we denote $M' = D' \otimes_D M$.  Then we have $\ZCar(M') = g^*( \ZCar(M))$.
\end{lemma}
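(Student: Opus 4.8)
The plan is to reduce the statement about characteristic cycles to a statement about the behavior of good filtrations under the base change, and then to a computation in commutative algebra about how supports and lengths transform under the flat map $g$. First I would pick a good filtration $\fil_\bullet M$ on $M$; since $\gr_\bullet M$ is then a finitely generated $\gr_\bullet D$-module. Using strictness of $f$ together with the hypothesis that $\gr_\bullet f$ induces $\gr_\bullet D' \simeq \gr_\bullet D \otimes_{D_0} D'_0$, I would equip $M' = D' \otimes_D M$ with the induced filtration $\fil_\alpha M' = \sum_{\beta} \fil_\beta D' \cdot (1 \otimes \fil_{\alpha - \beta} M)$ and verify that it is again a good filtration. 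The key point here is to produce a natural isomorphism of graded $\gr_\bullet D'$-modules
\[
\gr_\bullet M' \;\simeq\; \gr_\bullet M \otimes_{\gr_\bullet D} \gr_\bullet D' \;\simeq\; \gr_\bullet M \otimes_{D_0} D'_0.
\]
Surjectivity is formal; injectivity is where the flatness of $D'_0$ over $D_0$ (hence of $\gr_\bullet D'$ over $\gr_\bullet D$ by base change) enters, guaranteeing that no extra relations are created in passing to the associated graded and that the filtration on $M'$ has the expected pieces.

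Granting this isomorphism, the computation of $\ZCar(M')$ becomes the computation of $\Supp$ and of the lengths of the localizations of the $\gr_\bullet D'$-module $g^* \gr_\bullet M := \gr_\bullet M \otimes_{\gr_\bullet D} \gr_\bullet D'$ along a flat morphism $g$. For the underlying variety, $\Supp(g^*\gr_\bullet M) = g^{-1}(\Supp \gr_\bullet M) = g^{-1}(\Car(M))$ by the standard compatibility of support with flat (indeed arbitrary) base change for finitely generated modules, so $\Car(M') = g^*(\Car(M))$ as schemes. For the multiplicities, let $\eta'$ be a generic point of an irreducible component of $\Car(M')$; then $\eta'$ lies over some generic point $\eta$ of a component of $\Car(M)$, and since $g$ is flat the local ring $\calO_{\Car(M'),\eta'}$ is flat over $\calO_{\Car(M),\eta}$ and $0$-dimensional (both points being generic in their respective schemes). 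A length count along the fiber — using that length is additive in short exact sequences and multiplicative along flat maps between Artinian-ish situations, i.e. $\length_{\calO_{\eta'}}\big((\gr_\bullet M)_\eta \otimes_{\calO_\eta} \calO_{\eta'}\big) = \length_{\calO_\eta}\big((\gr_\bullet M)_\eta\big) \cdot \length_{\calO_{\eta'}}(\calO_{\eta'})$ — then shows the multiplicity of $\ZCar(M')$ at $\eta'$ equals the multiplicity of $\ZCar(M)$ at $\eta$ times the length of the structure sheaf of the fiber, which is exactly the coefficient with which $\overline{\{\eta'\}}$ appears in the pullback cycle $g^*(\ZCar(M))$. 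Summing over all such $\eta'$ gives $\ZCar(M') = g^*(\ZCar(M))$.

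The main obstacle is the first paragraph: establishing cleanly that the induced filtration on $M'$ is good and that its associated graded is the base change of $\gr_\bullet M$. One has to be careful that $\fil_\bullet M'$ as defined is genuinely exhaustive, separated (i.e. $\fil_\alpha M' = 0$ for $\alpha \ll 0$), and that each piece is $D'_0$-coherent — this last point uses $D'_0$-flatness over $D_0$ and coherence of the $\fil_\alpha M$. The identification of $\gr_\bullet M'$ requires unwinding that $D' = D'_0 \otimes_{D_0} D$ as filtered modules in the appropriate sense — which follows from strictness and the hypothesis on $\gr_\bullet f$ by a filtered Nakayama / devissage argument on the filtration degree — and then flat base change for the Tor-vanishing that makes $- \otimes_{D_0} D'_0$ exact on the relevant exact sequences. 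Once the filtered picture is set up, everything downstream is the routine commutative-algebra bookkeeping of supports and lengths under a flat morphism, which I would not belabor.
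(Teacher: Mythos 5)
Your proposal follows essentially the same strategy as the paper's proof: base-change the good filtration of $M$ to get a good filtration on $M'$ (using strictness of $f$ and flatness of $D'_0$ over $D_0$ to get $\fil_\alpha D' \simeq D'_0 \otimes_{D_0}\fil_\alpha D$ and hence $\gr_\bullet M' \simeq D'_0 \otimes_{D_0} \gr_\bullet M$), then deduce the cycle pullback from flatness of $g$. One small slip: the displayed length identity is not quite right as written — for a flat local homomorphism $A\to B$ and a finite-length $A$-module $N$ the correct formula is $\length_B(N\otimes_A B) = \length_A(N)\cdot\length_B(B/\gothm_A B)$, the length of the \emph{fiber} ring rather than $\length_B(B)$ — but your surrounding prose interprets the factor correctly as the length of the structure sheaf of the fiber, so the intended argument goes through.
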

\begin{proof}
By induction on $\alpha$ and using the flatness of $D'_0$ over $D_0$, the homomorphism $f$ induces natural isomorphisms $D'_0 \otimes_{D_0}\fil_\alpha D \stackrel \sim \to \fil_\alpha D'$ as \emph{left $D'_0$-modules} for each $\alpha \in \ZZ$. This implies that, we have $D'_0 \otimes_{D_0} D \simeq D'$ as left $D'_0$- and right $D$-modules.  As a consequence, we have a natural isomorphism $D'_0 \otimes_{D_0} M \stackrel \simeq \to M'$ as left $D'_0$-modules; we identify them.

Now, we choose a good filtration $\fil_\bullet M$ on $M$ with respect to $D$.  We define a filtration on $M'$ (under the aforementioned identification) by 
$
\fil_\alpha M' = D'_0 \otimes_{D_0} \fil_\alpha M
$ for all $\alpha \in \ZZ$.
This filtration obviously satisfies conditions (i)--(iii) of Definition~\ref{D:good-filtration}.
Moreover, we have $\gr_\bullet M' \simeq D'_0 \otimes_{D_0} \gr_\bullet M$ as a graded left $D'_0$-module.  Hence, to check the conditions (iv) of Definition~\ref{D:good-filtration}, it suffices to prove that the action of $\gr_\bullet D' \simeq D'_0 \otimes_{D_0} \gr_\bullet D$ on $\gr_\bullet M'$ is the one induced by the action of $\gr_\bullet D$ on $\gr_\bullet M$.  Indeed, this follows from the fact that for any $\alpha \in \ZZ$, any $x \in \fil_\alpha D$ and any $a \in D'_0$, we have $xa - ax \in \fil_{\alpha-1} D' \simeq D'_0 \otimes_{D_0} \fil_{\alpha-1} D$.  This shows that $\fil_\bullet M'$ is a good filtration.

The fact that $\gr_\bullet M' \simeq D'_0 \otimes _{D_0} \gr_\bullet M$ implies that $\Car(M') = g^{-1}(\Car(M))$.
Since $g$ is flat, it satisfies the Going-down property (\cite[Lemma~10.11]{eisenbud}).  So, if an irreducible component of $\Car(M)$ in $\Spec( \gr_\bullet D)$ intersects with the image of $g$, its generic point must be in the image too.  Hence, we have $\ZCar(M') = g^*(\ZCar(M))$.
\end{proof}

We record the following homological algebra result for future reference.

\begin{prop}
\label{P:bernstein-trick}
Let $(D, \fil_\bullet D)$ be as in Definition~\ref{D:good-filtration} and assume that $\gr_\bullet D$ is regular of pure dimension $n$ over $k$. Let $M$ be any finitely generated
left $D$-module. Then we have 
\[
\min\{j\,|\, \mathrm{Ext}_D^j(M,D) \neq 0\} = n - \dim(\Car(M)).
\]
\end{prop}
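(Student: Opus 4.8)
The plan is to reduce the statement to an equality of grades,
$\min\{j\mid\mathrm{Ext}^j_D(M,D)\neq0\}=\min\{j\mid\mathrm{Ext}^j_{\gr_\bullet D}(\gr_\bullet M,\gr_\bullet D)\neq0\}$
for a good filtration $\fil_\bullet M$ on $M$, and then to identify the right-hand side with $n-\dim\Car(M)$ by commutative algebra. We may assume $M\neq0$, the other case being trivial. Write $R=\gr_\bullet D$; by hypothesis $R$ is regular of pure dimension $n$, hence Cohen--Macaulay, of global dimension $n$, and satisfies $\mathrm{ht}(\mathfrak p)+\dim(R/\mathfrak p)=n$ for every prime $\mathfrak p$. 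For $\fil_\bullet M$ good, $\gr_\bullet M$ is a finitely generated $R$-module with $\Supp_R(\gr_\bullet M)=\Car(M)$, and $\min\{j\mid\mathrm{Ext}^j_R(\gr_\bullet M,R)\neq0\}$ is by definition the grade of $\gr_\bullet M$, which equals the grade of the ideal $\mathrm{Ann}_R(\gr_\bullet M)$ and, $R$ being Cohen--Macaulay, equals $\mathrm{ht}(\mathrm{Ann}_R(\gr_\bullet M))=n-\dim(R/\mathrm{Ann}_R(\gr_\bullet M))=n-\dim\Car(M)$ (see \cite{eisenbud}). Set $j_0:=n-\dim\Car(M)$; it remains to prove $\mathrm{Ext}^j_D(M,D)=0$ for $j<j_0$ and $\mathrm{Ext}^{j_0}_D(M,D)\neq0$.

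Next I would pick a filtered free resolution $L_\bullet\to M$ by finitely generated free $D$-modules with good filtrations, arranged so that $\gr_\bullet L_\bullet\to\gr_\bullet M$ is a free resolution over $R$, necessarily finite since $R$ is regular; this is the standard lifting construction. Then $\mathrm{Hom}_D(L_\bullet,D)$ is a finite complex of free right $D$-modules computing $\mathrm{Ext}^\bullet_D(M,D)$, equipped with the order filtration, and its associated graded complex is $\mathrm{Hom}_R(\gr_\bullet L_\bullet,R)$. Since each $L_p$ is generated in bounded filtration degrees and $\fil_{<0}D=0$, the order filtration on each $\mathrm{Hom}_D(L_p,D)$ is bounded below; hence the spectral sequence of this filtered complex converges, with $E_1=\mathrm{Ext}^\bullet_R(\gr_\bullet M,R)\Rightarrow\gr_\bullet\mathrm{Ext}^\bullet_D(M,D)$, and the induced filtration on each $\mathrm{Ext}^j_D(M,D)$ is bounded below, hence separated. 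In particular $\gr_\bullet\mathrm{Ext}^j_D(M,D)$ is a subquotient of $\mathrm{Ext}^j_R(\gr_\bullet M,R)$ for every $j$, which already gives $\mathrm{Ext}^j_D(M,D)=0$ for $j<j_0$.

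The crux is $\mathrm{Ext}^{j_0}_D(M,D)\neq0$, i.e. that the total-degree-$j_0$ part of $E_\infty$ is nonzero; the bare subquotient estimate is too weak here, and I would add two inputs. First, dimension bounds over $R$: since $R$ is regular, localizing at primes of small height and using Auslander--Buchsbaum gives $\dim_R\mathrm{Ext}^j_R(\gr_\bullet M,R)\le n-j$ for all $j$, while $\mathrm{Ext}^{j_0}_R(\gr_\bullet M,R)$ has dimension exactly $n-j_0=\dim\Car(M)$ because $R$ is Cohen--Macaulay and $j_0$ is its grade. Second, regarding $D$ with its order filtration as a filtered algebra with zero differential, the above spectral sequence is multiplicative over $R=\gr_\bullet D$, so every differential $d_r$ is $R$-linear and $\ker d_r$, $\mathrm{im}\,d_r$ are $R$-modules. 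Now in total degree $j_0$ there are no incoming differentials, since these would come from total degree $j_0-1$ where $E_1$ vanishes; and the outgoing $d_r$ land in total degree $j_0+1$, whose $E_r$-terms are subquotients of $\mathrm{Ext}^{j_0+1}_R(\gr_\bullet M,R)$ and so have dimension $\le n-j_0-1<n-j_0$. Thus on every page the $(n-j_0)$-dimensional part of the total-degree-$j_0$ term is destroyed by no incoming and no outgoing differential, hence survives to $E_\infty$; therefore $\gr_\bullet\mathrm{Ext}^{j_0}_D(M,D)\neq0$, and so $\mathrm{Ext}^{j_0}_D(M,D)\neq0$. Together with the previous paragraph this proves $\min\{j\mid\mathrm{Ext}^j_D(M,D)\neq0\}=j_0=n-\dim\Car(M)$.

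I expect this last step to be the main obstacle. The filtered free resolution, the convergence of its spectral sequence, and the easy inequality are routine and essentially classical (compare \cite[Appendix~D]{HTT}); the real content is upgrading ``$\gr_\bullet\mathrm{Ext}^{j_0}_D(M,D)$ is a subquotient of $\mathrm{Ext}^{j_0}_R(\gr_\bullet M,R)$'' to ``$\gr_\bullet\mathrm{Ext}^{j_0}_D(M,D)$ is nonzero'', which forces one to combine the $R$-linearity of the spectral sequence differentials with the two dimension facts over the regular ring $R$ and to keep track of which graded pieces can die on each page.
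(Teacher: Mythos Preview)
Your argument is correct and is essentially the classical proof; the paper itself does not give a proof but simply cites \cite[Theorem~V.2.2]{Alg-D}, where precisely this filtered-resolution/spectral-sequence argument is carried out. Your handling of the delicate direction---showing $\mathrm{Ext}^{j_0}_D(M,D)\neq 0$ by combining the $R$-linearity of the differentials with the dimension bound $\dim_R\mathrm{Ext}^j_R(\gr_\bullet M,R)\le n-j$ and the equality $\dim_R\mathrm{Ext}^{j_0}_R(\gr_\bullet M,R)=n-j_0$---is exactly right; the point that a short exact sequence $0\to\ker d_r\to E_r\to\mathrm{im}\,d_r\to 0$ forces $\dim\ker d_r=n-j_0$ whenever the target has dimension $\le n-j_0-1$ is the mechanism that makes the induction over pages work, and your own diagnosis of this as ``the main obstacle'' is accurate.
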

\begin{proof}
This is classical.  See for example \cite[Theorem V.2.2]{Alg-D}.
\end{proof}

\begin{remark}
\label{R:bernstein-trick}
A standard trick using the above proposition is due to Bernstein: one takes two different filtrations $\fil_\bullet$ and $\fil'_\bullet$ on $D$ such that $\gr_\bullet D$ and $\gr'_\bullet D$ are both regular of pure dimension $n$ over $k$.  For a finitely generated left $D$-module $M$, we may compute two characteristic cycles $\Car(M)$ and $\Car'(M)$ separately with respect to each of the two filtrations on $D$.  Then $\dim(\Car(M)) = \dim(\Car'(M))$ because they are both equal to a number which is independent of the filtrations.
\end{remark}

\subsection{$\calD$-modules and logarithmic variants}
In this subsection, we recall the definition of characteristic cycles for algebraic $\calD$-modules and their logarithmic variants.  We warn the readers that the theory for log-characteristic cycles is \emph{very different} from the classical theory of nonlog-characteristic cycles.  For one thing, the  logarithmic $\calD$-modules associated to a vector bundle with an integrable connection are \emph{not} finitely generated as logarithmic $\calD$-modules (see Caution~\ref{C:not-coherent}); for another, even without the first issue, the log-holonomicity is a quite delicate concept (see also Caution~\ref{C:no-Berntein-inequality}).

\subsubsection{Local setup}
\label{SS:local-setup}
We consider the following three local situations at the same time.  Let $m\leq n$ be two natural numbers.

(a) (Geometric) Let $X$ be a smooth affine variety with local parameters $x_1, \dots, x_n$, that is an \'etale morphism $p: X \to \AAA^n$, where $x_1, \dots, x_n$ are standard coordinates of $\AAA^n$.  Set $D = p^{-1}(V(x_1\cdots x_m))$.  Then $(X, D)$ is a smooth pair.

(b) (Formal) We take $X = \Spec (R_{n,0})$ with $R_{n,0} = k\llbracket x_1, \dots, x_n\rrbracket$, and set $D = V(x_1\cdots x_m)$.

(c) (CDVF)  We take $X = \Spec (k(D)\llbracket x_1 \rrbracket)$, where $k(D)$ is a finite extension of $k(x_2, \dots, x_n)$ or the fraction field of $k\llbracket x_2, \dots, x_n\rrbracket$.  We set $D = V(x_1)$ and $m=1$ in this case.

In either case, we put $U = X - D$ and use $j : U \hookrightarrow X$ to denote the natural morphism.  We have $\Omega_X^1 \simeq \bigoplus_{i=1}^n \calO_X dx_i$ (note that all differentials are assumed to be continuous); let $(\partial_i = \partial/\partial x_i)_{i=1, \dots, n}$ be the dual basis; it consists of mutually commutative derivations. The sheaf of logarithmic differential forms is
\[
\Omega_{X}^1(\log D)  = \bigoplus_{i=1}^m \calO_X \frac{dx_i}{x_i} \oplus \bigoplus_{i=m+1}^n \calO_X dx_i,
\]
as a subsheaf of $j_*\Omega^1_U$.
The dual basis of $\Omega_{X}^1(\log D)$ is given by $x_1\partial_1, \dots, x_m\partial_m, \partial_{m+1}, \dots, \partial_n$; they are also mutually commutative derivations.

By taking  the completion at a closed point, one can pass from (a) to (b) (with possibly a larger $k$); by taking the completion along the generic point of  $D_1$, one can pass from (a) or (b) to (c) (with possibly a larger $k$).

\begin{definition}
Keep the notation as above.
Let $\calD_X^{(\log)}$ denote the sheaf of rings of \emph{(logarithmic) differential operators}
on $X$ (over $k$); it is generated by $\calO_X$ and derivations 
$\partial_1, \dots,  \partial_n$ (resp. $x_1\partial_1, \dots, x_m\partial_m$, $\partial_{m+1}, \dots, \partial_n$).

We define the filtration $\calD_{X, \bullet}^{(\log)}$ on $\mathcal{D}_X^{(\log)}$
given by the order of differential operators, i.e. $\calD_{X, n}^{(\log)} = \{ D \in \calD_X^{(\log)} \,|\, D\textrm{ as a differential operator has order} \leq n\}$. In particular, $\calD_{X, n}^{(\log)} = 0$ if $n <0$.  The filtrations induce canonical isomorphisms
$$
\mathrm{gr}_\bullet(\mathcal{D}_X)\simeq \Sym^\bullet_{\calO_X}(\Omega_X^{1,\vee}), \quad
\mathrm{gr}_\bullet(\mathcal{D}_X^\log)\simeq \Sym^\bullet_{\calO_X}(\Omega^1_X(\log D)^\vee)
$$
We define the the \emph{logarithmic cotangent bundle} to be 
$
T^*\!X^{\log}= \Spec(\Sym^\bullet_{\calO_X}(\Omega_X^1(\log
D)^\vee)).
$

Now, we may apply the discussion of previous subsection to define, for a \emph{finitely generated} $\calD_X^{\log}$-module $M$, its \emph{log-characteristic cycle} $\Car(M)$.  (Here we omit the superscript $\log$ because we will exclusively study log-characteristic cycles in this paper.)
\end{definition}

\begin{caution}
\label{C:not-coherent}
Given a vector bundle $M$ over $U$ with an integrable connection, it is \emph{not} true in general that $j_*M$ is a finitely generated $\calD_X^\log$-module!  For example, if $M = \calO_U$ is the structure sheaf with trivial connection, $j_*\calO_U$ is \emph{not} coherent unless $U = X$.  Vaguely speaking, the nature of this trouble is caused by ``regular part" of $M$; whereas the ``irregular part" of $M$ is considered good.  To get around this trouble, we need to extend the definition of log-holonomicity to not necessarily finitely generated $\calD_X^\log$-modules.  (See Definition~\ref{D:log-char-cycle}.)

When $M$ is known to be regular along an irreducible component of $D$, one can avoid this non-finitely generated issue by taking the so-called Deligne-Malgrange extension.  However, in the situation of mixing regular and irregular, we do not know any sensible way of extending the vector bundle.  An even worse scenario is when $M$ is irregular along some irreducible component of $D$ generically, but when restricted to some particular curve (meeting this component transversally), $M$ becomes regular.
\end{caution}

\begin{caution}
\label{C:no-Berntein-inequality}
We also remark that the Bernstein inequality \emph{fails} for logarithmic $\calD$-modules.  For example, $X = \Spec (k[x])$, $D=  V(x)$, and $M = \frac 1{x} k[x] /k[x] \approx k$.  It is a $\calD_X^\log$-module (but \emph{not} a $\calD_X$-module) because $\calD_X^\log$ is generated by $x\partial_x$ (instead of $\partial_x$).  One computes easily that $\ZCar(X)$ is just the original point of $T^*X^\log$, which has dimension $0$.

Gaitsgory pointed out to me that the failure of Bernstein inequality is related to the fact that the Poisson structure on $T^*\!X^\log$ is degenerate over $D$.
Moreover, the degeneration of Poisson structure is also reflected in that irreducible components of the characteristic variety are not positioned to be conormal bundles.

Another minor point is that  taking log-characteristic cycles may not be additive for direct sums of ``log-holonomic" $\calD_X^\log$-modules because the lower dimensional pieces might be ``eaten up" by bigger dimensional ones; but the Euler characteristic is still additive in view of the analogous Kashiwara-Dubson formula (Theorem~\ref{T:log-Kashiwara-Dubson}).
\end{caution}

\begin{definition}
\label{D:log-holonomicity}
Assume that we are in the geometric situation \ref{SS:local-setup}(a).  Let $M$ be a (not necessarily finitely generated) $\calD_X^\log$-module.  We say that $M$ is \emph{log-holonomic} if for \emph{any} finitely generated $\calD_X^\log$-submodule $M_0 \subseteq M$, we have $\dim \Car(M_0) \leq n$.  Because of the lack of Bernstein inequality (Caution~\ref{C:no-Berntein-inequality}), this inequality may be strict.
\end{definition}

When $M$ comes from a vector bundle with an integrable connection, one can refine this definition and define log-characteristic cycles.

\begin{definition}
\label{D:log-char-cycle}
Keep the notation as in \ref{SS:local-setup}.  
Let $M$ be a vector bundle over $U$ with an integrable connection.  We choose a \emph{coherent} $\calO_X$-submodule $M_0$ of $j_*M$ such that $M_0|_U = M$.  Let $\widetilde M_0 = \calD_X^\log \cdot M_0$ denote the $\calD_X^\log$-submodule of $j_*M$ generated by $M_0$; it is automatically a coherent $\calD_X^\log$-module.  Define the \emph{log-characteristic cycle} of $j_*M$ to be $\ZCar(j_*M):=\ZCar(\widetilde M_0)$, the log-characteristic cycle of $\widetilde M_0$ as a $\calD_X^\log$-module.  This is independent of the choice of $M_0$ by  Lemma~\ref{L:log-char-independence} below.
\end{definition}

\begin{lemma}
\label{L:log-char-independence}
Let $M$ be a vector bundle over $U$ with an integrable connection.  As above, we choose coherent $\calO_X$-submodules $M_0$ and $M'_0$ of $j_*M$ such that $M_0|_U= M'_0|_U= M$, and we form $\widetilde M_0 = \calD_X^\log \cdot M_0$ and $\widetilde M'_0 = \calD_X^\log \cdot M'_0$.  Then, we have $\ZCar(\widetilde M_0)=\ZCar(\widetilde M'_0)$.
\end{lemma}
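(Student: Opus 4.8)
The plan is to reduce to the situation where one lattice contains the other, and then to exploit that multiplication by a suitable power of the equation of $D$ carries one of the two induced $\calD_X^\log$-submodules of $j_*M$ into the other, up to a \emph{logarithmic} twist that is invisible to the symbol. First I would replace $M'_0$ by $M_0+M'_0$ --- again a coherent $\calO_X$-lattice in $j_*M$ restricting to $M$ over $U$, with $\calD_X^\log\cdot(M_0+M'_0)=\widetilde M_0+\widetilde M'_0$ --- so that it suffices to treat the case $M_0\subseteq M'_0$, whence $\widetilde M_0\subseteq\widetilde M'_0$. Since $M_0$ and $M'_0$ agree over $U$, the coherent sheaf $M'_0/M_0$ is supported on $D$, so there is $N\gg 0$ with $gM'_0\subseteq M_0$ for $g:=(x_1\cdots x_m)^N$ (in each case of \ref{SS:local-setup} the reduced ideal of $D$ in $\calO_X$ is generated by $x_1\cdots x_m$, with $m=1$ in case (c)). As $g$ is invertible on $U$, the sheaf $j_*M$ is $g$-torsion-free and $g$-divisible, so $g^{-1}(-)$ makes sense on $\calO_X$-submodules of $j_*M$.

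The key computation is the commutation relation in $\calD_X^\log$: one has $[x_i\partial_i,g]=x_i\partial_i(g)=Ng$ for $1\le i\le m$, while $\partial_j$ for $j>m$ and all of $\calO_X$ commute with $g$. Since these generate $\calD_X^\log$ over $\calO_X$, an easy induction gives the equality of two-sided ideals $g\,\calD_X^\log=\calD_X^\log\,g$, hence $g\widetilde M'_0=\calD_X^\log(gM'_0)\subseteq\widetilde M_0$. Reading this inside $j_*M$ we obtain a chain $\widetilde M_0\subseteq\widetilde M'_0\subseteq g^{-1}\widetilde M_0$ of $\calD_X^\log$-submodules, the last of which equals $\calD_X^\log(g^{-1}M_0)$ and is again coherent. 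I would then note that multiplication by $g^{-1}$ identifies $g^{-1}\widetilde M_0$ with $\widetilde M_0\otimes_{\calO_X}\mathcal L$, where $\mathcal L$ is $\calO_X$ equipped with the integrable connection $f\mapsto df-Nf\sum_{i=1}^m \tfrac{dx_i}{x_i}$ --- a genuine \emph{logarithmic} connection, which is exactly where it matters that $g$ is supported on $D$. Transporting any good filtration of $\widetilde M_0$ along $\calO_X\cong\mathcal L$, the fact that each generator of $\calD_X^\log$ acts on $1\in\mathcal L$ by an element of $\fil_0\calD_X^\log=\calO_X$ shows the transported filtration is still good and that $\gr_\bullet(\widetilde M_0\otimes_{\calO_X}\mathcal L)\cong\gr_\bullet\widetilde M_0$ as $\gr_\bullet\calD_X^\log$-modules; hence $\ZCar(g^{-1}\widetilde M_0)=\ZCar(\widetilde M_0)$.

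Finally I would conclude by a squeezing argument: Remark~\ref{R:log-char-for-sub} applied to the two inclusions in the chain gives $\Car(\widetilde M_0)\subseteq\Car(\widetilde M'_0)\subseteq\Car(g^{-1}\widetilde M_0)$, and since the two ends have equal $\ZCar$ they have the same irreducible components, so all three characteristic varieties coincide; then the multiplicity comparison in Remark~\ref{R:log-char-for-sub} forces $\mult_\eta\ZCar(\widetilde M_0)\le\mult_\eta\ZCar(\widetilde M'_0)\le\mult_\eta\ZCar(g^{-1}\widetilde M_0)=\mult_\eta\ZCar(\widetilde M_0)$ at each generic point $\eta$, so the three $\ZCar$'s agree and in particular $\ZCar(\widetilde M_0)=\ZCar(\widetilde M'_0)$. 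The point needing the most care is the commutation identity $g\,\calD_X^\log=\calD_X^\log\,g$ and the derived inclusion $g\widetilde M'_0\subseteq\widetilde M_0$ --- multiplication by $g$ is not $\calD_X^\log$-linear --- together with checking that the twisting line bundle $\mathcal L$ carries a logarithmic (not merely meromorphic) connection so that the symbol is unchanged; the rest is routine.
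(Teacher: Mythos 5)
Your proof is correct and follows essentially the same route as the paper's: the double squeeze $(x_1\cdots x_m)^N M_0 \subseteq M'_0 \subseteq (x_1\cdots x_m)^{-N}M_0$ together with Remark~\ref{R:log-char-for-sub} reduces to comparing $\widetilde M_0$ with its twist by a power of $x_1\cdots x_m$, and the crucial observation in both arguments is that $[x_i\partial_i,\, (x_1\cdots x_m)^N]=N(x_1\cdots x_m)^N$ is a zeroth-order operator, so the twist shifts $x_i\partial_i$ by a constant and is invisible at the level of graded modules. Your presentation via the logarithmic line bundle $\mathcal L$ is just a repackaging of the paper's explicit isomorphism $\phi(x)=(x_1\cdots x_m)^N x$ and the computation that $\phi(x_i\partial_i(x))-x_i\partial_i(\phi(x))=-N\phi(x)$ dies in the next graded piece.
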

\begin{proof}
First, there exists $N \in \NN$ such that $(x_1 \cdots x_m)^N M_0 \subseteq M'_0 \subseteq (x_1 \cdots x_m)^{-N} M_0$.   By Remark~\ref{R:log-char-for-sub} (first matching the supports of two cycles and then checking the multiplicity at each generic point), it suffices to prove the lemma for the case $M'_0 = (x_1 \cdots x_m)^N M_0$ for any $N \in \NN$.  %In fact, we will deliberately prove this for a slightly more general case when $M'_0 = x_1^{\lambda_1}\cdots x_m^{\lambda_m} M_0$ for $\lambda_1, \dots, \lambda_m \in \ZZ$, for the benefit of later reference. 

Consider the map $\phi: M \to M$ given by $\phi(a) = (x_1 \cdots x_m)^N a$ for $a\in M$.  It induces an $\calO_X$-linear isomorphism between $M_0$ and $M'_0$.  We have $\phi(x_i\partial_i(a)) = x_i\partial_i(\phi(a)) - N\phi(a)$ for $i=1, \dots, m$ and $\partial_i(\phi(a)) = \phi(\partial_i(a))$ for $i =m+1, \dots, n$.

Note that, for $\alpha \in \ZZ_{\geq 0}$, $\calD_{X,\alpha}^\log$ is also generated over $\calO_X$ by polynomials in $x_1\partial_1 - N, \dots, x_m\partial_m - N, \partial_{m+1}, \dots, \partial_n$ of degree $\leq \alpha$.
This implies that $\phi(\calD_{X,\alpha}^\log \cdot M_0) = \calD_{X, \alpha}^\log \cdot M'_0$. In particular, $\phi(\widetilde M_0) = \widetilde M'_0$.
We take the good filtration $\fil_\bullet \widetilde M_0 = \calD_{X,\alpha}^\log \cdot M_0$ of $\widetilde M_0$ and then $\fil'_\bullet\widetilde M'_0 := \phi(\fil_\bullet\widetilde M_0) = \calD_{X, \alpha}^\log \cdot M'_0$ is also a good filtration for $\widetilde M'_0$.

Moreover, for any $\alpha \in \ZZ$, $i=1, \dots, m$, and any $a \in \fil_\alpha\widetilde M_0$, we have $\phi(x_i\partial_i(a)) - x_i\partial_i(\phi(a)) = -N\phi(a) \in \fil'_\alpha \widetilde M'_0$ which dies in $\gr'_{\alpha+1} \widetilde M'_0$.  Hence the isomorphism $\gr_\bullet \phi: \gr_\bullet \widetilde M_0 \to \gr_\bullet \widetilde M'_0$ is equivariant for the action of $x_i\partial_i$ (for $i =1, \dots, n$).  In other words, $\gr_\bullet \phi: \gr_\bullet \widetilde M_0 \to \gr_\bullet \widetilde M'_0$ is an isomorphism of $\gr_\bullet \calD_X^\log$-modules.  The statement of the lemma follows.
\end{proof}

\begin{remark}
It would be interesting to know if one can define logarithmic cycles for a more general class of holonomic $\calD_X$-modules.  (See also Theorem~\ref{T:log-holonomicity}.)

Also, it would be interesting to know if $\Car(j_*M)$ always has pure dimension $n$.  We will prove in Theorem~\ref{T:main-theorem} that this is the case if $M$ is clean in the sense of Definition~\ref{D:clean-global}.
\end{remark}

\begin{corollary}
\label{C:logcharcycles-base-change}
Assume that we are in one of the following situations:

(i) We are in the geometric local setup~\ref{SS:local-setup}(a).  Let $z$ be a closed point of $p^{-1}(\{0\})$.  Then we have a natural morphism $g: X' = \Spec \calO_{X, z}^\wedge \to X$; $g^*M$ may be viewed as a vector bundle over $U' = \Spec\big( \calO_{X, z}^\wedge[1/x_1 \cdots x_m]\big)$.  Write $j':  U'\to X'$ for the natural embedding.

(ii) We are in geometric or formal local setup \ref{SS:local-setup}(a)(b).  Let $\eta_1$ denote a generic point of $D_1:=V(x_1)$ and let $k(X)^{\wedge,\eta_1}$ denote the completion of $k(X)$ with respect to the valuation corresponding to $\eta_1$. We consider the natural morphism $g: X'=\Spec \calO_{X, \eta_1}^\wedge \to X$; $g^*M$ may be viewed as a vector bundle over $U' = \Spec (k(X)^{\wedge, \eta_1})$.  Write $j':  U'\to X'$ for the natural embedding.

(iii) We are in any local situation~\ref{SS:local-setup}.  Let $X''$ be \'etale over $X$ and let $X' = \Spec \big(\calO_{X''}[x_1^{1/h_1}, \dots, x_m^{1/h_m}]\big)$ for some positive integers $h_1, \dots, h_m$.  We have a natural morphism $g: X' \to X$ and $g^*M$ becomes a vector bundle over $U'= \Spec \big(\calO_{X'}[1/x_1 \cdots x_m]\big)$; write $j': U' \to X'$ for the natural embedding.

Then we have $\ZCar(j'_*g^*M) = \tilde g^*(\ZCar(j_*M))$, where $\tilde g: T^*\!X'^\log \to T^*\!X^\log$ is the natural morphism.
\end{corollary}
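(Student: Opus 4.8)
The plan is to deduce all three cases from the base-change Lemma~\ref{L:charcycles-base-change}, applied to the filtered $k$-algebras $D=\Gamma(X,\calD_X^\log)$ and $D'=\Gamma(X',\calD_{X'}^\log)$ with their order filtrations, combined with the description of $\ZCar(j_*M)$ through a coherent extension (Definition~\ref{D:log-char-cycle}) and its independence of the chosen extension (Lemma~\ref{L:log-char-independence}). First I would fix a coherent $\calO_X$-submodule $M_0\subseteq j_*M$ with $M_0|_U=M$ and put $\widetilde M_0=\calD_X^\log\cdot M_0$, so that $\ZCar(j_*M)=\ZCar(\widetilde M_0)$. The first real step is to check that $M'_0:=g^*M_0=\calO_{X'}\otimes_{\calO_X}M_0$ is again a coherent $\calO_{X'}$-submodule of $j'_*g^*M$ with $M'_0|_{U'}=g^*M$; coherence and the restriction statement are immediate, and for the inclusion one uses that in each of (i)--(iii) the morphism $g$ is flat with $\calO_{X'}$ a domain into which $\calO_X$ injects (a completion of a regular local ring in (i), a completion of a DVR's local ring in (ii), a finite Kummer extension of an \'etale extension of the regular domain $\calO_X$ in (iii)), so that tensoring $M_0\hookrightarrow j_*M$ by the flat module $\calO_{X'}$ and identifying $\calO_{X'}\otimes_{\calO_X}j_*M$ with an $\calO_{X'}$-submodule of $j'_*g^*M$ inside their common generic fibre over $\Frac(\calO_{X'})$ does the job. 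Consequently $\ZCar(j'_*g^*M)=\ZCar(\widetilde{M'_0})$ with $\widetilde{M'_0}=\calD_{X'}^\log\cdot M'_0$.

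Next I would verify the hypotheses of Lemma~\ref{L:charcycles-base-change} for the homomorphism $f:\calD_X^\log\to\calD_{X'}^\log$. Flatness of $D'_0=\calO_{X'}$ over $D_0=\calO_X$ holds in all three cases (localization followed by completion in (i),(ii); a finite free Kummer extension of an \'etale, hence flat, extension in (iii)). The required isomorphism $\gr_\bullet\calD_{X'}^\log\simeq\gr_\bullet\calD_X^\log\otimes_{\calO_X}\calO_{X'}$ reduces, via $\gr_\bullet\calD^\log=\Sym(\Omega^1(\log D)^\vee)$ and the compatibility of $\Sym$, of $(-)^\vee$, and of base change for locally free sheaves, to the statement that $g^*\Omega^1_X(\log D)\to\Omega^1_{X'}(\log D')$ is an isomorphism, i.e.\ that $D'$ is the reduced preimage of $D$ with matching log-structure. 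This is the one genuinely case-dependent point: in (i) the parameters $x_1,\dots,x_n$ and the divisor $V(x_1\cdots x_m)$ pull back verbatim; in (ii) only $D_1=V(x_1)$ survives over $X'$, because $x_2,\dots,x_m$ become units there and $\calO_{X'}\,dx_j/x_j=\calO_{X'}\,dx_j$; and in (iii), writing $y_i=x_i^{1/h_i}$, one has $dx_i/x_i=h_i\,dy_i/y_i$ with $h_i$ invertible in characteristic zero, so the log-cotangent sheaves again agree. Strictness of $f$ then follows from the companion isomorphism $\calO_{X'}\otimes_{\calO_X}\calD_{X,\alpha}^\log\xrightarrow{\sim}\calD_{X',\alpha}^\log$, which in (iii) uses $y_i\partial_{y_i}=h_i\,x_i\partial_{x_i}$.

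Granting these, Lemma~\ref{L:charcycles-base-change} yields $\ZCar(\calD_{X'}^\log\otimes_{\calD_X^\log}\widetilde M_0)=\tilde g^*\ZCar(\widetilde M_0)$, where $\tilde g=\Spec(\gr_\bullet f):T^*\!X'^\log\to T^*\!X^\log$ is the stated natural morphism (recall $\Spec\gr_\bullet\calD_X^\log=T^*\!X^\log$). It remains to identify $\calD_{X'}^\log\otimes_{\calD_X^\log}\widetilde M_0$ with $\widetilde{M'_0}$: right-exactness of $\calD_{X'}^\log\otimes_{\calD_X^\log}(-)$ applied to the presentation $\calD_X^\log\otimes_{\calO_X}M_0\twoheadrightarrow\widetilde M_0$ produces a surjection $\calD_{X'}^\log\otimes_{\calO_{X'}}M'_0\twoheadrightarrow\calD_{X'}^\log\otimes_{\calD_X^\log}\widetilde M_0$ and hence onto $\widetilde{M'_0}$, while injectivity follows because $\calD_{X'}^\log$ is flat as a right $\calD_X^\log$-module (from $\calD_{X'}^\log\simeq\calO_{X'}\otimes_{\calO_X}\calD_X^\log$ as right $\calD_X^\log$-modules, as in the proof of Lemma~\ref{L:charcycles-base-change}, together with flatness of $\calO_{X'}$ over $\calO_X$), so that tensoring $\widetilde M_0\hookrightarrow j_*M$ stays injective and lands in $\widetilde{M'_0}$ as above. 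Chaining the equalities gives $\ZCar(j'_*g^*M)=\ZCar(\widetilde{M'_0})=\tilde g^*\ZCar(\widetilde M_0)=\tilde g^*\ZCar(j_*M)$.

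I expect the main obstacle to be the case-by-case bookkeeping in the middle step: checking that $g$ is a morphism of smooth pairs along which the log-cotangent sheaf pulls back and the logarithmic differential operators are generated by the pulled-back generators. The Kummer case (iii), where the normalizing factors $h_i$ in $dx_i/x_i=h_i\,dy_i/y_i$ and $y_i\partial_{y_i}=h_i\,x_i\partial_{x_i}$ must be tracked, and the two completion cases, where one must observe that the extra branches of $D$ either disappear over $X'$ (case (ii)) or persist unchanged (case (i)), are where the verification really lives; everything else is formal once Lemma~\ref{L:charcycles-base-change} is in hand.
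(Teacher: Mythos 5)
Your proposal is correct and follows the paper's intended route: the paper's proof is simply ``combine Lemma~\ref{L:charcycles-base-change} with Lemma~\ref{L:log-char-independence},'' and you have filled in exactly the bookkeeping that combination requires — verifying flatness of $\calO_{X'}$ over $\calO_X$, checking that $g^*\Omega^1_X(\log D)\simeq\Omega^1_{X'}(\log D')$ (with the correct case-by-case observations about unit parameters in (ii) and the invertible factors $h_i$ in (iii)), and identifying $\calD_{X'}^\log\otimes_{\calD_X^\log}\widetilde M_0$ with $\calD_{X'}^\log\cdot g^*M_0$ so that the base-changed module still computes $\ZCar(j'_*g^*M)$ via the independence lemma.
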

\begin{proof}
This follows from combining Lemma~\ref{L:charcycles-base-change} with Lemma~\ref{L:log-char-independence}.
\end{proof}

\subsubsection{Global situation}
\label{SS:global-situation}
Let $(X, D)$ be a smooth pair, i.e., $X$ is a smooth variety of dimension $n$ over $k$ and $D=\bigcup D_j$ is
a divisor with  simple normal crossings, where $D_j$'s are  irreducible
components of $D$.  Denote $U = X-D$.
Here \emph{simple normal crossings} means that, {\it each irreducible component of $D$ is smooth} and we can cover $X$ by open subvarieties $\{V_i\}$, each of which is as in the geometric local situation \ref{SS:local-setup}(a).  For each $i$, let $j_i: V_i \cap U \to V_i$ be the natural embedding.  Then the definition of logarithmic cotangent bundle on each $V_i$ glues and gives the \emph{logarithmic cotangent bundle} $T^*\!X^\log$.  Also, there is a quasi-coherent sheaf $\calD_X^\log$ of $k$-algebras whose restriction to each $V_i$ is $\calD_{V_i}^\log$.

\begin{definition}
Let $M$ be a vector bundle over $U$ with an integrable connection or a coherent $\calD_X^\log$-module.  We define the \emph{log-characteristic variety} (resp. \emph{log-characteristic cycle}) of $j_*M$ or $M$  to be the subvariety (resp. cycle) of $T^*\!X^\log$ whose restriction to $V_i$ is the log-characteristic variety (resp. log-characteristic cycle) of $j_*(M|_{V_i \cap U})$ or $M|_{V_i}$.  We denote them by $\Car(j_*M)$ (resp. $\ZCar(j_*M)$) or $\Car(M)$ (resp. $\ZCar(M)$).
\end{definition}

\subsection{Holonomicity v.s. log-holonomicity}
\label{S:log-holonomicity}

In this subsection, we study the relation between holonomicity and log-holonomicity.
The goal of this subsection is to prove the following.

\begin{theorem}
\label{T:log-holonomicity}
Assume that we are in the geometric local situation \ref{SS:local-setup}(a).
Let $M$ be a holonomic $\calD_X$-module, and hence also a (not necessarily finitely generated) $\calD_X^\log$-module.
Then as a $\calD_X^\log$-module, $M$ is log-holonomic in the sense of Definition~\ref{D:log-holonomicity}.
\end{theorem}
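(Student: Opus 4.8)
The plan is to reduce the log-holonomicity statement to an ordinary holonomicity statement via a change of filtration on a suitable subring, exploiting Bernstein's trick (Remark~\ref{R:bernstein-trick}). The key observation is that $\calD_X^\log \subseteq \calD_X$, so any finitely generated $\calD_X^\log$-submodule $M_0 \subseteq M$ is in particular a finitely generated $\calD_X^\log$-module living inside the holonomic $\calD_X$-module $M$; I want to bound $\dim \Car_{\calD_X^\log}(M_0)$ by $n$. The difficulty is that the obvious comparison — generating a $\calD_X$-module $\calD_X \cdot M_0$ and using Bernstein inequality there — gives $\dim \Car_{\calD_X}(\calD_X \cdot M_0) \geq n$ but controls the wrong object; I need an upper bound on the log-characteristic variety, and there is no Bernstein inequality available on the log side.

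First I would introduce an auxiliary filtration. On $\calD_X$ consider, besides the order filtration $\calD_{X,\bullet}$, the filtration $F_\bullet$ in which $x_1\partial_1, \dots, x_m\partial_m, \partial_{m+1}, \dots, \partial_n$ all have degree $1$ but $\partial_1, \dots, \partial_m$ have degree $1$ as well — that is, I want a filtration on $\calD_X$ whose restriction to $\calD_X^\log$ recovers the log-order filtration and whose associated graded is still regular of dimension $n$. Concretely: assign to a monomial $x^a \partial^b$ the weight $|b|$ on $\calD_X$ (the usual order filtration), which restricts on $\calD_X^\log$ to the log-order filtration since $x_i\partial_i$ has order $1$; but the issue is that $\gr$ of this restriction is $\Sym^\bullet(\Omega^1_X(\log D)^\vee)$, an $n$-dimensional regular ring, so this is exactly the setup where Bernstein's trick applies \emph{if} I can play it on $\calD_X$ itself. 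So instead I would construct on $\calD_X$ a single increasing filtration $\fil'_\bullet \calD_X$, different from the order filtration, such that (i) $\gr'_\bullet \calD_X$ is commutative, noetherian, regular of pure dimension $n$ over $k$, and (ii) the induced filtration on the subring $\calD_X^\log$ is exactly the log-order filtration, so that $\Car'_{\calD_X}(N)$ restricts compatibly to $\Car_{\calD_X^\log}(N)$ for $\calD_X^\log$-modules $N$. The natural candidate assigns weight $0$ to $\calO_X$, weight $1$ to each $x_i\partial_i$ $(i\le m)$ and each $\partial_i$ $(i>m)$, and weight $1$ also to $\partial_i$ $(i \le m)$: one checks $\gr'$ is a polynomial ring in the symbols of $\partial_1,\dots,\partial_n$ over $\calO_X$ (the $x_i$ being units in each graded piece's sense — here one must be slightly careful, since $x_i\partial_i$ and $\partial_i$ having the same weight forces $x_i$ into weight $0$, giving $\gr' \calD_X \cong \Sym^\bullet_{\calO_X}(\Omega_X^{1,\vee})$, still $n$-dimensional regular).

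With such a filtration in hand, the argument runs: let $M_0 \subseteq M$ be a finitely generated $\calD_X^\log$-submodule. View $N := \calD_X \cdot M_0 \subseteq M$, a finitely generated $\calD_X$-submodule of the holonomic module $M$, hence holonomic; by Proposition~\ref{P:bernstein-trick} applied with the filtration $\fil'_\bullet$ (legitimate since $\gr'\calD_X$ is regular of pure dimension $n$), and by Remark~\ref{R:bernstein-trick}, $\dim \Car'_{\calD_X}(N) = \dim \Car_{\calD_X}(N) \le n$ (the latter being the usual holonomicity bound). Now $M_0 \subseteq N$, and since the $\fil'$-filtration on $\calD_X$ restricts to the log-order filtration on $\calD_X^\log$, a good $\calD_X^\log$-filtration on $M_0$ extends to a good $\fil'_\bullet\calD_X$-filtration on $N$ with $M_0$'s graded pieces mapping into $N$'s; thus $\Car_{\calD_X^\log}(M_0) = \Supp \gr_\bullet M_0$ maps into $\Car'_{\calD_X}(N)$ under the natural projection $\Spec \gr' \calD_X \to \Spec \gr \calD_X^\log$ — and here I must check this projection has finite fibres or at least does not increase dimension on the relevant components, which holds because the fibre direction corresponds to the symbols already present. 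Consequently $\dim \Car_{\calD_X^\log}(M_0) \le \dim \Car'_{\calD_X}(N) \le n$, which is what we want.

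The main obstacle I anticipate is the bookkeeping around the two gradeds: ensuring that there genuinely exists a filtration $\fil'_\bullet$ on $\calD_X$ that simultaneously (a) restricts to the log-order filtration on $\calD_X^\log$ and (b) has regular pure-dimensional graded — and then correctly relating $\Car_{\calD_X^\log}(M_0)$ to $\Car'_{\calD_X}(\calD_X\cdot M_0)$ as cycles/varieties in the two different symbol spectra. If the naive common filtration does not quite work (e.g. because forcing $x_i$ and $x_i\partial_i$ into the same weight as $\partial_i$ produces a degenerate graded along $D$, mirroring the Poisson degeneracy noted in Caution~\ref{C:no-Berntein-inequality}), the fix is to instead embed the log-order filtration into the order filtration more carefully — possibly after a Rees-algebra reinterpretation — or to argue directly that a $\calD_X^\log$-good filtration of $M_0$, saturated under $\calD_X$, yields a $\calD_X$-good filtration of $\calD_X \cdot M_0$ whose symbol support contains that of $M_0$ set-theoretically, bypassing the need for a single filtration on $\calD_X$ altogether. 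Either way, once the comparison of symbol supports is pinned down, the holonomicity of $M$ over $\calD_X$ does all the real work.
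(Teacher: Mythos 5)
Your approach is built around the projection $T^*\!X \to T^*\!X^\log$ (or its symbol-level avatar $\Spec\,\gr'\calD_X \to \Spec\,\gr\calD_X^\log$), and this is precisely where the argument breaks down. The dual of the inclusion $\Omega^1_X \hookrightarrow \Omega^1_X(\log D)$ is the $\calO_X$-linear map sending $\partial_i \mapsto x_i\partial_i$ for $i\le m$; on symbol spaces this is $\xi^\log_i \mapsto x_i \xi_i$, which degenerates entirely over $D$ (e.g.\ over a closed point of $D_1\cap\cdots\cap D_m$ the whole fibre of $T^*\!X$ maps to the single point $\xi_1^\log=\cdots=\xi_m^\log=0$). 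Consequently, knowing $\dim \Car_{\calD_X}(\calD_X\cdot M_0)\le n$ in $T^*\!X$ gives \emph{no} upper bound on the dimension of $\Car_{\calD_X^\log}(M_0)$ in $T^*\!X^\log$: components of the latter lying over $D$ in directions outside the image of $T^*\!X$ are simply invisible to the projection. You flag this concern (``I must check this projection has finite fibres or at least does not increase dimension''), but the check fails — the relevant fibres are positive-dimensional exactly over $D$ — and the backup suggestion of comparing ``symbol supports set-theoretically'' still tries to compare subsets of two different symbol spectra through the same degenerate map, so it does not repair the gap.

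The paper's proof avoids this geometric obstacle entirely. It introduces \emph{new} increasing filtrations $\fil'_\bullet$ on both $\calD_X$ and $\calD_X^\log$ (for $X=\AAA^n$, $D=V(x_1\cdots x_m)$) in which each $x_i$ is assigned weight $1$ rather than $0$; then both associated gradeds become free polynomial rings in $2n$ variables over $k$, so the situation is symmetric and Remark~\ref{R:bernstein-trick} applies on each side separately. The dimension bound for $\calD_X^\log$-submodules $M_0$ is extracted \emph{numerically} via Lemma~\ref{L:braverman}: one chooses a good $\fil'_\bullet\calD_X$-filtration on the holonomic $M$ with $\dim_k \fil'_\alpha M \le c\alpha^n$, and then takes the induced filtration $\fil'_\alpha M_0 := \fil'_{2\alpha} M \cap M_0$, which is admissible precisely because $\fil'_\alpha \calD_X^\log \subseteq \fil'_{2\alpha}\calD_X$ (since $x_i\partial_i$ has weight $1$ on the log side but $1+1=2$ on the non-log side). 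This filtration has polynomial growth of degree $\le n$, hence $\dim\Car'(M_0)\le n$, and Bernstein's trick carries this back to the standard log-order filtration. The general (\'etale over $\AAA^n$) case is then handled by pushforward along $p$ and Lemma~\ref{L:charcycles-base-change}. The essential idea your proposal is missing is this reweighting of $x_i$ into degree $1$, which replaces the degenerate symbol-space projection by a purely numerical Hilbert-polynomial estimate.
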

\begin{proof}
The proof uses a standard trick of Bernstein, which we found in the lecture notes of Braverman and Chmutova \cite{braverman}.

We first prove this theorem when $X = \AAA^n$ and $D = V(x_1\cdots x_m)$.  In this case, 
\[
\calD_X = k[x_1, \dots, x_n]\{ \partial_1, \dots, \partial_n\}
\textrm{ and }
\calD_X^\log = k[x_1, \dots, x_n]\{ x_1\partial_1, \dots, x_m\partial_m,\partial_{m+1}, \dots, \partial_n\},
\]
where the curly brackets mean that the corresponding $k$-algebras are not commutative and the generators satisfy natural relations.
We provide the two $k$-algebras with new filtrations: for $\alpha \in \ZZ_{\geq 0}$, $\fil'_\alpha \calD_X$ is the $k$-vector subspace of $\calD_X$ generated by $s_1\cdots s_\alpha$, where each $s_i \in \{x_1, \dots, x_n, \partial_1, \dots, \partial_n\}$; and $\fil'_\alpha \calD_X^\log$ is the $k$-vector subspace of $\calD_X^\log$ generated by $s_1\cdots s_\alpha$, where each $s_i $ belongs to $ \{x_1, \dots, x_n, x_1\partial_1, \dots, x_m\partial_m,\partial_{m+1}, \dots, \partial_n\}$.  In other words, we require each $x_i$ to have degree $1$ instead of $0$.  With respect to the new filtrations, we have
\[
\gr'_\bullet \calD_X = k[x_1, \dots, x_n, \xi_1, \dots, \xi_n] \textrm{ and } \gr'_\bullet \calD_X^\log = k[x_1, \dots, x_n, \xi^\log_1, \dots, \xi^\log_n],
\]
where $\xi_i$ is a proxy of $\partial_i$ for each $i$ and $\xi_i^\log$ is a proxy of $x_i\partial_i$ if $i \leq m$ and of $\partial_i$ if $i >m$.  In particular, they are all free commutative polynomial rings with $2n$ variables.

By Proposition~\ref{P:bernstein-trick} and Remark~\ref{R:bernstein-trick}, we know the holonomicity of $M$ with respect to the filtration  $\fil'_\bullet \calD_X$ and we need only to prove that, for any finitely generated $\calD_X^\log$-submodule $M_0\subseteq M$, we have $\dim (\Car'(M_0)) \leq n$ for the new filtration $\fil'_\bullet D_X^\log$.  

Now, applying (the argument of) the numerical Lemma~\ref{L:braverman} below to $\calD_X$ and $M$, we get a filtration $\fil'_\bullet M$ good for $\fil'_\bullet\calD_X$ such that
$\dim_k \fil'_\alpha M \leq c \alpha^n$ for all $\alpha \in \NN$ and for some fixed constant $c>0$.  Then for any finitely generated $\calD_X^\log$-submodule $M_0 \subseteq M$, we define a filtration by $\fil'_\alpha M_0 = \fil'_{2\alpha} M \cap M_0$ for any $\alpha \in \ZZ$; it is admissible (but almost never good).  However, we have $\dim_k \fil'_\alpha M_0 \leq \dim_k \fil'_{2\alpha} M\leq c\cdot 2^n \cdot \alpha^n$ for all $\alpha \in \NN$.  Apply the other direction of Lemma~\ref{L:braverman}, we deduce $\dim (\Car'(M_0)) \leq n$.

Now, we reduce the general case to the special case we studied above.  First, we recall that $X$ comes with an \'etale morphism $p: X \to \AAA^n$. It is well-known that $p_*M$ is still holonomic for $\calD_{\AAA^n}$ and hence the above argument implies that $p_* M$ is log-holonomic for $\calD_{\AAA^n}^\log$.  By Lemma~\ref{L:charcycles-base-change}(a), we know that $p^*p_*M$ is also log-holonomic for $\calD_{X}^\log$.  The natural homomorphism $M \to p^*p_*M$ is injective, yielding the log-holonomicity of $M$ itself.
\end{proof}

\begin{lemma}
\label{L:braverman}
Let $(D, \fil_\bullet D)$ be an abstract filtered $k$-algebra as in \ref{SS:filtered-ring} such that $\gr_\bullet D \simeq k[x_1, \dots, x_{2n}]$ is a free commutative polynomial algebra with $2n$ variables.  Let $M$ be a finitely generated $D$-module.  Then for any $r \in \ZZ_{\geq0}$, $\dim(\Car(M)) \leq r$ if and only if there exists an admissible \emph{(not necessarily good)} filtration $\fil_\bullet M$ on $M$ and a constant $c \in \RR_{>0}$ such that 
\[
\dim_k (\fil_\alpha M )\leq c \alpha^r, \textrm{ for all } \alpha \geq 1.
\]
\end{lemma}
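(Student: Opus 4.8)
The plan is to prove the two directions of the equivalence via the standard Hilbert–Samuel / Hilbert-polynomial argument, adapted to the abstract filtered setting. For the "only if" direction, assume $\dim(\Car(M)) \le r$. Choose any good filtration $\fil_\bullet M$ on $M$; then $\gr_\bullet M$ is a finitely generated graded module over $\gr_\bullet D \simeq k[x_1, \dots, x_{2n}]$, and its support (= $\Car(M)$) has dimension $\le r$. The Hilbert polynomial of a finitely generated graded module over a polynomial ring in $2n$ variables has degree equal to $\dim(\Supp) - 1$ when the support has positive dimension (and is eventually zero when the support is $0$-dimensional, i.e. $\gr_\bullet M$ has finite length), so $\dim_k \gr_\alpha M$ is bounded by a polynomial in $\alpha$ of degree $\le r - 1$. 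Summing, $\dim_k \fil_\alpha M = \sum_{\beta \le \alpha} \dim_k \gr_\beta M$ is bounded by a polynomial of degree $\le r$ in $\alpha$, hence by $c\alpha^r$ for a suitable constant $c > 0$ and all $\alpha \ge 1$. (One should be slightly careful that the filtration is indexed over $\ZZ$ and supported in degrees $\ge N_0$ for some $N_0 \ll 0$; shifting the index absorbs this into the constant.)

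For the "if" direction — which I expect to be the main obstacle, since the given filtration $\fil_\bullet M$ is merely \emph{admissible}, not good — I would argue as follows. Since $M$ is finitely generated over $D$, choose a \emph{good} filtration $\Fil_\bullet M$ (generated by a finite-dimensional $D_0$-coherent $\Fil_0 M$, say, so that $\Fil_\alpha M = \fil_\alpha D \cdot \Fil_0 M$). Comparing the two filtrations: because $M$ is generated over $D$ by finitely many elements, each lying in some $\fil_{\alpha_i} M$, there is a constant $C$ with $\Fil_\alpha M \subseteq \fil_{\alpha + C} M$ for all $\alpha$ (using $\fil_\beta D \cdot \fil_\gamma M \subseteq \fil_{\beta + \gamma} M$). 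Hence $\dim_k \Fil_\alpha M \le \dim_k \fil_{\alpha + C} M \le c(\alpha + C)^r \le c'\alpha^r$ for a new constant $c'$ and all $\alpha \ge 1$. Now $\dim_k \Fil_\alpha M$ is, for $\alpha \gg 0$, equal to the Hilbert–Samuel function of the good filtration, a polynomial in $\alpha$ whose degree is $\dim(\Car(M))$ (this is exactly where Proposition~\ref{P:bernstein-trick} / the standard theory enters: the degree of this Hilbert polynomial is independent of the choice of good filtration and equals $\dim \Car(M)$). A polynomial bounded above by $c'\alpha^r$ for all large $\alpha$ must have degree $\le r$, so $\dim(\Car(M)) \le r$, as desired.

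The one technical point worth flagging is the passage from an admissible filtration to a good one in the "if" direction: admissibility alone does not guarantee that $\dim_k \fil_\alpha M$ is eventually polynomial, so one genuinely needs to produce a good filtration and transfer the polynomial growth bound across via the containment $\Fil_\alpha M \subseteq \fil_{\alpha+C}M$. The reverse containment (some $\fil_\alpha M \subseteq \Fil_{\alpha + C'} M$) is not needed and in general is false for a badly chosen admissible filtration — which is precisely why the statement only asserts an \emph{upper} bound on dimension from an upper bound on growth. I would also remark that the hypothesis that $\gr_\bullet D$ is a polynomial ring in exactly $2n$ variables is used only through the fact that Hilbert polynomials over it behave in the standard way; any commutative noetherian graded $k$-algebra that is a quotient of a polynomial ring would do, but the polynomial case is all that is applied in Theorem~\ref{T:log-holonomicity}.
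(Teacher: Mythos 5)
Your proof is correct and follows essentially the same approach as the paper: both directions reduce to the eventual polynomiality of the Hilbert--Samuel function $\alpha \mapsto \dim_k \fil_\alpha M$ of a good filtration, whose degree equals $\dim \Car(M)$. The only (minor) difference is in the ``if'' direction: you compare a freshly chosen good filtration $\Fil_\bullet M$ to the admissible one via the shift $\Fil_\alpha M \subseteq \fil_{\alpha+C}M$, whereas the paper modifies the given admissible filtration in place, setting $\fil'_\alpha M = \fil_\alpha M$ for $\alpha \le \beta$ and $\fil'_\alpha M = \fil_{\alpha-\beta}D\cdot\fil_\beta M$ for $\alpha > \beta$, which gives a good filtration with the containment $\fil'_\alpha M \subseteq \fil_\alpha M$ at each index directly; both variants are correct.
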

\begin{proof}
We found this lemma in the lecture notes on algebraic $\calD$-modules by Braverman and Chmutova \cite[Corollary~2.10]{braverman}.  Since this is not a proper reference, we include the proof here.

We first assume that $\dim(\Car(M)) \leq r$.  We provide $M$ with a good filtration and the condition implies that the function $h(\alpha) = \dim_k (\fil_\alpha M)$ is the Hilbert polynomial for $\gr_\bullet M$ when $\alpha \gg 0$.  In particular, $h(\alpha) = c_1 \alpha^s + $ lower degree terms, where $s =\dim(\Car(M)) \leq r$.  This gives the estimate we want.

Conversely, if $\dim_k \fil_\alpha M \leq c\alpha^r$ for some admissible filtration $\fil_\bullet M$, we first ``refine" it into a good filtration.  Choose $\beta \in \ZZ$ such that $\fil_\beta M$ generates $M$ (as a left $D$-module) and define a (good) filtration on $M$ by $\fil'_\alpha M= \fil_\alpha M$ if $\alpha \leq \beta$ and $\fil'_\alpha M = \fil_{\alpha-\beta}D \cdot \fil_\beta M$ if $\alpha > \beta$.  Now, $\dim_k (\fil'_\alpha M) \leq \dim_k (\fil_\alpha M) \leq c \alpha^r$ for $\alpha \in \NN$.
By the Hilbert polynomial argument above, we have $\dim(\Car(M)) \leq r$.
\end{proof}

\begin{remark}
In the formal or CDVF local situation \ref{SS:local-setup}(b)(c), we do not know whether the analogous Theorem~\ref{T:log-holonomicity} still holds.
\end{remark}

\subsection{Logarithmic Kashiwara-Dubson formula}

The classical Kashiwara-Dubson formula expresses the Euler characteristic of the de Rham cohomology of a holonomic $\calD_X$-module in terms of the intersection number of its characteristic cycle with the zero section of the cotangent bundle.  However, the logarithmic variant of such formula is more delicate because we need to deal with $\calD_X^\log$-modules which are not finitely generated.  
We restrict ourselves to the case of $\calD_X^\log$-modules coming from a vector bundle with an integrable connection (see Theorem~\ref{T:log-Kashiwara-Dubson}).  The formula will be conditional on a finite generation hypothesis, which holds in our later application.

We first record the following formula which is valid for finitely generated $\calD_X^\log$-modules.

\begin{theorem}[Kashiwara-Dubson]\label{T:Kashiwara-Dubson}
Assume that $X$ is proper and $M$ is a \emph{coherent} log-holonomic $\calD_X^\log$-module. Then the Euler characteristic of the log-de Rham cohomology of $M$ is
\begin{equation}
\label{E:Kashiwara-Dubson}
\chi_\dR(M):=\sum_{i=1}^{2n} (-1)^i \dim \mathrm H^i \big(X, M \otimes \Omega_X^\bullet(\log D)\big) = (-1)^n\cdot\deg([X], \ZCar(M))_{T^*\!X^\log},
\end{equation}
where $[X]$ is the zero section and $(\cdot, \cdot)_{T^*\!X^\log}$ is the intersection in $T^*\!X^\log$.
\end{theorem}
\begin{proof}
This theorem is still in the classical realm; its proof may be found in many references (e.g, \cite{laumon}). 
\end{proof}

\begin{remark}
\label{R:naive approach not working}
Assume that we are in the global situation \ref{SS:global-situation}.
 We choose a coherent $\calO_X$-submodule $M_0$ of $j_*M$ such that $M_0|_U = M$.
By Lemma~\ref{L:log-char-independence}, we know that $\ZCar(j_*M)$ is well-defined and does not depend on the choice of $M_0$.  One may try to na\"ively take the direct limit of the above theorem over all such $M_0$ to compute the Euler characteristic of $j_*M$.  However, one has to verify that \emph{each} cohomological group stabilizes in the direct limit.
Unfortunately, we do not know how to verify this condition.  In our application later, we need to assume that all irregularities are positive so that $\widetilde M_0 = j_*M$, to avoid this technical difficulty.  It would be interesting to know if such a hypothesis may be removed.
\end{remark}

\begin{theorem}
\label{T:log-Kashiwara-Dubson}
Assume that we are in the global situation \ref{SS:global-situation} and assume that $X$ is proper.  Let $M$ be a vector bundle over $U$ with an integrable connection.  Let $M_0$ be a coherent $\calO_X$-submodule of $j_*M$ such that $M_0|_U =  M$.  For $r\in \NN$, we put $\widetilde M_{r} : = \calD_X^\log \cdot M_0(rD)$.

Suppose that the natural morphisms
\[
\rmH^\star\big(X, \widetilde M_{ r} \otimes_{\calO_X} \Omega^\bullet_X(\log D) \big) \rightarrow \rmH^\star\big(U, M \otimes_{\calO_U} \Omega_U^\bullet \big)
\]
are isomorphisms for $r$ sufficiently large.  Then
 the Euler characteristic of $M$ is
\[
\chi_\dR(M):=\sum_{i=1}^{2n} (-1)^i \dim \mathrm H^i \big(U, M \otimes \Omega_U^\bullet\big) = (-1)^n\cdot\deg([X], \ZCar(j_*M))_{T^*\!X^\log}
\]
\end{theorem}
\begin{proof}
The statement follows from classical Kashiwara-Dubson formula:
\begin{align*}
\chi_\dR(M)&= \sum_{i=1}^{2n} (-1)^i \dim \mathrm H^i \big(X, \widetilde M_r \otimes \Omega_X^\bullet(\log D) \big)\\
&\stackrel{\eqref{E:Kashiwara-Dubson}}
=(-1)^n\cdot\deg([X], \ZCar(\widetilde M_r))_{T^*\!X^\log}\\
& = (-1)^n\cdot\deg([X], \ZCar(j_*M))_{T^*\!X^\log}.
\end{align*}
Here the first equality uses our hypothesis.
\end{proof}

\section{Nonarchimedean differential modules }
In this section, we first recall the theory of nonarchimedean differential modules, and then we discuss various definitions of cleanliness condition.

\subsection{Differential modules over a field}
We first recall the definition of irregularities and refined irregularities.  Some of our setup is made specific to the case of residual characteristic zero.  For more details or a general treatment including positive residual characteristic case, one may consult \cite{kedlaya-xiao, xiao-refined}.

\begin{notation}
For a nonarchimedean field $(F, |\cdot|)$, we use $\gotho_F$ to denote the valuation ring and $\kappa_F$  the residue field.  For $s \in \RR_{>0}$, we put $\mathfrak m_F^{<s} = \{x \in F | |x|<s\}$, $\mathfrak m_F^{\leq s} = \{x \in F | |x|\leq s\}$, and $\kappa_F^{[s]} = \mathfrak m_F^{\leq s} / \mathfrak m_F^{<s}$; in particular, $\kappa_F^{[1]} = \kappa_F$.

In case when $F$ is discretely valued, we fix a uniformizer $\pi_F$.  We frequently write $\pi_F^b \kappa_{F^\alg}$ for $b \in \QQ$ to mean $\kappa_{F^\alg}^{[|\pi_F|^b]}$.  This should not cause any ambiguity.
\end{notation}

\begin{notation}
\label{N:bounded-disc}
For a complete discrete valuation field $(F, |\cdot|)$, we use $F\llbracket t\rrbracket_0$ to denote the ring of bounded functions on an open unit disc over $F$.  Put it another way, $F\llbracket t \rrbracket_0 = \gotho_F\llbracket t \rrbracket[\frac 1{\pi_F}]$.
\end{notation}

\subsubsection{Setup}
\label{SS:setup-field}
Let $(F, |\cdot|)$ be a complete nonarchimedean field with residual characteristic zero.  (We do not exclude the case when $F$ is trivially normed.)    Assume that $F$ admits $n$ commuting derivations $\partial_1,\dots,\partial_n$ of \emph{rational type}, i.e. there exist elements $x_1, \dots, x_n \in F$ (called \emph{rational parameters}) such that
\[
\partial_i(x_j) = \left\{ \begin{array}{ll}1 & \textrm{if }i =j,\\ 0 &\textrm{if } i \neq j,\end{array}\right. \quad \textrm{and the operator norm } \ |\partial_i|_F = |x_i|^{-1} \textrm{ for any } i.
\]

A \emph{$(\partial_1, \dots, \partial_n)$-differential module} (or simply \emph{differential module}) over $F$ is a finite dimensional $F$-vector space $V$ with commuting actions of $\partial_1,\dots,\partial_n$,
satisfying the Leibniz rule.

\begin{remark}
\label{R:enlarge-F}
We remark that the condition $\partial_1, \dots, \partial_n$ being of rational type with respect to $x_1, \dots, x_n$ is preserved if 
\begin{itemize}
\item[(i)] we replace $F$ by a finite extension \cite[Lemma~1.4.5]{kedlaya-xiao}, or 
\item[(ii)] we replace $F$ by the completion of $F(t)$ with respect to $\eta$-Gauss norm for some $\eta$ and declare $\partial_j(t) = 0$ for any $j$.
\end{itemize}
In particular, if we take $\eta = |x_j|$ in the second case, then $\partial_1, \dots, t\partial_j, \dots, \partial_n$ are rational type with respect to $x_1, \dots, x_j/t, \dots, x_n$.
\end{remark}

\subsubsection{$\partial$-radii}
We first assume that $n=1$ and write $\partial$ for $\partial_1$ and $x$ for $x_1$.

For a differential module $V$ over $F$, we define the \emph{$\partial$-radius} and \emph{intrinsic $\partial$-radius} of $V$ to be 
\[
R_\partial(V) = \big(\lim_{s\to \infty}|\partial^s|_{V}^{1/s}\big)^{-1}, \textrm{ and }IR_\partial(V) = |x|^{-1} \cdot R_\partial(V),
\]
where $|\partial^s|_V$ is the operator norm for a fixed $F$-norm $|\cdot|_V$ on $V$.  The definition of (intrinsic) $\partial$-radii does not depend on the choice of the norm $|\cdot|_V$.
We always have $IR_\partial(V) \leq 1$.

Let $V_1, \dots, V_r$ denote the Jordan-H\"older constituents of $V$, as $\partial$-differential modules over $F$.
We use $\calR_\partial(V)$ to denote the multiset consisting of, for every $i$, the number $R_\partial(V_i)$ with multiplicity $\dim V_i$.
We say that $V$ has \emph{pure (intrinsic) $\partial$-radius} if all $R_\partial(V_i)$'s are the same.  By \cite[Theorem~1.4.21]{kedlaya-xiao}, $V$ can be uniquely written as the direct sum of differential modules with distinct \emph{pure} $\partial$-radius.

\subsubsection{Partially intrinsic radii}
\label{SS:log-structure}
For general $n$, we will specify a \emph{log-structure}, that is a subset $\Log$ of $\{\partial_1,\dots, \partial_n\}$.  Without loss of generality, we assume that $\Log = \{\partial_1,\dots, \partial_m\}$ for a fixed nonnegative integer $m\leq n$.  We write $\Log^* = \{\frac{dx_1}{x_1}, \dots, \frac{dx_m}{x_m}, dx_{m+1}, \dots, dx_n\}$.

If $V$ is a differential module, we define the \emph{partially intrinsic radius} (or \emph{intrinsic radius} if $m=n$) to be
\[
IR^\sharp(V) = \max\{IR_{\partial_1}(V), \dots, IR_{\partial_m}(V), R_{\partial_{m+1}}(V), \dots, R_{\partial_n}(V)\};
\]
here we singled out derivatives $\partial_1, \dots, \partial_m$ in the log-structure $\Log$ to take their intrinsic radii instead of radii.

In general, by \cite[Theorem~1.5.6]{kedlaya-xiao}, $V$ may be (uniquely) written as the direct sum $V_1 \oplus \cdots \oplus V_r$ of differential modules, where each $V_i$ has \emph{pure} $\partial_j$-radius for all $j$.  We define the \emph{partially intrinsic subsidiary radii} (or \emph{intrinsic subsidiary radii} if $m=n$) to be the multiset $\calI\calR^\sharp(V)$ consisting of $IR^\sharp(V_i)$ with multiplicity $\dim V_i$ for $i = 1, \dots, r$.  Let $IR^\sharp(V) = IR^\sharp(V;1) \leq \cdots \leq IR^\sharp(V;\dim V)$ denote the elements of $\calI\calR^\sharp(V)$ in increasing order.
We say that $V$ has \emph{pure (partially) intrinsic radius} if $\calI\calR^\sharp(V)$ consists of $\dim V$ copies of $IR^\sharp(V)$.

\subsubsection{Irregularities}
\label{SS:irregularities}
Assume that $F$ is discretely valued, $x_1 = \pi_F$, and $\partial_1 \in \Log$.  Assume moreover that  $x_2, \dots, x_n \in \gotho_F^\times$.  We define the \emph{subsidiary irregularities} $\Irr(V; i) = \log_{|\pi_F|} IR^\sharp(V; i)$ and $\mathcal{I}rr(V) = \{\Irr(V)=\Irr(V;1), \dots, \Irr(V;\dim V)\}$; they are nonnegative rational numbers by \cite[Theorem~1.5.6]{kedlaya-xiao}.
The usual irregularities would be the sum of all subsidiary irregularities, but we will not use this concept in this paper.
  In case that all subsidiary irregularities are the same rational number, we say that $V$ has \emph{pure irregularity}.  If $\Irr(V) = 0$, we say that $V$ is \emph{regular}.  We remark that all definitions of irregularities do not depend on the log-structure as long as $\partial_1 \in \Log$.

\subsubsection{Refined radii and refined irregularities}
\label{SS:refined-irr}
Let $V$ be a differential module over $F$.  We fix a log-structure as in \ref{SS:log-structure}.  We further assume that $|x_{m+1}| \leq 1,\dots, |x_n|\leq 1$.
We now recall the definition of refined radii from \cite{xiao-refined}.  To avoid an unimportant technical difficulty, we will only give the definition in the case when $F$ is discretely valued; for the general case, we refer the readers to \cite[Definition~1.4.10]{xiao-refined} and Remark~\ref{R:refined general} later.

Assume that $F$ is discretely valued.
We first assume that $V$ has pure partially  intrinsic radius $IR^\sharp(V)$.  By \cite[Lemma~1.4.14]{xiao-refined}, there exists a norm $|\cdot|_V$ on $V$ such that 
\begin{itemize}
\item[(i)] it admits an orthogonal basis, and 
\item[(ii)] the operator norms $|x_j\partial_j|_V \leq IR^\sharp(V)^{-1}$ for $j = 1, \dots, m$ and $|\partial_j|_V \leq IR^\sharp(V)^{-1}$ for $j = m+1, \dots, n$;
\end{itemize}
we call such a norm \emph{good}.  (Note that this is weaker than the convention used in \cite[Definition~1.4.11]{xiao-refined} for multi-derivative case $n>1$.)

If the partially intrinsic radii $IR^\sharp(V) < \min\{1,|x_{m+1}|, \dots, |x_n|\}$, by possibly enlarging the valued group of $F$ as in \ref{SS:setup-field}, we may assume that $|\cdot|_V$ admits an orthonormal basis.
In this case, let $N_j$ denote the matrix of $x_j\partial_j$ if $j \leq m$ or of $\partial_j$ if $j>m$ acting on this chosen basis; they commute with each other.  The \emph{refined partially intrinsic radii} (or \emph{refined intrinsic radii} if $m=n$) is defined to be the multiset $
\mathcal{R}\mathrm{ef}^\sharp(V)$ consisting of 
\begin{equation}
\label{E:definition of refined radii}
\theta_1 \frac{dx_1}{x_1} + \cdots + \theta_m \frac{dx_m}{x_m} + \theta_{m+1} dx_{m+1} + \cdots+ \theta_ndx_n \in\bigoplus_{\omega \in \Log^*} \kappa_{F^\alg}^{[IR^\sharp(V)^{-1}]} \omega
\end{equation}
 for each common (genearalized) eigenvalues $(\theta_1, \dots, \theta_n)$ (with multiplicities) of $N_1, \dots, N_n$ modulo $\mathfrak m_{F^\alg}^{<IR^\sharp(V)^{-1}}$.

If $IR^\sharp(V) \geq \min\{1,|x_{m+1}|, \dots, |x_n|\}$, we conventionally write $\mathcal{R}\mathrm{ef}^\sharp(V) = \{0, \dots, 0\}$, a multiset consisting only $0$ of multiplicity $\dim V$.

For a general differential module $V$, applying the above construction to the Jordan-H\"older factors $V_1, \dots, V_r$ of $V$, we define $\mathcal{R}\mathrm{ef}^\sharp(V) = \cup_{i=1}^r \mathcal{R}\mathrm{ef}^\sharp(V_i)$.  We say $V$ has \emph{pure refined intrinsic radius} (or \emph{pure refined irregularity} if $F$ satisfies the conditions in \ref{SS:irregularities}) if $\mathcal{R}\mathrm{ef}^\sharp(V)$ consists of multiples of a same element. We order the elements in $\mathcal R \mathrm{ef}^\sharp(V)$ as $\Ref^\sharp(V;1), \dots, \Ref^\sharp(V;\dim V)$ so that
$
\Ref^\sharp(V; i) \in \bigoplus_{\omega \in \Log^*} \kappa_{F^\alg}^{[IR^\sharp(V; i)^{-1}]} \omega.
$
This choice of order may not be unique; we fix such a choice; however see Remarks~\ref{R:order-refined-irr} and \ref{R:no-global-defn-clean}.

When $F$ satisfies the conditions in \ref{SS:irregularities}, we also call $\calR\mathrm{ef}^\sharp(V)$ the \emph{refined irregularities} of $V$.  In particular, $\Ref^\sharp(V; i) \in \bigoplus_{\omega \in \Log^*} (\pi_F^{-\Irr(V; i)} \kappa_{F^\alg})\omega$.  Again, in this case, the definition of $\calR\mathrm{ef}^\sharp(V)$ does not depend on the choice of log-structure if we identify $\bigoplus_{\omega \in \Log^*} (\pi_F^{-\Irr(V; i)} \kappa_{F^\alg})\omega$ with $\bigoplus_{j=1}^n (\pi_F^{-\Irr(V; i)} \kappa_{F^\alg})\frac{dx_j}{x_j}$. Sometimes we omit the sharp from the notation for simplicity.

\begin{remark}
\label{R:refined general}
We now sketch the definition of refined partially intrinsic radii in the case when $F$ is not necessarily discretely valued.  For details, we refer to \cite[Definition~1.4.10]{xiao-refined} and the discussion preceding it.

The definition proceeds in several steps: we first define the refined radii for each one of the derivations; this is possible because we may always find a good norm good for one derivation (\cite[Lemma~1.3.9]{xiao-refined}).  Then we show that (\cite[Theorem~1.3.26]{xiao-refined}), up to making a finite extension of $F$, we may decompose $V$ into a direct sum of differential modules with pure refined radius for each $\partial_i$.  For each direct summand, we define the refined partially intrinsic radii by assembling the refined radii for each $\partial_i$ as in \eqref{E:definition of refined radii}.
\end{remark}

\begin{remark}
\label{R:refined-CDVF}
When $F$ satisfies the conditions in \ref{SS:irregularities}, we have an additional restriction on the refined irregularities.
For $i$ such that $\Irr(V;i)>0$, write $\Ref^\sharp(V;i) = x_1^{-\Irr(V;i)} \big(\theta_1 \frac{dx_1}{x_1} + \cdots + \theta_n \frac{dx_n}{x_n}\big)$ for $\theta_1, \dots, \theta_n \in \kappa_F^\alg$.  By \cite[Proposition~1.4.17]{xiao-refined}, we have $x_j\partial_j(x_1^{-\Irr(V;i)}\theta_1) = x_1 \partial_1(x_1^{-\Irr(V;i)}\theta_j)$ as an equality in $x_1^{-\Irr(V;i)} \kappa_F^\alg$ for any $j \neq 1$.  This implies that 
$
\theta_j = - x_j\partial_j(\theta_1)\big/\Irr(V;i) .
$
In other words, every $\theta_j$ for $j \neq 1$ is determined by $\theta_1$.  In particular, $\theta_1 \neq 0$.  (This fact is also hinted by \cite[Proposition~2.5.4]{kedlaya-sabbah1}.)
We also point out that similar phenomenon does not happen for the mixed characteristic analogue.
\end{remark}

\begin{prop}
\label{P:decomposition-field}
Let $V$ be a differential module over $F$.

(i) If $F$ is discretely valued, the sum $\Irr(V; 1) + \cdots + \Irr(V; \dim V)$ is a nonnegative integer.

(ii) We have a unique direct sum decomposition $V = \bigoplus_{r \in (0,1]}V_r$ of differential modules such that $V_r$ has pure partial intrinsic radius $IR^\sharp(V_r) = r$.

(iii) Assume that $F$ satisfies the conditions in \ref{SS:irregularities}.  If $F'$ is a \emph{finite} extension of $F$ of ramification degree $h$ such that $h\cdot\Irr(V;i) \in \ZZ$ and $\Ref(V;i) \in \bigoplus_{j=1}^n (\pi_{F'}^{-h\cdot \Irr(V; i)} \kappa_{F'}) \frac{dx_j}{x_j}$, we obtain a unique  direct sum decomposition $V \otimes F' = \bigoplus_{\vartheta} V_\vartheta$ of differential modules over $F'$, where the direct sum runs over all $\vartheta \in \bigoplus_{j=1}^n \pi_{F'}^b \kappa_{F'}\frac{dx_j}{x_j}$ for some $b \in \NN$, such that every $V_\vartheta$ has pure irregularity $b$ and pure refined irregularity $\vartheta$.

Moreover, if we group $\mathcal R\mathrm{ef}(V)$ into $G = \mathrm{Gal}(F' / F)$-orbits $\{G\vartheta\}$, the above decomposition descents to a unique direct sum  decomposition over $F$: $V = \bigoplus_{\{G\vartheta\}} V_{\{G\vartheta\}}$, where $V_{\{G\vartheta\}}$ has refined irregularities in $\{G\vartheta\}$ with same multiplicity on each element in $\{G\vartheta\}$.
\end{prop}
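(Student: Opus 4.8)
The plan is to deduce all three parts from the decomposition results already available in the literature (\cite{kedlaya-xiao}, \cite{xiao-refined}) together with a Galois-descent argument. For part~(ii), I would simply reorganize the decomposition $V = V_1 \oplus \cdots \oplus V_r$ recalled in~\ref{SS:log-structure} (the one with each $V_i$ of pure $\partial_j$-radii for all $j$): group the summands according to the common value $IR^\sharp(V_i) = r \in (0,1]$; uniqueness follows from the uniqueness of that decomposition, since a differential module with pure partially intrinsic radii $r$ cannot contain a nonzero subquotient with a different value. For part~(i), the key point is that when $F$ is discretely valued with $x_1 = \pi_F$, the irregularity $\Irr(V; i) = \log_{|\pi_F|} IR^\sharp(V; i)$ is controlled by the $\partial_1$-slopes, and the sum of the $\partial_1$-irregularities is an integer because it equals (up to sign) the valuation of the "leading coefficient" — concretely, one reduces to the rank-one and then to the cyclic-vector situation over the CDVF and invokes the classical fact that $\sum \Irr$ equals the degree of a suitable determinant or the order of vanishing of a Wronskian-type quantity; alternatively this is \cite[Theorem~1.4.21]{kedlaya-xiao} combined with the Hasse--Arf-type integrality statement for the single-derivation case. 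I would state this reduction and cite the relevant integrality input rather than rederiving it.

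For part~(iii), I would proceed in two stages. First, over the finite extension $F'$: by \ref{SS:refined-irr}, after the ramified base change the refined partially intrinsic radii of $V \otimes F'$ become genuine elements of $\bigoplus_{j} \pi_{F'}^{-h\Irr(V;i)}\kappa_{F'}\frac{du_j}{u_j}$ (this is exactly the hypothesis on $h$ and on the field of definition of the $\Ref(V;i)$). The existence and uniqueness of the decomposition $V \otimes F' = \bigoplus_\vartheta V_\vartheta$ into pieces with pure irregularity and pure refined irregularity is then precisely \cite[Theorem~1.5.6]{kedlaya-xiao} in its refined form (as used to define $\mathcal{IR}^\sharp$ and $\mathcal{R}\mathrm{ef}^\sharp$), applied simultaneously to the commuting operators $N_1, \dots, N_n$ after choosing a good orthonormal basis — the generalized eigenspace decomposition of the commuting reductions $\overline{N_j}$ gives the summands, and it is canonical because generalized eigenspace decompositions are. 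Second, for the descent to $F$: the Galois group $G = \Gal(F'/F)$ acts on $V \otimes F'$ semilinearly, permuting the summands $V_\vartheta$ in a way compatible with the induced action on the set of refined irregularities $\vartheta$; hence for each $G$-orbit $\{G\vartheta\}$ the subspace $\bigoplus_{\vartheta' \in G\vartheta} V_{\vartheta'}$ is $G$-stable, so by Galois descent (the decomposition being unique, hence automatically $G$-equivariant) it descends to a differential submodule $V_{\{G\vartheta\}}$ of $V$ over $F$, and $V = \bigoplus_{\{G\vartheta\}} V_{\{G\vartheta\}}$. The equal-multiplicity statement on each element of an orbit follows because $G$ acts transitively on the orbit and permutes the $V_{\vartheta'}$ transitively, forcing $\dim V_{\vartheta'}$ to be constant along the orbit.

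The main obstacle I anticipate is verifying carefully that the decomposition over $F'$ is genuinely $G$-equivariant rather than merely $G$-stable as a set of summands — i.e., that $g(V_\vartheta) = V_{g\vartheta}$ with the correct matching, and that the semilinear $G$-action on $V \otimes F'$ really does permute refined irregularities according to the natural $G$-action on $\bigoplus_j \pi_{F'}^b\kappa_{F'}\frac{du_j}{u_j}$. This requires tracking how conjugation by $g$ transforms the matrices $N_j$ and their reductions modulo $\mathfrak{m}_{F'^\alg}^{<\cdots}$, and checking the compatibility with the identification of $\Log^*$-coordinates used in \ref{SS:refined-irr}; once that bookkeeping is in place, the descent and the multiplicity claim are formal. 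I would isolate this equivariance as the one genuinely technical lemma and handle everything else by citation.
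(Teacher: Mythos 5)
The paper's own proof is a one-line citation: (i) and (ii) are attributed to Robba via \cite[Proposition~1.3.4]{kedlaya-xiao}, and (iii) together with its Galois-descent ``Moreover'' clause is \cite[Theorem~1.3.26]{xiao-refined}. Your proposal follows the same route (delegate to the decomposition theorems of \cite{kedlaya-xiao} and \cite{xiao-refined}), but the inner sketch you supply has a gap and points at the wrong references in a couple of places.

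For (iii), what you describe --- pick a good norm, form the commuting matrices $N_1,\dots,N_n$, take the simultaneous generalized eigenspace decomposition of their reductions --- produces a decomposition of the residue module over $\kappa_{F'}$, not a priori a direct sum decomposition of $V\otimes F'$ into differential submodules over $F'$. The lifting of that eigenspace decomposition to a decomposition of differential modules, and its independence of the auxiliary choices (good norm, orthonormal basis, choice of $F'$), is exactly the content of \cite[Theorem~1.3.26]{xiao-refined}; it is not a ``refined form'' of \cite[Theorem~1.5.6]{kedlaya-xiao}, which only gives the decomposition by (unrefined) subsidiary radii used in (ii). So the phrase ``it is canonical because generalized eigenspace decompositions are'' hides the theorem being cited rather than proving it. Likewise for (i), \cite[Theorem~1.4.21]{kedlaya-xiao} is a decomposition statement, not an integrality statement; the Robba integrality of $\sum_i\Irr(V;i)$ is a separate classical fact (the paper's citation is \cite[Proposition~1.3.4]{kedlaya-xiao}), and your cyclic-vector/Wronskian reduction is the right heuristic for it but should be cited rather than resketched. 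Your part~(ii) and your Galois-descent argument for the ``Moreover'' clause are correct; and you are right that the only genuinely technical point in the descent is the $G$-equivariance $g(V_\vartheta)=V_{g\vartheta}$, which does require tracking how conjugation acts on the $N_j$ and their reductions. In short: same approach as the paper, but the key refined-decomposition input in (iii) is being treated as routine when it is the theorem the paper actually relies upon.
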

\begin{proof}
(i) and (ii) are well known; see for example \cite[Proposition~1.3.4]{kedlaya-xiao}.  (iii) is proved in \cite[Theorem~1.3.26]{xiao-refined}.
\end{proof}

\begin{corollary}
\label{C:strong-integrality}
Assume that $F$ satisfies the conditions in \ref{SS:irregularities}. 
Let $V$ be a differential module over $F$ with pure irregularity.  Assume that all refined irregularities of $V$ form several copies of a same $\mathrm{Gal}(F' / F)$-orbit for some finite Galois extension $F'$ of $F$. Let $r$ be the number of elements in the $\mathrm{Gal}(\kappa_{F'} / \kappa_F)$-orbit of an element in $\mathcal R \mathrm{ef}^\sharp(V)$.  
Then $\dim V \cdot \Irr(V) / r \in \ZZ$.
\end{corollary}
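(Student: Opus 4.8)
The plan is to descend the base field to a suitable \emph{unramified} extension $\tilde F/F$ over which the relevant direct summand of $V$ becomes an honest differential module of dimension $\dim V/r$ and pure irregularity $\Irr(V)$, and then to invoke the integrality of the total irregularity over $\tilde F$ (Proposition~\ref{P:decomposition-field}(i)).  If $\Irr(V)=0$ then $V$ is regular, $\mathcal R\mathrm{ef}^\sharp(V)=\{0,\dots,0\}$, $r=1$, and the assertion reads $0\in\ZZ$; so I assume $b:=\Irr(V)>0$ from now on.

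First I would arrange, by enlarging $F'$, that $F'/F$ is Galois; this changes neither the integer $r$ nor the common $\mathrm{Gal}(F'/F)$-orbit of the refined irregularities, since each refined irregularity has coefficients in the old residue field and is built out of $x_1=\pi_F\in F$, hence is fixed by the Galois group of the new $F'$ over the old one.  Set $G=\mathrm{Gal}(F'/F)$, fix a refined irregularity $\vartheta_0$, and write it as in Remark~\ref{R:refined-CDVF}:
\[
\vartheta_0 = x_1^{-b}\Big(\theta_1\,\frac{dx_1}{x_1}+\cdots+\theta_n\,\frac{dx_n}{x_n}\Big),\qquad \theta_1\in\kappa_{F'}\setminus\{0\},\quad \theta_j=-x_j\partial_j(\theta_1)/b\ \ (j\geq 2).
\]
Because the residue derivations are defined over $F$ and therefore commute with the residual action of $G$, the $G$-action on $\vartheta_0$ is determined by the $G$-action on $\theta_1$ alone; hence the $\mathrm{Gal}(\kappa_{F'}/\kappa_F)$-orbit of $\vartheta_0$ has exactly $[\kappa_F(\theta_1):\kappa_F]$ elements, so $r=[\kappa_F(\theta_1):\kappa_F]$, and — using again that $x_1^{-b}$ is $G$-fixed because $x_1\in F$ — the stabilizer $\mathrm{Stab}_G(\vartheta_0)$ is contained in the subgroup $\{g\in G:g(\theta_1)=\theta_1\}$.

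Next I would take $\tilde F$ to be the unramified extension of $F$ inside $F'$ with residue field $E:=\kappa_F(\theta_1)$, so that $[\tilde F:F]=r$ and $H:=\mathrm{Gal}(F'/\tilde F)=\{g\in G:g(\theta_1)=\theta_1\}$ has index $r$ in $G$ and contains $\mathrm{Stab}_G(\vartheta_0)$.  As $\tilde F$ still satisfies the hypotheses of \ref{SS:irregularities} (with $\pi_{\tilde F}=\pi_F$), and as passing to the finite unramified extension $\tilde F$ alters neither the pure irregularity $b$ nor the multiset $\mathcal R\mathrm{ef}^\sharp(V)$, I would apply Proposition~\ref{P:decomposition-field}(iii) over $\tilde F$ with the extension $F'/\tilde F$ (enlarging $F'$ first, if needed, so that the ramification index of $F'/\tilde F$ clears the denominators of $b$ and of the refined irregularities).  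This yields a decomposition $V\otimes\tilde F=\bigoplus_{\{H\vartheta\}}W_{\{H\vartheta\}}$ over $\tilde F$ indexed by the $H$-orbits in $\mathcal R\mathrm{ef}^\sharp(V)$, with $W_{\{H\vartheta\}}$ carrying the refined irregularities in $\{H\vartheta\}$ each with equal multiplicity.  Since $\mathrm{Stab}_G(\vartheta_0)\subseteq H$, the $H$-orbit of $\vartheta_0$ has $\#(G\vartheta_0)/r$ elements; writing $\mathcal R\mathrm{ef}^\sharp(V)$ as, say, $k$ copies of the single orbit $G\vartheta_0$, the summand $W:=W_{\{H\vartheta_0\}}$ then satisfies $\dim_{\tilde F}W=k\cdot\#(G\vartheta_0)/r=\dim V/r$, and, being a direct summand of $V\otimes\tilde F$, it has pure irregularity $b$.

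To finish I would apply Proposition~\ref{P:decomposition-field}(i) over the discretely valued field $\tilde F$ to $W$, obtaining $\sum_{i=1}^{\dim W}\Irr(W;i)\in\ZZ$, i.e.\ $(\dim V/r)\cdot b\in\ZZ$, which is precisely $\dim V\cdot\Irr(V)/r\in\ZZ$.  I expect the main obstacle to be the group-theoretic bookkeeping of the third paragraph — above all the inclusion $\mathrm{Stab}_G(\vartheta_0)\subseteq\mathrm{Gal}(F'/\tilde F)$, which relies on $x_1=\pi_F$ lying in $F$ together with Remark~\ref{R:refined-CDVF} — and in checking that the descent part of Proposition~\ref{P:decomposition-field}(iii) and the invariance of the (refined) irregularities under finite unramified base change are genuinely available over the auxiliary field $\tilde F$.
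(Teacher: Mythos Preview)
Your proposal is correct and takes essentially the same approach as the paper: pass to an unramified subextension of $F'/F$ (the paper uses the maximal one, you use the one with residue field $\kappa_F(\theta_1)$), decompose $V$ there via Proposition~\ref{P:decomposition-field}(iii) into summands of dimension $\dim V/r$, and then apply Proposition~\ref{P:decomposition-field}(i) to one summand. Your write-up is in fact more careful than the paper's terse two-line proof, particularly in using Remark~\ref{R:refined-CDVF} to pin down the orbit bookkeeping and the inclusion $\mathrm{Stab}_G(\vartheta_0)\subseteq H$.
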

\begin{proof}
Let $F''$ be the maximal unramified extension of $F$ inside $F'$.  
Applying the decomposition of Proposition~\ref{P:decomposition-field}(iii) to $V'' = V \otimes_F F''$, as a differential module over $F''$, we see that $V''$ splits into a direct sum of $r$ differential modules, whose refined irregularities give the $\Gal(F'/F'')$-orbits of the refined irregularities of $V$.  Hence, each direct summand is of dimension $\dim V/r$ over $F''$.
Applying Proposition~\ref{P:decomposition-field}(i) to any such direct summand shows that $\Irr(V) \cdot \dim V/r \in \ZZ$.
\end{proof}

\begin{remark}
When the differential operators are not of rational type, all above definitions and results are still valid, if the radii is strictly bigger than the the inverse of operator norms.  See \cite[Remarks~1.3.29, 1.4.22]{xiao-refined}.
\end{remark}

We record a technical but useful lemma for future reference.

\begin{lemma}
\label{L:refined-integral}
Let $R$ be a unique factorization domain of characteristic zero and let $S = R((\pi_F))$.  We write $F$ for the completion of $\Frac(S)$ with respect to the $\pi_F$-adic valuation, and let $|\cdot|_F$ denote a norm on $F$ given by the $\pi_F$-adic valuation. Let $R^\alg$ denote the integral closure of $R$ in an algebraic closure of $\Frac(R)$.

Assume either $\partial$ is a nontrivial derivation on $R$, extended to $F$ naturally by setting $\partial(\pi_F) =0$, or $\partial = \partial / \partial \pi_F$.  In the former case, we assume that $\partial$ is of rational type with respect to some $u \in R$; in the latter case, we set $u = \pi_F$.  When talking about $\partial$-differential modules, we take $\Log = \emptyset$. 
Let $M$ be a $\partial$-differential module over $S$, that is a locally free module over $S$ of finite rank $d$, equipped with an action of $\partial$ subject to Leibniz rule.  Assume that $R_\partial(M\otimes F) = |\pi_F|^b< |u|$ and let $M_b$ be the unique differential submodule of $M \otimes F$ with pure $\partial$-radius $|\pi_F|^b$.  Then all refined partially intrinsic radii $\Ref^\sharp(M_b)$ 
actually lie in $\pi_F^{-b}R^\alg \subseteq \pi_F^{-b}\Frac(R)^\alg$, in other words, they are reductions of the zeros of some monic polynomial $X^d+a_1 X^{d-1} +\cdots +a_d $ with $a_i \in \pi_F^{\lceil ib\rceil} R$ for $i=1,\dots, d$.
\end{lemma}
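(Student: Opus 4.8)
The plan is to reformulate the refined radius as the eigenvalues of an explicit reduced operator, to push the relevant data onto a finite \emph{integral} extension of $S$, and to read off integrality over $R$ from there; the one genuinely delicate point is producing a good basis already over that integral extension rather than merely over its fraction field.

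Here is the reformulation. After the harmless enlargement of the value group needed to get an orthonormal basis, fix a good norm on $M_b$. Since $\Log=\emptyset$, goodness means $|\nabla_\partial|_{M_b}\le|\pi_F|^{-b}$, so the rescaled connection $\pi_F^{b}\nabla_\partial$ preserves the unit lattice $M_b^0$; moreover, for $f\in\gotho_F$ one has $|\pi_F^{b}\,\partial f|\le|\pi_F|^{b}|u|^{-1}<1$ --- this is exactly where the hypothesis $|\pi_F|^{b}<|u|$ is used --- so the reduction $\overline{\pi_F^{b}\nabla_\partial}$ is a \emph{linear} endomorphism of the residue vector space $M_b^0/\gothm_F M_b^0$, whose eigenvalues are precisely the $\theta_i$ with $\Ref^\sharp(M_b;i)=\pi_F^{-b}\theta_i\,dx$. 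Thus $\overline{\pi_F^{b}\nabla_\partial}$ represents $\Ref^\sharp(M_b)$ (up to the factor $\pi_F^{-b}\,dx$), and the whole problem becomes: realize this operator on a lattice that is ``defined over $R$''.

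To do that, replace $F$ by a finite extension of ``geometric'' type $F'=\Frac\big(R'((\pi_{F'}))\big)^\wedge$, where $\pi_{F'}^{h}=\pi_F$ with $hb\in\ZZ$ and $R'$ is the integral closure of $R$ in a finite extension of $\Frac(R)$; this is possible because $\kappa_F=\Frac(R)$ and, by the refined-decomposition theory of \cite{xiao-refined}, only finitely many residues together with an $h$-th root of $\pi_F$ need be adjoined to split $M_b\otimes F'$ into pieces of pure refined radii, each of which --- using Robba's decomposition by $\partial$-radius on the punctured disc $\Spec R'((\pi_{F'}))$ --- is in fact defined over $R'((\pi_{F'}))$. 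If one can choose the good orthonormal basis above rational over $R'((\pi_{F'}))$, then $\pi_{F'}^{hb}\nabla_\partial$ has matrix with entries in $R'((\pi_{F'}))\cap\gotho_{F'}=R'[[\pi_{F'}]]$, so $\overline{\pi_{F'}^{hb}\nabla_\partial}$ is a matrix over $R'[[\pi_{F'}]]/(\pi_{F'})=R'$; its characteristic polynomial is monic over $R'$, the $\theta_i$ are roots, hence integral over $R'$ and a fortiori over $R$, which gives $\Ref^\sharp(M_b)\subseteq\pi_F^{-b}R^\alg\,dx$.

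The hard part, and the only substantial one, is to construct such an \emph{integral} good basis: the classical Dwork--Robba / Turrittin--Levelt construction of a good (indeed split) basis is carried out over the coefficient field, and one must check it can be performed over the subring $R'((\pi_{F'}))$ --- concretely, that the Hensel / successive-approximation step decomposing the rescaled connection matrix (equivalently, factoring the twisted polynomial of a cyclic vector) introduces no $\Frac(R')$-denominators and yields a basis in which the connection has pole order matching the spectral bound. I would handle this either (i) by first reducing to the case that $R$ is a \emph{complete} discrete valuation ring, using $R=\bigcap_{\gothp}R_\gothp$ over height-one primes, the intrinsic nature of $\Ref^\sharp(M_b)$ and its invariance under the flat base changes $R\rightsquigarrow R_\gothp\rightsquigarrow\widehat{R_\gothp}$ (which preserve rational type of $\partial$ by Remark~\ref{R:enlarge-F}), and the elementary fact that an element of $\Frac(R)^\alg$ integral over every $\widehat{R_\gothp}$ is integral over $R$ --- so that the factorization takes place in the familiar two-dimensional complete setting; or (ii) by carrying out the slope factorization integrally over $R'((\pi_{F'}))$ and tracking coefficients throughout. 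The underlying reason it must work is that the package $(M_b,\nabla_\partial)$ together with a good lattice is finite over $R$, so $\Ref^\sharp(M_b)$ cannot develop poles along codimension-one points of $\Spec R$; route (i) makes this precise and is probably the cleaner one.
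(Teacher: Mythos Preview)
Your overall strategy is correct and your route~(i) matches the paper's approach: reduce the question to showing integrality over each completion $\widehat{R_\gothp}$ at a height-one prime (the paper does this by contradiction, picking an irreducible $\lambda$ where a pole is alleged and passing to $R_\lambda = \kappa_\lambda\llbracket\lambda\rrbracket$). The reformulation of $\Ref^\sharp$ as eigenvalues of a reduced operator is also essentially what the paper uses, though the paper packages it via a cyclic vector and its twisted polynomial rather than via a good basis.

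The difference is in how the ``hard part'' is dispatched. You propose to construct an integral good orthonormal basis over $R'((\pi_{F'}))$. The paper avoids this: it takes a cyclic vector $\bbv$ in $M\otimes\Frac(S)$, forms the basis $\bbv,\pi_F^{-b}\partial\bbv,\ldots,\pi_F^{-b(d-1)}\partial^{d-1}\bbv$ (defining a norm on $M_\lambda\otimes\widetilde F$), and then invokes the \emph{lattice lemma} \cite[Lemma~2.2.3]{kedlaya-xiao} to produce a basis $m_1,\ldots,m_d$ of $M_\lambda$ over $S_\lambda$ defining the \emph{same} norm. The matrix $B$ of $\partial$ on the $m_i$ then automatically has entries in $S_\lambda$, and since the change-of-basis matrix $N$ lies in $\GL_d(\gotho_{\widetilde F})$, the characteristic polynomial of $B$ agrees with the twisted polynomial of $\bbv$ modulo $\pi_F$ (the correction $N^{-1}\partial(N)$ has norm $\le|\pi_F|^{-1}$, which is absorbed). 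This gives $\bar a_i\in R_\lambda$ directly, without ever needing a good basis to be defined over the subring.

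So the paper's execution is somewhat slicker than what you outline: rather than carrying out Turrittin--Levelt integrally (your option~(ii)) or building a good integral basis over the complete DVR (your option~(i) as stated), it sidesteps the issue by comparing two bases --- one encoding the refined radii, one integral --- and checking their characteristic polynomials coincide modulo $\pi_F$. The lattice lemma is the tool you were looking for.
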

\begin{proof}
Let $r$ be the multiplicity of $b$ in $\calR_\partial(V \otimes F)$, which is equal to $\dim_F M_b$.  To prove the lemma, we may adjoin $\pi_F^{1/N}$ to $S$ for some appropriate $N$ so that, by \cite[Theorem~1.4.21]{kedlaya-xiao}, we may assume that all radii in $\calR_\partial(M \otimes F)$ are integer powers of $|\pi_F|$.

Let $\bbv \in M \otimes \Frac(S)$ be a  cyclic vector  (see e.g. \cite[Theorem~5.4.2]{kedlaya-course}) so that $\bbv, \partial(\bbv), \dots, \partial^{d-1}(\bbv)$ form a basis of $M \otimes \Frac(S)$ over $\Frac(S)$; the action of $\partial$ is determined by $\big(\partial^d + a_1\pi_F^{-b}\partial^{d-1} + \dots + a_d\pi_F^{-bd} \big)\bbv = 0$ for some $a_1, \dots, a_d \in \Frac(S)$.  By \cite[Proposition~1.3.2]{kedlaya-xiao}, we have $a_i \in \gotho_F$ for $i=1, \dots, r-1$, $a_r \in \gotho_F^\times$, and $a_i \in \pi_F\gotho_F$ for $i = r+1, \dots, d$.  Moreover, by \cite[Corollary~1.3.13]{xiao-refined}, the reductions of the roots of $X^r + a_1 X^{r-1} + \cdots +a_r=0$ in $\kappa_F^\alg = \Frac(R)^\alg$ are exactly $\pi_F^b \Ref^\sharp(M_b)$.

Now, to prove the lemma, it suffices to show that the reduction $\bar a_i$ of each $a_i$ in $\kappa_F = \Frac(R)$ lies in $R$.
This is equivalent to proving that, for any irreducible element $\lambda$ of $R$ with $v_\lambda$ the corresponding valuation on $R$, $v_\lambda(\bar a_i) \geq 0$ for every $i=1, \dots, r$.
We fix such an irreducible $\lambda$.

Let $R_\lambda$ denote the valuation ring in the $v_\lambda$-adic completion of $\Frac(R)$; it may be also written as $\kappa_\lambda\llbracket \lambda \rrbracket$ with residue field $\kappa_\lambda$.  Since $\partial$ preserves $R$, it extends to a \emph{continuous} derivation on $R_\lambda$.  Set $S_\lambda = R_\lambda((\pi_F)) = \kappa_\lambda((\pi_F))\llbracket \lambda\rrbracket_0$ (see Notation~\ref{N:bounded-disc}) and  $M_\lambda = M \otimes_S S_\lambda$.  Let $\widetilde F$ denote the completion of $\Frac (S_\lambda)$ for the $\pi_F$-adic valuation; it contains $F$ as a subfield (with compatible norms).  Now, $\bbv$ is also a cyclic vector of $M_\lambda \otimes \widetilde F$ and we take a basis of $M_\lambda \otimes \widetilde F$ to be $\bbv, \pi_F^{-b}\partial(\bbv), \dots, \pi_F^{-b(d-1)} \partial^{d-1}(\bbv)$; it gives a norm on $M_\lambda \otimes \widetilde F$.  Let $A$ denote the matrix of $\partial$ acting on this basis.
By the lattice lemma \cite[Lemma~2.2.3]{kedlaya-xiao}, we can find a basis $m_1, \dots, m_d$ of $M_\lambda$ (over $S_\lambda$) defining the same norm restricted from $M_\lambda \otimes \widetilde F$.  

Now we let $B$ denote matrix of $\partial$ acting on this new basis and let $X^d + \tilde a_1\pi_F^{-b} X^{d-1} + \cdots + \tilde  a_d\pi_F^{-db}$ denote the characteristic polynomial of $B$.  If we use $N \in \GL_d(\gotho_{\widetilde F})$ to denote the transformation matrix between the two bases, we have $B = N^{-1}AN + N^{-1} \partial(N)$.  We know that $|N^{-1} \partial(N)| \leq |\pi_F|^{-1}$ if  $\partial = \partial / \partial \pi_F$ and $\leq 1$ otherwise.
By \cite[Theorem~4.2.2]{kedlaya-course}, for any $i=1, \dots, d$,  $|a_i\pi_F^{-ib} - \tilde a_i\pi_F^{-ib}| \leq|\pi_F|^{-i}$ if $\partial = \partial / \partial \pi_F$, and $\leq 1$ otherwise.   In particular, $a_i$ is congruent to $\tilde a_i$ modulo $\pi_F$ and hence $\bar a_i$ lies in $\kappa_\lambda\llbracket \lambda \rrbracket$; in other words, $v_\lambda(\bar a_i) \geq 0$.  This concludes the proof of the lemma.
\end{proof}

%\begin{remark}
%On one hand, one can check that the condition that $\partial$ is of rational type is not used in the proof here, if we define radii of convergence as in \cite[Definition~1.2.8]{kedlaya-xiao}.  On the other hand, we do not know how to generalize this to a broader class of rings because we need to make sure that $\partial$ is continuous with respect to the new valuation $v_\lambda$, which is not obvious if $v_\lambda$ is too arbitrary.
%\end{remark}

\begin{remark}
We also want to point out that this argument only applies to the submodule with the smallest $\partial$-radii.  
In general, one expects the ``product" of refined $\partial$-radii from the pieces with $l$ smallest $\partial$-radii for any $l$, to lie in $R^\alg$, if suitably normalized.
\end{remark}

\subsubsection{Refined irregularities over higher dimensional local fields}
Equip $\QQ^m$ with the lexicographic order: $\mathsf i = (i_1, \dots, i_m)< \mathsf j=  (j_1, \dots, j_m)$ if and only if 
\[
i_1 = j_1, \dots, i_{l-1} = j_{l-1}, \textrm{ and }
i_l < j_l  \textrm{ for some } l \in \{1, \dots, m\}.
\]

We will abuse the notation $\underline 0 = (0, \dots, 0)$ in various contexts, e.g. as elements in $\QQ^m$ or in $\bigoplus_{\omega \in \Log^*} x_1^b \kappa_\sfF^\alg \omega$.  This should not cause any confusion.

Let $\mathsf F = \mathbf k((x_m))\cdots((x_1))$ be the $m$-dimensional local field, where $\mathbf k$ is a trivially normed field.  The residue field $\kappa_\sfF$ of $\sfF$ is $\mathbf k((x_m))\cdots((x_2))$.
We then define a multi-indexed valuation $\mathsf v = (v_1, \dots, v_m): \sfF^\times \to \ZZ^m \subset \QQ^m$, where, for $\alpha \in \sfF^\times$, $v_1(\alpha)$ is the $x_1$-valuation of $\alpha$ and inductively, $v_i(\alpha)$ is the $x_i$-valuation of the reduction of $\alpha x_1^{-v_1(\alpha)} \cdots x_{i-1}^{-v_{i-1}(\alpha)}$ in $\mathbf k((x_m))\cdots ((x_i))$.   We denote $\mathsf O_\sfF =\{x \in \sfF| x=0 \textrm { or }\mathsf v(x) \geq \underline 0\}$ and $\mathsf M_\sfF =\{x \in F| x=0 \textrm{ or }\mathsf v(x) > \underline 0\}$.

We assume that $\mathbf k$  contains $k(x_{m+1}, \dots, x_n)$; and we assume that $\sfF$ admits continuous actions of differential operators $\partial_1 = \partial / \partial x_1, \dots, \partial_n = \partial / \partial x_n$.  When considering differential modules, the log-structure is given by $\Log =\{\partial_1, \dots, \partial_m\}$ and $\Log^* =\{\frac{dx_1}{x_1}, \dots, \frac{dx_m}{x_m}, dx_{m+1}, \dots, dx_n\}$.

For any $b \in \QQ$, the valuation $\mathsf v$ naturally gives rise to a valuation (still using the same notation) $\mathsf v: x_1^b\kappa_\sfF^\alg \bs \{ 0\}\to \QQ^m$; it induces a valuation $\mathsf v^\sharp: \big(\bigoplus_{\omega \in \Log^*}   x_1^b\kappa_\sfF^\alg \cdot \omega\big) \bs\{ 0\} \to \QQ^m$ given by
\[
\mathsf v^\sharp\Big(\theta_1 \frac{dx_1}{x_1}+ \cdots + \theta_m \frac{dx_n}{x_m}+ \theta_{m+1} dx_{m+1} + \cdots + \theta_n dx_n \Big):=\min \big\{ \mathsf v(\theta_1), \dots, \mathsf v(\theta_n) \big\}.
\]

If $V$ is a differential module over $\sfF$ of dimension $d$ with pure irregularity $\Irr(V)>0$, we define the \emph{multi-valuational irregularities} to be
\[
\mathsf{Irr}^\sharp(V) = \{-\mathsf v^\sharp(\Ref^\sharp(V; i))|i=1, \dots, \dim M\}.
\]
Let $\mathsf{Irr}^\sharp(V;1), \dots, \mathsf{Irr}^\sharp(V;\dim V)$ be elements of $\mathsf{Irr}^\sharp(V)$ in decreasing order.  In particular, the first argument of each $\mathsf{Irr}^\sharp(V; i)$ is simply $\Irr(V;i)$. \emph{Be aware that we may not have $-\mathsf v^\sharp(\Ref^\sharp(V;i)) = \mathsf{Irr}^\sharp(V;i)$ as there is no canonical order for $\Ref^\sharp(V;i)$ as pointed out in \ref{SS:refined-irr}.}
We also define the \emph{multi-valuational refined irregularities} to be
\[
\mathsf{Ref}^\sharp(V) = \big\{\big(-\mathsf v^\sharp(\Ref^\sharp(V; i)), \bar \vartheta_i\big)|\;i=1, \dots, \dim M\big\},
\]
where $\bar \vartheta_i$ is the reduction of $x_1^{v_1(\Ref^\sharp(V;i))}\cdots x_m^{v_m(\Ref^\sharp(V;i))} \Ref^\sharp(V;i)$ in $\bigoplus_{\omega \in \Log^*}\mathbf k^\alg \cdot \omega$.  We order the elements of $\mathsf{Ref}^\sharp(V)$ as $\mathsf{Ref}^\sharp(V;1), \dots, \mathsf{Ref}^\sharp(V; d)$, in decreasing order on the first argument.  (Again, note that a new order may be taken among all refined partially intrinsic radii.)

In general, $\mathsf{Irr}^\sharp(V)$ is the union of $\mathsf{Irr}^\sharp(V_i)$ for $V_i$ Jordan-H\"older factors of $V$. Here, those Jordan-H\"older factors $V_i$ with $\Irr(V_i)=0$ contribute $(0, \dots, 0)$ with multiplicity $\dim V_i$ to $\mathsf{Irr}^\sharp(V)$.

\begin{remark}
\label{R:Irr=0}
Careful readers may have noticed that, even when $\Irr(V)=0$, one can use Deligne-Malgrange lattice (see \cite[Section~2.4]{kedlaya-sabbah1}) to extract some information on the valuations $v_2, \dots, v_{m}$, and hence develop certain version of the multi-valuational irregularities this way.  However, we do not take this approach because (a) the information on the irregular pieces is what we need for our main theorem, and (b) this is a special phenomenon for residual characteristic zero case and we hope to keep parallel with the treatment in characteristic $p>0$ where Deligne-Malgrange lattices are not available.
\end{remark}

\subsection{Differential modules over $R_{n,m}$}

\subsubsection{Setup}
Let $k$ be a field of characteristic $0$. For $n\geq m\geq 0$, set
$$R_{n,m}:=k\llbracket x_1,\dots,x_n\rrbracket [x_1^{-1},\dots, x_m^{-1}].$$

For $\underline r=(r_1,\dots,r_n)\in[0,\infty)^n$, let $|\cdot|_{\underline r}$ denote the $(e^{-r_1},\dots,e^{-r_n})$-Gauss norm on $R_{n,m}$ and let $F_{\underline r}$ be the completion of $\Frac(R_{n,m})$ with respect to $|\cdot|_{\underline r}$.
Note that $F_{\underline r}$ is a complete nonarchimedean differential field of rational type with respect to $\partial_1=\partial /\partial x_1, \dots, \partial_n = \partial / \partial x_n$ with rational parameters $x_1,\dots,x_n$.

Let $S_{n,m}$ be the Fr\'echet completion of $R_{n,m}$ with respect to the norms $|\cdot|_{\underline r}$ for ${\underline r} \in (0, \infty)^n$.

Let $e_1, \dots, e_n$ be the standard base vectors  of $\mathbb R^n$.  For $j= 1, \dots, m$, we write $F_{(j)}$ and $|\cdot|_{(j)}$ for $F_{e_j}$ and $|\cdot|_{e_j}$, and write $\mathfrak o_{(j)}$ for $\gotho_{F_{(j)}}$.

\subsubsection{Differential modules}

Let $M$ be a \emph{differential module} over $R_{n,m}$, that is a locally free module $M$ over $R_{n,m}$ of finite rank $d$, with an integrable connection
$\nabla: M\to M\otimes \Omega^1_{R_{n,m}/k},$  i.e. with commuting actions of $\partial_1=\partial/\partial x_1,\dots,\partial_n=
\partial/\partial x_n $ (subject to the Leibniz rule).

For the first part of this subsection, we consider intrinsic radii with respect to the \emph{full log-structure} $\Log' = \{\partial_1, \dots, \partial_n\}$; in this case we omit the superscript $\sharp$ and write $IR(M \otimes F_{\underline r};i)$ for the subsidiary intrinsic radii.  We put $g_i(M,{\underline r})=-\log (IR(M \otimes F_{\underline r}; i))$ and  $G_i(M,{\underline r})=g_1(M,{\underline r})+\cdots+ g_i(M,{\underline r})$ for $i=1, \dots, d$.

\begin{remark}
For any $\lambda \in \RR_{>0}$, $|\cdot|_{\lambda {\underline r}} = |\cdot|_{\underline r}^\lambda$ and hence $F_{\lambda {\underline r}}$ is isomorphic to $F_{\underline r}$ only with a different norm.  It follows that $g_i(M, \lambda {\underline r}) = \lambda  g_i(M, {\underline r})$ for any $i=1, \dots, d$.  Also, under our convention, $F_{(0, \dots, 0)}$ is trivially normed and hence $g_i(M, (0, \dots, 0))  =0$ for all $i = 1, \dots, d$.
\end{remark}

\begin{notation}
For positive integers $\underline h = (h_1, \dots, h_m) \in \NN^m$ and a finite extension $k'$ of $k$, we put 
\[
R'_{n,m,\underline h} = k'\llbracket x_1^{1/h_1}, \dots, x_m^{1/h_m}, x_{m+1}, \dots, x_n\rrbracket [x_1^{-1}, \dots, x_m^{-1}].
\]  For $\underline r \in [0, \infty)^n$, we write $F'_{\underline r, \underline h} = k'F_{\underline r}[x_1^{1/h_1}, \dots, x_m^{1/h_m}]$ for the completion of $\Frac(R'_{n,m,\underline h})$ with respect to $|\cdot|_{\underline r}$ (extended to $F'_{\underline r, \underline h}$.
\end{notation}

\begin{theorem}\label{T:variation}
Let $M$ be a non-zero differential module of rank $d$ over $R_{n,m}$.
We have the following properties.

\emph{(i) (Variation)}
The functions $G_i(M,{\underline r})$ are continuous, convex, and piecewise linear for all $\underline r \in [0, \infty)^n$.  Moreover, if ${\underline r}, \underline r' \in (0, \infty)^n$ with $r_j= r'_j$ for $j = 1, \dots, m$ and $r_j \leq r'_j$ for $j = m+1, \dots, n$, then $G_i(M, {\underline r}) \leq G_i(M, \underline r')$.

\emph{(ii) (Decomposition)} Fix $l \in \{1, \dots, d-1\}$.  Suppose that the function $G_l(M,{\underline r})$ is linear, and $g_l(M,\underline  r) > g_{l+1}(M, {\underline r})$ for all ${\underline r} \in (0, \infty)^n$.
Then $M$ admits a unique direct sum decomposition $M_1\oplus M_2$ such that for each ${\underline r} \in (0, \infty)^n$,
$\calI\calR(M_1 \otimes F_{\underline r})$ consists of the smallest $l$ elements of $\calI\calR(M \otimes F_{\underline r})$.

\emph{(iii) (Refined intrinsic radii decomposition)}
Assume that $g_1(M, {\underline r}) = \cdots = g_d(M, \underline r) = b_1 r_1 + \cdots +b_mr_m$ are affine functions over $(0,\infty)^n$ (in particular, it is constant in the last $n-m$ coordinates).  Let $h_i$ denote the denominator of $b_i$ for all $i$.   Then there exists a finite extension $k'$ of $k$ and a multiset $\Ref'(M) \subset \oplus_{i=1}^n k' \frac {dx_i}{x_i}$ such that we have a unique  direct sum  decomposition of differential modules:
\[
M \otimes_{R_{n,m}} R'_{n,m,\underline h} = \bigoplus_{\vartheta \in \Ref'(M)} M_\vartheta,
\]
such that $M_\vartheta \otimes F'_{\underline r, \underline h}$ has pure refined intrinsic radius $x_1^{-b_1}\cdots x_m^{-b_m}\vartheta$ for all ${\underline r} \in (0, \infty)^n$.
\end{theorem}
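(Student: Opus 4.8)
All three assertions are the relative --- i.e.\ over $R_{n,m}$ --- counterparts of the one-variable and field-theoretic facts recalled in \cite{kedlaya-xiao} and in Proposition~\ref{P:decomposition-field}, and the substance lies in upgrading the pointwise-in-$\underline r$ statements over the completed residue fields $F_{\underline r}$ to statements about $M$ itself. For \emph{(i)}, continuity, convexity and piecewise linearity are local on $[0,\infty)^n$, and since $G_i(M,\lambda\underline r)=\lambda G_i(M,\underline r)$ it is enough to verify them along line segments in $(0,\infty)^n$ plus continuity up to the boundary. Along a segment I would reduce to one variable: completing $\Frac(R_{n,m})$ in the directions transverse to the segment (which preserves rationality of the derivations by Remark~\ref{R:enlarge-F}) presents $M$ restricted to the segment as a differential module in one parameter, to which the one-variable variation theorem of \cite{kedlaya-xiao} applies. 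Because the individual $g_i$ need not be convex while the partial sums $G_i$ are, I would run this simultaneously for the exterior powers $\bigwedge^iM$, reading convexity and piecewise linearity of $G_i(M,\cdot)$ off the top subsidiary-radius function of $\bigwedge^iM$; the finitely many ``breaking loci'' are cut out by the rationality conditions coming from the one-variable theory, hence are hyperplanes. Monotonicity in the non-logarithmic directions $j>m$ reflects that $M$ is a module over a power series ring in $x_{m+1},\dots,x_n$ (hence ``tame'' as $r_j\to0^+$) while $|\partial_j|_{F_{\underline r}}=e^{r_j}$ grows with $r_j$; it follows from the monotonicity of intrinsic radii under enlarging the Gauss-norm exponent, again reduced to \cite{kedlaya-xiao}.

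For \emph{(ii)}, fix $\underline r\in(0,\infty)^n$. The hypothesis $g_l(M,\underline r)>g_{l+1}(M,\underline r)$ means the $l$-th and $(l+1)$-th smallest subsidiary radii of $M\otimes F_{\underline r}$ are distinct, so the decomposition of Proposition~\ref{P:decomposition-field}(ii) into pieces pure for all $\partial_j$ groups canonically into $M\otimes F_{\underline r}=N_{1,\underline r}\oplus N_{2,\underline r}$ with $N_{1,\underline r}$ carrying exactly the smallest $l$ subsidiary radii; canonicity forces these splittings to agree where they overlap. Let $e_{\underline r}\in\End(M\otimes F_{\underline r})$ be the idempotent onto $N_{1,\underline r}$; it is $\nabla$-horizontal and integral for the natural lattice. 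The crucial input is that the \emph{linearity} of $G_l(M,\cdot)$ removes any breaking locus, so the family $(e_{\underline r})_{\underline r}$ glues with no obstruction to an idempotent $e$ over $R_{n,m}$ (the descent step of \cite{kedlaya-xiao}, passing through the Fr\'echet completion $S_{n,m}$ and using that $e$ is $\nabla$-horizontal and uniformly integral). Then $M=eM\oplus(1-e)M$ is the desired decomposition, with $M_1|_{F_{\underline r}}=N_{1,\underline r}$; uniqueness is inherited from uniqueness over the $F_{\underline r}$ in Proposition~\ref{P:decomposition-field}(ii).

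For \emph{(iii)}, now every $g_i(M,\cdot)$ equals the same linear function $b_1r_1+\cdots+b_mr_m$, so $M\otimes F_{\underline r}$ has pure intrinsic radii and the refined decomposition of Proposition~\ref{P:decomposition-field}(iii) applies after a finite base change. To make the base change uniform in $\underline r$, note the $b_i$ are rational by the one-variable theory; take $h_i$ to be the denominator of $b_i$ and pass to $R'_{n,m,\underline h}$ over a finite extension $k'$ of $k$. Applying Lemma~\ref{L:refined-integral} (with $R$ a suitable localization of the power series ring in the remaining variables and $\pi_F$ a power of $x_1$, treating $x_2,\dots,x_n$ one at a time) shows that the refined intrinsic radii in question, normalized by $x_1^{-b_1}\cdots x_m^{-b_m}$, have coefficients in $k'$, so they assemble into a multiset $\Ref'(M)\subset\bigoplus_{i=1}^n k'\frac{dx_i}{x_i}$. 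That this normalized multiset does not depend on $\underline r$ follows from homogeneity in $\underline r$ along each ray together with the rigidity of $\Ref'(M)$ once $k'$ is fixed: it varies within a fixed discrete set, hence is locally constant on the connected domain $(0,\infty)^n$. Finally, the refined pieces over the $F'_{\underline r,\underline h}$ are canonical and their idempotents integral, and by the just-established constancy they glue and descend to $R'_{n,m,\underline h}$ exactly as in \emph{(ii)}, giving the asserted decomposition $\bigoplus_{\vartheta\in\Ref'(M)}M_\vartheta$.

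The step I expect to be the main obstacle is the descent in \emph{(ii)} and its refinement in \emph{(iii)}: converting the family of canonical splittings over the completed residue fields $F_{\underline r}$ into an honest direct-sum decomposition of $M$ as an $R_{n,m}$-module (resp.\ $R'_{n,m,\underline h}$-module). This is where the linearity/affineness hypotheses must be turned into quantitative statements --- uniform integrality of the projectors and vanishing of any monodromy around the polyannulus --- and is the part that genuinely requires the analytic machinery of \cite{kedlaya-xiao} rather than formal manipulation; the remaining steps are bookkeeping or direct citations of the one-variable and field-theoretic results already recalled.
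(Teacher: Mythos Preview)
Your overall strategy matches the paper's: parts (i) and (ii) are in fact direct citations there (to \cite[Theorem~3.3.9]{kedlaya-xiao} and \cite[Theorem~3.3.6]{kedlaya-sabbah1} respectively), and your sketches for them are in the spirit of what those cited proofs do. For (iii) the paper also follows the template you identify in your final paragraph: obtain the decomposition over the Fr\'echet completion $S_{n,m}$ and then descend the projector to $R_{n,m}$ by the argument of \cite[Theorem~3.3.6]{kedlaya-sabbah1} (using \cite[Lemma~2.6.4]{kedlaya-sabbah1}).

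The soft spot in your (iii) is the step where you invoke Lemma~\ref{L:refined-integral} to conclude that the normalized refined radii have coefficients in $k'$. That lemma only places them in $R^\alg$ for $R$ a ring of the type $k\llbracket x_2/x_1,\dots\rrbracket$, not in a finite extension of $k$; and your subsequent ``rigidity'' argument (the multiset varies in a fixed discrete set over a connected domain, hence is locally constant) presupposes that the refined radii already sit in a fixed finite set independent of $\underline r$, which is precisely what needs to be shown. The paper bypasses this by citing \cite[Theorem~4.3.6]{xiao-refined}, which directly produces the refined decomposition over $S_{n,m}$ with the $\vartheta$'s already identified as constants; the descent to $R'_{n,m,\underline h}$ is then purely the projector argument you describe. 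So your outline is correct, but the specific tool you reach for at the constancy step is not strong enough on its own --- you would need either to reprove the content of \cite[Theorem~4.3.6]{xiao-refined} or to cite it.
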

\begin{proof}
(i) is proved in \cite[Theorem~3.3.9]{kedlaya-xiao} and (ii) is proved in \cite[Theorem~3.3.6]{kedlaya-sabbah1}.
We now prove (iii).  There is nothing to prove when $m=0$, so we assume hereafter that $m>0$.  Also, we may replace $x_j$ by $x_j^{1/h_j}$ and $k$ by $k'$ and assume that $h_j =1$ and $k=k'$ for all $j = 1, \dots, m$.  By \cite[Theorem~4.3.6]{xiao-refined}, we have a decomposition of $M$ over $S_{n,m}$ satisfying the required property; this corresponds to a projector $\bbe \in \End(M) \otimes_{R_{n,m}} S_{n,m}$.  By the following lemma, we deduce that this projector $\bbe$ in fact lives in $\End(M)$, yielding (iii).
\end{proof}

\begin{lemma}
\label{L:Kiran's decompletion}
Let $M$ be a differential module over $R_{n,m}$.  Suppose that $v$ is a vector of $M \otimes_{R_{n,m}} S_{n,m}$ such that $\partial_i(v) = 0$ for every $i$.  Then $v \in M$.
\end{lemma}

\begin{proof}
This is exactly what was proved in \cite[Theorem~3.3.6]{kedlaya-sabbah1}, although {\it loc. cit.} is stated in a different form with our $M$ being the $\End(M)$ therein.
\end{proof}

\begin{definition}
For a non-zero differential module $M$ of over $R_{n,m}$, we say that $M$ is \emph{numerically clean} if the functions $g_i(M, \underline r)$ are linear in $\underline r$ for all $i$.
\end{definition}

\begin{remark}\label{R:clean-nonstable}
The numerical condition is preserved under taking subobjects but is \emph{not} stable under taking direct sums, because the functions $g_i$ from different direct summands may not be well-ordered. However, one can develop an explicit recipe to make toroidal blow-ups at the intersection of irreducible components so that, pulling back to this blowup, $M$ becomes numerically clean. 
\end{remark}

In \cite{kedlaya-sabbah1}, Kedlaya introduced a condition which is slightly stronger than numerical cleanliness.  It has the advantage of being equivalent to $M$ having an explicit form (locally), and is stable under proper birational base change.

\begin{definition}
Let $M$ be a  differential module over $R_{n,m}$. We say $M$ is \emph{regular} if $M=0$ or $g_1(M,\underline r)\equiv 0$.  By Theorem~\ref{T:variation}(i), this is equivalent to $IR(M \otimes F_{(j)};1 ) = 1$ for all $j=1, \dots, m$.
\end{definition}

\begin{definition}
\label{D:good-decomposition}
For $\phi \in R_{n,m}$, we define a differential module $E(\phi)$ over $R_{n,m}$ of rank 1 with generator $\mathbf e$ by
\[
\partial_i \mathbf e = \partial_i(\phi) \mathbf{e}, \textrm{ for } i=1,\dots, n.
\]

Let $M$ be a  differential module over $R_{n,m}$. A \emph{good decomposition} is an isomorphism
$$M\simeq\bigoplus_{\alpha\in A} E(\phi_\alpha)\otimes_{R_{n,m}} \mathrm{Reg}_\alpha$$
for some $\phi_\alpha\in R_{n,m}$ and some regular differential modules $\mathrm{Reg}_\alpha$, satisfying the following two conditions:

(1) For $\alpha\in A$, if $\phi_\alpha \notin R_{n,0}$, then $\phi_\alpha=ux_1^{-i_1}\cdots x_m^{-i_m}$, for some unit $u\in R_{n,0}^\times$
and some nonnegative integers $i_1,\dots,i_m$.

(2) For $\alpha,\beta\in A$, if $\phi_\alpha-\phi_\beta \notin R_{n,0}$, then $\phi_\alpha-\phi_\beta=ux_1^{-i_1}\cdots x_m^{-i_m}$, for some unit $u\in R_{n,0}^\times$
and some nonnegative integers $i_1,\dots,i_m$.
\end{definition}

\begin{theorem}
\label{T:kedlaya-criterion}
Let $M$ be a non-zero differential modules over $R_{n,m}$ of rank $d$. The following conditions are equivalent:

(1) There exist a finite extension $k'$ and a positive integer $h$ such that $M\otimes_{R_{n,m}}R_{n,m}[x_1^{1/h}, \dots, x_m^{1/h}]$ admits a good decomposition.

(2) The functions $G_1(M,r),\cdots, G_d(M,r)$ and $G_{d^2}(M\otimes M^\vee, r)$ are linear in $r$.

(3) The functions $G_d(M,r)$ and $G_{d^2}(M\otimes M^\vee, r)$ are linear in $r$.
\end{theorem}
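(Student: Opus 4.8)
The plan is to prove the cyclic chain $(1)\Rightarrow(2)\Rightarrow(3)\Rightarrow(1)$. The implication $(2)\Rightarrow(3)$ is trivial, since (3) only retains part of the hypotheses of (2). The implication $(1)\Rightarrow(2)$ is a direct computation with a good decomposition. The implication $(3)\Rightarrow(1)$ is the substantial one; it is in essence Kedlaya's numerical-cleanness criterion \cite[Theorem~4.4.2]{kedlaya-sabbah1}, and the point of the $M\otimes M^\vee$-hypothesis is precisely to make it go through.

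\emph{Proof of $(1)\Rightarrow(2)$.} Under the finite flat base change $R_{n,m}\to R_{n,m}[x_1^{1/h},\dots,x_m^{1/h}]$ (and the extension of residue field, harmless by Remark~\ref{R:enlarge-F}), each function $G_i(M,\underline r)$, resp.\ $G_{d^2}(M\otimes M^\vee,\underline r)$, is replaced by its composition with an invertible linear change of $\underline r$; hence linearity is unaffected and we may assume $M\cong\bigoplus_{\alpha\in A}E(\phi_\alpha)\otimes\mathrm{Reg}_\alpha$ is itself a good decomposition. If $\phi_\alpha\in R_{n,0}$ then $E(\phi_\alpha)$ is regular: after removing the constant term (which does not change the connection) $\phi_\alpha$ lies in the maximal ideal, so $\exp(-\phi_\alpha)$ is a unit of $R_{n,0}$ converging in every $|\cdot|_{\underline r}$ with $\underline r\in(0,\infty)^n$ (using residual characteristic zero), and $\exp(-\phi_\alpha)\mathbf e$ is a horizontal generator. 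If $\phi_\alpha\notin R_{n,0}$, Definition~\ref{D:good-decomposition}(1) gives $\phi_\alpha=u_\alpha x_1^{-i_1^\alpha}\cdots x_m^{-i_m^\alpha}$ with $u_\alpha$ a unit and $\underline i^\alpha\in\NN^m\setminus\{0\}$, and a standard computation yields $g_l(E(\phi_\alpha)\otimes\mathrm{Reg}_\alpha,\underline r)=i_1^\alpha r_1+\cdots+i_m^\alpha r_m$ for every $l$. Now Definition~\ref{D:good-decomposition}(2) forces the vectors $\underline i^\alpha$ to be pairwise comparable in $\NN^m$: if $\underline i^\alpha,\underline i^\beta$ were incomparable — say $i^\alpha_{l_1}>i^\beta_{l_1}$ and $i^\alpha_{l_2}<i^\beta_{l_2}$ — then after factoring the maximal common monomial out of $\phi_\alpha-\phi_\beta$ the remaining factor lies in $R_{n,0}$ and vanishes on $\{x_{l_1}=x_{l_2}=0\}$, so it is not a unit, while $\phi_\alpha-\phi_\beta\notin R_{n,0}$ (it has a negative power of $x_{l_1}$), contradicting Definition~\ref{D:good-decomposition}(2). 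Consequently the subsidiary irregularity functions of $M$ form, on $(0,\infty)^n$, a totally ordered chain of fixed linear functions (the zero functions from the regular summands being smallest), so every $g_l(M,\underline r)$ is linear and hence so is every $G_i(M,\underline r)$. Finally $M\otimes M^\vee\cong\bigoplus_{\alpha,\beta}E(\phi_\alpha-\phi_\beta)\otimes\mathrm{Reg}_\alpha\otimes\mathrm{Reg}_\beta^\vee$; each $\phi_\alpha-\phi_\beta$ lies in $R_{n,0}$ or is a unit times a monomial by Definition~\ref{D:good-decomposition}(2), so each summand has linear subsidiary irregularity, and since $G_{d^2}$ is the total sum it is linear regardless of the ordering.

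\emph{Sketch of $(3)\Rightarrow(1)$.} One argues by induction on $\rank(M)$, and secondarily on the total irregularity $G_d(M,-)$. The role of the hypothesis that $G_{d^2}(M\otimes M^\vee,\underline r)$ is linear is to exclude ``incomparable slopes'': together with linearity of $G_d(M,-)$ it forces each $g_i(M,\underline r)$ to be linear, so, applying Theorem~\ref{T:variation}(ii) repeatedly (first splitting off the maximal regular summand, then separating distinct slopes), $M$ decomposes into pieces of a single pure slope; we may thus assume $M$ itself has one pure slope $b_1r_1+\cdots+b_mr_m$. Theorem~\ref{T:variation}(iii) then gives, over a finite cover $R'_{n,m,\underline h}$ with $\underline h$ the denominators of the $b_j$, a decomposition $M\otimes R'_{n,m,\underline h}=\bigoplus_\vartheta M_\vartheta$ into pieces of pure refined intrinsic radii $x_1^{-b_1}\cdots x_m^{-b_m}\vartheta$. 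For each $\vartheta\neq0$, the compatibility among the components of $\vartheta$ (Remark~\ref{R:refined-CDVF}) lets one integrate $\vartheta$ to a function $\phi_\vartheta=u_\vartheta x_1^{-b_1}\cdots x_m^{-b_m}$ with $u_\vartheta$ a unit, after a further finite extension absorbed into $k'$ and $\underline h$; twisting $M_\vartheta$ by $E(-\phi_\vartheta)$ strictly lowers the slope, so the inductive hypothesis applies to $M_\vartheta\otimes E(-\phi_\vartheta)$ and then, tensoring back by $E(\phi_\vartheta)$, to $M_\vartheta$. Reassembling yields the desired decomposition; the verification of Definition~\ref{D:good-decomposition}(1) and (2) reduces to the observation that the monomials $x_1^{-b_1}\cdots x_m^{-b_m}$ appearing, and all their pairwise differences, remain products of a unit and a monomial, which holds because the various slopes and refined slopes arising along the induction stay pairwise comparable in the appropriate sense.

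The main obstacle is precisely this last bookkeeping: one must check that the $M\otimes M^\vee$-hypothesis propagates through the inductive decompositions and twists, so that at every stage the exponential factors and all their differences remain of the monomial-times-unit form (equivalently, the slopes and refined slopes never become ``incomparable''), and one must control the growth of the auxiliary finite extension $k'$ and the ramification index $\underline h$. This is the technical heart of \cite[Theorem~4.4.2]{kedlaya-sabbah1}, from which we import it.
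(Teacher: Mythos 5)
The paper's ``proof'' of this theorem is a one-line citation to Kedlaya, \cite[Theorem~4.4.2]{kedlaya-sabbah1}; no argument is supplied. You instead reconstruct the argument: a direct proof of $(1)\Rightarrow(2)$ (and the trivial $(2)\Rightarrow(3)$), plus a sketch of $(3)\Rightarrow(1)$ that defers its technical core back to Kedlaya. Since the result is genuinely Kedlaya's and the paper treats it as such, your identification of the source is correct, and your approach is strictly more informative than the paper's.

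On the substance: your $(1)\Rightarrow(2)$ argument is essentially right. The one spot that wants tightening is the incomparability contradiction. When you ``factor the maximal common monomial out of $\phi_\alpha-\phi_\beta$,'' you should say what that means precisely: writing $\underline m=\max(\underline i^\alpha,\underline i^\beta)$ componentwise, one has $\phi_\alpha-\phi_\beta=x^{-\underline m}\,v$ with $v=u_\alpha x^{\underline m-\underline i^\alpha}-u_\beta x^{\underline m-\underline i^\beta}\in R_{n,0}$, and the two exponent vectors $\underline m-\underline i^\alpha$, $\underline m-\underline i^\beta$ are themselves nonzero and incomparable (with disjoint supports), so $v$ has two incomparable minimal monomials and therefore cannot be a unit times a monomial --- whence $\phi_\alpha-\phi_\beta$ cannot be either, contradicting Definition~\ref{D:good-decomposition}(2). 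Stated at the level you wrote it (``the remaining factor\ldots is not a unit''), the contradiction is not quite immediate, since the claim is not that the specific factorization you exhibited has a non-unit cofactor, but that \emph{no} factorization into unit times monomial exists; the incomparable-minimal-monomials phrasing closes that gap. Also, for the slope computation $g_l(E(\phi_\alpha)\otimes\mathrm{Reg}_\alpha,\underline r)=\underline i^\alpha\cdot\underline r$ you should note this is taken with respect to the full log-structure as the paper declares before Theorem~\ref{T:variation}, and that the nonlogarithmic derivations $\partial_{m+1},\dots,\partial_n$ contribute strictly less, so the maximum is indeed attained by some $j\leq m$ with $i^\alpha_j>0$. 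Your $(3)\Rightarrow(1)$ sketch is a fair description of Kedlaya's inductive strategy; it is not a self-contained proof (nor do you claim it is), and the two delicate points you flag --- propagating the $M\otimes M^\vee$ hypothesis through the twists, and controlling the ramification index --- are exactly where the real work in \cite[Theorem~4.4.2]{kedlaya-sabbah1} lives.
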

\begin{proof}
See \cite[Theorem~4.4.2]{kedlaya-sabbah1}.
\end{proof}

\begin{definition}
We say that $M$ admits a \emph{good formal structure} at $x$ if $M \otimes R_{n,m}$ satisfies the equivalent conditions in Theorem~\ref{T:kedlaya-criterion}.
\end{definition}

\begin{remark}
We have implications
(i) $M$ and $\End(M) = M \otimes M^\vee$ being numerically clean $\Rightarrow$
(ii) $M$ admitting good formal structure $\Rightarrow$ (iii) $M$ being numerically clean.  The first implication is not an equivalence; see \cite[Example~4.4.5]{kedlaya-sabbah1} for a counterexample.  This failure is very similar to the instability of cleanliness under taking direct sums, as explained in Remark~\ref{R:clean-nonstable}.
\end{remark}

\subsubsection{A different log-structure}
For the rest of this subsection, we discuss the situation with a different choice of log-structure: $\Log = \{\partial_1, \dots, \partial_m\}$.  We write $\Log^* =\{\frac{dx_1}{x_1}, \dots, \frac{dx_m}{x_m}, dx_{m+1}, \dots, dx_n\}$.
For a differential module $M$ over $R_{n,m}$, we set $g_i^\sharp(M,\underline r)=-\log( IR^\sharp(M \otimes F_{\underline r}; i))$ and  $G_i^\sharp(M,{\underline r})=g_1^\sharp(M,{\underline r})+\cdots+ g_i^\sharp(M,{\underline r})$ for $\underline r\in [0,\infty)^n$ and $i=1, \dots, d$.  

Similarly, we have $g_i^\sharp(M, \lambda {\underline r}) = \lambda  g_i^\sharp(M, {\underline r})$ and $ g_i^\sharp(M, (0, \dots, 0)) =0$ for any $\lambda \in \RR_{>0}$ and all $i = 1, \dots, d$.

Let $\bbk = \Frac(k\llbracket x_{m+1}, \dots, x_n\rrbracket)$.  Some of our decomposition theorems will only work over $\bbR_{n,m} = \bbk\llbracket x_1, \dots, x_m\rrbracket [x_1^{-1}, \dots, x_m^{-1}]$.  For $\underline r \in [0, \infty)^m \times \{0\}^{n-m}$, the norm $|\cdot|_{\underline r}$ also extends to $\Frac(\bbR_{n,m})$; let $\bbF_{\underline r}$ denote the completion.  Each $\bbF_{\underline r}$ contains $F_{\underline r}$ as a subfield and it actually equals to $F_{\underline r}$ if $\underline r \in (0, \infty)^m \times \{0\}^{n-m}$.  

For a differential module $\bbM$  over $\bbR_{n,m}$, we similarly define $g_i^\sharp(\bbM,\underline r)=-\log( IR^\sharp(\bbM \otimes \bbF_{\underline r}; i))$ and   $G_i^\sharp(\bbM,{\underline r})=g_1^\sharp(\bbM,{\underline r})+\cdots+ g_i^\sharp(\bbM,{\underline r})$ for any $\underline r \in [0, \infty)^m \times \{0\}^{n-m}$ and $i=1, \dots, d$.  If $\bbM = M \otimes_{R_{n,m}}\bbR_{n,m}$ is the base change of a differential module $M$ over $R_{n,m}$, we have $g_i^\sharp(\bbM,\underline r) = g_i^\sharp(M,\underline r)$ for any $\underline r \in [0, \infty)^m \times \{0\}^{n-m}$.

\begin{prop}
\label{P:weak-decomposition}
Let $\bbM$ be a nonzero differential module of rank $d$ over $\bbR_{n,m}$.  We have the following properties.

\emph{(i${}^\sharp$) (Variation)} The functions $G^\sharp_i(\bbM,{\underline r})$ are continuous, convex, and piecewise linear for all $\underline r \in [0, \infty)^m \times \{0\}^{n-m}$.

\emph{(ii${}^\sharp$) (Weak decomposition)}    Fix $l \in \{1, \dots, d-1\}$.  Suppose that the function $G_l^\sharp(\bbM, \underline r)$ is linear, and $g_l^\sharp(\bbM, \underline r)> g^\sharp_{l+1}(\bbM, \underline r)$ for all $\underline r \in (0, \infty)^m \times \{0\}^{n-m}$.  Then $\bbM$ admits a unique direct sum decomposition $\bbM_1\oplus \bbM_2$ such that, for each $\underline r \in (0, \infty)^m \times \{0\}^{n-m}$,
$\calI\calR^\sharp(\bbM_1 \otimes \bbF_{\underline r})$ consists of the smallest $l$ elements of $\calI\calR^\sharp(\bbM \otimes \bbF_{\underline r})$.

\emph{(iii${}^\sharp$) (Weak refined intrinsic radii decomposition)}
Assume that $g_1^\sharp(\bbM, {\underline r}) = \cdots = g_d^\sharp(\bbM, \underline r) = b_1 r_1 + \cdots +b_mr_m$ are affine functions over $[0,\infty)^m \times \{0\}^{n-m}$.  Let $h_i$ denote the denominator of $b_i$ for all $i$.   Then we have a unique  direct sum decomposition of differential modules
\[
\bbM \otimes_{\bbR_{n,m}} \bbR_{n,m}[x_1^{1/h_1}, \dots, x_m^{1/h_m}] = \bigoplus_{\{G_\bbk\vartheta\}} \bbM_{\{G_\bbk\vartheta\}},
\]
where the direct sum is taken over all $G_\bbk = \Gal(\bbk^\alg/\bbk)$-orbits of elements in $\bigoplus_{\omega \in \Log^*} \bbk^\alg \cdot \omega$, such that, for all ${\underline r} \in (0, \infty)^m \times \{0\}^{n-m}$, the refined intrinsic radii of $\bbM_{\{G_\bbk\vartheta\}} \otimes \bbF_{\underline r}[x_1^{1/h_1}, \dots, x_m^{1/h_m}]$ consists of $x_1^{-b_1}\cdots x_m^{-b_m}g(\vartheta)$ for $g \in G_\bbk$ with the same multiplicity on each element.
\end{prop}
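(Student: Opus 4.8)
The plan is to recognize the three assertions as the exact analogues of the three parts of Theorem~\ref{T:variation}, after replacing the pair $(R_{n,m},k)$ by $(\bbR_{n,m},\bbk)$, and then to re-run the arguments behind Theorem~\ref{T:variation} over $\bbk$.  Two observations make this possible.  First, $\bbR_{n,m}=\bbk\llbracket x_1,\dots,x_m\rrbracket[x_1^{-1},\dots,x_m^{-1}]$ is of exactly the same shape as $R_{m,m}$, with the base field $k$ replaced by $\bbk=\Frac(k\llbracket x_{m+1},\dots,x_n\rrbracket)$; here $\partial_1,\dots,\partial_m$ are the geometric derivations (whose associated divisor is $x_1\cdots x_m=0$) and $\partial_{m+1},\dots,\partial_n$ act as continuous derivations of the base.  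I would first check that $\bbk$, with the trivial norm and the commuting derivations $\partial_{m+1},\dots,\partial_n$, is a complete nonarchimedean differential field of rational type in the sense of \ref{SS:setup-field} (rational parameters $x_{m+1},\dots,x_n$; the identities $|\partial_j|_\bbk=1=|x_j|_\bbk^{-1}$ are immediate for the trivial norm), so that the machinery of \cite{kedlaya-xiao,kedlaya-sabbah1,xiao-refined} applies over $\bbk$ verbatim.  Second, for $\underline r$ in the subcone $[0,\infty)^m\times\{0\}^{n-m}$ one has $|x_j|_{\bbF_{\underline r}}=e^{-r_j}=1$ for $j=m+1,\dots,n$, hence $IR_{\partial_j}(\bbM\otimes\bbF_{\underline r})=R_{\partial_j}(\bbM\otimes\bbF_{\underline r})$ for those $j$; thus the partially intrinsic subsidiary radii (with $\Log=\{\partial_1,\dots,\partial_m\}$) coincide, as multisets, with the fully intrinsic subsidiary radii of $\bbM\otimes\bbF_{\underline r}$.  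In short, the partially intrinsic variation/decomposition problem over $\bbR_{n,m}$ along this subcone is an ordinary fully intrinsic variation/decomposition problem over the $m$-dimensional polyannulus ring over $\bbk$.

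Granting this, (i${}^\sharp$) and (ii${}^\sharp$) follow at once: they are \cite[Theorem~3.3.9]{kedlaya-xiao} and \cite[Theorem~3.3.6]{kedlaya-sabbah1} applied to $\bbM$ over $\bbR_{n,m}$, i.e. the proofs of Theorem~\ref{T:variation}(i),(ii) with $k$ replaced by $\bbk$.  For (iii${}^\sharp$), I would first dispose of the case $b_1=\cdots=b_m=0$, in which $\bbM$ is regular, all refined intrinsic radii are conventionally $\underline 0$, and the decomposition is the trivial one; so assume some $b_i>0$.  Then, running the proof of Theorem~\ref{T:variation}(iii) over $\bbk$ --- split $\bbM$ over the Fr\'echet completion of $\bbR_{n,m}$ along the norms $|\cdot|_{\underline r}$, $\underline r\in(0,\infty)^m\times\{0\}^{n-m}$, via \cite[Theorem~4.3.6]{xiao-refined}, and then descend the resulting projector into $\End(\bbM)$ by the argument of \cite[Theorem~3.3.6]{kedlaya-sabbah1} using \cite[Lemma~2.6.4]{kedlaya-sabbah1} --- produces, after a finite Galois extension $\bbk'/\bbk$ and after adjoining $x_j^{1/h_j}$, a decomposition $\bbM\otimes_{\bbR_{n,m}}\bbR_{n,m}[x_1^{1/h_1},\dots,x_m^{1/h_m}]\otimes_\bbk\bbk'=\bigoplus_\vartheta\bbM_\vartheta$ indexed by a finite multiset of refined radii $\vartheta\in\bigoplus_{\omega\in\Log^*}\bbk'\cdot\omega$, with each $\bbM_\vartheta$ of pure refined intrinsic radii $x_1^{-b_1}\cdots x_m^{-b_m}\vartheta$.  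Since the left-hand side is already defined over $\bbR_{n,m}[x_1^{1/h_1},\dots,x_m^{1/h_m}]$, the group $G_\bbk=\Gal(\bbk^\alg/\bbk)$ acts on it and permutes the $\bbM_\vartheta$ through its action on the $\vartheta$; regrouping into $G_\bbk$-orbits and descending gives $\bigoplus_{\{G_\bbk\vartheta\}}\bbM_{\{G_\bbk\vartheta\}}$ over $\bbR_{n,m}[x_1^{1/h_1},\dots,x_m^{1/h_m}]$ with the asserted pure refined radii, exactly as in the last paragraph of Proposition~\ref{P:decomposition-field}(iii); uniqueness follows from that in Theorem~\ref{T:variation}(iii) together with uniqueness of Galois descent.

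The step I expect to be the main obstacle is the projector-descent inside (iii${}^\sharp$): one must verify that \cite[Lemma~2.6.4]{kedlaya-sabbah1} and the surrounding argument of \cite[Theorem~3.3.6]{kedlaya-sabbah1} remain valid when only the norms along the \emph{lower-dimensional} subcone $(0,\infty)^m\times\{0\}^{n-m}$ are available --- that is, that a projector defined over the corresponding restricted Fr\'echet completion of $\bbR_{n,m}$ already lives in $\End(\bbM)$ --- and, along the way, that the auxiliary Gauss-norm extensions used in that argument (cf.\ Remark~\ref{R:enlarge-F}) keep $\partial_{m+1},\dots,\partial_n$ of rational type, so that the relevant convergence estimates still hold.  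Everything else should be a routine transcription, over $\bbk$, of the arguments already recorded for Theorem~\ref{T:variation}.
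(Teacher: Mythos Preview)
Your proposal is correct and follows essentially the same approach as the paper. The paper's proof likewise cites \cite[Theorem~3.3.9]{kedlaya-xiao} for (i${}^\sharp$), then for (ii${}^\sharp$) and (iii${}^\sharp$) obtains the decomposition over the Fr\'echet completion $\bbS_{n,m}$ (citing \cite[Theorem~3.4.2]{kedlaya-xiao} and \cite[Theorem~3.3.6]{xiao-refined} respectively) and descends the projector to $\End(\bbM)$ via the argument of \cite[Theorem~3.3.6]{kedlaya-sabbah1} using \cite[Lemma~2.6.4]{kedlaya-sabbah1}; your worry about the ``lower-dimensional subcone'' is a non-issue precisely because, as you observed, $\bbR_{n,m}$ \emph{is} an $R_{m,m}$-type ring over $\bbk$, so the restricted Fr\'echet completion is the full one for that ring.
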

\begin{proof}
(i${}^\sharp$) follows from \cite[Theorem~3.3.9]{kedlaya-xiao} applied to $\bbM$.
For (ii${}^\sharp$) and (iii${}^\sharp$), let $\bbS_{n,m}$ be the Fr\'echet completion of $\bbR_{n,m}$ with respect to the norms $|\cdot|_{\underline r}$ for ${\underline r} \in (0, \infty)^m \times \{0\}^{n-m}$.  (In case (iii${}^\sharp$), we may first replace $x_j$ by $x_j^{1/h_j}$ and $k$ by $k'$ and assume that $h_j =1$ and $k=k'$.)
In these two cases, we invoke \cite[Theorem~3.4.2]{kedlaya-xiao} and \cite[Theorem~4.3.6]{xiao-refined}, respectively, to obtain the desired decomposition over $\bbS_{n,m}$.  Each direct summand corresponds to a projector $\bbe \in \End(\bbM) \otimes_{\bbR_{n,m}} \bbS_{n,m}$.  Then, we
apply Lemma~\ref{L:Kiran's decompletion} (forgetting the actions of $\partial_{m+1}, \dots, \partial_n$) to deduce that this projector $\bbe$ in fact lives in $\End(\bbM)$, yielding (ii${}^\sharp$) and (iii${}^\sharp$).
\end{proof}

Under a stronger hypothesis, we can extend the decomposition to differential modules over $R_{m,n}$.
\begin{theorem}
\label{T:strong-decomposition}
Let $M$ be a nonzero differential module of rank $d$ over $R_{n,m}$.
Fix $l \in \{1, \dots, d-1\}$.  Assume that $g_1^\sharp(M, {\underline r}) = \cdots = g_l^\sharp(M, r) = b_1 r_1 + \cdots +b_mr_m$ are affine functions over $(0,\infty)^m \times \{0\}^{n-m}$ and assume that $g_l^\sharp(M, \underline r) > g_{l+1}^\sharp(M, \underline r)$ for $\underline r  \in (0,\infty)^m \times \{0\}^{n-m}$.  Let $h_i$ denote the denominator of $b_i$ for all $i$.  We have the following.

(i) There exists a (complete) local ring $\mathfrak R$ finite over $k\llbracket x_{m+1}, \dots, x_n\rrbracket$ such that for all ${\underline r} \in (0, \infty)^m \times \{0\}^{n-m}$,  
\[
\Ref^\sharp(M \otimes F_{\underline r}; 1), \dots, \Ref^\sharp(M \otimes F_{\underline r}; l) \in \bigoplus_{\omega\in \Log^*}x_1^{-b_1}\cdots x_m^{-b_m} \gothR\cdot \omega
\]

(ii) Let $\gothm_{\gothR}$ denote the maximal ideal of $\gothR$ and write $k' = \gothR / \gothm_\gothR$ which is a finite extension of $k$. 
We have a unique direct sum decomposition of differential modules:
\[
M \otimes_{R_{n,m}} R'_{n,m,\underline h} = M_0 \oplus \bigoplus_{\underline \lambda} M_{\underline \lambda},
\]
where the direct sum runs through all elements $\underline \lambda =(\lambda_\omega)_{\omega \in \Log^*}\in k'^n\bs \{\underline 0\}$, such that
\begin{itemize}
\item for any $\underline r \in (0, \infty)^m \times \{0\}^{n-m}$, all elements in $x_1^{b_1} \cdots x_m^{b_m} \Ref^\sharp(M_{\underline \lambda} \otimes F'_{\underline r, \underline h})$ is congruent to $\sum_{\omega \in \Log^*} \lambda_\omega \omega$ modulo $\gothm_\gothR$, and
\item for any $\underline r \in (0, \infty)^m \times \{0\}^{n-m}$, any Jordan-H\"older factor of $M_0 \otimes F'_{\underline r, \underline h}$ either has partially intrinsic radii $>b_1r_1+ \cdots + b_mr_m$, or has the refined partially intrinsic radii lie in $\bigoplus_{\omega \in \Log^*} x_1^{-b_1} \cdots x_m^{-b_m}\gothm_\gothR \cdot \omega$.
\end{itemize}
\end{theorem}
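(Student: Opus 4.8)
The plan is: produce the decomposition over $\bbR_{n,m}$ from Proposition~\ref{P:weak-decomposition}, descend it to $R_{n,m}$ and its tame covers by the idempotent-descent trick used in the proofs of Theorem~\ref{T:variation}(iii) and Proposition~\ref{P:weak-decomposition}, and then pin down the refined radii using Lemma~\ref{L:refined-integral}.

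\emph{Construction of $\gothR$.} After adjoining the roots $x_j^{1/h_j}$ and (eventually) a finite extension $k\to k'$, we may assume $b_1,\dots,b_m\in\NN$ and $h_j=1$. Base change $M$ to $\bbR_{n,m}$; since $G_l^\sharp(M,\underline r)$ is linear and $g_l^\sharp>g_{l+1}^\sharp$ on $(0,\infty)^m\times\{0\}^{n-m}$, Proposition~\ref{P:weak-decomposition}(ii${}^\sharp$) gives $M\otimes_{R_{n,m}}\bbR_{n,m}=\bbM_1\oplus\bbM_2$ with $\bbM_1$ of pure partial intrinsic radii $g^\sharp(\bbM_1,\underline r)=b_1r_1+\dots+b_mr_m$ and every subsidiary radius of $\bbM_2$ strictly larger. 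The corresponding idempotent being canonical, the descent argument of \cite[Theorem~3.3.6]{kedlaya-sabbah1} (via \cite[Lemma~2.6.4]{kedlaya-sabbah1}), applied verbatim as in the proofs of Theorem~\ref{T:variation}(iii) and Proposition~\ref{P:weak-decomposition}, shows it lies in $\End(M)$, yielding $M=M_1\oplus M_2$ over $R_{n,m}$. Applying Proposition~\ref{P:weak-decomposition}(iii${}^\sharp$) to $\bbM_1$ (extending $k$ if necessary) and descending the idempotents in the same way, we obtain $M_1\otimes_{R_{n,m}}R'_{n,m,\underline h}=\bigoplus_{\{G_{\bbk}\vartheta\}}(M_1)_{\{G_{\bbk}\vartheta\}}$, each piece with pure refined partial intrinsic radii $x_1^{-b_1}\cdots x_m^{-b_m}g(\vartheta)$, $\vartheta=\sum_\omega\theta_\omega\,\omega$ with $\theta_\omega\in\bbk^\alg$; note $\vartheta$ does not involve $x_1,\dots,x_m$. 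Let $\gothR$ be the integral closure of $k\llbracket x_{m+1},\dots,x_n\rrbracket$ inside the subfield of $\bbk^\alg$ generated over $\bbk=\Frac(k\llbracket x_{m+1},\dots,x_n\rrbracket)$ by all the $\theta_\omega$; once this subfield is seen to be finite over $\bbk$, $\gothR$ is a complete local domain, module-finite over $k\llbracket x_{m+1},\dots,x_n\rrbracket$, with residue field a finite extension of $k$ (the $k'$ of the statement).

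\emph{Proof of (i).} It suffices to show every $\theta_\omega$ is integral over $k\llbracket x_{m+1},\dots,x_n\rrbracket$. Since $g_l^\sharp>g_{l+1}^\sharp\ge0$, some $b_{j_0}>0$; by the symmetry of $R_{n,m}$ and of $\Log$ in $x_1,\dots,x_m$ we take $j_0=1$. Fix a piece $N=(M_1)_{\{G_{\bbk}\vartheta\}}$ and one of its refined radii $\vartheta=\sum_\omega\theta_\omega\,\omega$. Over the complete discretely valued field $F_{(r_1,0,\dots,0)}$ — uniformizer $x_1$, with $x_2,\dots,x_n$ units and residue field $\Frac(k\llbracket x_2,\dots,x_n\rrbracket)$ — the module $N\otimes F_{(r_1,0,\dots,0)}$ has pure irregularity $b_1>0$, so by Remark~\ref{R:refined-CDVF} its $\frac{dx_1}{x_1}$-coefficient $\theta_1$ is nonzero and its other coefficients equal $-x_j\partial_j(\theta_1)/b_1$; these vanish for $2\le j\le m$ (as $\theta_1$ is independent of $x_j$) and lie in $\gothR$ for $j>m$ once $\theta_1\in\gothR$. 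Moreover $\theta_1\ne0$ forces $N\otimes F_{(r_1,0,\dots,0)}$ to have \emph{pure} $\partial_1$-radii equal to $|x_1|^{b_1}$. Hence Lemma~\ref{L:refined-integral} applies to $N\otimes_{R'_{n,m,\underline h}}S$ with $\pi_F=x_1$, $\partial=\partial/\partial x_1$, $S=R((x_1))\supseteq R'_{n,m,\underline h}$, and $R=k'\llbracket x_2,\dots,x_n\rrbracket[(x_2\cdots x_m)^{-1}]$ — a UFD, being a localization of a regular complete local ring — (after a harmless further ramification in $x_1$ if $b_1=1$, to ensure the strict inequality $R_\partial<|u|$): it gives $\theta_1\in x_1^{-b_1}R^\alg$, i.e. $\theta_1$ is integral over $R$. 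Since also $\theta_1\in\bbk^\alg$, Gauss's lemma over the UFD $R$, applied to the minimal polynomial of $\theta_1$ over $\bbk$, puts its coefficients in $R\cap\bbk=k'\llbracket x_{m+1},\dots,x_n\rrbracket$, so $\theta_1$ is integral over $k\llbracket x_{m+1},\dots,x_n\rrbracket$. Repeating with $x_j$ in place of $x_1$ for each $j\le m$ with $b_j>0$ handles all components, so every $\vartheta\in\bigoplus_\omega x_1^{-b_1}\cdots x_m^{-b_m}\gothR\cdot\omega$. The stated containment over $F_{\underline r}$ then follows, since the first $l$ refined radii of $M\otimes F_{\underline r}$ occur among those of $M_1\otimes F'_{\underline r,\underline h}$.

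\emph{Proof of (ii) and the main obstacle.} The refined-radii decomposition of $M_1$ over $R'_{n,m,\underline h}$ is regrouped according to the reduction modulo $\gothm_{\gothR}$ of $x_1^{b_1}\cdots x_m^{b_m}\cdot(\text{refined radius})\in\bigoplus_\omega\gothR\cdot\omega$; these reductions are $k'$-rational, the regrouped idempotents are again canonical, so the \cite[Theorem~3.3.6]{kedlaya-sabbah1} descent places them in $\End(M)$, giving $M_1\otimes_{R_{n,m}}R'_{n,m,\underline h}=\bigoplus_{\underline\lambda\in k'^{\,n}}M_{\underline\lambda}$ indexed by the residues $\underline\lambda=(\lambda_\omega)_\omega$. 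Setting $M_0=M_2\oplus M_{\underline 0}$, the two bulleted properties hold by construction (for $\underline\lambda\ne\underline 0$ the refined radii of $M_{\underline\lambda}\otimes F'_{\underline r,\underline h}$ reduce to $\sum_\omega\lambda_\omega\omega$; $M_2\otimes F'_{\underline r,\underline h}$ has all partial intrinsic radii $>b_1r_1+\dots+b_mr_m$; $M_{\underline 0}\otimes F'_{\underline r,\underline h}$ has refined radii in $\bigoplus_\omega x_1^{-b_1}\cdots x_m^{-b_m}\gothm_{\gothR}\cdot\omega$), and uniqueness propagates from the uniqueness in Proposition~\ref{P:weak-decomposition}. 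The delicate point is in (i): matching the abstract refined radius produced by the weak decomposition with the refined radius of a genuine least-$\partial_1$-radius differential module over the one-variable local field $F_{(r_1,0,\dots,0)}$ (where Remark~\ref{R:refined-CDVF} is essential) so that Lemma~\ref{L:refined-integral} applies, and then descending integrality from the essentially two-variable UFD $R$ down to $k\llbracket x_{m+1},\dots,x_n\rrbracket$; the repeated appeals to the \cite[Theorem~3.3.6]{kedlaya-sabbah1} descent in the log-setting are routine given its earlier uses in this section.
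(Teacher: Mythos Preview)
Your proof has a genuine gap, and it lies precisely in the step you dismiss as ``routine'': the descent of idempotents from $\bbR_{n,m}$ (or its tame covers) down to $R_{n,m}$.

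The argument of \cite[Theorem~3.3.6]{kedlaya-sabbah1} via \cite[Lemma~2.6.4]{kedlaya-sabbah1}, as used in Theorem~\ref{T:variation}(iii) and Proposition~\ref{P:weak-decomposition}, descends projectors from a Fr\'echet completion \emph{in the variables $x_1,\dots,x_m$}: from $S_{n,m}$ to $R_{n,m}$, or from $\bbS_{n,m}$ to $\bbR_{n,m}$. The passage from $\bbR_{n,m}=\bbk\llbracket x_1,\dots,x_m\rrbracket[x_1^{-1},\dots,x_m^{-1}]$ to $R_{n,m}=k\llbracket x_1,\dots,x_n\rrbracket[x_1^{-1},\dots,x_m^{-1}]$ is of an entirely different nature: it is a question of integrality in the \emph{residual} variables $x_{m+1},\dots,x_n$, and \cite[Lemma~2.6.4]{kedlaya-sabbah1} says nothing about this. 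Your claim that the projector ``lies in $\End(M)$'' is therefore unjustified, and this undermines both parts of your proof: in (i) you need the piece $N$ to live over $R'_{n,m,\underline h}$ before you can apply Lemma~\ref{L:refined-integral} with your UFD $R=k'\llbracket x_2,\dots,x_n\rrbracket[(x_2\cdots x_m)^{-1}]$, and in (ii) the entire decomposition rests on this descent.

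The paper handles (ii) by a gluing argument you have missed. For each $j=m+1,\dots,n$, one produces a separate decomposition of $M$ over a ring $F'_{\underline\bbr,(j)}\llbracket x_j\rrbracket_0$ (using \cite[Corollary~4.2.9]{xiao-refined} after an auxiliary change of derivations), checks that it agrees with the $\bbR_{n,m}$-decomposition over a common overfield, and then invokes Lemma~\ref{L:proj-intersect} to put the projectors into the intersection ring; iterating this over all $j$ descends the decomposition to $R_{n,m}$. For (i), the paper avoids needing any descent at all by applying Lemma~\ref{L:refined-integral} directly to $M$ (which genuinely lives over $R_{n,m}$) with the UFD $R=k\llbracket x_2/x_1,\dots,x_m/x_1,x_{m+1},\dots,x_n\rrbracket[x_2^{-1},\dots,x_m^{-1}]$ and $\pi_F=x_1$; combining this with the weak decomposition over $\bbR_{n,m}$ yields the integrality. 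Your identification of the ``delicate point'' is thus inverted: the descent is the hard step, and the refined-radius bookkeeping is comparatively straightforward.
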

\begin{proof}
(i)  By Proposition~\ref{P:weak-decomposition}(ii${}^\sharp$), we can first separate a differential submodule $\bbM_1$ of $M \otimes \bbR_{n,m}$ that accounts for the $l$ smallest partially intrinsic radii of $M \otimes F_{\underline r}$ with $\underline r \in [0, \infty)^m \times \{0\}^{n-m}$.  Then we apply Proposition~\ref{P:weak-decomposition}(iii${}^\sharp$) to $\bbM_1$ and conclude that, for any $\underline r \in (0, \infty)^m \times \{0\}^{n-m}$,
\begin{equation}
\label{E:weak-refined}
\Ref^\sharp(M \otimes F_{\underline r}; 1), \dots, \Ref^\sharp(M \otimes F_{\underline r}; l) \in \bigoplus_{\omega\in \Log^*}x_1^{-b_1}\cdots x_m^{-b_m} \Frac(k\llbracket x_{m+1}, \dots, x_n\rrbracket)^\alg\cdot \omega
\end{equation}
We now look at the point $\underline \bbr = (\bbr_1, \dots, \bbr_n)$ with $\bbr_1 = \cdots = \bbr_m = 1$ and $\bbr_{m+1} = \cdots  = \bbr_n=0$.
We apply Lemma~\ref{L:refined-integral} to the case when the ring $R$ therein is equal to
\[
k\big \llbracket \frac{x_2}{x_1}, \dots, \frac{x_m}{x_1}, x_{m+1}, \dots, x_n\big \rrbracket \big[\big(\frac{x_2}{x_1}\big) ^{-1}, \dots, \big(\frac{x_m}{x_1}\big)^{-1}]
\]
and $\pi_F = x_1$; this implies that, 
the coefficient (on every $\omega$) of each $x_1^{b_1} \cdots x_m^{b_m}\Ref^\sharp(M \otimes F_{\underline r}; i)$ is a zero of an irreducible polynomial
of the form $X^{d'} + a_1 X^{d'-1} + \cdots + a_{d'}$, where $a_j \in R$.
But the coefficient on $\omega$ of $x_1^{b_1} \cdots x_m^{b_m}\Ref^\sharp(M \otimes F_{\underline r}; i)$ belongs to $\Frac(k\llbracket x_{m+1}, \dots, x_n\rrbracket)^\alg$.
This means that $a_j \in k\llbracket x_{m+1}, \dots, x_n\rrbracket$, and hence the coefficient belongs to a complete local ring $\gothR$ finite over $k\llbracket x_{m+1}, \dots, x_n\rrbracket$. 

(ii)  We may replace $x_j$ by $x_j^{1/h_j}$ and $k$ by $k'$ and hence assume that $h_j=1$ and $k=k'$.  By Proposition~\ref{P:weak-decomposition}(ii${}^\sharp$) and (iii${}^\sharp$), we have the desired decomposition over $\bbR_{n,m}$, that is $M \otimes \bbR_{n,m} =  \bbM_0\oplus \bigoplus_{\underline \lambda} \bbM_{\underline \lambda} ,
$
where the direct sum runs through all $\underline \lambda =(\lambda_\omega)_{\omega \in \Log^*}\in k^n\bs \{\underline 0\}$, such that
\begin{itemize}
\item for any $\underline r \in (0, \infty)^m \times \{0\}^{n-m}$, all elements in $x_1^{b_1} \cdots x_m^{b_m} \Ref^\sharp(\bbM_{\underline \lambda} \otimes \bbF_{\underline r})$ is congruent to $\sum_{\omega \in \Log^*} \lambda_\omega \omega$ modulo $\gothm_\gothR$, and
\item for any $\underline r \in (0, \infty)^m \times \{0\}^{n-m}$, any Jordan-H\"older factor of $\bbM_0 \otimes \bbF_{\underline r}$ either has partially intrinsic radii $>b_1r_1+ \cdots + b_mr_m$, or has the refined partially intrinsic radii lie in $\bigoplus_{\omega \in \Log^*} x_1^{-b_1} \cdots x_m^{-b_m}\gothm_\gothR \cdot \omega$.
\end{itemize}
(In fact, Proposition~\ref{P:weak-decomposition} provides us with a much finer decomposition; we regroup the summand into $\bbM_0$ and $\bbM_{\underline \lambda}$'s accordingly.)

We need to make this decomposition descend to $R_{n,m}$ by ``gluing" with some other decompositions using Lemma~\ref{L:proj-intersect} and Remark~\ref{R:proj-intersect} below.

We look at the point $\underline \bbr = (\bbr_1, \dots, \bbr_n)$ with $\bbr_1 = \cdots = \bbr_m = 1$ and $\bbr_{m+1} = \cdots  = \bbr_n=0$. For each $j =m+1, \dots, n$, we use $F_{\underline \bbr, (j)}$ to denote the completion of $\Frac(k\llbracket x_1, \dots, \hat x_j, \dots, x_n\rrbracket)$ with respect to the Gauss norm $|\cdot|_{\underline \bbr}$ (restricted to this subfield with no variable $x_j$).  Let $F'_{\underline \bbr, (j)}$ denote the completion of $F_{\underline \bbr, (j)}(t)$ with respect to the $\mathrm e$-Gauss norm (on $t$); we set $\partial_{j'}(t) = 0$ for $j' = 1, \dots, n$.  We consider a new set of differential operators $\partial'_1 = t^{-1}\partial_1, \dots, \partial'_m = t^{-1}\partial_m, \partial'_{m+1}=\partial_m, \dots, \partial'_n = \partial_n$; they are of rational type with rational parameters $tx_1, \dots, tx_m, x_{m+1}, \dots, x_n$, when viewed as differential operators on the completion of $\Frac(F'_{\underline \bbr, (j)}\llbracket x_j\rrbracket_0)$ for any Gauss norm (on $x_j$). (For $\llbracket \cdot \rrbracket_0$, see Notation~\ref{N:bounded-disc}.)

We consider the differential module $M \otimes F'_{\underline \bbr, (j)}\llbracket x_j \rrbracket_0$ (with respect to $\partial'_1, \dots, \partial'_n$) and take the \emph{trivial} log-structure $\Log'=\emptyset$.  Let $E_j$ denote the completion of $\Frac(F'_{\underline \bbr, (j)} \llbracket x_j \rrbracket_0)$ with respect to the $1$-Gauss norm on $x_j$.  For a differential module $N$ over $F'_{\underline \bbr, (j)}\llbracket x_j\rrbracket_0$, we write $IR'(N \otimes E_j; i)$ for the subsidiary partially intrinsic radii and $\Ref'(N \otimes E_j;i)$ for the refined partially intrinsic radii, with respect to this new choice of differential operators and log-structure. We have $IR^\sharp(N \otimes E_j; i) = IR'(N \otimes E_j; i)$ and $\Ref^\sharp(N \otimes E_j;i) = \Ref'(N \otimes E_j;i)$ for any $i$, if we identify $\bigoplus_{\omega \in \Log^*} \kappa_{E_j^\alg}^{[s]} \cdot \omega$ with $\bigoplus_{i=1}^m \kappa_{E_j^\alg}^{[s]} \cdot tdx_i \oplus \bigoplus_{i=m+1}^n \kappa_{E_j^\alg}^{[s]} \cdot dx_i$ for any $s$.

We apply \cite[Corollary~4.2.9]{xiao-refined} (the $K$ therein is our $F'_{\underline \bbr, (j)}$ and the $E$ therein is the completion of $\Frac(F'_{\underline \bbr, (j)}\{\{\alpha/x_j, x_j \rrbracket_0)$, which is the same as our $E_j$ because $\Frac(F'_{\underline \bbr, (j)}\llbracket x_j \rrbracket_0)$ is dense in the former fraction field) to obtain a unique direct sum decomposition
\[
M \otimes F'_{\underline \bbr, (j)}\llbracket x_j \rrbracket_0 =
\bigoplus M'_{\underline \lambda, (j)}
\oplus M'_{0,(j)} 
\]
satisfying analogous conditions as in the statement of the theorem, namely, 
\begin{itemize}
\item the direct sum runs through all $\underline \lambda =(\lambda_\omega)_{\omega \in \Log^*}\in k^n\bs \{\underline 0\}$,

\item  all elements in $x_1^{b_1} \cdots x_m^{b_m} \Ref^\sharp(M'_{\underline \lambda,(j)} \otimes E_j)$ is congruent to $\sum_{\omega \in \Log^*} \lambda_\omega \omega$ modulo $\gothm_\gothR$, and

\item  any Jordan-H\"older factor of $M'_{0,(j)} \otimes E_j$ has partially intrinsic radii $>b_1+ \cdots + b_m$ or the refined partially intrinsic radii lie in $\bigoplus_{\omega \in \Log^*} x_1^{-b_1} \cdots x_m^{-b_m}\gothm_\gothR \cdot \omega$.
\end{itemize}
Strictly speaking, \cite[Corollary~4.2.9]{xiao-refined} gives a much finer decomposition and we regrouped them together (mostly into $M'_{0,(j)}$).

For a fixed $j$, the decomposition over $F'_{\underline \bbr, (j)}\llbracket x_j \rrbracket_0$ agrees with the decomposition over $\bbR_{n,m}$ in the sense that they both induce the same decomposition of $M$ over $E_j$ (given by Proposition~\ref{P:decomposition-field}(ii)(iii)).  More precisely, we have
\[
M'_{\underline \lambda, (j)} \otimes E_j\cong \bbM_{\underline \lambda} \otimes E_j \textrm { and } M'_{0,(j)}\otimes E_j \cong \bbM_0 \otimes E_j
\]
inside $M \otimes E_j$.

By Lemma~\ref{L:proj-intersect} and Remark~\ref{R:proj-intersect} below, we conclude that the decomposition of $M$ descends to 
\begin{equation}
\label{E:intersection-rings}
F'_{\underline \bbr, (j)}\llbracket x_j \rrbracket_0 \cap \bbR_{n,m} = \Frac(k\llbracket x_{m+1}, \dots, \hat x_j, \dots, x_n\rrbracket) \llbracket x_1, \dots, x_m, x_j\rrbracket[x_1^{-1}, \dots, x_m^{-1}]
\end{equation}
Applying
Lemma~\ref{L:proj-intersect} and Remark~\ref{R:proj-intersect} again to glue the decompositions over \eqref{E:intersection-rings} for each $j$ (along the decomposition over $\bbR_{n,m}$), we obtain the desired decomposition over $R_{n,m}$.

We remark that it is attempting to try to glue the decompositions of $M \otimes F'_{\underline \bbr, (j_1)}\llbracket x_{j_1}\rrbracket_0$ and $M \otimes F'_{\underline \bbr, (j_2)}\llbracket x_{j_2}\rrbracket_0$ ($j_1 \neq j_2$) directly.  But there is no ring containing both base rings $F'_{\underline \bbr, (j_1)}\llbracket x_{j_1}\rrbracket_0$ and $F'_{\underline \bbr, (j_2)}\llbracket x_{j_2}\rrbracket_0$.  We do need $\bbR_{n,m}$ to hold these $F'_{\underline \bbr, (j)}$'s together.
\end{proof}

\begin{lemma}
\label{L:proj-intersect}
Let 
\[
\xymatrix{
R \ar[r] \ar[d] &  S \ar[d] \\
T \ar[r] & U
}
\]
be a commuting diagram of \emph{inclusions} of integral domains,
such that the intersection
$S \cap T$ within $U$ is equal to $R$. Let $M$ be a finite
locally free $R$-module. Then the intersection of $M \otimes_R S$
and $M \otimes_R T$ within $M \otimes_R U$ is equal to $M$.
\end{lemma}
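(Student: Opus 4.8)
The plan is to reduce to the case where $M$ is finite free and then argue one coordinate at a time. Since $M$ is finite locally free, it is finitely generated projective over $R$, so I can choose an $R$-module $N$ and an integer $k$ with $M\oplus N\simeq R^k$; write $\iota\colon M\hookrightarrow R^k$ for the inclusion, $\mathrm{pr}\colon R^k\twoheadrightarrow M$ for the retraction (so $\mathrm{pr}\circ\iota=\id_M$), and $e=\iota\circ\mathrm{pr}\in\End_R(R^k)$ for the associated idempotent. Because $M$ is projective, hence flat, over $R$, applying $M\otimes_R-$ keeps each of the four inclusions in the square injective; this is exactly what makes the intersections in the statement meaningful, and it lets me view $M\otimes_RS$, $M\otimes_RT$, $M\otimes_RU$ as submodules of $U^k$ via the base changes of $\iota$. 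Moreover $\iota$, $\mathrm{pr}$ and $e$, being $R$-linear, commute with all these base changes, and the base changes are compatible with the four ring maps.

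The key step is the identity $(M\otimes_RU)\cap S^k=M\otimes_RS$ inside $U^k$, together with its analogue for $T$ and for $R$. The inclusion $\supseteq$ is immediate. For $\subseteq$: if $v$ lies in the left side, then $e_S(v)$ lies in (the copy of) $M\otimes_RS$ because $v\in S^k$; on the other hand $e_U(v)=v$ because $v\in M\otimes_RU$ and $\mathrm{pr}\circ\iota=\id$; since $e$ base-changes compatibly, $e_S(v)=e_U(v)=v$, so $v\in M\otimes_RS$. Taking $R$ in place of $S$ in this argument gives in particular $(M\otimes_RU)\cap R^k=M$.

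To finish, the hypothesis $S\cap T=R$ inside $U$ immediately yields $S^k\cap T^k=R^k$ inside $U^k$, since the intersection of products is the product of the intersections. Hence, inside $M\otimes_RU$,
\[
(M\otimes_RS)\cap(M\otimes_RT)=\big((M\otimes_RU)\cap S^k\big)\cap\big((M\otimes_RU)\cap T^k\big)=(M\otimes_RU)\cap R^k=M,
\]
which is the assertion. I do not anticipate a genuine obstacle here: the only points requiring care are that locally free implies flat (so the base-changed maps stay injective and the intersections are meaningful) and that the retraction $\mathrm{pr}$ is compatible with base change, which is precisely what allows the computation in $U^k$ to be pushed back down to $M$.
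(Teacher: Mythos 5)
Your proof is correct. The paper simply cites \cite[Lemma~2.3.1]{kedlaya-xiao}, whose argument is essentially the one you give: choose a projective complement so that $M\oplus N\simeq R^k$, embed everything in $U^k$, reduce to the trivial statement $S^k\cap T^k=R^k$, and project back to $M$ via the associated idempotent (which commutes with all the base changes because it is $R$-linear).
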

\begin{proof}
This is \cite[Lemma~2.3.1]{kedlaya-xiao}.
\end{proof}

\begin{remark}
\label{R:proj-intersect}
We remark on how this lemma is used.  We often apply this lemma to the module $\mathrm{End}(M)$ over $R$ for a differential module $M$.  When we have a ``desired" direct sum decomposition of $M \otimes_R S$ and $M \otimes_R T$ which coincide on 
$M \otimes_R U$, we view the projectors of the decomposition as elements in $\mathrm{End}(M) \otimes_R S$, $\mathrm{End}(M) \otimes_R T$, and $\mathrm{End}(M) \otimes_R U$, respectively.  By Lemma~\ref{L:proj-intersect}, we see that the projectors giving this direct sum decomposition actually comes from $\End(M)$.  Hence we may ``glue" the direct sum decomposition of $M \otimes_R S$ and $M \otimes_R T$ to get a direct sum decomposition of $M$ (over $R$).
\end{remark}

\subsection{Cleanliness condition}
We give the definition of cleanliness using refined irregularities developed in \cite{xiao-refined}.  We prove that the numerical cleanliness implies this cleanliness.  We keep the notation as in the previous subsection.

\begin{definition}
Let $\mathfrak S_m$ denote the permutation group of $\{1, \dots, m\}$.  Given  $\sigma \in \mathfrak S_m$, we have a natural embedding $\iota_\sigma: R_{n,m} \to \sfF_\sigma = \mathbf k((x_{\sigma(m)}))\cdots ((x_{\sigma(1)}))$, where $\sfF_\sigma$ is a $m$-dimensional local field with its norm given by the $x_{\sigma(1)}$-valuation such that $|x_{\sigma(1)}| = \mathrm e^{-1}$.  So $\sfF_\sigma$ contains $F_{(\sigma(1))}$ as a subfield.
\end{definition}

\begin{prop}
\label{P:variation-vs-refined}
Let $M$ be a differential module over $R_{n,m}$ of rank $d$.  Then there exists $\epsilon_1, \dots, \epsilon_m >0$ such that, if we let $C$ denote the interior of the convex hull of the set of points
\[
\big\{
(1,0, \dots, 0),
(1+\epsilon_1, 0,\dots, 0),(1+\epsilon_1, \epsilon_2, \dots, 0, 0), \dots, 
(1+\epsilon_1, \epsilon_2, \dots, \epsilon_{m}, 0, \dots, 0)
\big\},
\]
then we have a unique  direct sum  decomposition of differential modules $M \otimes \bbR_C^\bdd = \bbM_0 \oplus \bigoplus_{\underline b \in \QQ^n, b_1>0} \bbM_{\underline b}$ such that,
\begin{align}
\label{E:variation-vs-refined}
\textrm{for }\underline b \in \QQ^n \textrm{ with } b_1>0,& \quad
g_1^\sharp(\bbM_{\underline b}, \underline r) = \cdots= g_{\dim \bbM_{\underline b}}^\sharp (\bbM_{\underline b}, \underline r) = b_1r_1+ \cdots +b_mr_m, \textrm{ for all }\underline r \in C;\\
\label{E:variation-vs-refined1}
\textrm{for }\bbM_0, & \quad g_1^\sharp(\bbM_0, (1, 0, \dots, 0)) = \cdots= g_{\dim \bbM_0}^\sharp (\bbM_0, (1, 0, \dots, 0)) = 0.
\end{align}
where $\bbR_C$ is the Fr\'echet completion of $\bbR_{n,m}$ with respect to the norms $|\cdot|_{\underline r}$ for $\underline r \in C$ and $\bbR_C^\bdd \subset \bbR_C$ is the subring consisting of elements whose norms are bounded for $|\cdot|_{\underline r}$ for all ${\underline r} \in C$.

Moreover, the multiset $\mathsf{Ref}^\sharp(M \otimes \sfF_\id)$ is the same as
\[
\bigcup_{\underline b, b_1>0}\{(b_1, \dots, b_m, \vartheta_{\underline b, i})| i = 1, \dots, \rank (\bbM_{\underline b}) \} \cup\{(0, \dots, 0)\ \rank (\bbM_0) \textrm{ times}\},
\]
 where $\vartheta_{\underline b, i}$ is the reduction of $x_1^{b_1} \cdots x_m^{b_m} \Ref(\bbM_{\underline b} \otimes \bbF_{\underline r}; i)$ in $\bbk^\alg$ for (any) $\underline r \in C$.

\end{prop}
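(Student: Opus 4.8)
The plan is to (i) carve out the cone $C$ using the homogeneity and piecewise-linearity of the functions $G_i^\sharp$, (ii) produce the decomposition over $\bbR_C^\bdd$ by running the polyannular decomposition machinery over $C$ and descending projectors, and (iii) match the resulting data against the iterated-completion description of $\mathsf{Ref}^\sharp(M \otimes \sfF_\id)$.

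\emph{Choice of $C$.} By Proposition~\ref{P:weak-decomposition}(i${}^\sharp$) each $G_i^\sharp(M,-)$ is continuous, convex and piecewise linear on $[0,\infty)^m\times\{0\}^{n-m}$, and since these functions are homogeneous of degree $1$ there are only finitely many maximal cones of linearity for $G_1^\sharp(M,-),\dots,G_d^\sharp(M,-)$ together. Hence, taking $1\gg\epsilon_1\gg\epsilon_2\gg\cdots\gg\epsilon_m$, the (relatively) open simplex $C$ spanned by the staircase points in the statement lies inside a single such cone, so every $g_i^\sharp(M,-)$ is affine on $C$, say $g_i^\sharp(M,\underline r)=b^{(i)}_1r_1+\cdots+b^{(i)}_mr_m$. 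Because $g_1^\sharp\ge\cdots\ge g_d^\sharp\ge0$ pointwise and each difference $g_l^\sharp-g_{l+1}^\sharp$ is a nonnegative affine function on $C$, after shrinking the $\epsilon_i$ further finitely many times we may assume that for each $l$ either $g_l^\sharp\equiv g_{l+1}^\sharp$ on $C$ or $g_l^\sharp>g_{l+1}^\sharp$ throughout $C$. This gives a partition of $\{1,\dots,d\}$ into maximal blocks on which the $g_i^\sharp$ agree as affine functions on $C$, with strict gaps between consecutive blocks.

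\emph{The decomposition.} Over the Fr\'echet completion $\bbR_C$ I would peel off these blocks one at a time, at each step invoking the strict gap just established together with the polyannular decomposition theorem \cite[Theorem~3.4.2]{kedlaya-xiao} applied directly over the cone $C$ (this is the local statement from which Proposition~\ref{P:weak-decomposition}(ii${}^\sharp$) was derived; the global linearity hypothesis of that proposition is replaced here by linearity on $C$). This yields a direct sum decomposition of $M\otimes\bbR_C$ into pieces of pure partial intrinsic radii $b_1r_1+\cdots+b_mr_m$ on $C$. Then, exactly as in the proof of Theorem~\ref{T:strong-decomposition}, the projector-descent argument via \cite[Lemma~2.6.4]{kedlaya-sabbah1} shows that the corresponding projectors already lie in $\End(M)\otimes_{\bbR_{n,m}}\bbR_C^\bdd$, so the decomposition is defined over $\bbR_C^\bdd$. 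Writing $\bbM_{\underline b}$ for the summand whose common radius function is $\underline r\mapsto b_1r_1+\cdots+b_mr_m$ with $b_1>0$, and $\bbM_0$ for the remaining summand, one has \eqref{E:variation-vs-refined} by construction; and $\bbM_0$ corresponds to a single block (the blocks are totally ordered by their affine functions on the segment ending at $(1,0,\dots,0)$, and at most the top one can have $b_1=0$), so evaluating $g_i^\sharp$ at that endpoint -- legitimate by continuity of the $g_i^\sharp$ -- gives \eqref{E:variation-vs-refined1}. Uniqueness is inherited from uniqueness in the decomposition theorem.

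\emph{Identification of $\mathsf{Ref}^\sharp(M\otimes\sfF_\id)$, and the main obstacle.} The heart of the matter is that $\sfF_\id=\bbk((x_m))\cdots((x_1))$ is obtained from $R_{n,m}$ by successively completing $x_1$-adically, then $x_2$-adically, $\dots$, then $x_m$-adically, and that the composite valuation $\mathsf v$ so produced is exactly what the directional regime $1\gg r_1\gg r_2\gg\cdots\gg r_m$ cut out by $C$ detects: for the finitely many exponent vectors $\underline b$ occurring above, the staircase ordering of the $\epsilon_i$ forces the ordering of the affine functions $b_1r_1+\cdots+b_mr_m$ on $C$ to be the lexicographic ordering of the vectors $(b_1,\dots,b_m)$, hence the ordering by $\mathsf v$ on $\sfF_\id$. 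Concretely I would induct on $m$: the $x_1$-adic completion step is controlled by the CDVF results, namely Proposition~\ref{P:decomposition-field} and Lemma~\ref{L:refined-integral} (which reads off $b_1$ and the integrality of the leading refined datum); passing to the residue field and iterating handles $x_2,\dots,x_m$, using Remark~\ref{R:enlarge-F} to reinterpret each completion step through a Gauss norm so that the partial-radius functions on $\bbR_C^\bdd$ transform correctly. This yields $-\mathsf v^\sharp(\Ref^\sharp(M\otimes\sfF_\id;i))=(b^{(i)}_1,\dots,b^{(i)}_m)$ for the constituents coming from the $\bbM_{\underline b}$ with $b_1>0$. For the normalized leading term, the refined-intrinsic-radii decomposition of each $\bbM_{\underline b}$ after adjoining $x_j^{1/h_j}$ (Proposition~\ref{P:weak-decomposition}(iii${}^\sharp$), or Theorem~\ref{T:strong-decomposition}) and the refined-irregularity decomposition of $M\otimes\sfF_\id$ (the higher-dimensional-local-field construction obtained by iterating Proposition~\ref{P:decomposition-field}(iii)) both descend from a common decomposition over a sufficiently large field, so by a Lemma~\ref{L:proj-intersect}/Remark~\ref{R:proj-intersect}-style comparison the reduction $\bar\vartheta_i$ equals $\vartheta_{\underline b,i}$. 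Finally, every Jordan--H\"older factor of $M\otimes\sfF_\id$ arising from $\bbM_0$ has $v_1=0$, hence $\Irr=0$, and by the convention fixed in the definition of $\mathsf{Irr}^\sharp$ in that case contributes $(0,\dots,0)$ with multiplicity $\rank(\bbM_0)$. Assembling these gives the asserted equality of multisets. I expect the genuinely delicate part to be this last step -- simultaneously tracking which affine function on $C$ governs the $i$-th subsidiary radius, that its coefficient vector is $-\mathsf v^\sharp$ of the corresponding refined radius, and the compatibility of the two a priori different refined-radii decompositions -- whereas the choice of $C$ and the descent of the radius decomposition to $\bbR_C^\bdd$ are routine given Proposition~\ref{P:weak-decomposition} and the argument already used for Theorem~\ref{T:strong-decomposition}.
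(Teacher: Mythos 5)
Your overall outline — choose the staircase cone $C$ so that all $g_i^\sharp$ are affine on it via Proposition~\ref{P:weak-decomposition}(i${}^\sharp$), decompose over a completion adapted to $C$, then match against the $\sfF_\id$-side data — is the same as the paper's, which in fact disposes of the whole thing in three sentences: it cites \cite[Theorem~3.4.4, Remark~3.4.7]{kedlaya-xiao} for the decomposition over $\bbR_C^\bdd$ and \cite[Theorem~4.3.8]{xiao-refined} for the equality of multisets, then handles the $b_1=0$ pieces by Remark~\ref{R:Irr=0}. Where you depart from the paper, there are two genuine problems.

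First, the descent from $\bbR_C$ to $\bbR_C^\bdd$ cannot be done with the projector argument you borrow from Theorem~\ref{T:strong-decomposition}. In that proof, \cite[Lemma~2.6.4]{kedlaya-sabbah1} descends a projector from the Fr\'echet completion $\bbS_{n,m}$ back to the \emph{Noetherian} ring $\bbR_{n,m}$; that is the whole point of the lemma. Here the target is $\bbR_C^\bdd$, the ring of analytic functions \emph{bounded} on $C$, which is neither $\bbR_{n,m}$ nor finitely generated over it, and there is no a priori reason the projector in $\End(M)\otimes\bbR_C$ should have bounded coefficients on $C$. This is exactly the content that \cite[Theorem~3.4.4 and Remark~3.4.7]{kedlaya-xiao} supply (boundedness of the decomposition on a suitable subdomain), and one really does need to cite or reprove that quantitative statement rather than substitute the Noetherian-descent lemma. (A minor related slip: $\bbM_0$ need not be a single block of your partition — there can be several blocks with $b_1=0$ but different $(b_2,\dots,b_m)$ — though this does not affect the conclusion \eqref{E:variation-vs-refined1}.)

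Second, and more importantly, the identification of $\mathsf{Ref}^\sharp(M\otimes\sfF_\id)$ with the data read off from the $\bbM_{\underline b}$ is precisely the hard theorem that the paper outsources to \cite[Theorem~4.3.8]{xiao-refined}. You correctly identify this as "the heart of the matter" and "the genuinely delicate part," and your inductive plan (CDVF step via Proposition~\ref{P:decomposition-field} and Lemma~\ref{L:refined-integral}, pass to residue fields, Gauss-norm reinterpretation, then a Lemma~\ref{L:proj-intersect}-style compatibility of the two refined-radii decompositions) is a plausible blueprint for how one \emph{would} prove it. But as written it is a sketch, not a proof: the statement that the decomposition over $C$ tracks the lexicographic valuation $\mathsf v^\sharp$ on $\sfF_\id$, block by block, including the reduction of the leading refined datum, is not derived from anything you establish in the proposal. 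Citing \cite[Theorem~4.3.8]{xiao-refined} (as the paper does) closes this; rederiving it would be a substantial undertaking on its own.
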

\begin{proof}
To get the decomposition, one first uses Proposition~\ref{P:weak-decomposition}(i${}^\sharp$) to show that the functions $g_i^\sharp(M,\underline r)$ are linear when restricted to some small enough $C$ and then invokes \cite[Theorem~3.4.4 and Remark~3.4.7]{kedlaya-xiao} to obtain the decomposition.  The equality of two multisets is proved in \cite[Theorem~4.3.8]{xiao-refined}.  Note that when $b_1=0$, $\mathsf{Ref} ^\sharp(M \otimes \sfF_\id)$ will always give $(0,\dots, 0)$. (See also Remark~\ref{R:Irr=0}.)
\end{proof}

\begin{theorem}
\label{T:clean-equiv-local}
Let $M$ be a differential module over $R_{n,m}$ of rank $d$. 
Put $b_{ij} = -\log( IR(M \otimes F_{(j)}; i))$.
The following conditions are equivalent:

(1) The functions $g_1^\sharp(M,\underline r),\dots, g_d^\sharp(M,\underline r)$ are linear in $\underline r \in [0, \infty)^m \times \{0\}^{n-m}$. If we write $g_i^\sharp(M, \underline r) = b_{i1}r_1+ \cdots + b_{im}r_m$ for any $i$ and any $\underline r \in [0, \infty)^m \times \{0\}^{n-m}$, then, for any $\underline r \in (0, \infty)^m \times \{0\}^{n-m}$, we have $x_1^{b_{i1}}\cdots x_m^{b_{im}}\Ref^\sharp(M \otimes F_{\underline r}; i) \in \bigoplus_{\omega \in \Log^*} \gothR\cdot \omega$ for some local ring $(\gothR, \gothm_\gothR)$ finite over $k\llbracket x_{m+1}, \dots, x_n\rrbracket$, and its reduction modulo $\gothm_\gothR$ is nontrivial. 

(2) For any $j \in \{1, \dots, m\}$ satisfying
\begin{equation}
\label{E:j-ramifies}
\min\{i|b_{ij}=0\} \geq \min\{i|b_{ij'}=0\} \textrm{ for any }j \neq j',
\end{equation}
we have, for some $\sigma \in \mathfrak S_m$  with $\sigma(1)=j$, $\mathsf{Ref}^\sharp(M \otimes_{R_{n,m}} \sfF_\sigma; i)$ for each $i$ is of the form
\[
(b_{i\sigma(1)}, \dots, b_{i\sigma(m)} , \vartheta_i),
\]
where $\vartheta_i$ is some element in $\bigoplus_{\omega \in \Log^*} \gothR\cdot \omega$ for some local ring $\gothR$ finite over $k\llbracket x_{m+1}, \dots, x_n\rrbracket$, whose reduction modulo $\gothm_\gothR$ is nontrivial.

(3) For any $j \in\{1, \dots, m\}$, $ i \in \{1, \dots, d\}$ and any $\sigma \in \mathfrak S_m$ such that $b_{ij}>0$ and $\sigma(1) =j$, $\mathsf{Ref}^\sharp(M \otimes_{R_{n,m}} \sfF_\sigma; i)$ is  of the form
$
(b_{i\sigma(1)}, \dots, b_{i\sigma(m)} , \vartheta_i)
$
, where $\vartheta_i$ is some element in $\bigoplus_{\omega \in \Log^*} \gothR\cdot \omega$ for some local ring $(\gothR, \gothm_\gothR)$ finite over $k\llbracket x_{m+1}, \dots, x_n\rrbracket$, whose reduction modulo $\gothm_\gothR$ is nontrivial.
\end{theorem}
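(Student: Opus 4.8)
The plan is to run the cycle $(1)\Rightarrow(3)\Rightarrow(2)\Rightarrow(1)$, with essentially all of the content concentrated in $(2)\Rightarrow(1)$. The common tool for every step is Proposition~\ref{P:variation-vs-refined}: for $\sigma\in\mathfrak S_m$, relabelling $x_1,\dots,x_m$ by $\sigma$ is an automorphism of $R_{n,m}$ carrying $\sfF_\id$ to $\sfF_\sigma$, so applying that proposition to the relabelled module identifies the multiset $\mathsf{Ref}^\sharp(M\otimes\sfF_\sigma)$ with the collection of tuples $(b_{\sigma(1)},\dots,b_{\sigma(m)},\vartheta_{\underline b,i})$ read off a decomposition $M\otimes\bbR_{C_\sigma}^{\bdd}=\bbM_0\oplus\bigoplus_{\underline b}\bbM_{\underline b}$ over a small \emph{full-dimensional} cone $C_\sigma$ having $e_{\sigma(1)}$ as a vertex; on $C_\sigma$ every $g_i^\sharp(M,\underline r)$ is linear, its coefficient of $r_{\sigma(k)}$ being the $k$-th coordinate of the tuple attached to the $i$-th slot, and $\vartheta_{\underline b,i}$ being the reduction of $x_1^{b_1}\cdots x_m^{b_m}\Ref^\sharp(M\otimes F_{\underline r};i)$ for $\underline r\in C_\sigma$. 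I will use throughout that each $g_i^\sharp(M,\cdot)$ is continuous, convex and piecewise linear on $[0,\infty)^m\times\{0\}^{n-m}$ (Proposition~\ref{P:weak-decomposition}(i$^\sharp$)), positively homogeneous of degree one, and that the subsidiary functions satisfy $g_1^\sharp\ge\cdots\ge g_d^\sharp\ge 0$; here $b_{ij}$ always means the irregularity of $M$ in the $x_j$-direction along the $i$-th subsidiary datum, i.e.\ $b_{ij}=-\log IR^\sharp(M\otimes F_{(j)};i)=g_i^\sharp(M,e_j)$.

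\emph{Step $(2)\Rightarrow(1)$.} Work with the single $\sigma$ furnished by $(2)$, and set $\Gamma(\underline r)=G_d^\sharp(M,\underline r)=\sum_{i=1}^d g_i^\sharp(M,\underline r)$; it is convex and homogeneous of degree one, and by the dictionary above its restriction to $C_\sigma$ is the linear form $L(\underline r)=\sum_{k=1}^m\bigl(\sum_i c_{ik}\bigr)r_{\sigma(k)}$, where $c_{ik}$ is the $k$-th coordinate of the tuple of $\mathsf{Ref}^\sharp(M\otimes\sfF_\sigma)$ at slot $i$. Since $\Gamma$ is convex and agrees with the affine form $L$ on the open set $C_\sigma$, $L$ is a global supporting function, so $\Gamma\ge L$ on $[0,\infty)^m$. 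Summing the identities provided by $(2)$ over $i$ — which washes out the ordering ambiguity of $\mathsf{Ref}^\sharp$ — gives $L(e_{\sigma(k)})=\sum_i c_{ik}=\sum_i b_{i\sigma(k)}=\Gamma(e_{\sigma(k)})$ for every $k$. Hence $\Gamma-L$ is convex, nonnegative, and vanishes at all $m$ vertices of the simplex $\{r_1+\cdots+r_m=1\}$; by convexity it is $\le 0$ there as well, so $\Gamma\equiv L$. A sum of convex functions equal to an affine function has all summands affine, so each $g_i^\sharp$ is linear, necessarily $g_i^\sharp(M,\underline r)=\sum_j b_{ij}r_j$. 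This establishes the linearity clause of $(1)$. For the integrality clause, now that all $g_i^\sharp$ are affine on $(0,\infty)^m\times\{0\}^{n-m}$, Theorem~\ref{T:strong-decomposition} (iterated over the breakpoints $l$ with $g_l^\sharp>g_{l+1}^\sharp$, and invoking Lemma~\ref{L:refined-integral} for the base field at $\underline r=(1,\dots,1,0,\dots,0)$) produces a single local ring $\gothR$ finite over $k\llbracket x_{m+1},\dots,x_n\rrbracket$ and a \emph{global} decomposition $M\otimes R'_{n,m,\underline h}=M_0\oplus\bigoplus_{\underline\lambda}M_{\underline\lambda}$ valid for all $\underline r$ at once, with $x_1^{b_{i1}}\cdots x_m^{b_{im}}\Ref^\sharp(M\otimes F_{\underline r};i)\in\bigoplus_\omega\gothR\cdot\omega$ and reduction modulo $\gothm_\gothR$ equal to the locally constant datum $\sum_\omega\lambda_\omega\omega$. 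Translating the non-vanishing part of $(2)$ through Proposition~\ref{P:variation-vs-refined} shows no slot with $b_{ij}>0$ lies in the ``bad'' summand $M_0$, so its reduction is non-trivial; as this reduction is constant in $\underline r$ and the decomposition is global, integrality and non-vanishing persist for every $\underline r\in(0,\infty)^m\times\{0\}^{n-m}$. This is $(1)$.

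\emph{The remaining implications.} For $(1)\Rightarrow(3)$: fix $j$, $i$ with $b_{ij}>0$, and $\sigma$ with $\sigma(1)=j$; since $g_i^\sharp$ is globally linear, Proposition~\ref{P:variation-vs-refined} (in the $\sigma$-relabelled coordinates) gives that the first $m$ entries of the $i$-th tuple of $\mathsf{Ref}^\sharp(M\otimes\sfF_\sigma)$ are $(g_i^\sharp(e_{\sigma(1)}),\dots,g_i^\sharp(e_{\sigma(m)}))=(b_{i\sigma(1)},\dots,b_{i\sigma(m)})$, and its last entry is the reduction of $x_1^{b_{i1}}\cdots x_m^{b_{im}}\Ref^\sharp(M\otimes F_{\underline r};i)$, which lies in $\bigoplus_\omega\gothR\cdot\omega$ with non-trivial reduction by the integrality clause of $(1)$; this is $(3)$. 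For $(3)\Rightarrow(2)$: pick $j$ satisfying \eqref{E:j-ramifies} and any $\sigma$ with $\sigma(1)=j$. Since $b_{\cdot j}$ is non-increasing in $i$, the set $\{i:b_{ij}=0\}$ is an upper interval $\{i_0,\dots,d\}$, and \eqref{E:j-ramifies} forces $b_{ij'}=0$ for all $j'$ whenever $i\ge i_0$; such an $i$-th Jordan--H\"older datum is regular, hence contributes $(0,\dots,0)$ to $\mathsf{Ref}^\sharp(M\otimes\sfF_\sigma)$, of the required form, while for $i<i_0$ we have $b_{ij}>0$ and $(3)$ supplies the desired tuple for this $\sigma$. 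Thus $(2)$ holds.

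\emph{Main obstacle.} The convexity argument pins down linearity cleanly; the real work is the precise dictionary of Proposition~\ref{P:variation-vs-refined}, i.e.\ matching the non-canonically ordered multiset $\mathsf{Ref}^\sharp(M\otimes\sfF_\sigma)$ with the subsidiary functions $g_i^\sharp$ and with the cone decomposition so that ``the $k$-th coordinate of a tuple'' really is ``the $r_{\sigma(k)}$-slope of the corresponding piece on $C_\sigma$'', and then propagating the integrality and non-vanishing of the refined radii from $C_\sigma$ to all of $(0,\infty)^m\times\{0\}^{n-m}$ via Theorem~\ref{T:strong-decomposition} and Lemma~\ref{L:refined-integral}. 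Treating the $\Irr=0$ Jordan--H\"older factors uniformly (cf.\ Remark~\ref{R:Irr=0}) is a further bookkeeping subtlety one has to be careful about.
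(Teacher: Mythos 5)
Your cycle $(1)\Rightarrow(3)\Rightarrow(2)\Rightarrow(1)$ matches the paper's, and $(1)\Rightarrow(3)$, $(3)\Rightarrow(2)$ are fine. However, the last sentence of your $(2)\Rightarrow(1)$ step rests on a false premise, and the way you set up the argument throws away the information needed to repair it.

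You write that each $g_i^\sharp(M,\cdot)$ is ``continuous, convex and piecewise linear'' by Proposition~\ref{P:weak-decomposition}(i${}^\sharp$), and you then invoke ``a sum of convex functions equal to an affine function has all summands affine'' to conclude each $g_i^\sharp$ is linear once $\Gamma=G_d^\sharp$ has been shown affine. But Proposition~\ref{P:weak-decomposition}(i${}^\sharp$) (and Theorem~\ref{T:variation}(i)) only asserts convexity of the \emph{partial sums} $G_i^\sharp=g_1^\sharp+\cdots+g_i^\sharp$, not of the individual $g_i^\sharp$. Indeed, the $g_i^\sharp$ need not be convex: with $m=2$, $d=2$, the functions $g_1^\sharp=\max(r_1,r_2)$, $g_2^\sharp=\min(r_1,r_2)$ satisfy $G_1^\sharp,G_2^\sharp$ convex, $g_1^\sharp\ge g_2^\sharp\ge0$, homogeneity, and $G_2^\sharp=r_1+r_2$ affine, yet $g_1^\sharp$ is not affine. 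So the ``sum of convex equals affine'' lemma does not apply, and the conclusion does not follow from $\Gamma\equiv L$ alone.

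The source of the trouble is that you sum the constraints from $(2)$ over $i$ before comparing with the vertex values. After summing you only know $\Gamma(e_{\sigma(k)})=L(e_{\sigma(k)})$, which is exactly consistent with the counterexample above. The per-index information that the slope tuple of $g_i^\sharp$ on the cone $C_\sigma$ coincides with the vertex values $(b_{i\sigma(1)},\dots,b_{i\sigma(m)})$ is precisely what is discarded, and it is precisely what rules out the counterexample. The paper keeps this information by arguing inductively on $i$: assuming $G_{i-1}^\sharp$ is already linear, $g_i^\sharp=G_i^\sharp-G_{i-1}^\sharp$ becomes a \emph{bona fide} convex function; then condition $(2)$ gives that $g_i^\sharp$ equals the linear form $\sum_j b_{ij}r_j$ both on the open cone $C$ and on the coordinate axes, and the same ``supporting hyperplane plus vanishing at the vertices of the simplex'' argument you already use for $\Gamma$ forces $g_i^\sharp$ to equal that linear form globally, closing the induction. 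The remaining parts of your write-up (the integrality/non-vanishing clause of $(1)$ via Theorem~\ref{T:strong-decomposition} and Lemma~\ref{L:refined-integral}, and the implications $(1)\Rightarrow(3)$, $(3)\Rightarrow(2)$) are in line with what the paper does.
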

\begin{proof}
$(1) \Rightarrow (3)$ follows immediately from Proposition~\ref{P:variation-vs-refined} above because $C$ therein  is a subset of $(0, \infty)^m \times \{0\}^{n-m}$.  $(3) \Rightarrow (2)$ is tautology.  So it suffices to  prove $(2) \Rightarrow (1)$. 
Let $j$ be the index in $(2)$. By Proposition~\ref{P:variation-vs-refined} again, there exist $\epsilon_1, \dots, \epsilon_m >0$ and $C$ therein, such that \eqref{E:variation-vs-refined} and \eqref{E:variation-vs-refined1} hold.
Note that condition \eqref{E:j-ramifies} implies that if $b_{ij} = 0$ then $b_{ij'} =0$ for all $j'$; thus we can upgrade condition 
\eqref{E:variation-vs-refined1}
to $g_1^\sharp(\bbM_0, \underline r) = \cdots  = g_{\dim \bbM_0}(\bbM_0, \underline r) = 0$ for all $\underline r \in C$.

  We now prove that the functions $g_i^\sharp(M, \underline r)$ are linear in $\underline r$ for $\underline r \in [0, \infty)^m \times \{0\}^{n-m}$ by induction on $i$.
For $i=0$, this is void.  Assume that $G_{i-1}^\sharp(M,\underline r)$ is linear in $\underline r$; we show that $g_i^\sharp(M,\underline r)$ is linear in $\underline r$. By the convexity in Proposition~\ref{P:weak-decomposition}(i${}^\sharp$), each $G_i^\sharp(M,\underline r)$ and hence $g_i^\sharp(M,\underline r) = G_i^\sharp(M,\underline r) - G_{i-1}^\sharp (M,\underline r)$ is convex over $[0, \infty)^m \times \{0\}^{n-m}$.  Moreover, we know that condition (2) implies that $g_i^\sharp(M,\underline r) = b_{i1} r_1 + \cdots + b_{im} r_m$ for all $\underline r \in C$ and the axes $\RR e_1 \cup \cdots \cup \RR e_m$.  This forces the equality  $g_i^\sharp(M,\underline r) = b_{i1} r_1 + \cdots + b_{im} r_m$ for all $\underline r \in [0, \infty)^m \times \{0\}^{n-m}$,
because a convex function agrees with a linear function on a convex polygon if they agree at all vertices (lying on $\RR e_1 \cup \cdots \cup \RR e_m$) and a point in the interior (some point in $C$).
This concludes the induction process.  The statement on refined partially intrinsic radii for $\underline r\in C$  follows from Proposition~\ref{P:variation-vs-refined} and for general $\underline r \in (0, \infty)^m \times \{0\}^{n-m}$ follows from further applying the variation property in Proposition~\ref{P:weak-decomposition}(iii${}^\sharp$).
\end{proof}

\subsubsection{Pointed local setup}
\label{SS:pointed-local}
From now on, we assume that we are in one of the following pointed local situations (centered at $z$).

(a) (Geometric) Assume that we are in the geometric local situation \ref{SS:local-setup}(a), that is $X$ being a smooth affine variety with an \'etale morphism $p: X \to \AAA^n$ such that $D = p^{-1}(V(x_1\cdots x_m))$, where the affine space has standard coordinates $x_1, \dots, x_n$.  We assume moreover that $p^{-1}(0) = \{z\}$ is a single closed point, and $D_1 = \pi^{-1}(V(x_1)), \dots, D_m= \pi^{-1}(V(x_m))$ are all irreducible.  We have $D = \cup_{j=1}^m D_j$.

(b) (Formal) We take $X = \Spec\, k\llbracket x_1, \dots, x_n\rrbracket$, $D_1 = V(x_1), \dots, D_m = V(x_m)$, $D=\cup_{j=1}^m D_j$, and $z$ be the origin.

We set $U = X-D$ and let $j: U \hookrightarrow X$ denote the natural embedding.
We take the log-structure to be $\Log = \{\partial_1, \dots, \partial_m\}$.
For each $j$, let $F_{(j)}$ denote the completion of $ k(X)$ with respect to the valuation corresponding to $D_j$. 
 In the formal case, $\calO_U \simeq R_{n,m} = k\llbracket x_1, \dots, x_n\rrbracket [ x_1^{-1}, \dots, x_m^{-1}]$.

Let $M$ be a vector bundle over $U$ with an integrable connection.

\subsubsection{Irregularity $\QQ$-divisors}
\label{SS:irregularity Q-divisor}
We define the \emph{irregularity $\QQ$-divisors} to be
\[
R_i = \sum_{j=1}^r b_{ij} \cdot D_j = \sum_{j=1}^r \Irr(M \otimes F_{(j)}; i) \cdot D_j,
\]
for $i =1, \dots, d$.  They are divisors of $X$ with rational coefficients.

\begin{definition}
\label{D:clean-global}
Let $N$ be a positive integer such that $Nb_{ij}$ are all integers.
Under the two setups of \ref{SS:pointed-local}, we consider the following:

(a) In the geometric case, consider the morphism $g: \AAA^n \to \AAA^n$ given by $x_i \mapsto x_i^N$ if $i=1, \dots, m$ and $x_i \mapsto x_i$ if $i=m+1, \dots, n$.  Denote $X' = X \times_{\AAA^n, g} \AAA^n$.  Let $\pi:  X' \to X$ denote the natural morphism; then there exists a \emph{unique} closed point $z'$ lying above $z$.

(b)  Consider $\pi: X' = \Spec\big( R_{n,0}[x_1^{1/N}, \dots, x_m^{1/N}] \big) \to X$.
Let $z'$ denote the origin point of $X'$.

Let $D'_j$ be the reduced subscheme of $\pi^{-1}(D_j)$ for $j =1, \dots, m$.  Write $D' = \cup_{j=1}^m D'_j$ and set $R'_i = \pi^*(R_i)$ as divisors; note that $R'_i$'s now are genuine divisors (with integer coefficients).

We say that $M$ is \emph{clean} at $z$ if, for some $j \in \{1, \dots, m\}$ satisfying \eqref{E:j-ramifies}, there exists an (integral) scheme $\tilde D_j$ \emph{finite} over a neighborhood of $z$ in $D'_j$ such that, after \emph{reordering} the refined irregularities $\Ref^\sharp(M\otimes F_{(j)}; 1), \dots,\Ref^\sharp(M\otimes F_{(j)}; d) $, we have, for every point $\tilde z$ lying above $z$,
\begin{equation}
\label{E:clean-condition}
\Ref^\sharp(M\otimes F_{(j)}; i) \in \Omega^1_{X'}(\log D')(R'_i) \otimes \calO_{\tilde D_j, \tilde z},\textrm{ and it generates a direct summand of the latter}.
\end{equation}
This does not depend on the choice of $N$ because the morphism $\pi$ is log-\'etale; neither does it depend on the choice of $\tilde D_j$ (up to further shrinking the neighborhood of $z$) by Lemma~\ref{L:clean-independence} below.

By abuse of language, we write $\calO_{\tilde D_j}(R_i)$ for $\calO_{X'}(R'_i) \otimes \calO_{\tilde D_j}$; this does not depend on the choice of $X'$, nor on the local parameter system $p: X \to \AAA^n$ in the geometric local setup.
\end{definition}

\begin{remark}
\label{R:order-refined-irr}
Recall that when defining the refined irregularities in \ref{SS:refined-irr}, there is no canonical choice of the order of refined irregularities.  However, in Definition~\ref{D:clean-global}, the choice of the order does matter; it is related to the ordering given by some valuation on higher dimensional local fields.  See Theorem~\ref{T:clean-independence-of-j} below.
\end{remark}

\begin{remark}
\label{R:clean-suffice-formal}
In the geometric case, $M$ is clean at $z$ if and only if $M \otimes \calO_{X,z}^\wedge$ is clean at $z$ in the formal case by faithful flatness of $\calO_{D_j,z}^\wedge$ over $\calO_{D_j,z}$.  (Note that the cleanliness condition is essentially about refined irregularities being integral; so we do not need approximation to prove the sufficiency implication.)
\end{remark}

\begin{lemma}
\label{L:clean-independence}
Let $R \to R'$ be an integral extension of rings and let $M$ be a finite free $R$-module.  Then, an element $x \in M$ generates a direct summand of $M$ as $R$-modules if and only if $x$ generates a direct summand of $M\otimes_R R'$ as $R'$-modules.
\end{lemma}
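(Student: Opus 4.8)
The plan is to reduce the statement to a purely ideal-theoretic fact about integral extensions. Fix a basis $M = \bigoplus_{i=1}^n Re_i$ and write $x = \sum_i a_i e_i$ (we may assume $x \neq 0$, since otherwise both sides of the asserted equivalence hold trivially). First I would record the standard reformulation that an element of a finite free module generates a direct summand exactly when it is \emph{unimodular}: $x$ generates a direct summand of $M$ if and only if there is $\phi \in \Hom_R(M,R)$ with $\phi(x) = 1$, if and only if the ideal $J := (a_1, \dots, a_n) \subseteq R$ equals $R$, if and only if $x \notin \gothm M$ for every maximal ideal $\gothm$ of $R$. These equivalences are elementary: a relation $\sum_i a_i b_i = 1$ gives the splitting $e_i \mapsto b_i$ and conversely; and $x \in \gothm M$ is the same as $J \subseteq \gothm$. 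I would also note that the coordinates of $x \otimes 1$ in the basis $(e_i \otimes 1)$ of $M \otimes_R R'$ are the images of the $a_i$, so that the corresponding ideal of $R'$ is $JR'$.

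With this in hand the lemma is exactly the assertion: for an integral extension of rings $R \hookrightarrow R'$ and an ideal $J \subseteq R$, one has $J = R$ if and only if $JR' = R'$. The implication $J = R \Rightarrow JR' = R'$ is trivial (this is the easy, ``only if'', half of the lemma). For the converse I would argue by contraposition: if $J \subseteq \gothm$ for some maximal ideal $\gothm$ of $R$, then by Lying Over (Cohen--Seidenberg) for the integral extension $R \hookrightarrow R'$ there is a prime $\gothq \subset R'$ with $\gothq \cap R = \gothm$; hence $JR' \subseteq \gothm R' \subseteq \gothq \subsetneq R'$, so $JR' \neq R'$.

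I do not expect a serious obstacle: the whole content is the reformulation in terms of the unit ideal, together with the Cohen--Seidenberg Lying Over theorem. The one place that requires care — and the reason the two directions are genuinely asymmetric — is that an integral extension need not be flat, so one cannot obtain the ``if'' direction by naive base change; it really uses surjectivity of $\Spec R' \to \Spec R$. In the intended applications $R$ and $R'$ are domains and $x \neq 0$, so $Rx \cong R$ and ``generates a direct summand'' literally coincides with unimodularity, which is what makes the reformulation above legitimate; for arbitrary $R$ one either adopts ``generates a direct summand'' to mean ``unimodular'' by convention, or inserts a short extra step to see $\mathrm{Ann}_{R'}(x \otimes 1) = 0$.
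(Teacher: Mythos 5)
Your proof is correct and takes essentially the same route as the paper: both reformulate the condition as the coordinate ideal $(a_1,\dots,a_n)$ being the unit ideal, and both deduce the ``if'' direction from the Cohen--Seidenberg theorem (the paper cites it as the going-up theorem, Eisenbud Prop.~4.15, though what is actually invoked is Lying Over, exactly as you state it). Your closing remark on the distinction between ``$Rx$ is a direct summand'' and ``$x$ is unimodular'' is a point the paper passes over silently; it is harmless in context since the rings arising in the application (structure sheaves of integral schemes) are domains and $x\ne 0$, but you are right to flag that the reformulation needs that hypothesis.
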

\begin{proof}

If $R \cdot x$ is a direct summand of $M$, then $R'\cdot x$ is a direct summand of $M \otimes_R R'$ by tensoring with $R'$.  Conversely, we assume that $R' \cdot x$ is a direct summand of $M \otimes _R R'$.  Pick a basis $\bbe_1, \dots, \bbe_l$ of $M$ and write $x= x_1 \bbe_1 + \cdots + x_l \bbe_l$ for $x_1, \dots, x_l \in R$.  Then we know that $x_1, \dots, x_l$ generate the unit ideal of $R'$ and hence they also generate the unit ideal of $R$ by going-up theorem \cite[Proposition~4.15]{eisenbud}.
\end{proof}

\begin{theorem}
\label{T:clean-independence-of-j}
The differential module $M$ is clean at $z$ if and only if the equivalence conditions in Theorem~\ref{T:clean-equiv-local} hold.  In this case, the condition \eqref{E:clean-condition} also holds for any $j \in \{1,\dots, m\}$ and any $i \in \{1, \dots, d\}$ such that $b_{ij}>0$.
\end{theorem}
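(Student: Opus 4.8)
The plan is to match the geometric cleanness condition \eqref{E:clean-condition} against the equivalent conditions of Theorem~\ref{T:clean-equiv-local}, using that $\Ref_{ij}(M)$ is exactly the refined irregularity $\Ref^\sharp(M\otimes F_{(j)};i)$ at the generic point of $D_j$ (where $F_{(j)}=F_{e_j}$). First I would reduce: by Remark~\ref{R:clean-suffice-formal} we may assume the formal pointed local setup~\ref{SS:pointed-local}(b), so $M$ is a differential module over $R_{n,m}$; since neither side changes under the log-\'etale base change $\pi\colon X'\to X$, we may take all $b_{ij}=\Irr(M\otimes F_{(j)};i)$ to be integers and $X'=X$, $D'=D$; and, fixing a good index $j_0$ (one satisfying \eqref{E:j-ramifies}), after permuting $x_1,\dots,x_m$ we may assume $j_0=1$.

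For the direction ``Theorem~\ref{T:clean-equiv-local} $\Rightarrow$ cleanness'' (which also yields the final assertion of the theorem), I would argue straight from condition~(1): the $g_i^\sharp(M,\underline r)$ are linear and $x_1^{b_{i1}}\cdots x_m^{b_{im}}\Ref^\sharp(M\otimes F_{\underline r};i)\in\bigoplus_{\omega\in\Log^*}\gothR\cdot\omega$, with reduction modulo $\gothm_\gothR$ nontrivial, for a local ring $\gothR$ finite over $k\llbracket x_{m+1},\dots,x_n\rrbracket$. Fix $j\le m$ with $b_{ij}>0$ and specialise to $\underline r=e_j$ (passing from the interior to this boundary point by the variation property of Theorem~\ref{T:strong-decomposition}/Proposition~\ref{P:weak-decomposition}). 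Writing $\Ref_{ij}(M)=\sum_\omega h_\omega\,(x_1^{-b_{i1}}\cdots x_m^{-b_{im}}\omega)$ with $h_\omega\in\gothR$, and letting $\tilde D_j$ be the normalisation of a neighbourhood of $z$ in $D_j$ in the finite extension generated by $\Frac(\gothR)$ (so $\gothR\hookrightarrow\calO_{\tilde D_j}$ near $z$ and $\{x_1^{-b_{i1}}\cdots x_m^{-b_{im}}\omega\}_\omega$ is a frame of $\Omega^1_X(\log D)(R_i)\otimes\calO_{\tilde D_j}$), the containment $h_\omega\in\gothR\subset\calO_{\tilde D_j}$ gives the first clause of \eqref{E:clean-condition}; nontriviality of the reduction means some $h_\omega$ is a unit in $\gothR_{\gothm_\gothR}$, so the $h_\omega$ generate the unit ideal, and being pulled back from $\Spec\gothR$ their common zero locus misses a neighbourhood of $z$ in $\tilde D_j$ — giving the second clause. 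Thus \eqref{E:clean-condition} holds for every $(i,j)$ with $b_{ij}>0$ (the remaining entries being governed by the reordering convention of Definition~\ref{D:clean-global} and Remark~\ref{R:Irr=0}), which both proves the ``moreover'' and, taking $j=j_0$, shows $M$ is clean at $z$.

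For the reverse direction I would set up the dictionary between $\Ref_{i1}(M)$ and the higher-dimensional local field $\sfF_\id$. Since $\sfF_\id\supseteq F_{(1)}$ and $b_{i1}>0$ keeps the relevant radius $<1$, $\Ref^\sharp(M\otimes\sfF_\id;i)$ is the image of $\Ref_{i1}(M)$ under $\kappa_{F_{(1)}}\hookrightarrow\kappa_{\sfF_\id}$, and the multi-valuation $\mathsf v$ reads off the pole orders of its coefficients along the flag $D_1\supset D_1\cap D_2\supset\cdots\supset\bigcap_{l\le m}D_l$. If $M$ is clean, witnessed by $j_0=1$, then \eqref{E:clean-condition} says precisely that, after dividing out $x_1^{-b_{i1}}\cdots x_m^{-b_{im}}$, the coefficients of $\Ref_{i1}(M)$ are regular on $\tilde D_1$ near $z$ and do not simultaneously vanish there; passing to flag-leading coefficients and using Remark~\ref{R:refined-CDVF} (all coefficients are determined by, and of comparable size to, the one along $\tfrac{dx_1}{x_1}$, which is nonzero), this translates into $\mathsf{Ref}^\sharp(M\otimes\sfF_\id;i)=(b_{i1},\dots,b_{im},\vartheta_i)$ with $\vartheta_i\in\bigoplus_\omega\gothR\cdot\omega$ nonzero modulo $\gothm_\gothR$, for every $i$. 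This is exactly the input that the proof of the implication $(2)\Rightarrow(1)$ of Theorem~\ref{T:clean-equiv-local} consumes (the instance $j=1$, $\sigma=\id$, via Proposition~\ref{P:variation-vs-refined}), so that argument gives condition~(1), hence by Theorem~\ref{T:clean-equiv-local} all of its conditions.

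The step I expect to be the main obstacle is the dictionary of the third paragraph: carefully relating the \emph{geometric, at the point $z$} data in \eqref{E:clean-condition} (a finite cover $\tilde D_j$; regularity and non-vanishing in a whole neighbourhood of $z$) to the \emph{algebraic} data entering Theorem~\ref{T:clean-equiv-local} (the multi-valuation $\mathsf v^\sharp$ on $\sfF_\id$, the ring $\gothR$ finite over $k\llbracket x_{m+1},\dots,x_n\rrbracket$, and the reduction $\vartheta_i\bmod\gothm_\gothR$), together with bookkeeping the correspondence of frames. A related subtlety — and the reason the reverse direction cannot be run purely inside a single $\sfF_\sigma$ — is that one permutation only probes the flag down to the deepest stratum and so does not visibly control poles along divisors of $D_j$ other than the $D_j\cap D_l$; this is precisely why I route the ``$\Rightarrow$ cleanness'' direction through condition~(1) (which, via the decomposition results of Section~2, already encodes that a clean module's refined irregularities carry no $x_1,\dots,x_m$-dependence beyond the monomial $x_1^{-b_{i1}}\cdots x_m^{-b_{im}}$) rather than through $\mathsf{Ref}^\sharp(M\otimes\sfF_\sigma)$ directly.
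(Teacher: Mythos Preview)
Your direction ``cleanness $\Rightarrow$ condition (2)'' is essentially the paper's argument (your appeal to Remark~\ref{R:refined-CDVF} is unnecessary but harmless). The gap is in the other direction. You write that condition~(1) gives, after clearing the monomial, coefficients $h_\omega\in\gothR$, and you justify the passage from interior $\underline r$ to $\underline r=e_j$ by ``the variation property of Theorem~\ref{T:strong-decomposition}/Proposition~\ref{P:weak-decomposition}''. But those statements only control refined radii for $\underline r\in(0,\infty)^m\times\{0\}^{n-m}$, and the point $e_j$ is on the boundary. At $e_j$ the residue field $\kappa_{F_{(j)}}$ contains $\Frac(k\llbracket x_1,\dots,\hat x_j,\dots,x_n\rrbracket)$, which is strictly larger than $\Frac(\gothR)$, and the coefficients of $\Ref_{ij}(M)$ genuinely acquire dependence on $x_1,\dots,\hat x_j,\dots,x_m$ that is invisible at interior $\underline r$. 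Concretely, take $n=m=2$ and $M=E(x_1^{-1}x_2^{-1}(1+x_2))$: for interior $\underline r$ one has $\gothR=k$ and normalized refined radius $-\tfrac{dx_1}{x_1}-\tfrac{dx_2}{x_2}$, whereas $\Ref_{i1}(M)$, after clearing $x_1^{-1}x_2^{-1}$, has $\tfrac{dx_1}{x_1}$-coefficient $-(1+x_2)\notin k$. Only the \emph{leading term} (in the flag sense) of $\Ref_{ij}(M)$ matches the interior data, not the full coefficient; so your $h_\omega\in\gothR$ fails, and with it the argument that $\Ref_{ij}(M)\in\Omega^1_X(\log D)(R_i)\otimes\calO_{\tilde D_j}$.

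The paper's route is different in exactly this place: it first uses Theorem~\ref{T:strong-decomposition}(ii) to reduce to pure partially intrinsic radii, then invokes Lemma~\ref{L:refined-integral} directly at $\underline r=e_j$ to obtain $\theta_{ij'}\in\calO_{\tilde D_j}[1/x_1\cdots\hat x_j\cdots x_m]$ (this is what produces the correct integrality over $\calO_{D_j}$, not over $k\llbracket x_{m+1},\dots,x_n\rrbracket$), and only then uses condition~(3) --- namely that for \emph{every} embedding $\iota\colon\calO_{\tilde D_j}\llbracket x_j\rrbracket\hookrightarrow\sfF_\sigma^\alg$ one has $\min_l\mathsf v(\iota(\theta_{il}))=\underline 0$ --- to kill the possible poles along $x_1,\dots,\hat x_j,\dots,x_m$ and to force at least one $\theta_{il}$ to be a unit at every point $\tilde z$ over $z$. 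You correctly anticipated in your final paragraph that a single $\sfF_\sigma$ (hence condition~(1) alone) cannot control poles along all the $D_j\cap D_l$; the fix is not to route through condition~(1), but to combine Lemma~\ref{L:refined-integral} with condition~(3) as the paper does.
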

\begin{proof}
We may replace $M$ by $M \otimes \calO_{X,z}^\wedge$ and assume that $M $ is a differential module over $R_{n,m}$.  We observe that the condition \eqref{E:clean-condition} is preserved when replacing $X$ by $X'$, $D$ by $D'$, and $k$ by $k^\alg$ as in Definition~\ref{D:clean-global}.  Hence we may assume that $b_{ji}$ are all integers and $X' = X$, $D' =D$, and $k = k^\alg$ in Definition~\ref{D:clean-global}.

We first prove that the cleanliness condition at $z$ implies the condition (2) of Theorem~\ref{T:clean-equiv-local}.
We note that for any $\sigma \in \mathfrak S_m$, the valuation on $ k\llbracket x_1, \dots, x_n\rrbracket$ induced by the natural embedding to $\sfF_\sigma$ extends (not uniquely) to an embedding of $\calO_{\tilde D_j, \tilde z}$ to $\sfF_\sigma^\alg$.   The cleanliness of $M$ at $z$ implies that, for some $j \in \{1, \dots, m\}$ satisfying \eqref{E:j-ramifies} and each point $\tilde z$ above $z$, we have $
\Ref^\sharp(M \otimes F_{(j)}; i) \in \Omega^1_X(\log D)(R_i) \otimes_{\calO_X}\calO_{\tilde D_j, \tilde z}$ for each $i$
and it generates a direct summand of the latter.  In explicit terms, this means that if we write out
\[
\Ref^\sharp(M \otimes F_{(j)}; i) = x_1^{-b_{i1}}\cdots x_m^{-b_{im}} \big(\theta_{i1} \frac{dx_1}{x_1} + \cdots +\theta_{im} \frac{dx_m}{x_m} + \theta_{i,m+1} dx_{m+1} + \cdots +\theta_{in} dx_n \big),
\]
we have $\theta_{ij} \in \calO_{\tilde D_j, \tilde z}$ and $\theta_{i1}, \dots, \theta_{in}$ together generate the unit ideal.   This implies that, for (any) $\sigma \in \mathfrak S_m$ with $\sigma(1) =j$, $\mathsf{Ref}^\sharp(M \otimes \sfF_\sigma)$ consists of \[
(b_{i\sigma(1)}, \dots, b_{i\sigma(m)},\bar \theta_{i1} \frac{dx_1}{x_1} + \cdots +\bar \theta_{im} \frac{dx_m}{x_m} + \bar \theta_{i,m+1} dx_{m+1} + \cdots +\bar \theta_{in} dx_n ),
\]
where $\bar \theta_{ij}$ is the reduction of $\theta_{ij}$ in $\gothR=\calO_{\tilde D_j}\big / \sqrt{(x_1, \dots, \hat x_j, \dots, x_m)}$.
Hence, $M$ satisfies the condition (2) in Theorem~\ref{T:clean-equiv-local}, and hence all other equivalent conditions.

Conversely, we assume the equivalent conditions in Theorem~\ref{T:clean-equiv-local}.  By Theorem~\ref{T:strong-decomposition}, $M$ is a direct sum of differential modules over $R_{n,m}$ of pure partially intrinsic radius (when tensored with $F_{\underline r}$ for any $\underline r \in (0, \infty)^m \times \{0\}^{n-m}$).  So, we replace $M$ by each direct summand and assume that $M$ has this property itself.  In particular, we assume $b_{1j}=\cdots =b_{dj}$ for any $j$.  Now, we prove that the condition (3) of Theorem~\ref{T:clean-equiv-local} implies condition~\eqref{E:clean-condition} for any $j \in \{1, \dots, m\}$ with $b_{1j}>0$.  We first invoke Lemma~\ref{L:refined-integral} to show that there exists an integral scheme $\tilde D_j$ finite over $D_j$ such that if we write
\[
\Ref^\sharp(M \otimes F_{(j)};i) = x_1^{-b_1} \cdots x_m^{-b_m} \big( \theta_{i1} \frac{dx_1}{x_1} + \cdots + \theta_{im} \frac{dx_m}{x_m} + \theta_{i,m+1} dx_{m+1} +\cdots +\theta_{in} dx_n \big),
\]
then we have $\theta_{ij'} \in \calO_{\tilde D_j, \tilde z}[1/x_1\cdots \hat x_j \cdots x_m]$ for any $j' \in \{1,\dots, n\}$. We fix $\sigma \in \mathfrak S_m$ such that $\sigma(1) = j$.  By the condition (3) of Theorem~\ref{T:clean-equiv-local}, we know that for \emph{any} embedding $\iota: \calO_{\tilde D_j}\llbracket x_j\rrbracket \hookrightarrow \sfF_\sigma^\alg$ extending the natural embedding $\calO_{D_j}\llbracket x_j\rrbracket \hookrightarrow \sfF_\sigma$, we have  $\min\{\mathsf v(\iota(\theta_{i1})), \dots, \mathsf v(\iota(\theta_{in}) )\} = \underline 0$ for any fixed $i$.  
This in particular implies that $\theta_{ij'} \in \calO_{\tilde D_j}$ for all $i$ and $j'$.
Moreover, for any point $\tilde z$ of $\tilde D_j$ lying above $z$, we can find an embedding $\iota$ as above such that the maximal ideal $\gothm_{\tilde z}$ is given by $\{x \in \calO_{\tilde D_j, \tilde z} | \mathsf v(\iota(x)) > \underline 0 \}$.  Hence, for any fixed $i$, there exists some $\theta_{ij'} \notin \gothm_{\tilde z}$.  This implies that, for any fixed $i$ and any point $\tilde z$ above $z$, $\Ref^\sharp(M \otimes F_{(j)}; i) \in \Omega^1_{X'}(\log D)(R_i) \otimes \calO_{\tilde D_j, \tilde z}$
and it generates a direct summand.
\end{proof}

\begin{remark}
\label{R:clean=>numerical}
When $z$ is the intersection of exactly $n$ divisors $D_1, \dots, D_n$, the proof of the theorem implies that the cleanliness at $z$ is equivalent to numerical cleanliness at $z$.  This is however false for other points on $D$.  See Remark~\ref{R:clean<numerical}.
\end{remark}

\begin{theorem}
\label{T:numerical=>clean}
If $M\otimes \calO_{X, z}^\wedge$ is numerically clean at $z$ then $M $ is clean at $z$.
\end{theorem}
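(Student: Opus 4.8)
The plan is to deduce from numerical cleanness the equivalent conditions of Theorem~\ref{T:clean-equiv-local}; then $M$ is clean at $z$ by Theorem~\ref{T:clean-independence-of-j}. By Remark~\ref{R:clean-suffice-formal} we may assume throughout that $M$ is a numerically clean differential module over $R_{n,m}$, and we shall verify condition~(1) of Theorem~\ref{T:clean-equiv-local}. That condition has two halves: linearity of the functions $g_i^\sharp(M,\cdot)$ on $[0,\infty)^m\times\{0\}^{n-m}$, and an integrality statement for the refined partially intrinsic radii $\Ref^\sharp(M\otimes F_{\underline r};i)$ over that locus.

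For the first half, observe that when $\underline r\in[0,\infty)^m\times\{0\}^{n-m}$ we have $|x_j|_{\underline r}=1$ for $j=m+1,\dots,n$, so the intrinsic $\partial_j$-radius of any subquotient coincides with its $\partial_j$-radius for these $j$. Consequently the decomposition of $M\otimes F_{\underline r}$ into summands with pure $\partial_j$-radii (for all $j$) is insensitive to whether one singles out $\partial_{m+1},\dots,\partial_n$, and $IR^\sharp(M\otimes F_{\underline r};i)=IR(M\otimes F_{\underline r};i)$ for every $i$; hence $g_i^\sharp(M,\underline r)=g_i(M,\underline r)$ on this locus. Numerical cleanness says each $g_i(M,\cdot)$ is linear on $[0,\infty)^n$, so each $g_i^\sharp(M,\cdot)$ is linear on $[0,\infty)^m\times\{0\}^{n-m}$; write $g_i^\sharp(M,\underline r)=b_{i1}r_1+\cdots+b_{im}r_m$.

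For the second half, note that since the $g_i$ are weakly decreasing in $i$ and linear, each difference $g_l(M,\cdot)-g_{l+1}(M,\cdot)$ is a nonnegative linear form, hence is either identically zero or strictly positive on $(0,\infty)^n$; so Theorem~\ref{T:variation}(ii) applies at each index where it is nonzero, and iterating it decomposes $M=\bigoplus_s M^{(s)}$ over $R_{n,m}$, with each $M^{(s)}$ of pure partially intrinsic radii $b_1^{(s)}r_1+\cdots+b_m^{(s)}r_m$ on the locus. One then runs, for each $M^{(s)}$, the argument used to prove Theorem~\ref{T:strong-decomposition}(i): the initial separation step there is vacuous because $M^{(s)}$ is already pure, so Proposition~\ref{P:weak-decomposition}(iii$^\sharp$) places the refined partially intrinsic radii in $\bigoplus_{\omega} x_1^{-b_1^{(s)}}\cdots x_m^{-b_m^{(s)}}\Frac(k\llbracket x_{m+1},\dots,x_n\rrbracket)^\alg\cdot\omega$, while Lemma~\ref{L:refined-integral}, applied with $\pi_F=x_j$ for each $j\in\{1,\dots,m\}$ in turn, confines each coordinate of these radii to a local ring finite over $k\llbracket x_{m+1},\dots,x_n\rrbracket$; along the way one uses that in the irregular range $IR^\sharp<1$ the $dx_j$-components of $\Ref^\sharp$ vanish for $j>m$, so no poles in $x_{m+1},\dots,x_n$ enter. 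Amalgamating the finitely many local rings so produced into a single $\gothR$ finite over $k\llbracket x_{m+1},\dots,x_n\rrbracket$ (regular blocks contribute $\Ref^\sharp=0$ and may be ignored) yields the integrality assertion of Theorem~\ref{T:clean-equiv-local}(1), and with it the cleanness of $M$.

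I expect the integrality step to be the crux. The difficulty is that the cleanest refined-radii decomposition available, Theorem~\ref{T:variation}(iii), is only asserted over $(0,\infty)^n$, whereas condition~(1) concerns the boundary face $(0,\infty)^m\times\{0\}^{n-m}$, over which the residue field of $F_{\underline r}$ ceases to be finite over $k$ and instead contains $\Frac(k\llbracket x_{m+1},\dots,x_n\rrbracket)$; pinning down the refined radii there, and certifying that they acquire no spurious poles in the horizontal parameters, is precisely what forces the detour through the $\sharp$-decomposition results (Proposition~\ref{P:weak-decomposition}, Theorem~\ref{T:strong-decomposition}) and, at bottom, through the cyclic-vector estimate of Lemma~\ref{L:refined-integral}. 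The linearity step, by contrast, is formal.
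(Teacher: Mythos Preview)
Your linearity argument is fine, and so is the reduction to the formal situation. The real problem is that you have not established condition~(1) of Theorem~\ref{T:clean-equiv-local}: that condition has \emph{three} parts, not two. Besides linearity of the $g_i^\sharp$ and the integrality statement $x_1^{b_{i1}}\cdots x_m^{b_{im}}\Ref^\sharp(M\otimes F_{\underline r};i)\in\bigoplus_{\omega}\gothR\cdot\omega$, it requires that the reduction of this element modulo $\gothm_\gothR$ be \emph{nontrivial}. This nontriviality is exactly the ``generates a direct summand'' clause in Definition~\ref{D:clean-global}, and it is the whole point of cleanness. Your sketch re-runs the proof of Theorem~\ref{T:strong-decomposition}(i), but that theorem only produces integrality; it says nothing about units. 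Nothing you have written forces the coefficients $\theta_{ij}$ out of $\gothm_\gothR$, and indeed for a module like $E(x_{m+1}^2/x_1)$ over $R_{n,1}$ (which is $\sharp$-pure with linear $g_1^\sharp$) all the $\sharp$-coefficients lie in $(x_{m+1})$. Of course that module is not numerically clean, but your argument never invokes numerical cleanness at the nontriviality step, so it cannot distinguish the two cases.

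The paper supplies the missing idea: apply the equivalent conditions of Theorem~\ref{T:clean-equiv-local} not to $M$ over $R_{n,m}$ but to $M\otimes R_{n,n}$, i.e.\ with the \emph{full} log-structure. There $m=n$, so $\gothR$ is finite over $k$, and (after passing to $k^\alg$) the nontriviality is automatic; numerical cleanness gives the linearity needed to feed into condition~(1). Unwinding condition~(3) for $M\otimes R_{n,n}$ via the higher local field $\sfF_\id$ yields $\min\{\mathsf v(\theta_{i1}),\dots,\mathsf v(\theta_{im}),\mathsf v(x_{m+1}\theta_{i,m+1}),\dots,\mathsf v(x_n\theta_{in})\}=\underline 0$. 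The crucial observation is that for $j>m$ the integrality $\theta_{ij}\in\calO_{\tilde D_1}[1/x_2\cdots x_m]$ forces $\mathsf v(x_j\theta_{ij})>\underline 0$ strictly, so the minimum is actually attained among the indices $j\le m$, giving $\min\{\mathsf v(\theta_{i1}),\dots,\mathsf v(\theta_{im})\}=\underline 0$. One then concludes as in Theorem~\ref{T:clean-independence-of-j}. Incidentally, your remark that the $dx_j$-components of $\Ref^\sharp$ vanish for $j>m$ is false (take $E(x_{m+1}/x_1)$), though you do not actually need it: Theorem~\ref{T:strong-decomposition}(i) already handles the integrality without any such vanishing.
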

\begin{proof}
The proof is very similar to the proof above, but we have to be very careful at a few places, which hints why the converse of the theorem does not hold in general.
By Remark~\ref{R:clean-suffice-formal}, it suffices to assume that we are in the formal situation, that is $M$ is a finite and flat differential module over $R_{n,m}$.
Since the numerical cleanliness condition is preserved when replacing $X$ by $X'$ as in Definition~\ref{D:clean-global} and replacing $k$ by its algebraic closure, we may assume that $r_{ji}$ are all integers and $X' = X$ and $D' =D$ in Definition~\ref{D:clean-global}, and that $k$ is algebraically closed.

By Theorem~\ref{T:variation}(ii), we may assume that $M \otimes F_{\underline r}$ has pure intrinsic radius $b_1r_1 + \cdots + b_nr_n$ for all $\underline r \in [0, \infty)^n$ (with respect to the full log-structure).  When $b_1=\cdots =b_n =0$, $M$ is regular and it is obviously clean at $z$.  From now on, we assume that this is not the case; without loss of generality, we assume that $b_1>0$.  
As in proof of Theorem~\ref{T:clean-independence-of-j}, we first invoke Lemma~\ref{L:refined-integral} to show that there exists an integral scheme $\tilde D_1$ finite over $D_1$ such that if we write
\[
\Ref^\sharp(M \otimes F_{(1)}; i) = x_1^{-b_1} \cdots x_m^{-b_m} \big( \theta_{i1} \frac{dx_1}{x_1} + \cdots + \theta_{im} \frac{dx_m}{x_m} + \theta_{i,m+1} dx_{m+1} +\cdots +\theta_{in} dx_n \big),
\]
then we have $\theta_{ij} \in \calO_{\tilde D_1, \tilde z}[1/x_2\cdots x_m]$ for all $j \in \{1,\dots, n\}$.  Note that this is written in a form adapted to the log-structure $\Log=\{\partial_1, \dots, \partial_m\}$ but not the full log-structure.  Applying 
Theorem~\ref{T:clean-equiv-local} to $M \otimes R_{n,n}$, we know that for \emph{any} embedding $\iota: \calO_{\tilde D_1}\llbracket x_1\rrbracket \hookrightarrow \sfF_\mathrm{id}^\alg$ extending the natural embedding $\calO_{D_1}\llbracket x_1\rrbracket \hookrightarrow \sfF_\mathrm{id}$, we have 
\begin{equation}
\label{E:minimals}
\min\big\{\mathsf v(\iota(\theta_{i1})), \dots, \mathsf v(\iota(\theta_{im})), \mathsf v(\iota(x_{m+1}\theta_{i,m+1})), \dots, \mathsf v(\iota(x_n\theta_{in}) )\big\} = \underline 0
\end{equation}
for any fixed $i$.
Note that, for $j = m+1, \dots, n$, the facts that $\theta_{ij} \in \calO_{\tilde D_1, \tilde z}[1/x_2\cdots x_m]$ and that $\mathsf v(\iota(x_j\theta_{ij})) \geq \underline 0$ force $\mathsf v(\iota(x_j\theta_{ij})) > \underline 0$.  The minimum in \eqref{E:minimals} is taken from the first $m$ terms, i.e. $\min\{\mathsf v(\iota(\theta_{i1})), \dots, \mathsf v(\iota(\theta_{im}) \} = \underline 0$.  Now, we proceed exactly as in the proof of Theorem~\ref{T:clean-independence-of-j} to conclude.
\end{proof}

\begin{remark}
\label{R:clean<numerical}
We remark that cleanliness does not imply numerical cleanliness in general.  We construct a counterexample as follows.  Let $X = \AAA^2$ with coordinates $x$ and $y$, and let $D$ be the $x$-axis.  Consider the differential module $M = k[x,y][y^{-1}]\cdot \bbe$ given by $\partial_x \bbe = \frac 1{y^2}\bbe$ and $\partial_y  \bbe = -2\frac x{y^3}\bbe$; in other words, $\bbe$ is a proxy of $e^{x/y^2}$.  The refined partially intrinsic radii of $M$ along $D$ is $\frac 1{y^2}dx -\frac{2x}{y^2}\frac{dy}y$, which is clean everywhere on $D$ by definition.  However, at the origin, the corresponding function $g_1(M, F_{r_1, r_2}) = \max\{2r_2-r_1, 0\}$ for $r_1, r_2 \in [0,\infty)^2$ is not linear.
\end{remark}

Now, we switch back to the global situation.

\begin{definition}
\label{D:global clean}
Let $X$ be a smooth variety of dimension $n$ over $k$ and let $D= \cup D_j$ be a divisor with  simple normal crossings, where $D_j$ are irreducible components of $D$.
Let $M$ be a vector bundle over $U$ with an integrable connection.
We say that $M$ is \emph{(numerically) clean}, if for all closed point $z \in X$, $M \otimes \calO_{X,z}^\wedge$ is (numerically) clean.  Theorem~\ref{T:numerical=>clean} implies that numerically cleanliness $\Rightarrow$ cleanliness.
\end{definition}

\begin{remark}
\label{R:no-global-defn-clean}
Even if $M$ is clean over $X$ and $b_{ij} \in \ZZ_{>0}$ for all $i, j$ (which implies \eqref{E:j-ramifies}), it does \emph{not} mean that Definition~\ref{D:clean-global} holds globally, i.e., in the global situation above, we may not be able to find an integral scheme $\tilde D_j$ finite over $D_j$ such that, after reordering the refined irregularities, we have that $\Ref^\sharp(M \otimes F_{(j)}; i) \in \Omega_X^1(\log D)(R_i) \otimes \calO_{\tilde D_j}$ and it locally generates a direct summand.  The existence of $\tilde D_j$ is not the problem, but when $D_i \cap D_j$ is not connected, the reordering of refined irregularities in Definition~\ref{D:clean-global} might be different from points to points; this presents a difficulty in reasonably define the $\QQ$-divisor $R_i$.  In general, we do not expect any a priori reason for a uniform choice of $R_i$.

On the other hands, if $M$ is clean over $X$ and we have strict inequalities $b_{1j}> \cdots > b_{dj}>0$ for all $j$ (which automatically implies $b_{ij} \in \NN$ and \eqref{E:j-ramifies}), then there is a unique ordering of these refined irregularities that makes Definition~\ref{D:clean-global} work globally; in this case, there exists an integral scheme $\tilde D_j$ finite over $D_j$ such that $\Ref^\sharp(M \otimes F_{(j)}; i) \in \Omega_X^1(\log D)(R_i) \otimes \calO_{\tilde D_j}$ and it locally generates a direct summand.
\end{remark}

\begin{prop}
\label{P:generic-clean}
Keep the notation as in Definition~\ref{D:global clean}.  The set of closed points $V$ of $X$ at which $M$ is clean, is the set of closed points of an open subvariety of $X$.  We call this open subvariety the \emph{clean locus} of $M$.  Moreover, its complement has codimension $\geq 2$.
\end{prop}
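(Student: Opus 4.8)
The plan is to prove the two assertions separately: first that the clean locus is open, and then that its complement $Z$ is contained in $D$ and contains no irreducible component $D_j$ of $D$ — which together give that every component of $Z$ has codimension $\geq 2$, since a codimension-one irreducible closed subset of $X$ contained in the pure codimension-one divisor $D$ must equal one of the $D_j$. The inclusion $Z \subseteq D$ is immediate: at a closed point $z \notin D$ we are in the local setup of \ref{SS:pointed-local} with $m = 0$, so the cleanness condition (equivalently, condition (1) of Theorem~\ref{T:clean-equiv-local}) is vacuous, and $M$ is clean at $z$.

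For openness, suppose $M$ is clean at a closed point $z_0$ and let $D_1, \dots, D_m$ be the components of $D$ through $z_0$. By Theorem~\ref{T:clean-independence-of-j}, the equivalent conditions of Theorem~\ref{T:clean-equiv-local} hold for $M \otimes \calO_{X,z_0}^\wedge$, and consequently, for each $j \in \{1,\dots,m\}$ there exist an open neighborhood $W_j \ni z_0$ in $X$ and an integral scheme $\tilde D_j$ finite over the corresponding neighborhood of $z_0$ in $D'_j$ such that \eqref{E:clean-condition} holds for every $i$ with $b_{ij} > 0$ (for the indices with $b_{ij} = 0$ the condition is vacuous, the zero submodule being a direct summand). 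Setting $W_0 = \big(\bigcap_{j=1}^m W_j\big) \setminus \bigcup_{D_{j'} \not\ni z_0} D_{j'}$, I claim $M$ is clean at every closed point $z \in W_0$: the components of $D$ through such a $z$ form a subset of $\{D_1,\dots,D_m\}$, restricting the corresponding $\tilde D_j$ over a neighborhood of $z$ keeps \eqref{E:clean-condition} valid there, and choosing $j$ satisfying \eqref{E:j-ramifies} at $z$ — or simply observing that $M$ is regular near $z$ if every $b_{ij}$ vanishes — shows, via Definition~\ref{D:clean-global}, that $M$ is clean at $z$. Hence the clean locus is open.

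For the codimension bound it then suffices to show $M$ is clean at a dense set of closed points of each component $D_j$. Here one argues at the generic point $\eta_j$: over the complete discretely valued field $F_{(j)} = k(X)^{\wedge,D_j}$, whose residue field is $k(D_j)$, the coefficients of the refined irregularities $\Ref_{ij}(M) = \Ref^\sharp(M\otimes F_{(j)};i)$ lie in a finite extension $L$ of $k(D_j)$; let $\tilde D_j$ be the normalization of $D_j$ in $L$. These coefficients are rational functions on $\tilde D_j$, regular on a dense affine open, and by Remark~\ref{R:refined-CDVF} the $\tfrac{dx_1}{x_1}$-coefficient of $\Ref^\sharp(M\otimes F_{(j)};i)$ is nonzero whenever $\Irr(M\otimes F_{(j)};i)>0$; so after inverting the finitely many nonzero coefficients and discarding the intersections $D_j \cap D_{j'}$ with $j' \neq j$, we obtain a dense open $V'_j \subseteq D_j$, meeting no other component of $D$, over which $\Ref_{ij}(M)$ is a section of $\Omega^1_{X'}(\log D')(R'_i)\otimes\calO_{\tilde D_j}$ generating a direct summand. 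Since only $D_j$ passes through any $z \in V'_j$, Definition~\ref{D:clean-global} (with this $\tilde D_j$, and with $j$ vacuously satisfying \eqref{E:j-ramifies}) gives cleanness at $z$. Hence $D_j$ is not contained in $Z$, and the codimension claim follows from openness together with $Z \subseteq D$.

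I expect the main obstacle to be the openness step: a priori, cleanness at $z_0$ is data attached to $z_0$ together with a privileged component satisfying \eqref{E:j-ramifies}, and one must check that it propagates to nearby closed points lying on possibly fewer components and with a possibly different privileged direction. The key is that Theorem~\ref{T:clean-independence-of-j} upgrades cleanness at $z_0$ to the validity of \eqref{E:clean-condition} for \emph{all} $j$ simultaneously, over suitable neighborhoods; once this is in hand, the rest is bookkeeping of finitely many neighborhoods and of the condition \eqref{E:j-ramifies}. Were that theorem not available, one would instead have to run the spreading-out argument directly, using the variation and decomposition results (Theorems~\ref{T:variation} and~\ref{T:strong-decomposition}) to propagate linearity of the $g_i^\sharp$ and the structure of the refined irregularities from $z_0$ to a neighborhood.
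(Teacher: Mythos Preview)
Your argument is correct and rests on the same key ingredient as the paper (Theorem~\ref{T:clean-independence-of-j}), but the organization is genuinely different.  The paper does not separate ``open'' from ``codimension $\geq 2$''; instead it orders the components so that $\min\{i : b_{ij}=0\}$ is nondecreasing and proves, by induction on $j$, that $V \cap D_j$ is open \emph{and} dense in $D_j$.  The inductive step uses Theorem~\ref{T:clean-independence-of-j} to show $V \cap D_{j_0}$ sits inside the open dense locus $W \subseteq D_{j_0}$ where \eqref{E:clean-condition} holds for $j_0$, observes that $V$ and $W$ agree on $D_{j_0}^\circ = D_{j_0} \setminus \bigcup_{j<j_0} D_j$ (since there $j_0$ automatically satisfies \eqref{E:j-ramifies}), and handles $D_{j_0} \setminus D_{j_0}^\circ$ by the inductive hypothesis.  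Your approach instead spreads out cleanness from a single point $z_0$ to a neighborhood, using that Theorem~\ref{T:clean-independence-of-j} upgrades cleanness at $z_0$ to \eqref{E:clean-condition} for \emph{every} $j$, so whichever $j$ satisfies \eqref{E:j-ramifies} at a nearby $z$ is already covered; you then treat density separately at the generic point of each $D_j$.  Your route is more direct and avoids the induction, while the paper's ordering of the $D_j$ makes the interplay between \eqref{E:j-ramifies} and the stratification by intersections more explicit.  One small point: your parenthetical that the zero submodule is a direct summand is the right idea, but note that in the paper's reading (cf.\ Lemma~\ref{L:clean-independence}) ``generates a direct summand'' unpacks to the coefficients generating the unit ideal; the correct justification is rather that the reordering inherited from $z_0$ (equivalently, linearity of the $g_i^\sharp$ in condition~(1) of Theorem~\ref{T:clean-equiv-local}) forces $b_{ij'}=0$ for all $j'$ through $z$ whenever $b_{ij}=0$ for the chosen $j$, so $R'_i = 0$ near $z$ and there is nothing to check.
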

\begin{proof}
Only in this proof, all varieties are viewed as a set of its closed points with Zariski topology.
To prove the proposition, we may assume that we are in the geometric local situation \ref{SS:local-setup}.  In this case, $D$ is the union of irreducible divisors $D_1, \dots, D_m$ of $X$.  It suffices to prove that the intersection of the clean locus $V$ with each $D_j$ is open and dense in $D_j$.
By reordering $D_j$, we may assume
\[
\textrm{if }j<j',\quad \min\{i|b_{ij}=0\} \leq \min\{i|b_{ij'}=0\}.
\]

We now prove that $V \cap D_j$ is open and dense in $D_j$ by induction on $j$.
The statement is void if $j=0$.  Assume that the statement is proved for all $j < j_0$ and we prove it for $j =j_0$.
By Theorem~\ref{T:clean-independence-of-j}, we know that the set $V \cap D_{j_0}$ is contained in the set $W$ where the condition \eqref{E:clean-condition} is fulfilled for each $i$ for which $b_{ij_0}>0$; it follows from the definition that $W$ is an open and dense subset of $D_j$.
Unfortunately, $V \cap D_{j_0}$ may still be different from $W \cap D_{j_0}$ because condition \eqref{E:clean-condition} can be used to test cleanliness only if the index $j_0$ satisfies condition \eqref{E:j-ramifies}.  (In other words, When $b_{ij_0} =0$, we need to know whether other $b_{ij}$ might still be positive in which case, the cleanliness will be determined by condition \eqref{E:clean-condition} on that $j$.)

So we need to prove that $W \bs V$ is closed in $W$. Since condition \eqref{E:j-ramifies} automatically holds for closed points $z \in D_{j_0}^\circ := D_{j_0} \bs \cup_{j<j_0}D_j$ by our ordering of the divisors, we have $V \cap D_{j_0}^\circ = W \cap D_{j_0}^\circ$. It then suffices to show that $(W \cap (D_{j_0} \bs D_{j_0}^\circ) )\bs V$ is closed in $W$. 
In fact, we will show that $(D_{j_0} \bs D_{j_0}^\circ )\bs V$ is closed in $D_{j_0} \bs D^\circ_{j_0}$ which obviously implies the previous sentence.
By induction hypothesis, for any $j < j_0$, $D_j \bs V$ is closed in $D_j$ and hence $(D_j \cap D_{j_0}) \bs V$ is closed in $D_j \cap D_{j_0}$; this implies that $(D_{j_0} \bs D_{j_0}^\circ) \bs V$ is closed in $(D_{j_0} \bs D_{j_0}^\circ)$, finishing the inductive proof. 
\end{proof}

\begin{remark}
The cleanliness condition is a very restrictive condition.  However, Kedlaya \cite{kedlaya-sabbah1, kedlaya-sabbah2} proved that, after certain blowups, one can achieve this condition.  The precise statement is the following.
\end{remark}

\begin{theorem}
Let $X$ be a smooth variety of dimension $n$ over $k$ and let $D$ be a divisor with  simple normal crossings.  Let $M$ be a differential module over $X-D$.  Then there exists a proper birational morphism of smooth pairs $f: (X', D') \to (X, D)$ such that $f|_{X'-D'}: X'-D' \to X-D$ is an isomorphism and $f^*M$ admits a good formal structure at each closed point of $X'$.  In particular, $f^*M$ is clean on $X'$.
\end{theorem}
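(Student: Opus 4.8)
The plan is to split the statement into two parts of very unequal difficulty. The substantive part --- the existence of a proper birational modification $f\colon (X',D')\to(X,D)$, an isomorphism over $U=X-D$, such that $f^{-1}(D)$ is again a strict simple normal crossings divisor and $f^*M$ acquires a good formal structure at \emph{every} closed point of $X'$ --- is precisely the resolution theorem of Kedlaya \cite{kedlaya-sabbah1, kedlaya-sabbah2}, proved independently by Mochizuki \cite{mochizuki}, and I would invoke it as a black box. For orientation, the idea there is to introduce the \emph{turning locus} of $M$, the closed subset of $D$ over whose formal completions $M$ admits no good decomposition in the sense of Definition~\ref{D:good-decomposition}; one then shows, using the variation and convexity of subsidiary radii (Theorem~\ref{T:variation}) together with the refined radii decomposition (Proposition~\ref{P:variation-vs-refined} and its global counterparts) and a delicate valuation-theoretic analysis of how refined irregularities transform under blow-up, that a finite sequence of blow-ups with smooth centres contained in $D$ --- interleaved with further blow-ups keeping the preimage of $D$ simple normal crossings --- renders the turning locus empty. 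This is by far the hardest step, and I would make no attempt to reprove it; it is exactly where the full machinery of \cite{kedlaya-xiao, kedlaya-sabbah1, xiao-refined} is needed.

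Granting such an $f$, the remainder is routine. First I would fix a closed point $z'\in X'$ and choose local parameters to put us in the formal situation~\ref{SS:pointed-local}(b) (replacing $k$ by the residue field $k(z')$ if necessary), so that $f^*M\otimes\calO_{X',z'}^\wedge$ becomes a differential module over some $R_{n,m}$ which, by hypothesis, satisfies the equivalent conditions of Theorem~\ref{T:kedlaya-criterion}. By condition~(2) of that theorem the functions $G_1(f^*M,\underline r),\dots,G_d(f^*M,\underline r)$ are all linear in $\underline r$, hence so is each $g_i(f^*M,\underline r)=G_i(f^*M,\underline r)-G_{i-1}(f^*M,\underline r)$; that is, $f^*M\otimes\calO_{X',z'}^\wedge$ is numerically clean. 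Theorem~\ref{T:numerical=>clean} then yields that $f^*M$ is clean at $z'$. Since $z'$ was an arbitrary closed point of $X'$, it follows that $f^*M$ is clean on $X'$, which is the assertion ``in particular''.

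The only bookkeeping point I would watch is the interplay between the full log-structure $\{\partial_1,\dots,\partial_n\}$, which governs numerical cleanness and the criterion of Theorem~\ref{T:kedlaya-criterion}, and the log-structure $\{\partial_1,\dots,\partial_m\}$ attached to the smooth pair $(X',D')$, which enters the definition of cleanness; but this causes no difficulty, precisely because Theorem~\ref{T:numerical=>clean} is the bridge between the two and is applied pointwise at each closed point. In summary: cite the Kedlaya--Mochizuki resolution for the existence of $f$, and then run the elementary chain ``good formal structure $\Rightarrow$ numerically clean $\Rightarrow$ clean'' at every closed point of $X'$. All of the genuine difficulty sits inside the cited resolution result, which I take as given.
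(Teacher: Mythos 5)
Your proposal is correct and matches the paper exactly. The paper itself offers no proof of this theorem: it is stated as a citation of the resolution results of Kedlaya \cite{kedlaya-sabbah1, kedlaya-sabbah2} and Mochizuki \cite{mochizuki} (as the preceding remark makes explicit), with the ``in particular'' clause following from the chain ``good formal structure $\Rightarrow$ numerically clean $\Rightarrow$ clean'' established by Theorem~\ref{T:kedlaya-criterion} (condition (2), which forces each $g_i=G_i-G_{i-1}$ to be linear) and Theorem~\ref{T:numerical=>clean}, applied at each closed point of $X'$ after passing to the formal completion --- precisely as you describe. Your flag about the two log-structures is the right thing to watch, and your observation that Theorem~\ref{T:numerical=>clean} is the bridge is also how the paper handles it.
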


\begin{remark}
One might question the need of introducing the (weaker version of) cleanliness since we can achieve good formal structure under proper birational pullback.  One reason is that the current version of cleanliness is closely tied to the conjectural log-characteristic cycles.  Another reason is that, in the analogous positive characteristic situation, one do not have a notion of ``good formal structure".  In fact, we do not expect to achieve ``numerically cleanliness" under birational proper pullback.
\end{remark}

\section{Main theorem}

\subsection{Statement of the main theorem}

\begin{definition}
\label{D:ZCar'}
We now define the conjectural log-characteristic cycles.  We first assume that we are in the local setup \ref{SS:local-setup}.  We do not assume that $M$ is clean to begin with, and hence the conjectural log-characteristic cycle may not be equal to the actual log-characteristic cycle.  (See Proposition~\ref{P:ZCar'<ZCar} though.)

We fix $j\in \{1, \dots, m\}$.  Let $F_{(j)}$ denote the completion of $k(X)$ with respect to the valuation corresponding to the divisor $D_j$; let $\mathfrak o_{(j)}$ and $\kappa_{(j)}$ be the corresponding valuation ring and residue field, respectively.  We now pass to the CDVF situation \ref{SS:local-setup}(c).  We have defined the refined irregularities of $M_{(j)}=M \otimes F_{(j)}$ in \ref{SS:refined-irr}:
\begin{equation}
\label{E:refined-irregularities}
\Ref(M_{(j)};i) \in \bigoplus_{l=1}^n x_j^{-\Irr(M_{(j)})} \kappa_{(j)}^\alg \frac{dx_l}{x_l} \textrm{ for }i = 1, \dots, d.
\end{equation}

We first assume that all refined irregularities of $M_{(j)}$ come from the same $\Gal(F'_{(j)}/ F_{(j)})$-orbit for some finite Galois extension $F'_{(j)}$ of $F_{(j)}$ containing $x_j^{-\Irr(M_{(j)})}$; in particular, $M_{(j)}$ has pure irregularity $\Irr(M_{(j)})$.  For each $i$, we view $\Ref(M_{(j)};i)$ as a homomorphism
\[
\Ref(M_{(j)};i): x_j^{\Irr(M_{(j)})}\kappa_{F'_{(j)}} \to \Omega^1_X(\log D) \otimes_{\calO_X} \kappa_{F'_{(j)}}.
\]
This defines a line $L_{ij}$ in the vector space $T^*\!X^\log \times_X \Spec\, \kappa_{F'_{(j)}}$.  Consider the pushforward morphism $\pi: T^*\!X^\log \times_X \Spec\, \kappa_{F'_{(j)}} \to T^*\!X^\log \times_X \Spec\, \kappa_{F_{(j)}}$.  Let $\overline L_{ij}$ denote the closure of $\pi_*(L_{ij})$ in $T^*\!X^\log \times_X D_j$.

We define the \emph{conjectural log-characterisitic cycle} over $D_j$ to be
\begin{equation}
\label{E:ZCar'j(M) pure break}
\ZCar'_j(M) = \frac{\rank M \cdot \Irr(M_{(j)})}{[F'_{(j)}:F_{(j)}]} \overline L_{ij}.
\end{equation}
By Corollary~\ref{C:strong-integrality}, the coefficient of the cycle $\overline L_{ij}$ is an integer; moreover, the definition of $\ZCar'_j(M)$ does not depend on the choice of $F'_{(j)}$ and $i$.
 
For general $M$, We write $M_{(j)}$ as a direct sum of $M_{(j), \{G\vartheta\}}$ by Proposition~\ref{P:decomposition-field}(iii), where $M_{(j), \{G\vartheta\}}$ satisfies the assumption above.  We define
the \emph{conjectural log-characterisitic cycle} over $D_j$ to be $\ZCar'_j(M) = \sum_{\{G\vartheta\}} \ZCar'_j(M_{(j), \{G\vartheta\}})$.

Finally, we define the \emph{conjectural log-characterisitic cycle} of $M$ to be
\[
\ZCar'(M) = \rank(M) \cdot [X] + \sum_{j=1}^m\ZCar'_j(M),
\]
where $[X]$ is the zero section of $T^*\!X^\log$.

We use $\Car'(M)$ to denote the support of $\ZCar'(M)$, called the \emph{conjectural log-characteristic variety} of $M$ (although it is often not irreducible as a scheme).

Now, we assume that we are in the global situation \ref{SS:global-situation}; the smooth pair $(X, D)$ is covered by open subvarieties $(V_i, V_i \cap D)$, each of which satisfies the local situation~\ref{SS:local-setup}(a). We define the \emph{conjectural log-characterisitic cycle} of $M$ to be the cycle $\ZCar'(M)$ of $T^*\!X^\log$ whose restriction to each $V_i$ is the conjectural log-characteristic cycle $\ZCar'(M|_{V_i})$ defined above.
\end{definition}

We point out the following immediate property of $\ZCar'(M)$.

\begin{lemma}
\label{L:ZCar'-stable}
Assume that we are in one of the following situations:

(i) We are in the geometric local setup~\ref{SS:local-setup}(a).  Let $z$ be a closed point of $p^{-1}(\{0\})$.  Then we consider the natural morphism $g: X' = \Spec \calO_{X, z}^\wedge \to X$ and view $g^*M$  as a vector bundle over $U' = \Spec\big( \calO_{X, z}^\wedge[1/x_1 \cdots x_m]\big)$.  %Write $j':  U'\to X'$ for the natural embedding.

(ii) We are in geometric or formal local setup \ref{SS:local-setup}(a)(b).  Let $\eta_1$ denote the generic point of $D_1$.  We consider the natural morphism $g: X'=\Spec \calO_{X, \eta_1}^\wedge \to X$; $g^*M$ may be viewed as a vector bundle over $U' = \Spec (k(X)^{\wedge, \eta_1})$.  %Write $j':  U'\to X'$ for the natural embedding.

(iii) We are in either case of the local setup~\ref{SS:local-setup}.  Let $X''$ be \'etale over $X$ and let $X' = \Spec \big(\calO_{X''}[x_1^{1/h_1}, \dots, x_m^{1/h_m}]\big)$ for some positive integers $h_1, \dots, h_m$.  We have a natural morphism $g: X' \to X$ and $g^*M$ becomes a vector bundle over $U' = \Spec \big(\calO_{X'}[1/x_1 \cdots 1/x_m] \big)$.% write $j': U' \to X'$ for the natural embedding.

 Then we have $\ZCar'(g^*M) = \tilde g^*(\ZCar'(M))$, where $\tilde g: T^*\!X'^\log \to T^*\!X^\log$ is the natural morphism.
\end{lemma}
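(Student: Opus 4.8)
The plan is to reduce the identity to the behaviour, under each of the three base changes, of the purely local data entering Definition~\ref{D:ZCar'} --- the irregularities and refined irregularities of $M$ at the generic points of the components of $D$ --- and then to check that the cycle-theoretic steps in that definition (cutting out the line $L_i$, the pushforward $\pi_*$, forming the closure $\overline{L_i}$, and reading off the multiplicity $\tfrac{\rank M\cdot\Irr(M_{(j)})}{[F'_{(j)}:F_{(j)}]}$) all commute with $\tilde g^*$. Since $\rank(g^*M)=\rank(M)$ and $\tilde g^*[X]=[X']$, the zero-section term already matches, so, writing $D'_{j'}$ for the components of $g^{-1}(D_j)$, it remains to prove $\sum_{j'}\ZCar'_{j'}(g^*M)=\tilde g^*(\ZCar'_j(M))$ for each $j$ (only $j=1$ occurring in case (ii)). In all three situations $g$ is flat --- completion of a Noetherian local ring in (i) and (ii), an étale map composed with a finite free Kummer cover in (iii) --- so $\tilde g^*$ is flat pullback of cycles and commutes with taking closures of subvarieties dominating $D_j$; hence it is enough to prove the identity after restricting to the generic point $\eta_j$ of $D_j$ and the generic points $\eta'_{j'}$ of the $D'_{j'}$.

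Case (ii) is essentially a tautology: there $X'=\Spec\calO_{X,\eta_1}^\wedge$ is a complete discrete valuation ring with residue field $k(D_1)$ and fraction field $F_{(1)}$, so $U'=\Spec F_{(1)}$, $g^*M=M_{(1)}$, the completion of $k(X')$ along $D'_1=V(x_1)$ is again $F_{(1)}$, and $T^*\!X'^\log\times_{X'}D'_1$ is canonically $(T^*\!X^\log\times_X D_1)|_{\eta_1}$; under these identifications $\tilde g^*$ is exactly restriction to $\eta_1$, which is what $\overline{L_i}$ is the closure of. For case (i) and the étale factor of case (iii), completion (resp. the étale map) identifies $D'_{j'}$ with an open of $\Spec\calO_{D_j,z}^\wedge$ (resp. with a component of $D_j\times_X X''$) and induces an inclusion of CDVFs $F_{(j)}\hookrightarrow\widetilde F_{(j)}$ with the same uniformizer $x_j$ and residue field extension $k(D_j)\hookrightarrow\kappa$, $\kappa$ being the corresponding residue field of $D'_{j'}$. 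For such an inclusion, irregularities are unchanged and the refined irregularities of $g^*M\otimes\widetilde F_{(j)}$ are the images of $\Ref(M_{(j)};i)$ under the coefficient map $\bigoplus_l x_j^{-\Irr}\kappa_{(j)}^\alg\tfrac{dx_l}{x_l}\to\bigoplus_l x_j^{-\Irr}\kappa^\alg\tfrac{dx_l}{x_l}$ --- this is the stability of (refined) radii under residue-field extension recorded in Section~2 (cf. Proposition~\ref{P:decomposition-field} and the underlying constructions of \cite{xiao-refined}). Consequently $L_i$ pulls back to the line cut out by $\Ref(g^*M\otimes\widetilde F_{(j)};i)$, compatibly chosen trivializing Galois extensions have the same degree, the pushforwards $\pi_*$ and the closures correspond, the multiplicity is literally preserved, and $\ZCar'_{j'}(g^*M)=\tilde g^*(\ZCar'_j(M))$ follows.

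For the Kummer factor $x_j\mapsto x_j^{1/h_j}$ of case (iii) the reduced preimage $D'_j$ is isomorphic to $D_j$, so $\tilde g$ restricted over $D_j$ is (essentially) an isomorphism and $\tilde g^*\overline{L_i}=\overline{\widetilde L_i}$ once the lines are identified; what changes is that $\widetilde F_{(j)}/F_{(j)}$ is a tame totally ramified extension of degree $h_j$, whence $\Irr(g^*M\otimes\widetilde F_{(j)};i)=h_j\Irr(M_{(j)};i)$, while the class $\tfrac{dx_j}{x_j}$ is rewritten as $h_j\tfrac{d(x_j^{1/h_j})}{x_j^{1/h_j}}$ and $x_j^{-\Irr}=(x_j^{1/h_j})^{-h_j\Irr}$, so that in the $\tfrac{dx_l}{x_l}$-basis the refined irregularity is again (the image of) $\Ref(M_{(j)};i)$. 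Choosing the trivializing Galois extensions compatibly, $[\widetilde F'_{(j)}:\widetilde F_{(j)}]$ and $[F'_{(j)}:F_{(j)}]$ differ exactly by the ramification contributed by the Kummer cover, cancelling the factor $h_j$ introduced into $\Irr$; hence once more $\ZCar'_{j'}(g^*M)=\tilde g^*(\ZCar'_j(M))$. All coefficients occurring are integers by Corollary~\ref{C:strong-integrality}, although integrality is not needed for the identity itself.

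The step I expect to be the main obstacle is making rigorous the transformation law for refined irregularities under the CDVF inclusions $F_{(j)}\hookrightarrow\widetilde F_{(j)}$ arising in cases (i) and (iii): that these are genuinely the images of $\Ref(M_{(j)};i)$ in the enlarged coefficient space, and that the degrees of the minimal trivializing extensions behave as claimed. This is where one must actually invoke the finer structure theory of differential modules from \cite{xiao-refined} rather than formal properties of cycles; the Kummer bookkeeping of the factor $h_j$ is delicate but elementary once that framework is in place.
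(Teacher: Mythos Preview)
Your proposal is correct and is, in content, a detailed unpacking of what the paper records in a single sentence: the paper's entire proof is ``Since all the morphisms $g$ involved are (formally) log-\'etale, it is straightforward to check the equalities of cycles.'' You have supplied the ``straightforward'' check case by case.

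A brief comparison: the paper's one-line argument really rests on the structural fact that for a log-\'etale morphism $g:(X',D')\to(X,D)$ the natural map $g^*\Omega^1_X(\log D)\to\Omega^1_{X'}(\log D')$ is an isomorphism, so that $T^*\!X'^\log\cong T^*\!X^\log\times_X X'$ and $\tilde g$ is simply the second projection. Once this is said, every ingredient of Definition~\ref{D:ZCar'} --- the line $L_i$ cut out by a refined irregularity, its pushforward and closure, and the multiplicity --- is a construction inside $T^*\!X^\log$ over the base, and flat base change along $X'\to X$ carries it to the corresponding construction for $g^*M$. Your approach instead tracks the three cases separately and follows the CDVF invariants (irregularity, refined irregularity, degree of trivializing extension) through each base change by hand. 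This is more explicit, and your treatment of the Kummer factor in (iii) --- where $\Irr$ scales by $h_j$ and this is absorbed either into the normalization of $\tfrac{dx_j}{x_j}$ or into the degree of the trivializing extension --- is exactly the bookkeeping hidden behind the word ``log-\'etale.'' Invoking the isomorphism $g^*\Omega^1_X(\log D)\cong\Omega^1_{X'}(\log D')$ up front would let you collapse much of that bookkeeping.

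Your closing concern is not a genuine obstacle. The stability of (refined) irregularities under the CDVF inclusions $F_{(j)}\hookrightarrow\widetilde F_{(j)}$ arising in (i)--(iii) is part of the standard package: these inclusions are either unramified (residue-field extension only) or tamely totally ramified, and in both cases the spectral norm computations defining $IR^\sharp$ and the reduction defining $\Ref^\sharp$ are manifestly compatible with the inclusion --- this is implicit in the constructions of \cite{kedlaya-xiao,xiao-refined} and does not require any further structure theory beyond what is already quoted in the paper.
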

\begin{proof}
Since all the morphisms $g$ involved are (formally) log-\'etale, it is straightforward to check the equalities of the cycles.
\end{proof}

\begin{construction}
\label{C:global-description}
Assume that $M$ is clean over $X$.  Then we may give a global construction of the conjectural log-characteristic cycle as follows.

Write $D= \cup_{j=1}^r D_j$ as the union of irreducible components.
Put $D_{jj'} = D_j \cap D_{j'}$ if $j \neq j'$.
We write $D_{jj'} = \coprod_{\alpha = 1}^{\lambda_{jj'}}D_{jj', (\alpha)}$ as the union of irreducible components.
Fix $j \in \{1, \dots, r\}$, and let $F_{(j)}$ denote the completion of $ k(X)$ with respect to the valuation corresponding to the divisor $D_j$.  The refined irregularities of $M_{(j)} = M \otimes F_{(j)}$ are
\[
\Ref(M_{(j)}; i) \in \Omega_X^1(\log D) \otimes \pi_{F_{(j)}}^{-\Irr(M_{(j)}; i)}\kappa_{F^\alg_{(j)}}, \quad i=1, \dots, d.
\]
There exists a finite extension $F'_{(j)}$ of $F_{(j)}$ such that every $\Ref(M_{(j)}; i) \in \Omega_X^1(\log D) \otimes \pi_{F_{(j)}}^{-\Irr(M_{(j)}; i)}\kappa_{F'_{(j)}}$.  Let $D'_j$ denote the integral closure of $D_j$ in $\kappa_{F'_{(j)}}$.  Since we have assumed that $M$ is clean over $X$, there exists $\QQ$-divisor $R_i^{(j)} = \sum_{j'} \sum_{\alpha} b_{ij', (\alpha)}^{(j)}D_{jj',(\alpha)}$ of $D_j$ such that 
\begin{equation}
\label{E:where Ref belongs}
\Ref(M_{(j)}; i) \in \Omega^1_X(\log D) \otimes_{\calO_X} \calO_{D'_j}(R_i^{(j)})
\end{equation}
and it generates a direct summand of the latter.
(Note that $R_i^{(j)}$ is a rational divisor, we should understand $\Omega^1_X(\log D) \otimes_{\calO_X} \calO_{D'_j}(R_i^{(j)})$ locally as $\Omega^1_X(\log D) \otimes_{\calO_X} \calO_{D'_j}(R_i^{(j)})$, where the latter is as introduced at the end of Definition~\ref{D:clean-global}.)
In this case, we view $\Ref(M_{(j)}; i)$ as a morphism from $\calO_{D'_j}(-R_i^{(j)})$ to $\Omega^1_X(\log D) \otimes _{\calO_X}\calO_{D'_j}$ and let $L'_{ij}$ denote its image, viewed as a line subbundle of the base change of the cotangent bundle $T^*\!X^\log \times_X D'_j$.  We define
\begin{equation}
\label{E:ZCar'j(M) global}
\ZCar'_j(M) = \sum_{i=1}^d \frac{\Irr(M_{(j)}; i)}{[F'_{(j)}: F_{(j)}]}\pi_{j*}(L'_{ij}),
\end{equation}
where $\pi_{j*}$ is the natural morphism $T^*\!X^\log \times_X D'_j \to T^*\!X^\log \times_X D_j$.
This definition agrees with Definition~\ref{D:ZCar'} in the sense that $\overline L_{ij}$ in \eqref{E:ZCar'j(M) pure break} is a proper multiple of $\pi_{j*}L'_{ij}$ in \eqref{E:ZCar'j(M) global} accounting for the difference between the field extension and multiplicity of the refined conductor.

We remind the reader again that $b_{ij',(\alpha)}^{(j)}$ may not be the same as $b_{ij'}$ as it depends on $j'$ and on $\alpha$.  (See Remark~\ref{R:no-global-defn-clean}.)  We only know that, for fixed $j$ and $\alpha$, the multiset of numbers $\{b_{ij',(\alpha)}^{(j)}|i=1, \dots, d\}$ is the same as $\{ b_{ij'}|i=1, \dots, d\} = \calI rr(M \otimes F_{(j')})$ (but possibly in different order). More generally, whenever $j, j_1, \dots, j_t \in \{1, \dots, r\}$ such that $D_j \cap D_{j_1} \cap \cdots \cap D_{j_t} \neq \emptyset$ (and hence connected by our assumption), the cleanliness condition at any point of the intersection implies the equality of multisets of $t$-tuples
\begin{equation}
\label{E:intersection-indices}
\big\{(b_{ij_1, (\alpha_1)}^{(j)}, \dots, b_{ij_t, (\alpha_t)}^{(j)}) \;|\; i = 1, \dots, d\big\} = \big\{(b_{ij_1}, \dots, b_{ij_t}) \;|\; i = 1, \dots, d\big\},
\end{equation}
where $\alpha_s$ is label determined by $D_j \cap D_{j_1} \cap \cdots \cap D_{j_t} \subseteq D_{jj_s, (\alpha_s)}$.

\end{construction}

The following is the main theorem of this paper; its proof will occupy the rest of the section.

\begin{theorem}
\label{T:main-theorem}
Let $X$ be a smooth variety over $k$ and let $D$ be a divisor with  simple normal crossings.  
Let $(M, \nabla)$ be a vector bundle over $U = X-D$ with an integrable connection.  Let $j:U \hookrightarrow X$ denote the natural inclusion. Assume that $M$ is \emph{clean} on $X$.  Then $\ZCar'(M) = \ZCar(j_*M)$.
\end{theorem}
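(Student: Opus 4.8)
The plan is to reduce to an explicit local computation near a closed point, determine the underlying log-characteristic variety by a fiberwise analysis, and then pin down the multiplicities at the generic points of $D$.

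First I would carry out several reductions. Both $\ZCar(j_*M)$ and $\ZCar'(M)$ are defined by gluing local contributions and are compatible with passing to the formal completion $\calO_{X,z}^\wedge$ at a closed point (Corollary~\ref{C:logcharcycles-base-change}(i) and Lemma~\ref{L:ZCar'-stable}(i)); since the completion morphism is faithfully flat, it suffices to prove the equality after every such completion, i.e. in the formal setup~\ref{SS:pointed-local}(b) for a clean differential module $M$ over $R_{n,m}$, and one may assume $k$ algebraically closed. Next, using cleanness together with Theorem~\ref{T:clean-equiv-local} and Theorem~\ref{T:strong-decomposition} (applied repeatedly), after a tamely ramified log-\'etale base change $x_l\mapsto x_l^{h_l}$ — harmless by Corollary~\ref{C:logcharcycles-base-change}(iii), Lemma~\ref{L:ZCar'-stable}(iii), and the log-\'etale invariance of cleanness noted in Definition~\ref{D:clean-global} — I would decompose $M$ into a direct sum of differential modules, each still clean (cleanness passes to subobjects, Remark~\ref{R:clean-nonstable}), each with pure refined partially intrinsic radii over all $F_{\underline r}$, $\underline r\in(0,\infty)^m\times\{0\}^{n-m}$, hence each either regular or admitting a good formal structure $M\simeq\bigoplus_\alpha E(\phi_\alpha)\otimes\mathrm{Reg}_\alpha$ with $\phi_\alpha\notin R_{n,0}$ of the single shape $\phi_\alpha=u_\alpha x_1^{-i_1}\cdots x_m^{-i_m}$ (the exponent multi-index being forced to be common by purity). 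The supports of $\ZCar'$ and of $\ZCar$ are unions of the supports of the summands, so for the statement on varieties the summands may be treated separately; for the statement on cycles one additionally uses that, once each summand is shown to have log-characteristic variety of pure dimension $n$, $\ZCar$ becomes additive across the summands.

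For a regular summand, its Deligne–Malgrange lattice $M_0\subset j_*M$ is $\calO_X$-coherent, restricts to $M$ on $U$, and is stable under $x_1\partial_1,\dots,x_m\partial_m,\partial_{m+1},\dots,\partial_n$, so $\widetilde M_0=\calD_X^\log\cdot M_0=M_0$; with the order filtration $\gr_\bullet\widetilde M_0=M_0$ is concentrated in degree $0$, giving $\ZCar(j_*M)=\rank(M)\cdot[X]=\ZCar'(M)$ (and pure dimension $n$). For an irregular summand I would first establish the equality of underlying varieties. By Theorem~\ref{T:log-holonomicity}, $\Car(\widetilde M_0)$ is a conical subvariety of $T^*\!X^\log$ of dimension $\leq n$, equal to the zero section over $U$ since $M$ carries a flat connection; being conical it is the closure of the union of its fibers, so it is enough to compute the fiber over each closed point $z\in D$. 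Choosing $M_0=\bigoplus_\alpha E(\phi_\alpha)\cdot\bbe_\alpha\otimes(\text{Deligne lattice of }\mathrm{Reg}_\alpha)$ and filtering $\widetilde M_0$ by order, the relations $x_l\partial_l\bbe_\alpha=\big(-i_l u_\alpha+x_l\partial_l(u_\alpha)\big)x_1^{-i_1}\cdots x_m^{-i_m}\bbe_\alpha$ show that the pole order of $\fil_\alpha\widetilde M_0$ along $x_1^{-i_1}\cdots x_m^{-i_m}$ grows by one step at each level, so $\gr_\alpha\widetilde M_0$ is supported on $\bigcup_{l:\,i_l>0}D_l$ and the symbol $\xi_l^\log$ of $x_l\partial_l$ acts on $\gr_\bullet\widetilde M_0$ by the leading coefficient $-i_l u_\alpha+x_l\partial_l(u_\alpha)$, which is exactly the component of the refined irregularity $\Ref(M_{(l)})$ along $\tfrac{dx_l}{x_l}$ (resp. $dx_l$). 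Reducing modulo $\gothm_z$ identifies the fiber over $z$ with $\{0\}$ together with the lines cut out by $\Ref(M_{(l)})$ for $l$ with $z\in D_l$; here the cleanness hypothesis \eqref{E:clean-condition} is precisely what guarantees that each such line is genuine at $z$ (the refined irregularity generates a direct summand of $\Omega^1_{X'}(\log D')(R'_i)\otimes\calO_{\tilde D_l}$, so its reduction at $z$ is nonzero), hence that no component of $\Car(\widetilde M_0)$ lies over a codimension $\geq 2$ stratum. Comparing with Definition~\ref{D:ZCar'} and invoking Theorem~\ref{T:clean-independence-of-j} to match the chosen ordering/position of the refined irregularities with the $\mathsf v^\sharp$-ordering used there, I would conclude $\Car(\widetilde M_0)=\Car'(M)$, in particular of pure dimension $n$.

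It then remains to match multiplicities at each generic point. At the generic point of $[X]$, over $k(X)$, $M$ is a rank-$d$ bundle with connection, so the multiplicity is $d$ on both sides. At the generic point of $\overline L_l$ (lying over the generic point $\eta_l$ of $D_l$), by Corollary~\ref{C:logcharcycles-base-change}(ii) I would pass to the one-variable CDVF situation over $F_{(l)}=k(X)^{\wedge,\eta_l}$ and compute $\ZCar(\widetilde M_0\otimes F_{(l)})$ directly using the decomposition of $M_{(l)}$ into $\Gal(F'_{(l)}/F_{(l)})$-orbit pieces (Proposition~\ref{P:decomposition-field}(iii)): the multiplicity along $\overline L_l$ comes out to $\rank(M)\cdot\Irr(M_{(l)})/[F'_{(l)}:F_{(l)}]$, the integer appearing in $\ZCar'_l(M)$ by Corollary~\ref{C:strong-integrality}, finishing the proof. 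I expect the main obstacle to be the variety computation: because the Bernstein inequality fails for $\calD_X^\log$ (Caution~\ref{C:no-Berntein-inequality}), one cannot argue abstractly that $\Car(\widetilde M_0)$ has no spurious components over the deep strata, so the hands-on analysis of $\gr_\bullet\widetilde M_0$ is unavoidable, and its delicate point — as anticipated in the introduction — is that the good formal structure yields clean information only after reduction modulo $\gothm_z$ (over the point $z$, not over its whole formal neighborhood), combined with the non-degeneracy built into the cleanness condition; the bookkeeping of reordering refined irregularities (Remark~\ref{R:order-refined-irr}) and the comparison of the descriptions of cleanness (Theorems~\ref{T:clean-equiv-local} and \ref{T:clean-independence-of-j}) are also essential for locating the $\overline L_l$ correctly inside $T^*\!X^\log$.
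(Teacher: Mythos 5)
Your overall two-step architecture (determine the underlying variety by a fiberwise computation over closed points of $D$, then pin down multiplicities at the generic points of $D$ via the CDVF picture and Proposition~\ref{P:ZCar=ZCar'-CDVF}) matches the paper's strategy, and the regular-summand and multiplicity arguments you sketch are in line with what the paper does. However, there is a genuine gap in the key step: you reduce the irregular case to a summand ``with pure refined partially intrinsic radii, hence \dots admitting a good formal structure $M\simeq\bigoplus_\alpha E(\phi_\alpha)\otimes\mathrm{Reg}_\alpha$.'' That ``hence'' is false. Cleanness (even together with purity of refined radii and arbitrary tame log-\'etale pullback) does \emph{not} give a good formal structure. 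Good formal structure, via Theorem~\ref{T:kedlaya-criterion}, requires linearity of $G_{d^2}(M\otimes M^\vee,\underline r)$ as well, and the implications $(1)\Rightarrow(2)\Rightarrow(3)$ in the introduction are explicitly strict. Remark~\ref{R:clean<numerical} gives a rank-one counterexample: $\bbe\leftrightarrow e^{x/y^2}$ with $D=V(y)$ is clean at the origin with pure refined irregularity $\frac{1}{y^2}dx-\frac{2x}{y^2}\frac{dy}{y}$, but $\phi=xy^{-2}$ is not of the form $u\cdot y^{-i}$ with $u$ a unit, so there is no good decomposition there and no tame base change will produce one.

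Because of this, the explicit choice $M_0=\bigoplus_\alpha E(\phi_\alpha)\cdot\bbe_\alpha\otimes(\text{Deligne lattice of }\mathrm{Reg}_\alpha)$ and the computation of $\gr_\bullet\widetilde M_0$ via the relations $x_l\partial_l\bbe_\alpha=(-i_lu_\alpha+x_l\partial_l(u_\alpha))\bbt^{-1}\bbe_\alpha$ are unavailable in general. The paper avoids this by \emph{not} decomposing $M$ into $E(\phi)\otimes\mathrm{Reg}$ pieces: in Subsection~\ref{S:local-calculation} it builds a lattice $M_0=\{x\in M:\ |x|_{M_{(i)}}\leq 1,\ i=1,\dots,m\}$ directly from good norms on $M\otimes F_{(i)}$ (Proposition~\ref{L:xi_1-generates}), filters $\widetilde M_0$ by powers of $\bbt=x_1^{b_1}\cdots x_m^{b_m}$, and then shows that the symbol of $\theta_i\Delta_j-\theta_j\Delta_i$ is nilpotent on $\gr_\bullet\widetilde M_0/(x_1,\dots,x_n)$ by an argument with twisted (cyclic-vector) polynomials and the refined radius decomposition of \cite[Corollary~1.3.13, Theorem~1.4.20]{xiao-refined}, rather than by inspecting an explicit $E(\phi)$-factor. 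The good-formal-structure computation in Subsection~\ref{S:calculation-good-model} is present in the paper only as a warm-up, precisely because it does not cover the clean case; you would need to replace your good-decomposition step by this lattice-plus-twisted-polynomial argument to make the variety computation go through.
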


\begin{corollary}
\label{C:EP-formula}
Keep the notation as in Theorem~\ref{T:main-theorem}. Assume that $(M, \nabla)$ is clean on $X$ and $X$ is proper.  
Assume moreover that all $b_{ij}$'s from \ref{SS:irregularity Q-divisor} are \emph{positive}.
Let $R_i$ denote the irregularity $\QQ$-divisor as in \ref{SS:irregularity Q-divisor}.  Then we have
\begin{equation}
\label{E:EP-formula}
\chi_\dR(M) =(-1)^n\sum_{i=1}^d \deg \big(c(\Omega^1_X(\log D)) \cap (1-R_i)^{-1}\big),
\end{equation}
where $c(\cdot)$ denote the total Chern class. 
\end{corollary}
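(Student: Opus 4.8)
The plan is to feed Theorem~\ref{T:main-theorem} into the logarithmic Kashiwara--Dubson formula and then convert the resulting intersection number on $T^*\!X^\log$ into the Chern-class expression \eqref{E:EP-formula}. First I would write, using Theorem~\ref{T:main-theorem} and the global description of $\ZCar'$ in Remark~\ref{R:global-description},
\[
\ZCar(j_*M) \;=\; \ZCar'(M) \;=\; \rank(M)\cdot[X] \;+\; \sum_{j}\sum_{i=1}^{d}\frac{\Irr(M_{(j)};i)}{[F'_{(j)}:F_{(j)}]}\,\pi_{j*}(L_{ij}),
\]
where $j$ runs over the irreducible components $D_j$ of $D$ and $L_{ij}\subseteq T^*\!X^\log\times_X D'_j$ is the line subbundle determined by the refined irregularity $\Ref(M_{(j)};i)$. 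By Theorem~\ref{T:log-Kashiwara-Dubson}, $\chi_\dR(M)=(-1)^n\deg([X],\ZCar(j_*M))_{T^*\!X^\log}$, so by linearity it suffices to evaluate $\deg([X],[X])_{T^*\!X^\log}$ and each $\deg([X],\pi_{j*}L_{ij})_{T^*\!X^\log}$.

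The zero-section term is classical: the normal bundle of the zero section of $T^*\!X^\log$ is $\Omega^1_X(\log D)$, so $\deg([X],[X])_{T^*\!X^\log}=\int_X c_n(\Omega^1_X(\log D))$, and after multiplying by $\rank(M)=d$ this produces one copy of $\int_X c_n(\Omega^1_X(\log D))$ for each index $i$. For the line-bundle terms I would base-change along the finite map $D'_j\to D_j$ (using Lemma~\ref{L:ZCar'-stable} together with Corollary~\ref{C:logcharcycles-base-change} to know this is compatible) and observe that $[X]$ meets $\pi_{j*}L_{ij}$ along the zero section over $D_j$, an excess intersection of the expected dimension plus $n-1$. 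The excess-intersection formula then gives
\[
\deg([X],\pi_{j*}L_{ij})_{T^*\!X^\log}=\int_{D'_j}c_{n-1}\big((\Omega^1_X(\log D)|_{D'_j})/L_{ij}\big),
\]
and since $L_{ij}\cong\mathcal{O}_{D'_j}(-R^{(j)}_i)$ inside $\Omega^1_X(\log D)|_{D'_j}$ — where $R^{(j)}_i$ is the $\QQ$-divisor recording the poles of $\Ref(M_{(j)};i)$ as in Remark~\ref{R:global-description} — this rewrites as $\int_{D'_j}\big\{c(\Omega^1_X(\log D)|_{D'_j})\,(1-R^{(j)}_i)^{-1}\big\}_{n-1}$; pushing down by the projection formula kills exactly the factor $[F'_{(j)}:F_{(j)}]$, so the total intersection number becomes $\rank(M)\int_X c_n(\Omega^1_X(\log D))+\sum_j\sum_i \Irr(M_{(j)};i)\int_{D_j}\{c(\Omega^1_X(\log D)|_{D_j})(1-R^{(j)}_i|_{D_j})^{-1}\}_{n-1}$.

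On the other side, iterating the elementary identity $(1-R_i)^{-1}\cap[X]=[X]+\sum_j b_{ij}\,i_{D_j*}\!\big((1-R_i|_{D_j})^{-1}\cap[D_j]\big)$ (for $R_i=\sum_j b_{ij}D_j$) and capping with $c(\Omega^1_X(\log D))$, I would expand the right-hand side of \eqref{E:EP-formula} into the same shape: a copy of $\int_X c_n(\Omega^1_X(\log D))$ for each $i$, plus $\sum_i b_{ij}\int_{D_j}\{c(\Omega^1_X(\log D)|_{D_j})(1-R_i|_{D_j})^{-1}\}_{n-1}$ for each $D_j$. Matching the two then reduces, after expanding the Chern products, to checking that on every stratum $D_{j_0}\cap\cdots\cap D_{j_t}$ the weighted sums $\sum_i b_{ij_0}b^{(j_0)}_{ij_1}\cdots b^{(j_0)}_{ij_t}$ and $\sum_i b_{ij_0}b_{ij_1}\cdots b_{ij_t}$ agree; on nonempty strata this is exactly the equality of multisets of tuples \eqref{E:intersection-indices} forced by cleanness, and on empty strata both products of divisor classes vanish.

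The step I expect to be hardest is the intersection-theoretic one. Because the components $\pi_{j*}L_{ij}$ of the log-characteristic cycle are genuinely \emph{not} conormal bundles (Caution~\ref{C:no-Berntein-inequality}), the usual device expressing $\deg([X],\overline{T^*_Z X})$ through the topological Euler characteristic of $Z$ is unavailable, and the excess-intersection computation above must be carried out by hand, tracking carefully the cover $D'_j\to D_j$, the normalization factor $\Irr(M_{(j)};i)/[F'_{(j)}:F_{(j)}]$, and the identification of $L_{ij}$ with the twist $\mathcal{O}_{D'_j}(-R^{(j)}_i)$. A secondary, more conceptual obstacle is that the $\QQ$-divisors $R^{(j)}_i$ attached to different components use different orderings of the refined irregularities and so do not assemble into a global divisor (Remark~\ref{R:no-global-defn-clean}); reconciling them with the globally defined $R_i$ of \eqref{E:EP-formula} succeeds only because the final degree is insensitive to the ordering, which is precisely what \eqref{E:intersection-indices} supplies.
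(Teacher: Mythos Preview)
Your proposal is correct and follows essentially the same route as the paper: apply Theorem~\ref{T:log-Kashiwara-Dubson} and Theorem~\ref{T:main-theorem}, split $\ZCar'(M)$ into the zero section plus the line bundles $\pi_{j*}L_{ij}$ via Remark~\ref{R:global-description}, compute each intersection with $[X]$ by Fulton's excess/Segre machinery (the paper simply cites \cite{fulton} for the identification of $([X],\pi_{j*}L_{ij})$ with $c(\Omega^1_X(\log D))\cdot \pi_{j*}c(\calO_{D'_j}(-R_i^{(j)}))^{-1}$), and then invoke \eqref{E:intersection-indices} to trade $R_i^{(j)}$ for the globally defined $R_i$.

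The only real difference is in the final bookkeeping. You expand $(1-R_i)^{-1}$ and match stratum by stratum; the paper instead observes directly that once \eqref{E:intersection-indices} has replaced $R_i^{(j)}$ by $R_i$, the sum over $j$ collapses because $\sum_j \Irr(M_{(j)};i)\,D_j = R_i$, giving
\[
d\,c_n(\Omega^1_X(\log D)) + \sum_{i=1}^d c(\Omega^1_X(\log D))\cdot R_i \cdot (1-R_i)^{-1}
= \sum_{i=1}^d c(\Omega^1_X(\log D))\cdot (1-R_i)^{-1}.
\]
This avoids the stratum-wise expansion entirely. Your approach works too, but the collapse $\sum_j b_{ij} D_j = R_i$ is the cleaner way to close the argument. (Also, your appeal to Lemma~\ref{L:ZCar'-stable} and Corollary~\ref{C:logcharcycles-base-change} for the finite-cover step is not quite the right citation; what is actually used there is just the projection formula in intersection theory, which is how the factor $[F'_{(j)}:F_{(j)}]$ cancels.)
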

\begin{proof}
We will verify the technical condition of Theorem~\ref{T:log-Kashiwara-Dubson} in Lemma~\ref{L:tilde M0 is everything}.  Thus by Theorem~\ref{T:log-Kashiwara-Dubson} and Theorem~\ref{T:main-theorem}, we have
\[
\chi_\dR(M) = (-1)^n \cdot \deg\big([X], \ZCar(j_*M)\big)_{T^*\!X^\log} =(-1)^n \cdot \deg\big([X], \ZCar'(j_*M)\big)_{T^*\!X^\log}.
\]
It suffices to compute the latter intersection number.  For this, we use the description of $\ZCar(j_*M)$ in Construction~\ref{C:global-description}.
\begin{align*}
\big([X], \ZCar'(j_*M)\big)_{T^*\!X^\log}
&= d\big([X],[X]\big)_{T^*\!X^\log}
+ \sum_{j=1}^m \big([X], \ZCar'_j(j_*M)\big)_{T^*\!X^\log} \\
&=d\big([X],[X]\big)_{T^*\!X^\log}
+ \sum_{j=1}^m\sum_{i=1}^d \frac{\Irr(M_{(j)}; i)}{[F'_{(j)}: F_{(j)}]}\big([X],\pi_{j*}(L_{ij}) \big)_{T^*\!X^\log} 
\end{align*}
By \cite{fulton}, $\big([X],[X]\big)_{T^*\!X^\log} = \deg (c_n(\Omega_X^1(\log D)))$ and the last intersection is given by the intersection of the total Chern class of $\Omega^1_X(\log D)$ with the Segre class of $D'_j$ in $\overline L_{ij}$.  Hence, 
\begin{align*}
\chi_{\dR}(M) &= (-1)^n \cdot\deg \Big(d\big([X],[X]\big)_{T^*\!X^\log}
+ \sum_{j=1}^m\sum_{i=1}^d \frac{\Irr(M_{(j)}; i)}{[F'_{(j)}: F_{(j)}]}\big(c(\Omega^1_X(\log D)) \cdot \pi_{j*}c(\calO_{D'_j}(-R_i^{(j)}))^{-1} \big)_{T^*\!X^\log} \Big) \\
&= (-1)^n \cdot \deg\Big(d\cdot c_n(\Omega_X^1(\log D)) 
+ \sum_{j=1}^m\sum_{i=1}^d \Irr(M_{(j)}; i)c(\Omega^1_X(\log D))\cdot D_j \cdot (1-R_i^{(j)})^{-1} \Big) \\
&=  (-1)^n \sum_{i=1}^d \deg\Big(c_n(\Omega_X^1(\log D) )
+ \sum_{j=1}^m  \Irr(M_{(j)}; i) c(\Omega^1_X(\log D))\cdot D_j \cdot (1-R_i)^{-1} \Big) \\
& =(-1)^n \sum_{i=1}^d \deg\Big(c_n(\Omega_X^1(\log D) )
+ c(\Omega^1_X(\log D))\cdot R_i \cdot (1-R_i)^{-1} \Big)\\
&= (-1)^n \sum_{i=1}^d \deg\big(
c(\Omega_X^1(\log D) ) \cdot (1-R_i)^{-1} \big).
\end{align*}
Here the third equality follows from \eqref{E:intersection-indices}.
\end{proof}

\begin{remark}
It is not clear from the formula why the intersection number on the right hand side of \eqref{E:EP-formula} should a priori give an integer.  One may view this as certain global version of Hasse-Arf Theorem.
\end{remark}

\begin{lemma}
\label{L:tilde M0 is everything}
Keep the notation as in Corollary~\ref{C:EP-formula}.  Suppose that every $b_{ij}$ as in \ref{SS:irregularity Q-divisor} are positive, then  $j_*M$ is generated as a $\calD_X^\log$-module by \emph{any} coherent $\calO_X$-submodule $M_0$ of $j_*M$ for which $M_0|_U = M$.
\end{lemma}
\begin{proof}
To prove the lemma, it suffices to prove it over the completion at a closed point $x \in X$.  Hence we may reduce to the formal local setup \ref{SS:local-setup} immediately.

Now we may assume that $M$ is a differential module over  $X = \Spec R_{n,m}$, and the ring of differential operator is $\calD_X^\log = R_{n,m}\{x_1\partial_1, \dots, x_m\partial_m, \partial_{m+1} , \dots, \partial_n\}$.  We first show that it suffices to prove $\calD_X^\log \cdot M_0 = M$ for \emph{some} submodule $M_0$ of $M$ for which $M = M_0 \otimes_{R_{n,0}}R_{n,m}$.
Indeed, any other $R_{n,0}$-lattice $M'_0$ of $M$ will contain $(x_1\cdots x_m)^N M_0$ for some $N \in \NN$, and by the proof of Lemma~\ref{L:log-char-independence}, we have 
\[
\calD_X^\log\cdot (x_1\cdots x_m)^N M_0 = (x_1\cdots x_m)^N \calD_X^\log\cdot M_0
=(x_1\cdots x_m)^N M=M.
\]
We deduce that $\calD_X^\log\cdot M'_0 = M$.

Proving the existence of the $M_0$ as above is the technical part, which will follow from Proposition~\ref{L:xi_1-generates} (for the case when $l=m$ because all $b_{ij}$'s are positive).
\end{proof}

\subsection{Overall of the proof}
In this subsection, we reduce the proof of Theorem~\ref{T:main-theorem} to the calculation on $R_{n,m}$.

First of all, Theorem~\ref{T:main-theorem} is local on $X$, and we may assume that we are in the geometric local situation \ref{SS:local-setup}(a).

\subsubsection{Outline of the the proof}
The crucial step is to prove that the set of closed points on the log-characteristic variety is contained in the set of closed points on the conjectural log-characteristic variety, i.e. $|\Car(j_*M)| \subseteq |\Car'(M)|  $.
For this, we may assume that $k$ is algebraically closed.  We need only to show that for each closed point $z \in X$, we have 
\[
|\Car(j_*M)| \cap \big(T^*\!X^\log  \times_X \{z\}\big) \subseteq|\Car'(M)| \cap \big(T^*\!X^\log \times_X \{z\}\big) .
\]
For this, we may base change to $\calO_{X,z}^\wedge \simeq R_{n,0}=k\llbracket x_1, \dots, x_n\rrbracket$ and reduce to the pointed geometric local situation \ref{SS:pointed-local}(a) (centered at $z$). Now, $j_*M \otimes \calO_{X,z}^\wedge$ becomes a differential module over $R_{n,m} = k\llbracket x_1, \dots, x_n\rrbracket[x_1^{-1}, \dots, x_m^{-1}]$.  By Corollary~\ref{C:logcharcycles-base-change}(ii) and Lemma~\ref{L:ZCar'-stable}(iii), we need to show that
\begin{equation}
\label{E:reduction}
|\Car(j_*M \otimes R_{n,0})| \cap \big(T^*\!\Spec(R_{n,0})^\log  \times \{z\}\big) \subseteq |\Car'(M \otimes R_{n,0})| \cap \big(T^*\!\Spec(R_{n,0})^\log  \times \{z\}\big).
\end{equation}
We defer the discussion of its proof to \ref{SS:local-calculation} below.

We now assume $|\Car(j_*M)| \subseteq |\Car'(M)|  $.
It follows immediately that $\Car(j_*M)$ is restricted within the union of some finite set of ($n$-dimensional) varieties, namely, the zero section $[X]$ of $T^*\!X^\log$ and some line bundles over the irreducible components of $D$ (because $\ZCar'(M)$ is so).  We need only to prove that the multiplicity at each generic point of these varieties agrees.  In fact, to prove this, we do not even need to assume that $M$ is clean on $X$, i.e., we will prove the following proposition, whose proof will be carried out in \ref{SS:proof-ZCar'<ZCar}.

\begin{prop}
\label{P:ZCar'<ZCar}
Keep the notation as in Theorem~\ref{T:main-theorem} except that we do \emph{not} assume that $M$ is clean on $X$.  Then $\ZCar(j_*M) - \ZCar'(M)$ is a non-negative linear combination of 
cycles in $T^*\!X^\log$ supported on $T^*\!X^\log \times_X W$ for some closed subvariety $W \subseteq D$ of codimension $\geq 1$.
\end{prop}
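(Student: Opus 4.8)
The argument will be given in \ref{SS:proof-ZCar'<ZCar}. The statement is local on $X$, so we place ourselves in the geometric local situation \ref{SS:local-setup}(a), and since both sides are insensitive to a finite extension of the base we may assume $k$ is algebraically closed. By the reduction carried out before the proposition we have $|\Car(j_*M)|\subseteq|\Car'(M)|$; hence every irreducible component of $\Car(j_*M)$ is either the zero section $[X]$, one of the line bundles $\overline L_{ij}\subset T^*\!X^\log\times_X D_j$ occurring in $\ZCar'_j(M)$ (cf. Definition~\ref{D:ZCar'} and Remark~\ref{R:global-description}), or a proper closed subvariety of one of these. In particular the only $n$-dimensional components of $\Car(j_*M)$ are $[X]$ and the $\overline L_{ij}$, and every other component lies over a closed subvariety $W\subseteq D$ of codimension $\ge 1$ in $D$. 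Since $\ZCar(j_*M)$ is a genuine characteristic cycle its multiplicities are $\ge 0$, while $\ZCar'(M)$ is supported on $[X]$ and the $\overline L_{ij}$ with positive coefficients; therefore it suffices to show that $\ZCar(j_*M)$ and $\ZCar'(M)$ have the \emph{same} multiplicity along $[X]$ and along each $\overline L_{ij}$. Granting this, $\ZCar(j_*M)-\ZCar'(M)$ has zero coefficient along every $n$-dimensional component, so it equals the sum of the lower-dimensional components of $\ZCar(j_*M)$, which are supported over $W$ and have non-negative coefficients.

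For the multiplicity along $[X]$ one restricts everything to $U$: there $j_*M|_U=M$ is a vector bundle with integrable connection and $D\cap U=\emptyset$, so $\calD_U^\log\cdot M=M$ with the trivial good filtration, and the log-characteristic cycle over $U$ is $(\rank M)\,[U]$. Hence the coefficient of $[X]$ in $\ZCar(j_*M)$ is $\rank M$, which is exactly its coefficient in $\ZCar'(M)$. For the multiplicity along $\overline L_{ij}$ one passes to the generic point $\eta_j$ of $D_j$: by Corollary~\ref{C:logcharcycles-base-change}(ii) together with Lemma~\ref{L:ZCar'-stable}(ii) it is enough to compare $\ZCar(j'_*g^*M)$ with $\ZCar'(g^*M)$ over $X'=\Spec\calO_{X,\eta_j}^\wedge$, i.e. for a differential module over a complete discretely valued field as in \ref{SS:local-setup}(c). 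There the decomposition theorems over a field (Proposition~\ref{P:decomposition-field}) split $M\otimes F_{(j)}$, after a tame finite extension $F'_{(j)}/F_{(j)}$, into pieces with pure irregularity and pure refined irregularity; up to a regular twist each such piece is of the form $E(\phi)\otimes\mathrm{Reg}$ with $\phi=x_1^{-b}u$ ($u$ a unit, $b=\Irr>0$), and for these one computes the log-characteristic cycle directly from a natural lattice, just as in Kato's rank-one calculation, obtaining the zero section together with the line in $T^*\!X^\log$ cut out by $d\phi$, with multiplicity $(\mathrm{rank})\cdot b$. Summing, descending from $F'_{(j)}$ back to $F_{(j)}$ — which produces the factor $1/[F'_{(j)}:F_{(j)}]$ and yields an \emph{integer} by Corollary~\ref{C:strong-integrality} — and unwinding Definition~\ref{D:ZCar'} matches the coefficient of $\overline L_{ij}$ in $\ZCar(j_*M)$ with that in $\ZCar'(M)$, and the proposition follows.

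The main obstacle is exactly this last computation of the log-characteristic cycle over the generic point of $D_j$ in arbitrary rank. Unlike Kato's line-bundle case one cannot pick a single generator, so one must run the full decomposition machinery of \cite{kedlaya-xiao, kedlaya-sabbah1, xiao-refined}, keep precise track of how the refined irregularities pin down the positions of the lines $\overline L_{ij}$ inside the log-cotangent bundle, and — crucially — check that the Galois descent from the splitting field $F'_{(j)}$ produces an integral multiplicity, which is where Corollary~\ref{C:strong-integrality} (a Hasse--Arf type input) enters. A secondary care point is that, because of the degenerate Poisson structure on $T^*\!X^\log$ (Caution~\ref{C:no-Berntein-inequality}), one genuinely has to control the possible extra lower-dimensional components and verify they all lie over the codimension-$\ge1$ locus $W\subseteq D$, rather than invoking any Bernstein-type purity.
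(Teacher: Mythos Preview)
Your argument contains a genuine gap: you invoke the inclusion $|\Car(j_*M)| \subseteq |\Car'(M)|$ as if it were already available, but that inclusion is established in the paper \emph{only under the cleanness hypothesis} (it is the content of \eqref{E:reduction} via Proposition~\ref{P:local-calculation}). The proposition explicitly drops cleanness, so you are not entitled to use it here. Without the inclusion you cannot conclude a priori that every $n$-dimensional component of $\Car(j_*M)$ lying over $D_j$ is one of the lines $\overline L_{ij}$; there could in principle be other line bundles over $D_j$ appearing in $\ZCar(j_*M)$.

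The good news is that the remainder of your argument is correct and is precisely what repairs the gap --- you just need to reorder it. The restriction to $U$ gives $\ZCar(j_*M)|_U = d\cdot[U]$, so $\ZCar(j_*M)-d\cdot[X]$ is a non-negative cycle supported over $D$. Your CDVF computation at the generic point $\eta_j$ of each $D_j$ (pullback via Corollary~\ref{C:logcharcycles-base-change}(ii) and Lemma~\ref{L:ZCar'-stable}(ii), then Hukuhara--Levelt--Turrittin plus the explicit lattice calculation, which is exactly Proposition~\ref{P:ZCar=ZCar'-CDVF} in the paper) shows not merely that multiplicities match along the $\overline L_{ij}$ but that the full cycles $\ZCar(j_*M)$ and $\ZCar'(M)$ \emph{coincide} after pullback to $\Spec\,\calO_{X,\eta_j}^\wedge$. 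This is the stronger statement you actually need: it rules out \emph{any} component of the difference whose image in $X$ contains $\eta_j$, with no prior control on the support required. Since every component of $\ZCar'(M)-d\cdot[X]$ dominates some $D_j$, subtracting it from the non-negative cycle $\ZCar(j_*M)-d\cdot[X]$ leaves a non-negative cycle supported away from all the $\eta_j$, hence over some $W\subseteq D$ of codimension $\ge 1$. So: begin with the $U$-restriction, then run the generic-point computation \emph{as an equality of pulled-back cycles}, and conclude; do not appeal to the support inclusion at all.
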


\begin{remark}
(Under the cleanliness assumption,) one may hope to prove $\ZCar'(M \otimes R_{n,0}) = \ZCar(j_*M \otimes R_{n,0})$ directly from the local calculation.  However, we do not know how to prove this directly, unless $m=1$ or when $M$ has a good formal structure.  This is why the proof has to proceed in two steps: checking supports and then matching multiplicities.
\end{remark}

\subsubsection{Local calculation}
\label{SS:local-calculation}
Now, we are back to the proof of \eqref{E:reduction}.  Set $X= \Spec R_{n,0}$ and $D = V(x_1\cdots x_m)$; let $z$ be the origin.  Let $M$ be a differential module over $U = \Spec R_{n,m}$ clean at $z$.
We keep in mind that Corollary~\ref{C:logcharcycles-base-change}(iii) and Lemma~\ref{L:ZCar'-stable}(iii) always allow us to replace $R_{n,0}$ by $R_{n,0}[x_1^{1/h_1}, \dots, x_m^{1/h_m}]$ for positive integers $h_1, \dots, h_m$.
By the direct sum decomposition given by combining Theorem~\ref{T:clean-independence-of-j} with Theorem~\ref{T:strong-decomposition}(ii), we may as well assume that $M \otimes F_{\underline r}$ has pure partially intrinsic radius $\mathrm e^{-b_1r_1-\cdots -b_mr_m}$ for all $\underline r \in [0, \infty)^m \times \{0\}^{n-m}$, where $b_1, \dots, b_m$ are nonnegative integers, and there exist $\underline \theta =(\theta_1, \dots, \theta_n) \in k^n \bs \{ \underline 0\}$ and a local ring $\gothR$ finite over $k\llbracket x_{m+1}, \dots, x_n\rrbracket$ such that
\[
x_1^{b_1} \cdots x_m^{b_m}\Ref^\sharp(M \otimes F_{\underline r}; i) \equiv \theta_1 \frac{dx_1}{x_1} + \cdots +\theta_m \frac{dx_m}{x_m} + \theta_{m+1} dx_{m+1} + \cdots +\theta_n dx_n \mod \gothm_\gothR
\]
for any $i$ and any $\underline r \in (0, \infty)^m \times \{0\}^{n-m}$.  

If we let $\xi_j$ denote the image of $x_j\partial_j$ if $j\leq m$ and of $\partial_j$ if $j >m$, in $\gr_1 \calD_X^\log$, then we have  $\gr_\bullet \calD_X^\log \simeq R_{n,0}[\xi_1, \dots, \xi_n]$.  The claim \eqref{E:reduction} follows from the explicit and separate calculation in the following two propositions for the conjectural log-characteristic cycles and the (genuine) log-characteristic cycles.
 
\begin{prop}
Keep the notation as above.  If $b_1= \cdots= b_m = 0$, the conjectural log-characteristic cycle is the zero section $X$ of the log-cotangent bundle with multiplicity $d$.

If $(b_1, \dots, b_m) \neq (0, \dots, 0)$, then $|\Car'(M)| \cap \big(T^*\!X^\log \times_X \{z\}\big)$ is the closed subset $Z_\vartheta$ defined by $x_1=\cdots = x_n= 0$ and $\theta_j\xi_i = \theta_i\xi_j$ for all $i\neq j$; in particular, this is a line in $T^*\!X^\log \times_X \{z\}$.
\end{prop}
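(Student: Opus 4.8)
The plan is to read off $|\Car'(M)|$ in the fibre over $z$ directly from Definition~\ref{D:ZCar'}, using the explicit shape of $M$ that we have arranged. First I would record that $\ZCar'(M) = d\cdot[X] + \sum_{j=1}^m\ZCar'_j(M)$, that the zero section $[X]$ meets $T^*\!X^\log\times_X\{z\}$ only in its origin $\{x_1=\cdots=x_n=\xi_1=\cdots=\xi_n=0\}$, and that this origin already lies on $Z_\vartheta$ (which is genuinely a line, since $\underline\theta\neq\underline 0$). So it suffices to analyse each $\ZCar'_j(M)$ over $z$. Specialising the hypothesis $g_i^\sharp(M,\underline r) = b_1r_1+\cdots+b_mr_m$ to $\underline r=e_j$ and using that $F_{(j)}=k(X)^{\wedge,D_j}$ is the completion at the valuation attached to $D_j$ (with $|x_j|=\mathrm e^{-1}$), one gets $\Irr(M_{(j)};i)=b_j$ for all $i$, so $M_{(j)}$ has pure irregularity $b_j$. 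When $b_1=\cdots=b_m=0$ this forces every coefficient $\rank M\cdot\Irr(M_{(j)})/[F'_{(j)}:F_{(j)}]$ to vanish, whence $\ZCar'(M)=d\cdot[X]$; that is the first assertion. When $(b_1,\dots,b_m)\neq(0,\dots,0)$, the indices $j$ with $b_j=0$ still contribute nothing, while for $j$ with $b_j>0$ the coefficient is a positive rational, so I only need to identify the support of $\ZCar'_j(M)$ over $z$ for such $j$.

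Fix such a $j$. Since in a formal pointed local situation ``clean over $X$'' just means ``clean at $z$'', I would use the global description of $\ZCar'_j$ from Remark~\ref{R:global-description}: $\Ref(M_{(j)};i)$ lies in $\Omega^1_X(\log D)(R_i^{(j)})\otimes\calO_{D'_j}$ and generates a direct summand there, with $\pi_j\colon D'_j\to D_j$ finite and integral, and $\ZCar'_j(M)=\sum_i\frac{\Irr(M_{(j)};i)}{[F'_{(j)}:F_{(j)}]}\pi_{j*}(L_{ij})$, where $L_{ij}\subseteq T^*\!X^\log\times_XD'_j$ is the line subbundle spanned by $\Ref(M_{(j)};i)$. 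Because $M$ has pure partially intrinsic radii we have $b_{il}=b_l$ for all $i$, so $R_i^{(j)}=\sum_lb_lD_l$ for every $i$ and, locally on our formal $X$, $\Omega^1_X(\log D)(R_i^{(j)})=x_1^{-b_1}\cdots x_m^{-b_m}\Omega^1_X(\log D)$. Hence $\Ref(M_{(j)};i)=x_1^{-b_1}\cdots x_m^{-b_m}\bigl(\phi_{i1}\tfrac{dx_1}{x_1}+\cdots+\phi_{im}\tfrac{dx_m}{x_m}+\phi_{i,m+1}dx_{m+1}+\cdots+\phi_{in}dx_n\bigr)$ with $\phi_{il}\in\calO_{D'_j}$ generating the unit ideal; and by the standing congruence (which, by the integrality and uniformity of the decomposition in Theorem~\ref{T:strong-decomposition}, holds along $D_j$ as well) the reductions at the unique point $\tilde z$ of $D'_j$ over $z$ satisfy $\phi_{il}(\tilde z)=\theta_l$. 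Identifying $T^*\!X^\log$ with the total space of $\Omega^1_X(\log D)$, so that $\xi_1,\dots,\xi_n$ are the linear functionals dual to $\tfrac{dx_1}{x_1},\dots,\tfrac{dx_m}{x_m},dx_{m+1},\dots,dx_n$, the fibre of $L_{ij}$ over $\tilde z$ is cut out by $x_1=\cdots=x_n=0$ and $\theta_q\xi_p=\theta_p\xi_q$ for all $p\neq q$. Since $\pi_j$ is finite and ($k$ being algebraically closed) $\kappa(\tilde z)=k=\kappa(z)$, pushing forward and intersecting with $T^*\!X^\log\times_X\{z\}$ gives exactly $Z_\vartheta$, independently of $i$ and of the chosen $j$ with $b_j>0$. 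Taking the union over $j$ and $i$ and adjoining the origin coming from $d\cdot[X]$ then yields $|\Car'(M)|\cap(T^*\!X^\log\times_X\{z\})=Z_\vartheta$.

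The routine inputs — that specialising $g_i^\sharp$ at $e_j$ computes $\Irr(M_{(j)};i)$, that the symbols $\xi_l$ are dual to the chosen log-forms, and that the $x_j^{-b_j}$- and $x_1^{-b_1}\cdots x_m^{-b_m}$-normalisations of $\Ref(M_{(j)};i)$ span the same line — I would treat briefly. The step that really needs care, and which I expect to be the main (if modest) obstacle, is the last one: showing that the closed fibre of $\overline L_{i}=\pi_{j*}(L_{ij})$ over $z$ is the single line $Z_\vartheta$ and not some larger subvariety. This rests on the direct-summand clause of cleanness (so that $\vartheta\neq0$ and the equations $\theta_q\xi_p=\theta_p\xi_q$ genuinely cut out a one-dimensional subspace), on the finiteness of $\pi_j$ together with there being a single point of $D'_j$ over $z$ with residue field $k$, and on the uniformity of Theorem~\ref{T:strong-decomposition} used to pass the congruence from the open region $(0,\infty)^m\times\{0\}^{n-m}$, where it was stated, to the boundary ray defining $F_{(j)}$.
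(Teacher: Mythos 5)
Your proof follows the same overall route as the paper's: identify the fibre of $[X]$ over $z$ as the origin (which lies on $Z_\vartheta$), dispose of $j$ with $b_j=0$, and for $b_j>0$ use cleanness and the refined irregularities along $F_{(j)}$ to cut out $Z_\vartheta$ in the fibre of $\pi_{j*}L_{ij}$ over $z$. The first assertion and the general structure are handled exactly as in the paper. You also correctly flag the crux: passing the congruence $x_1^{b_1}\cdots x_m^{b_m}\Ref^\sharp(M\otimes F_{\underline r};i)\equiv\sum_l\theta_l\omega_l\mod\gothm_\gothR$, which is only hypothesised for $\underline r\in(0,\infty)^m\times\{0\}^{n-m}$, to the boundary ray $\underline r=e_j$ that computes $\Ref(M\otimes F_{(j)};i)$.

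The gap is in the tool you cite for that step. Theorem~\ref{T:strong-decomposition}(ii) is itself only a statement about refined radii at $\underline r\in(0,\infty)^m\times\{0\}^{n-m}$; the ``uniformity'' of the decomposition over $R'_{n,m,\underline h}$ does not by itself say anything about the closed point at the boundary. In fact, in the reduction to this proposition the module $M$ has already been replaced by a single piece $M_{\underline\lambda}$ of that decomposition, so invoking the theorem again is vacuous. The device the paper actually uses is Proposition~\ref{P:variation-vs-refined} applied to the $m$-dimensional local field $\sfF_\id$: it identifies the multiset $\mathsf{Ref}^\sharp(M\otimes\sfF_\id)$ — whose residue part is precisely the reduction of the coefficients $\theta'_{il}$ of $\Ref(M\otimes F_{(1)};i)$ modulo $\sqrt{(x_2,\dots,x_m)\calO_{\tilde D_1}}$, as worked out in the proof of Theorem~\ref{T:clean-independence-of-j} — with the reduction of the interior refined radii. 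Combined with the standing congruence at interior $\underline r$, this forces $\theta'_{il}\equiv\theta_l$ modulo $\sqrt{(x_2,\dots,x_n)\calO_{\tilde D_1}}$. Replacing your appeal to Theorem~\ref{T:strong-decomposition} by this application of Proposition~\ref{P:variation-vs-refined} (together with the computation of $\mathsf{Ref}^\sharp$ from $\Ref$ already carried out in Theorem~\ref{T:clean-independence-of-j}) closes the gap; the rest of your argument, in particular the identification of the fibre of $\pi_{j*}L_{ij}$ over $z$ with $Z_\vartheta$, goes through.
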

\begin{proof}
When $b_1= \cdots= b_m = 0$, $M$ is regular along each of $D_j$ and hence $\ZCar'(M) = d \cdot [X]$ by definition.

Now, we assume that $(b_1, \dots, b_m) \neq (0, \dots, 0)$.  It suffices to prove that if $b_j \neq 0$, then $|\ZCar'_j(M)| \cap \big(T^*\!X^\log \times_X \{z\}\big)$ is exactly $Z_\vartheta$.  Without loss of generality, we assume $j=1$.  Recall that  $F_{(1)}$ is the completion of $\Frac ( R_{n,0})$ with respect to the $x_1$-valuation.  Since $M$ is clean at $z$,  there exists an integral scheme $\tilde D_1$ finite over $D_1$ such that
\begin{align*}
\Ref^\sharp(M \otimes F_{(j)};i) &= x_1^{-b_1} \cdots x_m^{-b_m} \big(\theta'_{i1}\frac{dx_1}{x_1} + \cdots + \theta'_{im}\frac{dx_m}{x_m}+ \theta'_{i,m+1}dx_{m+1} + \cdots + \theta'_{in}dx_n \big)
\\ 
&\in x_1^{-b_1} \cdots x_m^{-b_m} \big(\calO_{\tilde D_1} \frac{dx_1}{x_1} \oplus \cdots\oplus \calO_{\tilde D_1} \frac{dx_m}{x_m} \oplus \calO_{\tilde D_1} dx_{m+1} \oplus \cdots \oplus \calO_{\tilde D_1} dx_n \big)
\end{align*}
for any $i$.  Applying  Proposition~\ref{P:variation-vs-refined} to $\sfF_\id$ implies that $\theta'_{ij} \equiv \theta_j$ modulo $\sqrt{(x_2, \dots, x_n)\calO_{\tilde D_1}}$ for any $i$ and $j$.  By the definition of $\ZCar'_1(M)$, we see that $|\Car'_1(M)| \cap \big(T^*\!X^\log \times_X \{z\}\big)$ is precisely given by $Z_\vartheta$, finishing the proof.
\end{proof}

\begin{prop}
\label{P:local-calculation}
Keep the notation as above. If $b_1= \cdots= b_m = 0$, $\ZCar(M) = d \cdot [X]$.
If $(b_1, \dots, b_m)  \neq (0, \dots, 0)$, then $|\ZCar(M)|\cap \big(T^*\!X^\log \times_X \{z\}\big)$ is contained in the closed subset defined by $\theta_j\xi_i = \theta_i\xi_j$ for all $i \neq j$, and $x_1=\cdots = x_n$.
\end{prop}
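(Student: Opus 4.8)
The plan is to exhibit an explicit good filtration on $j_*M$ whose associated graded, after restriction to the fibre over $z$, is forced by the refined irregularity data into the claimed position. First the regular case: if $b_1=\cdots=b_m=0$ then $M$ is regular along every component of $D$, so it admits a Deligne--Malgrange lattice, i.e.\ a coherent $\calO_X$-submodule $M_0\subseteq j_*M$, locally free with $M_0|_U=M$, that is stable under $x_l\partial_l$ for $l\le m$ and under $\partial_l$ for $l>m$. Then $\widetilde M_0:=\calD_X^\log\cdot M_0=M_0$, and the constant filtration $\fil_\bullet\widetilde M_0=M_0$ is good with $\gr_\bullet\widetilde M_0=M_0$ on which every symbol $\xi_l$ acts by zero; hence $\Car(\widetilde M_0)$ is the zero section with multiplicity $d=\rank M_0$, that is $\ZCar(M)=d\cdot[X]$. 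This part uses neither cleanness nor the refined structure.

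Now assume $(b_1,\dots,b_m)\ne(0,\dots,0)$; after reindexing we may assume $b_1>0$, and after a harmless extension of $k$ we may assume $\theta_1,\dots,\theta_n\in k$. By Corollary~\ref{C:logcharcycles-base-change}(iii), replacing $R_{n,0}$ by $R_{n,0}[x_1^{1/h_1},\dots,x_m^{1/h_m}]$ changes neither $|\ZCar(M)|$ nor $|\Car'(M)|$, so we may assume $b_1,\dots,b_m\in\NN$. Set $y=x_1^{b_1}\cdots x_m^{b_m}\in\gothm_z$, write $\delta_l=x_l\partial_l$ for $l\le m$ and $\delta_l=\partial_l$ for $l>m$, so that $\calD_X^\log$ is generated over $R_{n,0}$ by the commuting $\delta_l$ and $\gr_\bullet\calD_X^\log=R_{n,0}[\xi_1,\dots,\xi_n]$ with $\xi_l$ the symbol of $\delta_l$. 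Using the integrality statement of Theorem~\ref{T:strong-decomposition}(i) together with the lattice-lemma construction used in the proof of Lemma~\ref{L:refined-integral}, I would choose a coherent $\calO_X$-lattice $M_0\subseteq j_*M$, locally free with $M_0|_U=M$, such that $y\delta_l(M_0)\subseteq M_0$ for all $l$ and such that on $\bar M_0:=M_0/\gothm_z M_0$ the operator induced by $y\delta_l$ equals $\theta_l\cdot\id+N_l$ with $N_l$ nilpotent; the $N_l$ commute, since $[y\delta_i,y\delta_j]$ lies in $y\cdot(\text{$R_{n,0}$-span of the }y\delta\text{'s})\subseteq\gothm_z\cdot\End(M_0)$ and hence vanishes on $\bar M_0$.

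Give $\widetilde M_0:=\calD_X^\log\cdot M_0$ the good filtration $\fil_\nu\widetilde M_0=\calD_{X,\nu}^\log\cdot M_0$. From $\delta_l(y^{-k}M_0)\subseteq y^{-(k+1)}M_0$ one gets $\fil_\nu\widetilde M_0\subseteq y^{-\nu}M_0$, and since $\theta_1$ is a unit one checks by induction, using $\delta_1(y^{-(\nu-1)}v)=(\nu-1)b_1\,y^{-(\nu-1)}v+y^{-\nu}(y\delta_1 v)$ and that $y\delta_1 v\equiv\theta_1 v$ modulo terms whose $y^{-\nu}$-multiple already lies in $\fil_\nu$, that in fact $\fil_\nu\widetilde M_0=y^{-\nu}M_0$. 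Because $y\in\gothm_z$, the assignment $\bar v\mapsto\overline{y^{-\nu}v}$ induces isomorphisms $\bar M_0\xrightarrow{\ \sim\ }\gr_\nu\widetilde M_0/\gothm_z\gr_\nu\widetilde M_0$ for all $\nu\ge0$, and the identity $\delta_l(y^{-\nu}v)=\nu b_l\,y^{-\nu}v+y^{-(\nu+1)}(y\delta_l v)$ (the first term absent when $l>m$) shows that under these identifications $\xi_l$ acts as the endomorphism $\theta_l\,\id+N_l$ of $\bar M_0$. Thus $\gr_\bullet\widetilde M_0/\gothm_z\gr_\bullet\widetilde M_0\cong\bar M_0\otimes_k k[\eta]$ as a graded $k[\xi_1,\dots,\xi_n]$-module, with $\xi_l$ acting by $\bar v\otimes\eta^\nu\mapsto(\theta_l\bar v+N_l\bar v)\otimes\eta^{\nu+1}$. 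For $i\ne j$ the operator $\theta_j\xi_i-\theta_i\xi_j$ then acts as $(\theta_jN_i-\theta_iN_j)\,\eta$, and $\theta_jN_i-\theta_iN_j$ is a sum of commuting nilpotents, hence nilpotent; choosing $r$ with $(\theta_jN_i-\theta_iN_j)^r=0$ gives $(\theta_j\xi_i-\theta_i\xi_j)^r\cdot\gr_\bullet\widetilde M_0\subseteq\gothm_z\gr_\bullet\widetilde M_0$. Therefore $|\Car(\widetilde M_0)|\cap(T^*\!X^\log\times_X\{z\})$ is contained in the locus $x_1=\cdots=x_n=0$, $\theta_j\xi_i=\theta_i\xi_j$ $(i\ne j)$; by Lemma~\ref{L:log-char-independence} this is independent of the choice of $M_0$, which proves the proposition.

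The main obstacle is the lattice construction in the middle step: producing a single coherent lattice over $R_{n,0}$, rather than merely over each Gauss-norm completion $F_{\underline r}$, whose reduction modulo $\gothm_z$ carries $y\delta_l$ to a scalar plus a commuting nilpotent, and then verifying in the graded computation that the genuinely lower-order corrections to $y\delta_l$ contribute nothing modulo $\gothm_z$ (in particular that $\fil_\nu\widetilde M_0=y^{-\nu}M_0$). This is precisely where the uniformity over all $\underline r\in(0,\infty)^m\times\{0\}^{n-m}$ of the refined-radius hypothesis supplied by Theorem~\ref{T:strong-decomposition} and the patching machinery behind Lemma~\ref{L:refined-integral} must be used with care, and it is the point at which the gap between mere cleanness and a good formal structure would be felt were one to attempt a more explicit description.
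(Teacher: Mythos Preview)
Your overall architecture matches the paper's exactly: construct a lattice $M_0$ with $y\delta_l(M_0)\subseteq M_0$, use the filtration $\fil_\nu\widetilde M_0=y^{-\nu}M_0$, and show that $\theta_j\xi_i-\theta_i\xi_j$ acts nilpotently on $\gr_\bullet\widetilde M_0/\gothm_z\gr_\bullet\widetilde M_0$. The regular case is handled identically. The difference, and the genuine gap, is precisely where you locate it yourself.

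You assert the existence of a locally free lattice $M_0$ on which $y\delta_l$ reduces modulo $\gothm_z$ to $\theta_l\cdot\id+N_l$ with $N_l$ nilpotent. Neither Theorem~\ref{T:strong-decomposition}(i) nor the lattice lemma inside Lemma~\ref{L:refined-integral} produces this: the former controls refined radii over each $F_{\underline r}$, not the matrix of $y\delta_l$ on a single $R_{n,0}$-lattice; the latter is a one-derivation, one-valuation statement and does not glue across the $m$ different $F_{(i)}$. The paper's $M_0$ is the intersection of unit balls for good norms on each $M\otimes F_{(i)}$ (not even asserted to be free), and the paper never claims your reduction property for it. Your surjectivity argument for $\fil_\nu\widetilde M_0=y^{-\nu}M_0$ also leans on this unproved structure (and on $\theta_1\neq 0$, which is not guaranteed by $b_1>0$; you should instead fix any $j$ with $\theta_j\neq 0$, as the paper does).

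The paper closes the gap by replacing your global matrix claim with an \emph{element-by-element} twisted-polynomial argument. For each $\bbe\in\bbt^{-\alpha}M_0$ one looks at the $\Delta_j$-cyclic submodule it generates in $M\otimes F_{(1)}$ and writes down its twisted polynomial; the refined-radius hypothesis (via \cite[Corollary~1.3.13]{xiao-refined}) pins the coefficients modulo $x_1\gotho_{F_{(1)}}+(x_2,\dots,x_n)k\llbracket x_2,\dots,x_n\rrbracket$, and a Nakayama step gives both the surjectivity $R_{n,0}\cdot\Delta_j(\bbt^{-\alpha}M_0)=\bbt^{-\alpha-1}M_0$ and, by the same trick applied to $\Delta=\theta_i\Delta_j-\theta_j\Delta_i$, the nilpotency $\Delta^d(\bbe)\in\gothm_z\cdot\bbt^{-\alpha-d}M_0$. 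This yields exactly the conclusion you want without ever needing the matrix of $y\delta_l$ on $\bar M_0$ to decompose as scalar plus nilpotent; in effect the twisted-polynomial method proves the consequence of your lattice claim that you actually use, while sidestepping the claim itself.
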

\begin{proof}
This is the crux of the proof of the main theorem.  We will prove it in Subsection~\ref{S:local-calculation}.
\end{proof}

\subsubsection{Proof of Proposition~\ref{P:ZCar'<ZCar}}
\label{SS:proof-ZCar'<ZCar}
We remind the reader that we do not assume any cleanliness on $M$ for this proof.
First of all, since $M$ is coherent over $U$, the log-characteristic cycle of $M$ over $U$ is the same as the characteristic cycle over $U$, which is simply $d$ copies of the zero section of $T^*\!U$.  Hence $\ZCar(M) - d \cdot [X]$ is a non-negative combination of cycles of $T^*\!X^\log$ supported on $T^*\!X^\log \times_X D$.

Now, fix $D_j$ an irreducible component of $D$.  We need only to show that $\ZCar'(M) - \ZCar(j_*M)$ has no support above the generic point $\eta_j$ of $D_j$.  By Corollary~\ref{C:logcharcycles-base-change}, we may assume that we are in the CDVF local setup \ref{SS:local-setup}(c), in other words, we are in the setup of Definition \ref{D:ZCar'}.  Proposition~\ref{P:ZCar'<ZCar} then follows from Proposition~\ref{P:ZCar=ZCar'-CDVF} below.

\begin{prop}
\label{P:ZCar=ZCar'-CDVF}
Assume that we are in the local CDVF situation~\ref{SS:local-setup}(c).  We take $F = k(X)$, $\calO_X = \gotho_F$, and $\pi_F = x_1$.  Let $M$ be a $(\partial_1, \dots, \partial_n)$-differential module of rank $d$ over $F$.  Then $\ZCar(M)$ is equal to $\ZCar'(M)$ as cycles in $T^*\!X^\log = \Spec( \gotho_F[\xi_1, \dots, \xi_n])$, where $\xi_1$ denote the image of $x_1\partial_1$ and $\xi_j$ denote the image of $\partial_j$ for $j =2, \dots, n$.
\end{prop}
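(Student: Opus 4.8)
The plan is to reduce, through base change, additivity, and a dévissage in the rank, to a rank‑one module, and then to carry out an explicit lattice computation in the style of Kato. First I would dispose of the reductions. Both $\ZCar'$ and $\ZCar(j_*-)$ are additive in $M$ — $\ZCar'$ by its definition via Proposition~\ref{P:decomposition-field}(iii), and $\ZCar(j_*-)$ because in the CDVF case every irreducible component of the relevant characteristic varieties is the zero section $[X]$ or a line over the point $D$, so none absorbs another and multiplicities on a direct‑sum lattice genuinely add. Hence I may assume $M$ has pure irregularity $b$ and pure refined irregularity. Next, by Corollary~\ref{C:logcharcycles-base-change}(iii) and Lemma~\ref{L:ZCar'-stable}(iii), together with the faithful flatness of the base change used there (which makes $\tilde g^{\,*}$ injective on cycles), it suffices to prove the statement after a finite extension $F'/F$; choosing $F'$ so that $b$ becomes an integer and the refined irregularities of $M\otimes F'$ are rational, splitting over $F'$ by Proposition~\ref{P:decomposition-field}(iii) into pieces with a single refined irregularity, and invoking additivity once more (the resulting lines $\overline L_\vartheta$ being manifestly distinct), I reduce to the case $F'=F$, $b\in\NN$, and $M$ with all refined irregularities equal to $\vartheta=x_1^{-b}\bigl(\theta_1\tfrac{dx_1}{x_1}+\cdots+\theta_n\tfrac{dx_n}{x_n}\bigr)$, $\theta_l\in\kappa_F$; here $\theta_1\ne 0$ and $\theta_l=-x_l\partial_l(\theta_1)/b$ by Remark~\ref{R:refined-CDVF}. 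If $b=0$ the module is regular: its Deligne--Malgrange lattice $M_0$ is $\calD_X^\log$-stable, so $\widetilde M_0=M_0$ sits in filtration degree $0$ and $\ZCar(j_*M)=\rank(M)\,[X]=\ZCar'(M)$.

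For $b>0$ I would set $\phi=-\tfrac{\theta_1}{b}x_1^{-b}$; then $E(-\phi)\otimes M$ has all refined irregularities $0$, hence is regular, so $M\simeq E(\phi)\otimes\mathrm{Reg}$ with $\mathrm{Reg}$ regular. Since $j\colon U\hookrightarrow X$ is affine ($U=\Spec F$), $j_*$ is exact; after a further finite base change arranging that $\mathrm{Reg}$ is a successive extension of rank‑one regular pieces, I tensor such a filtration with $E(\phi)$, push it forward, and equip everything with the (functorial, hence compatible) Deligne--Malgrange lattices. The order filtrations on the resulting objects $\widetilde{(\cdot)}_0$ are then strictly compatible, so $\ZCar(j_*-)$ is additive along the filtration and the problem reduces to $M=E(\phi)\otimes\calL$ with $\calL$ rank‑one regular; note $\vartheta$ is unchanged by this regular twist.

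The rank‑one case is an explicit computation. Take the generator $\mathbf e\otimes g$ of $M$ and the lattice $M_0=\gotho_F\cdot(\mathbf e\otimes g)$. Using $x_1\partial_1(\phi)=\theta_1x_1^{-b}$ and $\partial_l(\phi)=-\tfrac1b\,\partial_l(\theta_1)\,x_1^{-b}$ one checks $\fil_\alpha\widetilde M_0=\calD_{X,\alpha}^\log\cdot M_0=x_1^{-\alpha b}M_0$ for all $\alpha\ge 0$; the nontrivial inclusion uses $\theta_1\ne 0$, which forces $(x_1\partial_1)^\alpha(\mathbf e\otimes g)$ to have $x_1$-valuation exactly $-\alpha b$. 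This filtration is good, and inspecting the action of $\xi_1$ (the symbol of $x_1\partial_1$) and of $\xi_2,\dots,\xi_n$ (the symbols of $\partial_2,\dots,\partial_n$) on $\gr_\bullet\widetilde M_0$ shows, using $\theta_l=-x_l\partial_l(\theta_1)/b$, that each $\theta_1x_l\xi_l-\theta_l\xi_1$ annihilates $\gr_\bullet\widetilde M_0$ and that, over $\gotho_F[\xi_1,\dots,\xi_n]/(\theta_1x_l\xi_l-\theta_l\xi_1)_l\simeq\gotho_F[\xi_1]$, the module is generated by $\mathbf e\otimes g$ subject only to $x_1^b\xi_1=0$; that is, $\gr_\bullet\widetilde M_0\simeq\gotho_F[\xi_1]/(x_1^b\xi_1)$. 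Reading off the cycle in $\gotho_F[\xi_1]$: the divisor $V(\xi_1)$ has multiplicity $1$ (localize away from $x_1$, obtaining $F$ in degree $0$) and $V(x_1)$ has multiplicity $b$ (localize away from $\xi_1$, obtaining the length‑$b$ ring $\gotho_F/x_1^b$). Pushing these forward into $T^*\!X^\log=\Spec\gotho_F[\xi_1,\dots,\xi_n]$ along $\Spec\gotho_F[\xi_1]=V(\theta_1x_l\xi_l-\theta_l\xi_1)\hookrightarrow T^*\!X^\log$ yields $[X]$ and the line $\overline L_\vartheta$ over $D$ attached to $\vartheta$ in Definition~\ref{D:ZCar'}. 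Hence in rank one $\ZCar(j_*M)=[X]+b\,\overline L_\vartheta$, and summing the $d=\rank M$ pieces gives $\ZCar(j_*M)=d\,[X]+db\,\overline L_\vartheta=\ZCar'(M)$.

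I expect the main obstacle to be the two interlocking computations in the rank‑one step: establishing $\fil_\alpha\widetilde M_0=x_1^{-\alpha b}M_0$, and showing that the symbols $\xi_2,\dots,\xi_n$ contribute precisely the linear relations $\theta_1x_l\xi_l=\theta_l\xi_1$ on the associated graded, no more and no less. Both hinge on the rigidity $\theta_1\ne0$, $\theta_l=-x_l\partial_l(\theta_1)/b$ of Remark~\ref{R:refined-CDVF}; it is exactly this that pins $\Car(j_*M)$ onto the single line $\overline L_\vartheta$ rather than a larger subvariety of $\mathbb A^n_{\kappa_F}$, and makes the multiplicity come out to the integer $db$ predicted by $\ZCar'$.
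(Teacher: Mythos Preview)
Your overall strategy is close to the paper's, but there is a genuine gap in the step where you claim that tensoring with $E(-\phi)$, for $\phi=-\tfrac{\theta_1}{b}x_1^{-b}$, makes $M$ regular. Matching the refined irregularity at level $b$ only forces the irregularity of $E(-\phi)\otimes M$ to drop \emph{below} $b$; it does not force it to drop to $0$. A rank-one counterexample already shows this: take $M=E(x_1^{-b}+x_1^{-b+1})$ with $b\geq 2$. Then $\theta_1=-b$, your $\phi=x_1^{-b}$, and $E(-\phi)\otimes M\cong E(x_1^{-b+1})$ has irregularity $b-1>0$. So the inference ``all refined irregularities $0$, hence regular'' is false. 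What you are effectively computing is that the level-$b$ contribution vanishes, which says nothing about levels $<b$.

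The paper circumvents this entirely by invoking the Hukuhara--Levelt--Turrittin decomposition (after a suitable finite base change) to write $M\simeq\bigoplus_\alpha E(\phi_\alpha)\otimes\mathrm{Reg}_\alpha$ in one stroke, and then computes directly with $M_0=\mathbf e\otimes\mathrm{Reg}_0$ for a lattice $\mathrm{Reg}_0$ of $\mathrm{Reg}$ stable under $x_1\partial_1,\partial_2,\dots,\partial_n$. No d\'evissage to rank one is needed: the filtration $\fil_\alpha=x_1^{-\alpha b}M_0$ and the identification of $\gr_\bullet\widetilde M_0$ work verbatim for $\mathrm{Reg}$ of any rank $d$, and yield $\ZCar(j_*M)=d\,[X]+db\,Z_\vartheta$ immediately. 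Your further reduction to rank one via extensions of regular line bundles is therefore unnecessary, and the asserted strict compatibility of Deligne--Malgrange lattices along such a filtration (hence additivity of $\ZCar$ in extensions) is itself not obvious without first knowing that all components of the characteristic varieties involved are equidimensional --- which is precisely what you are trying to prove. The clean fix is to replace your twist-and-d\'evissage paragraph with a direct citation of Hukuhara--Levelt--Turrittin and carry out the lattice computation for $E(\phi)\otimes\mathrm{Reg}$ with $\mathrm{Reg}$ of arbitrary rank, as the paper does.
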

\begin{proof}
By Corollary~\ref{C:logcharcycles-base-change}(iii) and Lemma~\ref{L:ZCar'-stable}(iii), we can always replace $F$ by $F'(x_1^{1/h})$ for a positive integer $h$ and a finite extension $F'$ of $F$.  By Hukuhara-Levelt-Turrittin decomposition (see for example \cite[Theorem~2.3.3]{kedlaya-sabbah1}), we may assume that $M = E(\phi) \otimes \Reg$, where 
\begin{itemize}
\item $E(\phi)$ is the differential module of rank 1, generated by $\bbe$ such that $\partial_j(\bbe) = \partial_j(\phi) \bbe$ for some $\phi \in F$, and
\item $\Reg$ is a regular differential module over $F$.
\end{itemize}
Let $b =- v_F(\phi)$ and, let $\theta_1$ denote the reduction of $x_1^{b+1}\partial_1(\phi)$ in $\kappa_F$ and let $\theta_j$ denote the reduction of $x_1^b \partial_j(\phi)$ in $\kappa_F$ for $j =2, \dots, n$.  When $b>0$, $V$ has  pure irregularity $b$ and pure refined irregularity 
\[
\Ref(M) =d\phi = x_1^{-b}\theta_1 \frac{dx_1}{x_1} + x_1^{-b}\theta_2dx_2 + \cdots+ x_1^{-b}\theta_ndx_n  \in (x_1^{- b} \kappa_{F})\frac{dx_1}{x_1} \oplus \bigoplus_{j=2}^n (x_1^{- b} \kappa_{F}) dx_j.
\]  
According to Definition~\ref{D:ZCar'}, $\ZCar'(M) = d\cdot [X] + d\cdot \Irr(V) \cdot Z_{\vartheta}$, where $[X]$ is the zero section $\xi_1 = \cdots =\xi_n = 0$ and $Z_\vartheta$ is the cycle defined by $\theta_j \xi_{j'} = \xi_{j}\theta_{j'}$ for all $j,j'$.

We pick an  $\gotho_F$-lattice $\Reg_0$ of $\Reg$ that is stable under $x_1\partial_1, \partial_2, \dots, \partial_n$.  (The existence of such lattice is well-known, see \cite[Proposition~2.2.15]{kedlaya-sabbah1} for example.)

We use $M_0 = \bbe \otimes \Reg_0$ to define the log-characteristic cycle as in Definition~\ref{D:log-char-cycle}.  There are two cases we need to treat separately.

(i) If $b = 0$, we then have $\calD_X^\log \cdot M_0 = M_0$ and it is $\calO_X$-coherent.  In particular, we can provide it with the trivial filtration and hence $\ZCar(M) = d \cdot [X]$ in $T^*\!X^\log$.  This agrees with the definition of $\ZCar'(M)$.

(ii) If $b>0$, Remark~\ref{R:refined-CDVF} shows that $\theta_1 \in \kappa_F^\times$.   This implies that $\gotho_F \cdot x_1\partial_1(M_0) = x_1^{-b}M_0$ and $\partial_j(M_0) \subseteq x_1^{-b}M_0$ for any $j=2, \dots, n$.  Hence, $\calD_X^\log \cdot M_0 = M$.  We give $M$ a filtration by $\fil_\alpha M = 0$ if $\alpha <0$ and $ x_1^{-\alpha b} M_0$ if $\alpha \geq 0$.  We pick an $\gotho_F$-basis $\bbe_1, \dots, \bbe_d$ of $\Reg_0$.  Then the action of $\xi_j$ on the graded module $\gr_\bullet M$ is given by
\[
\xi_j(x_1^{-\alpha b}\bbe \otimes \bbe_i) =
\left\{
\begin{array}{ll}
 x_1^{b+1}\partial_1(\phi) x_1^{-(\alpha+1)b} \bbe \otimes \bbe_i &\textrm{if }j=1\\
x_1^b  \partial_j(\phi) x_1^{-(\alpha+1)b} \bbe \otimes \bbe_i &\textrm{if }j \in \{2, \dots, n\}
 \end{array}\right.
\]
for any $i \in \{1, \dots, d\}$.  (Note that the action from $\Reg_0$-dies when considering $\gr_\bullet M$.)  This implies that as an $\gotho_F[\xi_1, \dots, \xi_n]$-module, $\gr_\bullet M$ is isomorphic to
\begin{align*}
&\big(\gotho_F[\xi_1, \dots, \xi_n, t] / (x_1^b t, \xi_1 - x_1^{b+1}\partial_1(\phi) t, \xi_j - x_j^b \partial_j(\phi)t; j =2, \dots, n)\big)^{\oplus d}
\\
\cong & \big(\gotho_F[\xi_1, \dots, \xi_n] / (x_1^b\xi_1, x_1^{b+1}\partial_1(\phi)\xi_j - x_1^b\partial_j(\phi)\xi_1; j =2, \dots, n)\big)^{\oplus d},
\end{align*}
here the isomorphism follows from $x_1^{b+1}\partial_1(\phi) \equiv \theta_1 \neq 0$ modulo $x_1 \gotho_F$.
Since  $x_1^{b+1}\partial_1(\phi) \equiv \theta_1$ and $x_1^{b}\partial_j(\phi) \equiv \theta_j$ modulo $x_1 \gotho_F$, $\ZCar(M)$ is exactly the same as $\ZCar'(M)$.
\end{proof}

\subsection{Local calculation using good formal structures}
\label{S:calculation-good-model}

In this subsection, we prove Proposition~\ref{P:local-calculation} in the case when we have a good formal structure at $z$.  This calculation is basically due to Kato \cite[\S1]{kato-d-mod}.  We include it here because it is a toy version of the calculation in the next subsection.  In fact, we will prove the following stronger result.

\begin{prop}
\label{P:calculation-good-model}
Put $X= \Spec R_{n,0}$, $D = V(x_1\cdots x_m)$, and let $z$ be the origin.  Set $\phi = \alpha x_1^{-b_1} \cdots x_m^{-b_m}$ with $\alpha \in R_{n,0}^\times$ and $b_1, \dots, b_m \in \NN$.
Let $M = E(\phi) \otimes \mathrm{Reg}$, where $E(\phi)$ is the differential module defined in Definition~\ref{D:good-decomposition} and $\mathrm{Reg}$ is a \emph{regular} differential module of rank $d$ over $R_{n,m}$. Then we have an equality of cycles
\[
\ZCar'(M)= \ZCar(M).
\]
\end{prop}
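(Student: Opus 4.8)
The plan is to follow Kato's computation verbatim: pick the obvious coherent lattice for $j_*M$, show that the logarithmic $\calD$-module it generates carries an explicit good filtration, read off $\ZCar(M)$ directly, and compare with Definition~\ref{D:ZCar'}.

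\emph{Reductions.} If $b_1=\cdots=b_m=0$ then $\phi$ is a unit, $M$ is regular along every $D_j$, and $\ZCar'(M)=d\cdot[X]$; on the other hand, choosing a logarithmic lattice $\mathrm{Reg}_0$ of $\mathrm{Reg}$ — one stable under $x_1\partial_1,\dots,x_m\partial_m,\partial_{m+1},\dots,\partial_n$, which exists by the usual argument (e.g.\ \cite[Proposition~2.2.15]{kedlaya-sabbah1}) — the lattice $M_0=\bbe\otimes\mathrm{Reg}_0$ is already $\calD_X^\log$-stable, so $\widetilde M_0=M_0$ is $\calO_X$-coherent and $\ZCar(M)=d\cdot[X]$. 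So assume $S:=\{i:b_i>0\}\neq\emptyset$ and fix $i_0\in S$.

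\emph{Key computation.} Write $x^{-b}=x_1^{-b_1}\cdots x_m^{-b_m}$ and take $M_0=\bbe\otimes\mathrm{Reg}_0$. Since $\partial_l(\bbe)=\partial_l(\phi)\bbe$, we have $x_l\partial_l(\phi)=x^{-b}(x_l\partial_l(\alpha)-b_l\alpha)=:x^{-b}c_l$ for $l\le m$ and $\partial_l(\phi)=x^{-b}\partial_l(\alpha)=:x^{-b}c_l$ for $l>m$. Because $\alpha\in R_{n,0}^\times$ and $\mathrm{char}\,k=0$, the element $c_{i_0}=x_{i_0}\partial_{i_0}(\alpha)-b_{i_0}\alpha$ has nonzero constant term $-b_{i_0}\alpha(0)$, hence is a unit in $R_{n,0}$. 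Using that $c_{i_0}$ is a unit and that $\mathrm{Reg}_0$ is log-stable, one checks $\calD_{X,\alpha}^\log\cdot M_0=x^{-\alpha b}M_0$ for all $\alpha\ge 0$; in particular $\widetilde M_0=\bigcup_{\alpha\ge 0}x^{-\alpha b}M_0$, and $\fil_\alpha\widetilde M_0:=x^{-\alpha b}M_0$ (with $\fil_\alpha=0$ for $\alpha<0$) is a good filtration. Computing the associated graded module with respect to $\gr_\bullet\calD_X^\log\simeq R_{n,0}[\xi_1,\dots,\xi_n]$ (with $\xi_l$ the symbol of $x_l\partial_l$ for $l\le m$ and of $\partial_l$ for $l>m$), each $\xi_l$ acts by ``multiply by $c_l$ and advance one graded piece'', which yields
\[
\gr_\bullet\widetilde M_0\ \cong\ \bigl(R_{n,0}[\xi_1,\dots,\xi_n]\big/\!\bigl(x_1^{b_1}\cdots x_m^{b_m}\xi_l,\ c_l\xi_{l'}-c_{l'}\xi_l:\ 1\le l,l'\le n\bigr)\bigr)^{\oplus d}.
\]
(Over $U$ the relations $x^b\xi_l$ force $\xi_l=0$, recovering $d$ copies of the zero section; that is the only component not lying over $D$.)

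\emph{Reading off the cycle and matching $\ZCar'$.} Since $c_{i_0}$ is a unit, the support of the displayed module is the union of the zero section $\{\xi_1=\cdots=\xi_n=0\}$ with, for each $i\in S$, the closure $\overline L_i$ of the line over the generic point of $D_i$ cut out by $\xi_l=(c_l/c_{i_0})\xi_{i_0}$; these are the only $n$-dimensional components. Localizing at the generic point of each, one gets multiplicity $d$ along $[X]$ and $d\,b_i$ along $\overline L_i$ (the $b_i$ from the nilpotent structure $x_i^{b_i}$, the $d$ from $\rank\mathrm{Reg}$). On the other side, for $i\in S$ the module $M_{(i)}=E(\phi)\otimes\mathrm{Reg}\otimes F_{(i)}$ has pure irregularity $b_i$ and refined irregularity $d\phi=x^{-b}\bigl(\sum_{l\le m}c_l\tfrac{dx_l}{x_l}+\sum_{l>m}c_l\,dx_l\bigr)$ already defined over $F_{(i)}$, so $[F'_{(i)}:F_{(i)}]=1$ and $\Car'_i(M)$ is exactly the line in direction $(\bar c_1:\cdots:\bar c_n)$, i.e.\ the same $\overline L_i$, with coefficient $\rank M\cdot\Irr(M_{(i)})=d\,b_i$. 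Hence $\ZCar(M)=d\,[X]+\sum_{i\in S}d\,b_i\,\overline L_i=\ZCar'(M)$.

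\emph{Main obstacle.} There is no deep difficulty in this case; the one point requiring care — and the place where the hypothesis ``$\alpha$ a unit'' is genuinely used — is the identification $\widetilde M_0=\bigcup_\alpha x^{-\alpha b}M_0$ together with the goodness of the stated filtration, which rests on $c_{i_0}$ being a unit. This is also precisely the step that becomes substantially harder in Subsection~\ref{S:local-calculation}, where one no longer has a good formal structure, $\phi$ is replaced by refined-irregularity data valued in a finite extension, and one must argue with partially intrinsic radii rather than an explicit exponential.
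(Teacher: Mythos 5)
Your proof is correct and follows essentially the same route as the paper's: pick the regulating lattice $\mathrm{Reg}_0$, set $M_0=\bbe\otimes\mathrm{Reg}_0$, use the fact that $\theta_{i_0}$ (your $c_{i_0}$) is a unit because $\alpha\in R_{n,0}^\times$ and $b_{i_0}>0$ to identify $\widetilde M_0$ with $\bigcup_\alpha \bbt^{-\alpha}M_0$, endow it with the filtration $\fil_\alpha=\bbt^{-\alpha}M_0$, and read off $\gr_\bullet\widetilde M_0$ as $d$ copies of $R_{n,0}[\xi_1,\dots,\xi_n]$ modulo the same ideal (the paper writes the generators slightly differently, using only $\bbt\xi_1$ and $\theta_1\xi_j-\theta_j\xi_1$, but since $\theta_1$ is a unit these generate the same ideal as your list). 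The only cosmetic differences are that the paper fixes $j=1$ rather than an arbitrary $i_0\in S$ and is terser about the final multiplicity matching, which you spell out.
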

\begin{proof}
As usual, we use $\xi_j$ denote the image of $x_j\partial_j$ if $j \leq m$ and of $\partial_j$ otherwise, in $\gr_\bullet \calD_X^\log$; then $\gr_\bullet \calD_X^\log = R_{n,0}[\xi_1, \dots, \xi_n]$.  Write
\[
d\phi = x_1^{-b_1} \cdots x_m^{-b_m} \big(\theta_1 \frac{dx_1}{x_1} + \cdots + \theta_m \frac{dx_m}{x_m} + \theta_{m+1} dx_{m+1} + \cdots + \theta_n dx_n \big),
\]
where $\theta_j = - b_j\alpha + x_j \partial_j(\alpha)$ if $j \leq m$ and $\theta_j = \partial_j(\alpha)$ otherwise.

We first compute $\ZCar'(M)$.  Since Reg is regular, for each $b_j >0$, $M \otimes F_{(j)}$ has pure refined irregularity
$d\phi$, viewed as an  element in $x_1^{-b_1} \cdots x_m^{-b_m}\Omega^1_X(\log D) \otimes \calO_{D_j}$.
Hence $\ZCar'_j(M)$ is the cycle defined by $x_j = 0$ and $\bar \theta_l^{(j)}\xi_i =\bar \theta_i^{(j)}\xi_l$ for all $i, l$, with multiplicity $b_j$, where $\bar \cdot^{(j)}$ means the reduction from $\calO_X$ to $\calO_{D_j}$.
Then $\ZCar'(M)$ is the union of the zero section with multiplicity $d$ and all $\ZCar'_j(M)$.

Then we compute the log-characteristic cycle $\ZCar(M)$.  Let $\mathrm{Reg}_0$ be a \emph{regulating lattice} \cite[Theorem~4.1.4 and Definition 4.1.8]{kedlaya-sabbah1} of Reg, i.e., $\mathrm{Reg}_0$ is a \emph{free} differential module over $R_{n,0}$, equipped with derivations $x_1\partial_1, \dots, x_m\partial_m, \partial_{m+1}, \dots, \partial_n$, and an isomorphism $\mathrm{Reg} \simeq \mathrm{Reg}_0 \otimes_{R_{n,0}}R_{n,m}$.  We choose a basis $\bbe_1, \dots, \bbe_d$ for $\Reg_0$ over $R_{n,0}$.  Let $\mathbf e$ denote the standard generator of $E(\phi)$ as in Definition~\ref{D:good-decomposition}. As in Definition~\ref{D:log-char-cycle}, we take $M_0 = \bbe \otimes \mathrm{Reg}_0$.  

When $b_1 = \cdots= b_m = 0$, $M_0$ is stable under the action of $\calD_X^\log$ and it is coherent as an $R_{n,0}$-module.  In this case, we can provide $M_0$ with the trivial filtration and $\ZCar(M)$ is simply the zero section of $T^*\!X^\log$ with multiplicity $d$; this agrees with $\ZCar'(M)$.

From now on, we assume that $b_j$ are not all zero.  Without loss of generality, we assume that $b_1 , \dots, b_l>0$ and $b_{l+1} = \cdots =b_m = 0$.  This implies that $\theta_1 = -b_1 \alpha + x_1 \partial_1(\alpha) \in R_{n,0}^\times$.
Since $ x_1\partial_1(\bbe) = x_1^{-b_1} \cdots x_l^{-b_l} \theta_1\bbe$, we conclude that $R_{n,0}\cdot x_1\partial_1( M_0) = x_1^{-b_1} \cdots x_l^{-b_l}  M_0$.  
(Note that contribution from the action of $x_1\partial_1$ on $\Reg_0$ is negligible compare to the contribution from the action of $x_1\partial_1$ on $\bbe$.)
Moreover, $x_j \partial_j(M_0) \subseteq x_1^{-b_1} \cdots x_l^{-b_l}  M_0$ for all $j$.
This implies that $\widetilde M_0= \calD_X^\log \cdot M_0 =R_{n,l} \otimes_{R_{n,0}} M_0$.  
We provide it with the following filtration: $\fil_\alpha \widetilde M_0 = 0$ if $\alpha<0$ and $\fil_\alpha \widetilde M_0= (x_1^{-b_1} \cdots x_l^{-b_l})^\alpha  M_0$ if $\alpha \geq 0$.  Then we have
\[
\gr_\alpha\widetilde M_0 = \left\{
\begin{array}{ll}
0 & \textrm{if }\alpha<0,\\
\bigoplus_{i=1}^dR_{n,0} \cdot \bbe \otimes \bbe_i & \textrm{if }\alpha =0,\\
\bigoplus_{i=1}^d R_{n,0}/(x_1^{b_1}\cdots x_l^{b_l}) \cdot x_1^{-\alpha b_1} \cdots x_l^{-\alpha b_l} \bbe \otimes \bbe_i & \textrm{if }\alpha>0.
\end{array}
\right.
\]
The action of $\xi_j$ on this graded module is given by
\[
\xi_j\big( x_1^{-\alpha b_1} \cdots x_l^{-\alpha b_l} \bbe \otimes \bbe_i\big) = \theta_j \cdot x_1^{-(\alpha+1)b_1} \cdots x_l^{-(\alpha+1 )b_l} \bbe \otimes \bbe_i
\]
for any $j \in \{1, \dots, n\}$ and any $i \in \{1, \dots, d\}$.  This immediately implies that, as an $R_{n,0}[\xi_1, \dots, \xi_n]$-module, $\gr_\bullet \widetilde M_0$ is isomorphic to
\[
\big(R_{n,0}[\xi_1, \dots, \xi_n] / (x_1^{b_1}\cdots x_l^{b_l}\xi_1, \theta_1\xi_j - \theta_j\xi_1; j =2, \dots, n)\big)^{\oplus d}.
\]
Hence $\ZCar(M)$ is exactly the same as $\ZCar'(M)$.
\end{proof}

\begin{remark}
If we start with $M$ having good formal structure at each closed point of $X$, we may simplify the proof of Theorem~\ref{T:main-theorem} by skipping the argument at the generic points of $D$ (Proposition~\ref{P:ZCar=ZCar'-CDVF}), because Proposition~\ref{P:calculation-good-model} have already matched the multiplicity at the generic points.
\end{remark}

\subsection{Local calculation in the clean case}
\label{S:local-calculation}

Now, we prove Proposition~\ref{P:local-calculation} under the cleanliness condition.  We assume that $k$ is algebraically closed in this subsection.  We start by recalling our setup.

\subsubsection{Setup}
We put $\bbt = x_1^{b_1} \cdots x_m^{b_m}$ to simplify notation.
Let $M$ be a finite differential module over $R_{n,m}$ of rank $d$.  Assume that $M \otimes F_{\underline r}$ has pure partially intrinsic radius $\mathrm e^{-b_1r_1-\cdots -b_mr_m}$ for all $\underline r \in [0, \infty)^m \times \{0\}^{n-m}$, and there exist $\underline \theta =(\theta_1, \dots, \theta_n) \in k^n \bs \{\underline 0\}$ and a local ring $\gothR$ finite over $k\llbracket x_{m+1}, \dots, x_n\rrbracket$ such that
\[
\bbt \cdot \Ref^\sharp(M \otimes F_{\underline r}; i) \equiv \theta_1 \frac{dx_1}{x_1} + \cdots +\theta_m \frac{dx_m}{x_m} + \theta_{m+1} dx_{m+1} + \cdots +\theta_n dx_n \mod \gothm_\gothR
\]
for any $i$ and any $\underline r \in (0, \infty)^m \times \{0\}^{n-m}$.

Put $\Delta_j = x_j\partial_j$ if $j\leq m$ and $\Delta_j = \partial_j$ if $j>m$.  Set $\omega_j = \frac{dx_j}{x_j}$ if $j \leq m$ and $\omega_j = dx_j$ if $j>m$.

\subsubsection{Lattice over $R_{n,0}$}
Denote $M_{(i)} = M \otimes F_{(i)}$ for $i=1, \dots, m$. By \cite[Lemma~1.4.14]{xiao-refined}, we may find a norm $|\cdot|_{M_{(i)}}$ on $M_{(i)}$ such that $|\Delta_j|_{M_{(i)}} \leq |x_i|_{F_{(i)}}^{-b_i}$ for $j =1, \dots, n$.  Define
\[
M_0 = \big\{x \in M\;\big|\; |x|_{M_{(i)}} \leq 1 \textrm{ for }i=1, \dots, m\big\}.
\]

\begin{lemma}
The $R_{n,0}$-module $M_0$ is finite over $R_{n,0}$, generically of rank $d$;  it generates $M$ over $R_{n,m}$.
\end{lemma}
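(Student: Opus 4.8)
The plan is to sandwich $M_0$ between two explicit finitely generated $R_{n,0}$-modules built from a single $R_{n,m}$-basis of $M$, and then extract all three assertions. First I would fix an $R_{n,m}$-basis $\bbv_1,\dots,\bbv_d$ of $M$; since $M\hookrightarrow M_{(i)}=M\otimes_{R_{n,m}}F_{(i)}$, this tuple is also an $F_{(i)}$-basis of $M_{(i)}$ for each $i=1,\dots,m$. Because $R_{n,0}=k\llbracket x_1,\dots,x_n\rrbracket$ consists of elements of $|\cdot|_{(i)}$-norm $\le 1$ (coefficients in the trivially normed $k$, and $|x_l|_{(i)}\le 1$ for every $l$), we have $R_{n,0}\subseteq\gotho_{(i)}$; hence $F_{(i)}$-compatibility of $|\cdot|_{M_{(i)}}$ gives $|rx|_{M_{(i)}}\le|r|_{(i)}\,|x|_{M_{(i)}}\le|x|_{M_{(i)}}$ for $r\in R_{n,0}$, so $M_0$ is an $R_{n,0}$-submodule of $M$.

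The one genuine input I would invoke is that over the complete nonarchimedean field $F_{(i)}$ all norms on the finite-dimensional space $M_{(i)}$ are equivalent: there are constants $0<c_i\le C_i$ with $c_i\,|x|_{(i),\sup}\le|x|_{M_{(i)}}\le C_i\,|x|_{(i),\sup}$, where $\bigl|\sum_j a_j\bbv_j\bigr|_{(i),\sup}:=\max_j|a_j|_{(i)}$ (the upper bound is immediate from the ultrametric law with $C_i=\max_j|\bbv_j|_{M_{(i)}}$; the lower one is the standard finite-dimensionality fact). Combined with the elementary remark that for $a\in R_{n,m}$ one has $|a|_{(i)}\le e^{A}$ iff every monomial occurring in $a$ has $x_i$-exponent $\ge -A$, the lower bounds force each coefficient of an element of $M_0$ to lie in $x_1^{-A_1}\cdots x_m^{-A_m}R_{n,0}$ for suitable $A_i=\max\{0,\lceil-\log c_i\rceil\}$, so $M_0\subseteq\bigoplus_{j=1}^d x_1^{-A_1}\cdots x_m^{-A_m}R_{n,0}\,\bbv_j$; as $R_{n,0}$ is Noetherian and the right side is finite over it, $M_0$ is finite over $R_{n,0}$. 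Symmetrically, the upper bounds show $x_1^{B_1}\cdots x_m^{B_m}\bbv_j\in M_0$ for $B_i=\max\{0,\lceil\log C_i\rceil\}$, and since $x_1,\dots,x_m\in R_{n,m}^\times$ this gives $\bbv_j\in R_{n,m}\cdot M_0$, hence $M_0$ generates $M$ over $R_{n,m}$.

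For the rank claim I would note that $R_{n,m}$ is a localization of the domain $R_{n,0}$, so $\Frac(R_{n,m})=\Frac(R_{n,0})$ and $M$ is torsion-free over $R_{n,0}$; thus $M_0\hookrightarrow M$ yields an injection $M_0\otimes_{R_{n,0}}\Frac(R_{n,0})\hookrightarrow M\otimes_{R_{n,m}}\Frac(R_{n,m})$ into a $d$-dimensional space, while the surjectivity of $R_{n,m}\otimes_{R_{n,0}}M_0\to M$ makes it onto; hence $M_0$ has generic rank $d$. The only place that needs care — the ``main obstacle'' such as it is — is the finiteness step: it relies both on the norm-equivalence constant $c_i$ and, crucially, on intersecting all $m$ unit balls $|\cdot|_{M_{(i)}}\le 1$ simultaneously, which is what confines the coefficients to the fixed fractional ideal $x_1^{-A_1}\cdots x_m^{-A_m}R_{n,0}$; a single Gauss norm would bound only one exponent and finiteness would fail.
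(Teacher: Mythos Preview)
Your argument is essentially the same as the paper's: both use equivalence of norms on the finite-dimensional $F_{(i)}$-spaces $M_{(i)}$ to sandwich $M_0$ between two explicit finitely generated $R_{n,0}$-modules, and both deduce generation and generic rank from the upper sandwich. The one point where the paper is more careful is that it does \emph{not} assume $M$ is free over $R_{n,m}$: by definition a differential module over $R_{n,m}$ is only \emph{locally} free, i.e.\ projective, and the paper does not claim that projective $=$ free over $R_{n,m}$. Instead it chooses a complement $M'$ so that $\tilde M=M\oplus M'$ is free, runs your sandwich argument on $\tilde M$, and then uses that $M_0$ is a submodule of $\tilde M_0$ over the Noetherian ring $R_{n,0}$. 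Your proof tacitly assumes a global $R_{n,m}$-basis $\bbv_1,\dots,\bbv_d$ of $M$ exists; if you add the one-line reduction to the free case via a projective complement (or justify freeness of finite projective modules over $R_{n,m}$), your proof is complete and matches the paper's.
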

\begin{proof}
We first prove finite generation over $R_{n,0}$.  Note that $M$ is projective over $R_{n,m}$.  There exists a finite (projective) module $M'$ over $R_{n,m}$ such that $\tilde M = M \oplus M'$ is finite and free over $R_{n,m}$; let $\bbe_1, \dots, \bbe_s$ be a basis.  Assign $M' \otimes F_{(i)}$ any $F_{(i)}$-norm $|\cdot|_{M'_{(i)}}$ which induces an $F_{(i)}$-norm $|\cdot|_{\tilde M_{(i)}}$ on $\tilde M_{(i)} = \tilde M \otimes F_{(i)}$.  It suffices to show that $\tilde M_0 := \big\{x \in \tilde M\;\big|\; |x|_{\tilde M_{(i)}} \leq 1 \textrm{ for }i=1, \dots, m\big\}$ is finite over $R_{n,0}$ because $M_0$ is a submodule of $\tilde M_0$.  Write $\tilde M'_0 = \bigoplus_{l=1}^s R_{n,0}\bbe_l$ by choosing some basis.  For any $i$, consider a different norm $|\cdot|'_{\tilde M_{(i)}}$ by taking $\bbe_1, \dots, \bbe_s$ to be an orthonormal basis; this is (topologically) equivalent to $|\cdot|_{\tilde M_{(i)}}$.  In particular, there exists $N_i \in \ZZ$ such that $|x|'_{\tilde M_{(i)}} \leq |x_i|^{-N_i}_{F_{(i)}}\cdot |x|_{\tilde M_{(i)}}$ for any $x \in \tilde M_{(i)}$.  This implies that $\tilde M_0 \subseteq x_1^{-N_1}\cdots x_m^{-N_m}\tilde M'_0$.  Hence $\tilde M_0$ is finite, so is $M_0$.

For the second half of the lemma, pick any finitely generated $R_{n,0}$-submodule $M'_0$ of $M$ such that it generates $M$.  Then $|M'_0|_{(i)}$ is bounded above for any $i$.  In particular, this implies that $x_1^{a_1} \cdots x_m^{a_m}M'_0 \subseteq M_0$ for some $a_1, \dots, a_m \in \NN$ and hence $M_0$ generates $M$ over $R_{n,m}$ and has generic rank $d$.
\end{proof}

\begin{remark}
We do not know how to prove that $M_0$ is a free $R_{n,0}$ and it is even clear to us whether we should expect this to be true.
This is exactly the problem we need to work around.
\end{remark}

\begin{lemma}
\label{L:action-on-M}
For any $\alpha \in \ZZ$ and any $j$, we have $\Delta_j(\bbt^{-\alpha}M_0) \subseteq \bbt^{-\alpha-1}M_0$.
\end{lemma}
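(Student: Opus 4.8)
The plan is to reduce the statement to the case $\alpha=0$ by the Leibniz rule, and then to read that case off directly from the defining property of the norms $|\cdot|_{M_{(i)}}$.

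First I would record two preliminary facts. Since $\bbt = x_1^{b_1}\cdots x_m^{b_m}$ lies in $R_{n,0}$ (the exponents $b_i$ being nonnegative integers after the reductions made in \ref{SS:local-calculation}), we have $\bbt M_0 \subseteq M_0$, hence $\bbt^{-\beta}M_0 \subseteq \bbt^{-\beta-1}M_0$ for every $\beta \in \ZZ$. Moreover, because $|x_\ell|_{F_{(i)}} = 1$ for $\ell \neq i$, one has $|\bbt|_{F_{(i)}} = |x_i|_{F_{(i)}}^{b_i}$, so that $\bbt^{-1}M_0 = \{\, y \in M : |y|_{M_{(i)}} \leq |x_i|_{F_{(i)}}^{-b_i} \text{ for } i=1,\dots,m \,\}$.

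Next, since $\bbt^{-\alpha}\in R_{n,m}$ is a scalar and $\Delta_j$ is a $k$-linear derivation, for $x\in M_0$ the Leibniz rule gives
\[
\Delta_j(\bbt^{-\alpha}x) \;=\; \Delta_j(\bbt^{-\alpha})\,x \;+\; \bbt^{-\alpha}\,\Delta_j(x),
\]
where $\Delta_j(\bbt^{-\alpha}) = -\alpha b_j\,\bbt^{-\alpha}$ if $j\leq m$ and $\Delta_j(\bbt^{-\alpha})=0$ if $j>m$ (as $\bbt$ does not involve $x_j$). The first summand lies in $\bbt^{-\alpha}M_0\subseteq \bbt^{-\alpha-1}M_0$ (here we use that $M_0$ is a $k$-vector space, so $-\alpha b_j x \in M_0$), so the lemma reduces to the single assertion $\Delta_j(M_0)\subseteq \bbt^{-1}M_0$. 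But this is exactly the operator-norm bound arranged when $|\cdot|_{M_{(i)}}$ was chosen via \cite[Lemma~1.4.14]{xiao-refined}: for $x\in M_0$ and each $i$, using that the action of $\Delta_j$ on $M$ is the restriction of its action on $M_{(i)}=M\otimes F_{(i)}$,
\[
|\Delta_j(x)|_{M_{(i)}} \;\leq\; |\Delta_j|_{M_{(i)}}\cdot|x|_{M_{(i)}} \;\leq\; |x_i|_{F_{(i)}}^{-b_i},
\]
whence $\Delta_j(x)\in \bbt^{-1}M_0$ by the description above.

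Since each step is a direct verification, I do not expect a genuine obstacle here; the only points that need a little care are the bookkeeping that $|\bbt|_{F_{(i)}}=|x_i|_{F_{(i)}}^{b_i}$, and the harmless remark that integer scalars such as $-\alpha b_j$ have absolute value $\leq 1$ in residue characteristic zero — this is what would let one, if preferred, dispense with the reduction to $\alpha=0$ and instead verify $|\bbt^{\alpha+1}\Delta_j(\bbt^{-\alpha}x)|_{M_{(i)}}\leq 1$ for all $i$ in one stroke.
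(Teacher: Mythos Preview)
Your proof is correct and rests on the same idea as the paper's: the operator-norm bound $|\Delta_j|_{M_{(i)}} \le |x_i|_{F_{(i)}}^{-b_i}$ immediately gives the inclusion. The paper's one-line argument implicitly takes the direct route you sketch in your final paragraph (checking $|\Delta_j(y)|_{M_{(i)}} \le |x_i|^{-(\alpha+1)b_i}$ for $y \in \bbt^{-\alpha}M_0$), so your Leibniz reduction to $\alpha=0$ is a mild detour, but harmless and fully justified.
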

\begin{proof}
This follows immediately from the fact that $|\Delta_j|_{M_{(i)}} \leq |x_i|_{F_{(i)}}^{-b_i}$ for any $\Delta_j$.
\end{proof}

\subsubsection{Filtration on $\widetilde M_0$}
Without loss of generality, we assume that $b_1, \dots, b_l>0$ and $b_{l+1} = \cdots =b_m = 0$.  By Proposition~\ref{L:xi_1-generates} below, we have $\widetilde M_0 = \calD_X^\log \cdot M_0 = R_{n, l} \otimes_{R_{n,0}} M_0$. We provide $\widetilde M_0$  with the following filtration: $\fil_\alpha \widetilde M_0 = 0$ if $\alpha<0$ and $\fil_\alpha \widetilde M_0= \bbt^{-\alpha}  M_0$ if $\alpha \geq 0$.  Then we have
\[
\gr_\alpha\widetilde M_0 = \left\{
\begin{array}{ll}
0 & \textrm{if }\alpha<0,\\
M_0 & \textrm{if }\alpha =0,\\
\bigoplus_{i=1}^d R_{n,0}/(\bbt) \otimes_{R_{n,0}} \bbt^{-\alpha} M_0 & \textrm{if }\alpha>0.
\end{array}
\right.
\]

\begin{prop}
\label{L:xi_1-generates}
If $\theta_j \neq 0$ for some $j$, then we have $R_{n,0}\cdot \Delta_j\big(\bbt^{-\alpha}M_0 \big) = \bbt^{-\alpha-1}M_0$, for any $\alpha \in \ZZ$.
As a consequence, $\fil_\bullet \widetilde M_0$ is a good filtration.  In particular, $\widetilde M_0= M_0 \otimes_{R_{n,0}} R_{n,l}$.
\end{prop}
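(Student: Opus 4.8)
Since $\Delta_j(\bbt^{-\alpha}M_0)\subseteq\bbt^{-\alpha-1}M_0$ by Lemma~\ref{L:action-on-M}, the inclusion ``$\subseteq$'' is automatic, so everything is in proving ``$\supseteq$'' for a fixed $j$ with $\theta_j\neq0$. First I would remove the twist: from $\Delta_j(\bbt^{-\alpha}m)=\bbt^{-\alpha}(\Delta_j-\alpha b_j)(m)$ for $j\le m$ (and $=\bbt^{-\alpha}\Delta_j(m)$ for $j>m$), multiplying through by $\bbt^{\alpha+1}$ shows that the asserted equality is equivalent to $R_{n,0}\cdot S(M_0)=M_0$, where $S=\bbt\Delta_j-\alpha b_j\bbt\colon M\to M$ satisfies $S(M_0)\subseteq M_0$ by Lemma~\ref{L:action-on-M}. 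Now $R_{n,0}$ is local with maximal ideal $\gothm=(x_1,\dots,x_n)$, $M_0$ is $R_{n,0}$-finite, and $\bbt\in\gothm$ in the nontrivial case (some $b_i>0$), so $\alpha b_j\bbt\,M_0\subseteq\gothm M_0$, and Nakayama's lemma reduces the whole statement to the claim that $\bbt\Delta_j$ induces multiplication by $\theta_j$ on the finite-dimensional $k$-vector space $M_0/\gothm M_0$; in other words,
\[
(\bbt\Delta_j-\theta_j)(M_0)\subseteq\gothm M_0.
\]
Indeed, granting this, the image of $S(M_0)$ in $M_0/\gothm M_0$ is $\theta_j\cdot(M_0/\gothm M_0)=M_0/\gothm M_0$, so $R_{n,0}\cdot S(M_0)+\gothm M_0=M_0$ and Nakayama gives equality. (Iterating the equality, together with Lemma~\ref{L:action-on-M}, also yields $\widetilde M_0=\calD_X^\log\cdot M_0=\bigcup_\alpha\bbt^{-\alpha}M_0=R_{n,l}\otimes_{R_{n,0}}M_0$, as used above.)

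The displayed claim --- saying that at the closed point $z$ the operator $\bbt\Delta_j$ acts to leading order as the scalar $\theta_j$ --- is the heart. The strategy is to feed the cleanness normalization into the structure theory of refined irregularities. After a finite tamely ramified cover (permitted by Corollary~\ref{C:logcharcycles-base-change}(iii) and Lemma~\ref{L:ZCar'-stable}(iii)), one applies the Hukuhara--Levelt--Turrittin decomposition over the Gauss-norm fields $F_{\underline r}$ with $\underline r\in(0,\infty)^m\times\{0\}^{n-m}$: there $M\otimes F_{\underline r}^\alg$ splits as a sum of pieces $E(\phi_s)\otimes\mathrm{Reg}_s$ with $\mathrm{Reg}_s$ regular, and on the good lattice of such a piece $\bbt\Delta_j=\bbt\,\Delta_j(\phi_s)+\bbt\cdot(\text{bounded operator})$, where the second summand is topologically small because $|\bbt|_{F_{\underline r}}<1$, while $\bbt\,\Delta_j(\phi_s)$ is the $\omega_j$-coefficient of $\bbt\cdot\Ref^\sharp\bigl(E(\phi_s)\otimes\mathrm{Reg}_s\bigr)$, hence $\equiv\theta_j$ modulo $\gothm_\gothR$ by the standing cleanness normalization. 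Consequently $\bbt\Delta_j-\theta_j$ carries any lattice of $M\otimes F_{\underline r}^\alg$ into $(\gothm_\gothR+\mathfrak m_{F_{\underline r}^\alg})$ times that lattice.

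The main obstacle is to descend this generic-fibre statement to the single closed point $z$ and the specific lattice $M_0$ --- that is, to convert ``$\bbt\Delta_j-\theta_j$ maps a lattice of $M\otimes F_{\underline r}^\alg$ into a small multiple of it'' into $(\bbt\Delta_j-\theta_j)(M_0)\subseteq\gothm M_0$. This is where the finiteness of $\gothR$ over $k\llbracket x_{m+1},\dots,x_n\rrbracket$ is used (it absorbs the $x_{m+1},\dots,x_n$-directions that the Gauss norms do not see), together with the way $M_0$ is assembled from the norms $|\cdot|_{M_{(i)}}$ at the generic points of the $D_i$ and the higher-dimensional local-field characterization of cleanness in Theorems~\ref{T:clean-equiv-local} and \ref{T:clean-independence-of-j}. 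When $M$ already carries a good formal structure no descent is needed and the entire computation is the explicit one of Proposition~\ref{P:calculation-good-model}, which I would use as a template.

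Finally, for the asserted consequence: the filtration $\fil_\alpha\widetilde M_0=\bbt^{-\alpha}M_0$ for $\alpha\ge0$ and $\fil_\alpha\widetilde M_0=0$ for $\alpha<0$ is admissible in the sense of Definition~\ref{D:good-filtration}(i)--(iii), since each $\bbt^{-\alpha}M_0$ is $R_{n,0}$-coherent, $\widetilde M_0=\bigcup_\alpha\bbt^{-\alpha}M_0$, and $\Delta_\ell(\bbt^{-\alpha}M_0)\subseteq\bbt^{-\alpha-1}M_0$ for every $\ell$ by Lemma~\ref{L:action-on-M}. For goodness it suffices that $\gr_\bullet\widetilde M_0$ be generated over $\gr_\bullet\calD_X^\log$ by $\gr_0\widetilde M_0=M_0$, which is $R_{n,0}$-finite; and writing $\xi_j$ for the symbol of the chosen $\Delta_j$, the equality just proved gives, for every $\alpha\ge1$,
\[
R_{n,0}\cdot\xi_j\bigl(\gr_{\alpha-1}\widetilde M_0\bigr)=\bbt^{-\alpha}M_0\big/\bbt^{-\alpha+1}M_0=\gr_\alpha\widetilde M_0.
\]
Hence $\gr_\bullet\widetilde M_0$ is finitely generated over $\gr_\bullet\calD_X^\log$ and the filtration is good.
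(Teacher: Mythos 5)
Your opening reduction is correct and a nice reorganization: removing the twist, using $\bbt\in\gothm=(x_1,\dots,x_n)$ (valid since $\theta_j\neq 0$ forces some $b_i>0$), and Nakayama reduce the proposition to showing that $\bbt\Delta_j$ is \emph{surjective} on the finite-dimensional $k$-vector space $M_0/\gothm M_0$. But you then replace this by the strictly stronger claim $(\bbt\Delta_j-\theta_j)(M_0)\subseteq\gothm M_0$, i.e.\ that $\bbt\Delta_j$ acts on $M_0/\gothm M_0$ as the \emph{scalar} $\theta_j$, and this is not what the hypothesis gives you: the refined irregularities control only the common (generalized) eigenvalues of the reduced matrices $\bar N_j$, never the matrices themselves, so the reduction can carry a nontrivial nilpotent part. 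Already with $n=m=1$ take the rank-two module with basis $\bbe_1,\bbe_2$ and $x\partial_x\bbe_1 = x^{-b}(\theta\bbe_1+\bbe_2)$, $x\partial_x\bbe_2 = x^{-b}\theta\bbe_2$ (with $b>0$, $\theta\neq 0$); the sup-norm with $\bbe_1,\bbe_2$ orthonormal is a good norm, the refined radii are pure $\theta x^{-b}\frac{dx}{x}$, yet for $M_0=\gotho_{F_{(1)}}\bbe_1\oplus\gotho_{F_{(1)}}\bbe_2$ one has $(\bbt\Delta_1-\theta)(\bbe_1)=\bbe_2\notin\gothm M_0$. The proposition nevertheless holds there because the reduced matrix $\left(\begin{smallmatrix}\theta&0\\1&\theta\end{smallmatrix}\right)$ is \emph{invertible} over $k$; surjectivity, not scalarity, is what survives.

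This distinction is exactly what the paper's proof is designed around, and it is the missing idea in your sketch. Rather than bounding the matrix of $\bbt\Delta_j$ on $M_0$, one controls, for each individual $\bbe\in\bbt^{-\alpha}M_0$, the twisted polynomial $X^s+a_1X^{s-1}+\cdots+a_s$ of $\Delta_j$ on the cyclic $\partial_j$-submodule of $M_{(1)}$ generated by $\bbe$: using the cleanness normalization (via Lemma~\ref{L:refined-integral} and the cited Corollary 1.3.13 of \cite{xiao-refined}) one gets $\bbt^ia_i\equiv(-\theta_j)^i$ modulo $x_1\calO_{F_{(1)}}+(x_2,\dots,x_n)k\llbracket x_2,\dots,x_n\rrbracket$ --- a statement about the \emph{characteristic polynomial} only, immune to nilpotent parts. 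Rewriting $\Delta_j^s\bbe+a_1\Delta_j^{s-1}\bbe+\cdots+a_s\bbe=0$, isolating $\bbt^{-1}(-\theta_j)^s\bbe$, and pinning down the residual term by playing the $x_1$-integrality against the norms $|\cdot|_{M_{(l)}}$ at the other generic points yields precisely $\bbt^{-\alpha-1}M_0\subseteq R_{n,0}\cdot\Delta_j(\bbt^{-\alpha}M_0)+\gothm\cdot\bbt^{-\alpha-1}M_0$, and then Nakayama. Your proposed route via HLT decomposition over the Gauss-norm fields would give scalarity only if $M_0$ were compatible with a good formal structure (which exists only formally, only after ramified pullback, and in any case produces a different lattice than the one cut out by the good norms), and you candidly say the descent step is not filled in; as written that is a genuine gap, in addition to the over-strong reduction. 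The closing paragraph about admissibility and goodness of the filtration is fine.
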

\begin{proof}
The hypothesis of the lemma already implies that $b_1, \dots, b_m$ are not all zero.  Without loss of generality, we assume that $b_1>0$.
Let $\bbe $ be an element of $\bbt^{-\alpha}M_0$.  Let $M'_{(1)}$ denote the $\partial_j$-differential submodule of $M_{(1)}$ generated by $\bbe$.  We take the log-structure to be $\Log = \{\partial_j\}$ if $j\leq m$ and to be $\Log = \emptyset$ if $j>m$.  
Then $M'_{(1)}$ has pure partially intrinsic $\partial_j$-radius $|x_1|^{-b_1}$ and every refined partially intrinsic radius is of the form $\bbt^{-1}\theta' \omega_j$, where $\theta' \in \gothR_1$ for some local ring $\gothR_1$ finite over $k\llbracket x_2, \dots, x_n\rrbracket$ and $\theta' \equiv \theta_j \mod \gothm_{\gothR_1}$.  We write the  twisted polynomial associated to $\bbe$ with respect to the differential operator $\Delta_j$ (not $\partial_j$) as $X^s + a_1X^{s-1} + \cdots + a_s$, where $a_1, \dots, a_s \in F_{(1)}$.  By \cite[Remark~1.3.29]{xiao-refined}, we may apply \cite[Corollary~1.3.13]{xiao-refined} to the differential operator $\Delta_j$ (note that $|\Delta_j|_{F_{(1)}} \leq 1$); from this we know that
\[
\bbt^i a_i - (-\theta_j)^i \in x_1 \calO_{F_{(1)}}+ (x_2, \dots, x_n)k\llbracket x_2, \dots, x_n\rrbracket
\]
for any $i =1, \dots, s$.  (Note here the last term on the right is the intersection of the maximal ideal of $\gothR_1$ with the residue field of $\calO_{F_{(1)}}$.)
Now, by the definition of twisted polynomial, we have $\bbt^{s-1} \Delta_j^s\bbe + \bbt^{s-1} a_1\Delta_j^{s-1}\bbe + \cdots + \bbt^{s-1} a_s \bbe = 0$.  Hence,
\begin{align*}
-\bbt^{-1}(-\theta_j)^s\bbe &= \bbt^{s-1} \Delta_j^s\bbe + \bbt^{s-1} a_1\Delta_j^{s-1}\bbe + \cdots + \bbt^{s-1} a_s \bbe -\bbt^{-1}(-\theta_j)^s\bbe \\
&=\bbt^{s-1} \Delta_j^s\bbe + \bbt^{s-1} a_1\Delta_j^{s-1}\bbe + \cdots  + \bbt^{-1}(\bbt^{s} a_s -(-\theta_j)^s)\bbe
\end{align*}
We observe that the last term of the equation above belongs to 
\[
\big(x_1 \calO_{F_{(1)}}+ (x_2, \dots, x_n)k\llbracket x_2, \dots, x_n\rrbracket \big) \cdot \bbt^{-\alpha-1}M_0,
\]
and, by Lemma~\ref{L:action-on-M}, we have, if $j>m$
\[
\bbt^{s-1}a_i\Delta_j^{s-i}\bbe =
\bbt^ia_i\Delta_j^{s-i}(\bbt^{s-i-1}\bbe) \in \big(x_1\calO_{F_1} + k\llbracket x_2, \dots, x_n\rrbracket\big) \cdot \Delta_j(\bbt^{-\alpha}M_0),
\]
and if $j \leq m$
\begin{align*}
\bbt^{s-1}a_i\Delta_j^{s-i}\bbe &=
\bbt^ia_i\Delta_j\big(\bbt^{s-i-1}(\Delta_j^{s-i-1} + (s-i-1)\Delta_j^{s-i-2} + (s-i-1)(s-i-2)\Delta_j^{s-i-3} + \cdots )\bbe\big)\\
& \in \big(x_1\calO_{F_{(1)}} + k\llbracket x_2, \dots, x_n\rrbracket\big) \cdot \Delta_j(\bbt^{-\alpha}M_0).
\end{align*}
Therefore, we may write $-\bbt^{-1}(-\theta_j)^s\bbe$ as $\bbe_1 + \bbe_2 + \bbe_3$ with
\begin{align*}
\bbe_1 &\in 
x_1 \calO_{F_{(1)}}\cdot \bbt^{-\alpha-1}M_0,\\
\bbe_2 & \in (x_2, \dots, x_n)k\llbracket x_2, \dots, x_n\rrbracket \cdot \bbt^{-\alpha-1}M_0, \textrm{ and}\\
\bbe_3 &\in k\llbracket x_2, \dots, x_n\rrbracket \cdot \Delta_j(\bbt^{-\alpha}M_0).
\end{align*}
Viewing $\bbe_1 = -\bbt^{-1}(-\theta_j)^{-s}\bbe - \bbe_2 - \bbe_3$ forces $\bbe_1 \in M$ and $|\bbe_1|_{M_{(l)}} \leq |x_l|^{-(\alpha+1)b_l}$ for $l =2, \dots, n$.  Moreover, we know that $|\bbe_1|_{M_{(1)}} \leq |x_1|^{-(\alpha+1)b_1+1}$, yielding $\bbe_1 \in x_1\cdot \bbt^{-\alpha-1}M_0$.  Since $\bbe$ is arbitrary and $\theta_j \neq 0$, we conclude that
\[
\bbt^{-\alpha-1}M_0 \subseteq R_{n,0}\cdot \Delta_j(\bbt^{-\alpha}M_0) + (x_1, \dots, x_n)\bbt^{-\alpha-1}M_0.
\]
The proposition follows by Nakayama's lemma.
\end{proof}

\begin{corollary}
If $\theta_j \neq 0$ for some $j$, $\Delta_j$ induces an $R_{n,0}$-linear isomorphism $\bbt^{-\alpha}M_0 / \bbt^{-\alpha+1}M_0 \to \bbt^{-\alpha-1}M_0 / \bbt^{-\alpha}M_0$ for any $\alpha \in \ZZ$.  As a consequence, $\gr_\bullet \widetilde M_0$ is isomorphic to $M_0 \otimes_{R_{n,0}} R_{n,0}[\xi_j] / (\bbt \xi_j)$ as an $R_{n,0}[\xi_j]$-module.
\end{corollary}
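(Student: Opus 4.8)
The plan is to work throughout with the increasing chain of $R_{n,0}$-submodules $N_\alpha := \bbt^{-\alpha}M_0$ of $M$, so that $\fil_\alpha\widetilde M_0 = N_\alpha$ for $\alpha\geq 0$ and $N_{\alpha-1}\subseteq N_\alpha$ for all $\alpha\in\ZZ$. First I would check that $\Delta_j$ descends to the graded pieces: by Lemma~\ref{L:action-on-M} it carries $N_\alpha$ into $N_{\alpha+1}$ and $N_{\alpha-1}$ into $N_\alpha$, hence induces an additive map $\bar\Delta_j\colon N_\alpha/N_{\alpha-1}\to N_{\alpha+1}/N_\alpha$; the Leibniz rule $\Delta_j(fm)=\Delta_j(f)m+f\Delta_j(m)$ together with the fact that $\Delta_j(f)\in R_{n,0}$ preserves each $N_\alpha$ shows that $\bar\Delta_j$ is $R_{n,0}$-linear. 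By construction $\bar\Delta_j$ is exactly the way the symbol $\xi_j\in\gr_\bullet\calD_X^\log$ acts from degree $\alpha$ to degree $\alpha+1$ on $\gr_\bullet\widetilde M_0$ (in degree $0$ one has the analogous map $\gr_0\widetilde M_0=M_0\to N_1/N_0$).

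Next, surjectivity of $\bar\Delta_j$ is immediate from Proposition~\ref{L:xi_1-generates}, which applies since $\theta_j\neq0$: it gives $R_{n,0}\cdot\Delta_j(N_\alpha)=N_{\alpha+1}$, so the $R_{n,0}$-submodule of $N_{\alpha+1}/N_\alpha$ generated by the image of $\bar\Delta_j$ is everything. For injectivity I would use the following trick. Since $M_0\subseteq M$ and $\bbt$ is invertible on $M$ (a module over $R_{n,m}$), multiplication by $\bbt$ is injective on $M$ and carries $N_{\alpha+1}$ onto $N_\alpha$; hence it induces an $R_{n,0}$-linear isomorphism $N_{\alpha+1}/N_\alpha\xrightarrow{\ \sim\ }N_\alpha/N_{\alpha-1}$. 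Composing with $\bar\Delta_j$ produces a surjective $R_{n,0}$-linear endomorphism of $N_\alpha/N_{\alpha-1}$; but $\bbt$ kills $N_\alpha/N_{\alpha-1}$, so this is a finitely generated module over the Noetherian ring $R_{n,0}/(\bbt)$, and by the standard fact that a surjective endomorphism of a Noetherian module is an isomorphism, the composite is bijective. Therefore $\bar\Delta_j$, being a factor of a bijection, is injective, hence an isomorphism $\bbt^{-\alpha}M_0/\bbt^{-\alpha+1}M_0\xrightarrow{\ \sim\ }\bbt^{-\alpha-1}M_0/\bbt^{-\alpha}M_0$ for every $\alpha\in\ZZ$ (for $\alpha=0$ the source is $M_0/\bbt M_0$, as $\bbt^{-0}M_0/\bbt\,M_0 = M_0/\bbt M_0$).

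For the consequence I would feed this back into the description of $\gr_\bullet\widetilde M_0$ recalled above. Iterating the isomorphisms shows $\gr_\alpha\widetilde M_0\cong M_0/\bbt M_0$ for all $\alpha\geq1$, while $\gr_0\widetilde M_0=M_0$, and the degree-$0$ action of $\xi_j$ is the composite $M_0\twoheadrightarrow M_0/\bbt M_0\xrightarrow{\ \sim\ }\gr_1\widetilde M_0$, with kernel $\bbt M_0$. In particular $\gr_\bullet\widetilde M_0$ is generated over $R_{n,0}[\xi_j]$ by its degree-$0$ part $M_0$, and $\bbt\xi_j$ annihilates this part: for $z\in M_0$ one has $\bbt\cdot(\xi_j\cdot\bar z)=\overline{\bbt\Delta_j z}=0$ in $N_1/N_0$ since $\bbt\Delta_j z\in\bbt N_1=N_0$. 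This yields a surjection of graded $R_{n,0}[\xi_j]$-modules $M_0\otimes_{R_{n,0}}R_{n,0}[\xi_j]/(\bbt\xi_j)\twoheadrightarrow\gr_\bullet\widetilde M_0$, and comparing the two sides degree by degree — both are $M_0$ in degree $0$ and $M_0/\bbt M_0$ in each degree $\alpha\geq1$, matched precisely by the isomorphisms just established — shows it is an isomorphism.

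I expect the only genuinely non-formal ingredient to be the injectivity step, and within it the Noetherian-endomorphism argument; everything else is bookkeeping once Proposition~\ref{L:xi_1-generates} is in hand. The one place to be careful is the uniform treatment of $\alpha=0$, where $\gr_0\widetilde M_0=M_0$ does not coincide with the quotient $N_0/N_{-1}=M_0/\bbt M_0$ appearing for the other degrees.
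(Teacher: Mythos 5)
Your proof is correct and takes essentially the same route as the paper's (very terse) proof: well-definedness and $R_{n,0}$-linearity of the induced map from Lemma~\ref{L:action-on-M} and the Leibniz rule, surjectivity from Proposition~\ref{L:xi_1-generates}, and injectivity via the standard fact that a surjective $R_{n,0}$-linear endomorphism of a finitely generated module over a Noetherian ring is bijective (comparing source and target through multiplication by $\bbt$). The paper leaves the injectivity step implicit, but this is clearly the intended mechanism, and your degree-by-degree comparison (with the correct caveat about the degree-$0$ piece $\gr_0 = M_0$ versus $N_0/N_{-1} = M_0/\bbt M_0$) cleanly derives the second assertion.
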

\begin{proof}
The first statement follows from Proposition~\ref{L:xi_1-generates} and the fact that $\Delta_j(\bbt^{-\alpha} R_{n,0}) \subseteq \bbt^{-\alpha} R_{n,0}$ for any $\alpha \in \ZZ$.  The second statement follows immediately.
\end{proof}

\subsubsection{Proof of Proposition~\ref{P:local-calculation}}
Keep the notation as before.
First of all, if $b_1=\cdots = b_m =0$, $\widetilde M_0$ is coherent and the filtration is trivial.  Obviously, $\ZCar(M) = d\cdot [X]$.

Now, we assume that $\underline b \neq \underline 0$. Without loss of generality, we assume that $b_1>0$.  As above, we have a good filtration on $\widetilde M_0$.  Fix $j$ such that $\theta_j \neq 0$.  By an easy commutative algebra Lemma~\ref{L:commutative-algebra} below, we need only to show that, for any $i \in \{1, \dots, n\}$, the action of $\theta_i \xi_j - \theta_j \xi_i$ is nilpotent on $\gr_\bullet \widetilde M_0/ (x_1, \dots, x_n)$ as an $R_{n,0}[\xi_1, \dots, \xi_n]$-module.  Let $\Delta :=\theta_i \Delta_j - \theta_j \Delta_i$.  Recall that $\Delta_j: \gr_\alpha \widetilde M_0 \stackrel \sim \to \gr_{\alpha+1}\widetilde M_0$ is an isomorphism.  So, in explicit terms, we need only to show that, for any $\alpha \in \NN$ and any $\bbe \in \fil_\alpha \widetilde M_0$, $\Delta^d(\bbe) \subseteq (x_1, \dots, x_n) \fil_{\alpha+d} M_0$.

We only consider the differential operators $\partial_i$ and $\partial_j$.  Take the log-structure to be $\Log = \{\partial_1, \dots, \partial_m\} \cap \{\partial_i, \partial_j\}$.  As in previous lemma, we consider $M \otimes F_{(1)}$; this differential module has pure partially intrinsic radius $|x_1|^{-b_1}$ and all of its refined partially intrinsic radii are of the form $\bbt^{-1}\vartheta$, where $\vartheta \in \gothR_1$ for some local ring $\gothR_1$ finite over $k\llbracket x_2, \dots, x_n\rrbracket$ and $\vartheta \equiv \theta_i \omega_i + \theta_j \omega_j$ modulo $\gothm_{\gothR_1}$.

We claim that if we view $M \otimes F_{(1)}$ as a $\Delta$-differential module over $F_{(1)}$, then any Jordan-H\"older factor of $M \otimes F_{(1)}$ either has $\Delta$-radii $> |x_1|^{-b_1}$, or has $\Delta$-radii $|x_1|^{-b_1}$ and its refined $\Delta$-radii lies in $\bbt^{-1}\gothm_{\gothR_1}$ for some local ring $(\gothR_1, \gothm_{R_1})$ finite over $k\llbracket x_2, \dots, x_n\rrbracket$.
Indeed, we may first apply Proposition~\ref{P:decomposition-field}(iii) to reduce to the case when $M \otimes F_{(1)}$ has pure refined partially intrinsic radius.  Then we apply \cite[Theorem~1.4.20]{xiao-refined} to conclude.  Strictly speaking, $\Delta$ is not a differential operator of rational type, but one uses \cite[Remark~1.4.22]{xiao-refined} and the fact that $|\Delta|_{F_{(1)}} = 1$.

Following the proof of Proposition~\ref{L:xi_1-generates}, we pick an arbitrary element $\bbe \in \fil_\alpha \widetilde M_0 = \bbt^{-\alpha}M_0$.  It generates a $\Delta$-differential submodule of $M \otimes F_{(1)}$.  Let $X^s+a_1X^{s-1} + \cdots + a_s$ denote the twisted polynomial associated to $\bbe$ with respect to the differential operator $\Delta$.  By the claim above and \cite[Corollary~1.3.13]{xiao-refined} (using the version described in \cite[Remark~1.3.29]{xiao-refined}), we know that
\[
\bbt^ia_i  \in x_1 \calO_{F_{(1)}} + (x_2, \dots, x_n)k\llbracket x_2, \dots, x_n\rrbracket
\]
for any $i=1, \dots, s$.  This implies that
\[
\Delta^s(\bbe) = -a_1 \Delta^{s-1}(\bbe) - \cdots -a_s \bbe \in \big(x_1 \calO_{F_{(1)}} + (x_2, \dots, x_n)k\llbracket x_2, \dots, x_n\rrbracket \big) \bbt^{-\alpha-s}M_0.
\]
Therefore, we can write $\Delta^s(\bbe)$ as $\bbe_1 + \bbe_2$ with
\[
\bbe_1 \in 
x_1 \calO_{F_{(1)}}\cdot \bbt^{-\alpha-s}M_0, \textrm{ and }
\bbe_2 \in (x_2, \dots, x_n)k\llbracket x_2, \dots, x_n\rrbracket \cdot \bbt^{-\alpha - s}M_0.
\]
The equality $\bbe_1 = \Delta^s(\bbe)-\bbe_2$ forces $\bbe_1 \in M$ and $|\bbe_1|_{M_{(l)}} \leq |x_l|^{-(\alpha+s)b_l}$ for $l =2, \dots, n$, yielding $\bbe_1 \in x_1\cdot \bbt^{-\alpha-s}M_0$.  Hence, we have  $\Delta^s(\bbe) \in (x_1, \dots, x_n) \bbt^{-\alpha-s}M_0$, which trivially implies that 
\[
\Delta^d(\bbe) \in \Delta^{d-s}\big((x_1, \dots, x_n) \bbt^{-\alpha-s}M_0\big) \subseteq (x_1, \dots, x_n) \bbt^{-\alpha-d}M_0.
\]
This concludes the proof of Proposition~\ref{P:local-calculation}.

\begin{lemma}
\label{L:commutative-algebra}
Let $R$ be a noetherian ring and let $N$ be a finite 
$R$-module. Let $\gothp$ be a prime ideal of $R$.  Let $r \in R$ be an element such that $r^dN \subseteq \gothp N$ for some positive integer $d$.  Then $
\Supp(N) \cap \overline{\{\gothp\}}$ is contained in the closed subset $Z(r)$ defined by $r$.
\end{lemma}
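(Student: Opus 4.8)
The plan is to translate the statement into the language of prime ideals and then localize. Since $N$ is a finite $R$-module, $\Supp(N) = V(\mathrm{Ann}_R(N))$, and of course $\overline{\{\gothp\}} = V(\gothp)$. So a point of $\Supp(N) \cap \overline{\{\gothp\}}$ is precisely a prime ideal $\gothq$ containing both $\mathrm{Ann}_R(N)$ and $\gothp$, and the assertion to prove is that every such $\gothq$ contains $r$; that is, $\Supp(N)\cap\overline{\{\gothp\}} \subseteq V(r) = Z(r)$.

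So first I would fix such a prime $\gothq$ and argue by contradiction, assuming $r \notin \gothq$. Localizing at $\gothq$, the element $r$ becomes a unit in $R_\gothq$, hence so does $r^d$, so the inclusion $r^dN \subseteq \gothp N$ localizes to
\[
N_\gothq = r^d N_\gothq \subseteq \gothp R_\gothq \cdot N_\gothq \subseteq \gothm_\gothq N_\gothq,
\]
where $\gothm_\gothq$ is the maximal ideal of the local ring $R_\gothq$ (using $\gothp \subseteq \gothq$). Thus $N_\gothq = \gothm_\gothq N_\gothq$.

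Next I would invoke Nakayama's lemma: $N_\gothq$ is a finitely generated module over the local ring $R_\gothq$, so $N_\gothq = \gothm_\gothq N_\gothq$ forces $N_\gothq = 0$. But $\gothq \supseteq \mathrm{Ann}_R(N)$ means $\gothq \in \Supp(N)$, i.e.\ $N_\gothq \neq 0$ — a contradiction. Hence $r \in \gothq$, completing the proof.

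There is no real obstacle here; the only point requiring any care is confirming that $\Supp(N) = V(\mathrm{Ann}_R(N))$ and that $R$ being noetherian (equivalently, $N$ being finite) is what licenses both this identification and the application of Nakayama after localization. Everything else is a one-line localization argument.
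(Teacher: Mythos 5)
Your proof is correct and follows essentially the same route as the paper: localize at a prime in $\Supp(N) \cap \overline{\{\gothp\}}$, observe that if $r$ were a unit there the hypothesis would give $N_\gothq = \gothm_\gothq N_\gothq$, and conclude $N_\gothq = 0$ by Nakayama, contradicting membership in the support. The only cosmetic difference is that the paper phrases it at an arbitrary prime $\gothm$ rather than explicitly naming $\gothq \supseteq \gothp$; the content is identical.
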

\begin{proof}
Pick $\gothm \in \Supp(N) \cap \overline{\{\gothp\}}$.  If $r \notin \gothm R_\gothm$, then $r \in R_\gothm^\times$; the condition would imply that $N_\gothm = r^d N_\gothm \subseteq \gothp N_\gothm \subseteq \gothm N_\gothm$.  By Nakayama's lemma, $N_\gothm = 0$, which is a contradiction.  Hence, $r \in \gothm R_\gothm$ and the lemma follows.
\end{proof}

\end{document}